\numberwithin{equation}{section}
\definecolor{darkred}{RGB}{100,0,0}
\definecolor{darkgreen}{RGB}{0,100,0}
\definecolor{darkblue}{RGB}{0,0,150}
\definecolor{citecol}{RGB}{30,80,150}
\definecolor{tabcol}{RGB}{200,230,255}
\newcommand{\p}[1]{\left(#1 \right)}
\newcommand{\CC}{\mathcal{C}}
\newcommand{\HH}{\mathcal{H}}
\newcommand{\OO}{\mathcal{O}}
\newcommand{\QQ}{\mathcal{Q}}
\newcommand{\RR}{\mathcal{R}}
\newcommand{\XX}{\mathcal{X}}
\newcommand{\E}{\mathbb{E}}
\newcommand{\R}{\mathbb{R}}
\newcommand{\defeq}{\vcentcolon=}
\newcommand{\fmin}{f_{\min}}
\newcommand{\fmax}{f_{\max}}
\newcommand{\taumin}{\tau_{\min}}
\newcommand{\QQm}{\QQ^d_{\taumin,\fmin,\fmax}}
\newcommand{\eps}{\varepsilon}
\DeclareMathOperator*{\Conv}{Conv}
\DeclareMathOperator*{\Rad}{Rad}
\newtheorem{theorem}{Theorem}[section]
\newtheorem{lemma}[theorem]{Lemma}
\newtheorem{cor}[theorem]{Corollary}
\newtheorem{prop}[theorem]{Proposition}
\newtheorem{rem}[theorem]{Remark}
\newtheorem{definition}[theorem]{Definition}
\begin{document}

\raggedbottom
\title{\MakeUppercase{Minimax adaptive estimation in manifold inference}}
\author{%
  Vincent~Divol %
  \thanks{\affil{Center for Data Science and Courant Institute of Mathematical Science, New York University}           \email{firstname.lastname@nyu.edu}}
}
\date{}
\maketitle

%
%
%
%

\begin{abstract}
We focus on the problem of manifold estimation: given a set of observations sampled close to some unknown submanifold $M$, one wants to recover information about the geometry of $M$. Minimax estimators which have been proposed so far all depend crucially on the a priori knowledge of parameters quantifying the underlying distribution generating the sample (such as bounds on its density), whereas those quantities will be unknown in practice. Our contribution to the matter is twofold. First, we introduce a one-parameter family of manifold estimators $(\hat M_t)_{t\geq 0}$ based on a localized version of convex hulls, and show that for some choice of $t$, the corresponding estimator is minimax on the class of models of $\CC^2$ manifolds introduced in \cite{stat:genovese2012manifold}. Second, we propose a completely data-driven selection procedure for the parameter $t$, leading to a minimax adaptive manifold estimator on this class of models. This selection procedure actually allows us to recover the Hausdorff distance between the set of observations and $M$, and can therefore be used as a scale parameter in other settings, such as tangent space estimation.
\end{abstract}

%



\section{Introduction}

Manifold inference deals with the estimation of geometric quantities in a random setting. Given $\XX_n = \{X_1, \dots, X_n\}$ a set of i.i.d.~observations from some law $\mu$ on $\R^D$ supported on (or concentrated around) a $d$-dimensional manifold $M$, one wants to produce an estimator $\hat\theta$ that estimates accurately some quantity $\theta(M)$ related to the geometry of $M$ such as its dimension $d$ \cite{stat:hein2005intrinsic,stat:little2009multiscale, stat:kim2019minimax}, its homology groups \cite{geo:niyogi2008finding, stat:balakrishnan2012minimax}, its tangent spaces \cite{stat:aamari2019nonasymptotic, stat:cheng2016tangent}, or $M$ itself \cite{ stat:genovese2012manifold, stat:genovese2012minimax, stat:maggioni2016multiscale, stat:aamari2018stability, stat:aamari2019nonasymptotic, stat:puchkin2019structure}. Consider for instance the problem of estimating the manifold $M$ with respect to the Hausdorff distance $d_H$. The quality of an  estimator $\hat M$ with respect to some law $\mu$, called its $\mu$-risk, is given by the average Hausdorff distance $d_H$ between the estimator and $M$:

\begin{equation}
R_n(\hat M,\mu) \defeq \E[d_H(\hat M,M)],
\end{equation}
where $\hat M=\hat M(\XX_n)$ and $\XX_n$ is a $n$-sample of law $\mu$. In reality, the law $\mu$ generating the dataset is unknown, and it is more interesting to control the $\mu$-risk uniformly over a set $\QQ$ of laws $\mu$, that we call a statistical model. The uniform risk of the estimator $\hat M$ on the class $\QQ$ is given by,

\begin{equation}
R_n(\hat M,\QQ) \defeq \sup\{R_n(\hat M,\mu):\ \mu\in \QQ\}.
\end{equation}

while we say that an estimator is \emph{minimax} if it attains (up to a multiplicative constant as $n$ goes to $\infty$) the \emph{minimax risk}

\begin{equation}
\RR_n(\QQ) \defeq \inf\{R_n(\hat M,\QQ):\ \hat M \text{ is an estimator}\}.
\end{equation}

\begin{figure}
\centering
\includegraphics[width=0.2\textwidth]{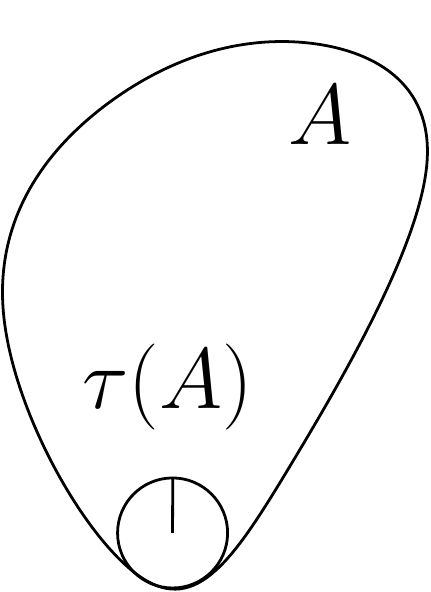}
\hspace{1cm}
\includegraphics[width=0.5\textwidth]{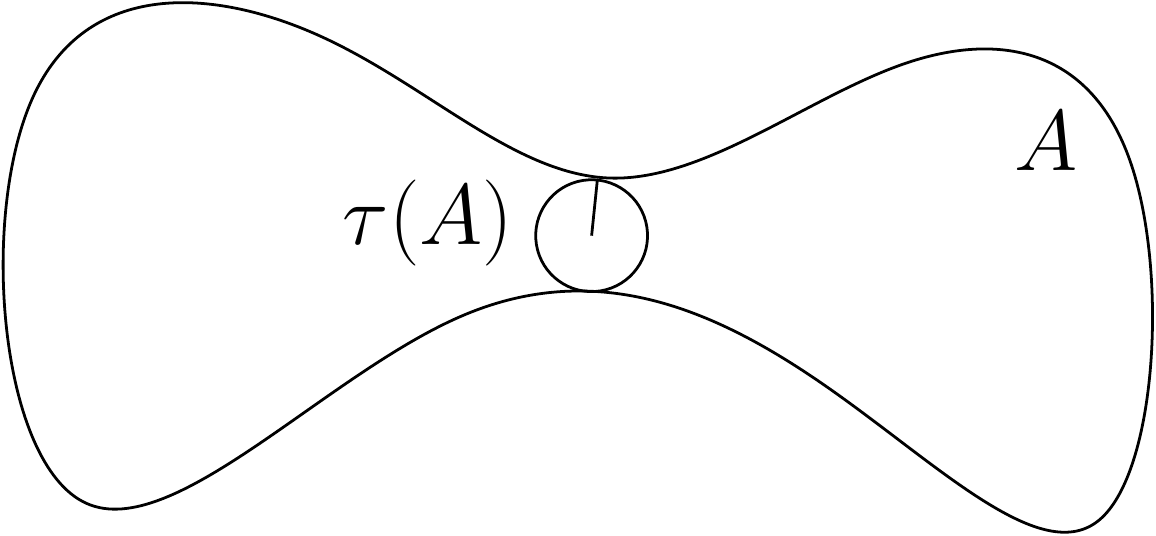}
\caption{If the reach of the curve $M$ is large, then the curve cannot be too pinched (left) and cannot present a tight bottleneck structure (right).}\label{fig:reach}
\end{figure}

In geometric inference, several statistical models were introduced, which take into account different noise models and regularities of the manifold $M$. Let us mention the family of models $\QQm$ introduced by Genovese et al. in \cite{stat:genovese2012manifold}, consisting of the laws $\mu$ supported on a $d$-dimensional manifold $M$ satisfying some additional properties. First, we assume that $\mu$ has a density $f$ on $M$, lower bounded by some constant $\fmin>0$ and upper bounded by another constant $\fmax$. This ensures that all the parts of the manifold $M$ are approximately evenly sampled: we then say that the law is "almost-uniform" on $M$. The parameter $\taumin$ gives a lower bound on the reach $\tau(M)$ of the manifold. The reach is a central notion in geometric inference, defined as the largest radius $r$ such that, if some point $x$ is at distance less than $r$ to $M$, then there exists a unique projection $\pi_M(x)$ of $x$ on $M$. As such, it controls both a local regularity of $M$ (a bound on its curvature radius) and a global regularity (namely the presence of a "bottleneck structure"), see also Figure \ref{fig:reach}.

On the statistical model $\QQm$, the minimax rate of convergence satisfies

\begin{equation}
c_0\p{\frac{\ln n}{n}}^{2/d} \leq \RR_n(\QQm) \leq c_1\p{\frac{\ln n}{n}}^{2/d},
\end{equation}
for two positive constants $c_0$, $c_1$ depending on $\taumin$, $\fmin$, $\fmax$ and $d$. The lower bound in this inequality was shown by Kim and Zhou \cite{stat:kim2015tight}, while the upper bound is obtained by exhibiting an estimator having a uniform risk of order $(\ln n/n)^{2/d}$. Such an estimator (although not computable in practice) was first proposed by Genovese et al. in \cite{stat:genovese2012manifold}, while another estimator
attaining this same minimax rate (computable in practice), and based on the Tangential Delaunay Complex \cite{geo:boissonnat2014manifold}, was proposed by Aamari and Levrard \cite{stat:aamari2018stability}. Although being minimax and computable, the Tangential Delaunay Complex depends on the tuning of several parameters (for instance a radius quantifying the size of neighborhoods which are used to compute local PCAs), while those parameters have to be calibrated in a precise manner with respect to the quantities $\taumin$, $\fmin$ and $\fmax$ defining the model for the Tangential Delauny Complex to be
minimax. However, those quantities are a priori unknown. A first possibility is to estimate those quantities in turn: if procedures are known to estimate the reach (although themselves
depending on the tuning of parameters \cite{stat:aamari2019estimating, stat:berenfeld2020estimating}), estimating $\fmin$ and $\fmax$ appears to be delicate. The problem of the practical choice of the parameters defining the estimator is then raised. This question of the tuning of parameters defining an estimator is not restricted to the framework of manifold estimation, but is a classical problem in statistics.

Let us cite for instance the question of the choice of the bandwidth for kernel density estimation. Let $X_1,\dots,X_n$ be a $n$-sample of some law $\mu$ having a density $f$ on $\R$, and suppose that we want to recover the value $f(x_0)$ of the density at some fixed point $x_0\in\R$. A standard method to achieve this goal is to consider the convolution of the empirical measure $\mu_n = \frac{1}{n} \sum_{i=1}^n \delta_{X_i}$ by some kernel $K_h$, where $K_h = h^{-1} K(\cdot /h)$ and $K$ satisfies $\int K = 1$. We then obtain a function $\hat f = K_h * \mu_n$. Assume that the density $f$ is of regularity $s$, that is $f \in \CC^s(\R)$, the set of $\lfloor s \rfloor$-times differentiable functions, whose $\lfloor s \rfloor$th derivative is $(s - \lfloor s \rfloor)$-H\"older continuous. Then, for a good choice of kernel $K$, it is optimal to choose the bandwidth $h_{\mathrm{opt}}$ of order $c\cdot n^{-1/(2s+1)}$, where $c$ depends on the $\CC^s$-norm of $f$ \cite[Chapter 1]{tsybakov2008introduction}. The associated risk is then of order $n^{-s/(2s+1)}$, which is the minimax rate of estimation on the class of densities of regularity $s$. In practice, it is impossible to know exactly the value of $s$, so that we must find another strategy to choose the bandwidth $h$. Adaptive methods consist in choosing a bandwidth $\hat h$ in a data-dependent way, such that the estimator $\hat f_{\hat h}$ has a $\mu$-risk almost as good as the optimal estimator $\hat f_{h_{\mathrm{opt}}}$ under weak hypotheses on $\mu$. One of such methods, the PCO method (for Penalized Comparison to Overfitting) introduced by Lacour, Massart and Rivoirard \cite{stat:lacour2016estimator} consists in comparing each estimator $\hat f_h$ to some degenerate estimator $\hat f_{h_{\min}}$ for some very small bandwidth $h_{\min}$. The selected bandwidth $\hat h$ is chosen among a family $\HH$ of bandwidths (all larger than $h_{\min}$), by minimizing a criterion depending on the distance $\|\hat f_h - \hat f_{h_{\min}}\|_{L_2(\R)}$, while penalizing small values of $h$. Lacour, Massart and Rivoirard then show an oracle inequality for their estimator, that is an inequality of the form
\begin{equation}\label{eq1.5}
\E\|\hat f_{\hat h}-f\|_{L_2(\R)}^2 \leq C \min\{ \E\|\hat f_h-f\|^2_{L_2(\R)}:\ h\in \HH\} + C(n,|\HH|)
\end{equation}
where $C(n, |\HH|)$ is a remainder term negligible in front of the optimal risk. Thus, we obtain that $\hat f_{\hat h}$ has a risk almost as good as the best estimator $\hat f_{h_{\mathrm{opt}}}$, while we never had to estimate the parameters defining the statistical model (that is the regularity $s$ of the density and the $\CC^s$-norm of $f$).

\begin{figure}
\centering
\includegraphics[width = 0.6\textwidth]{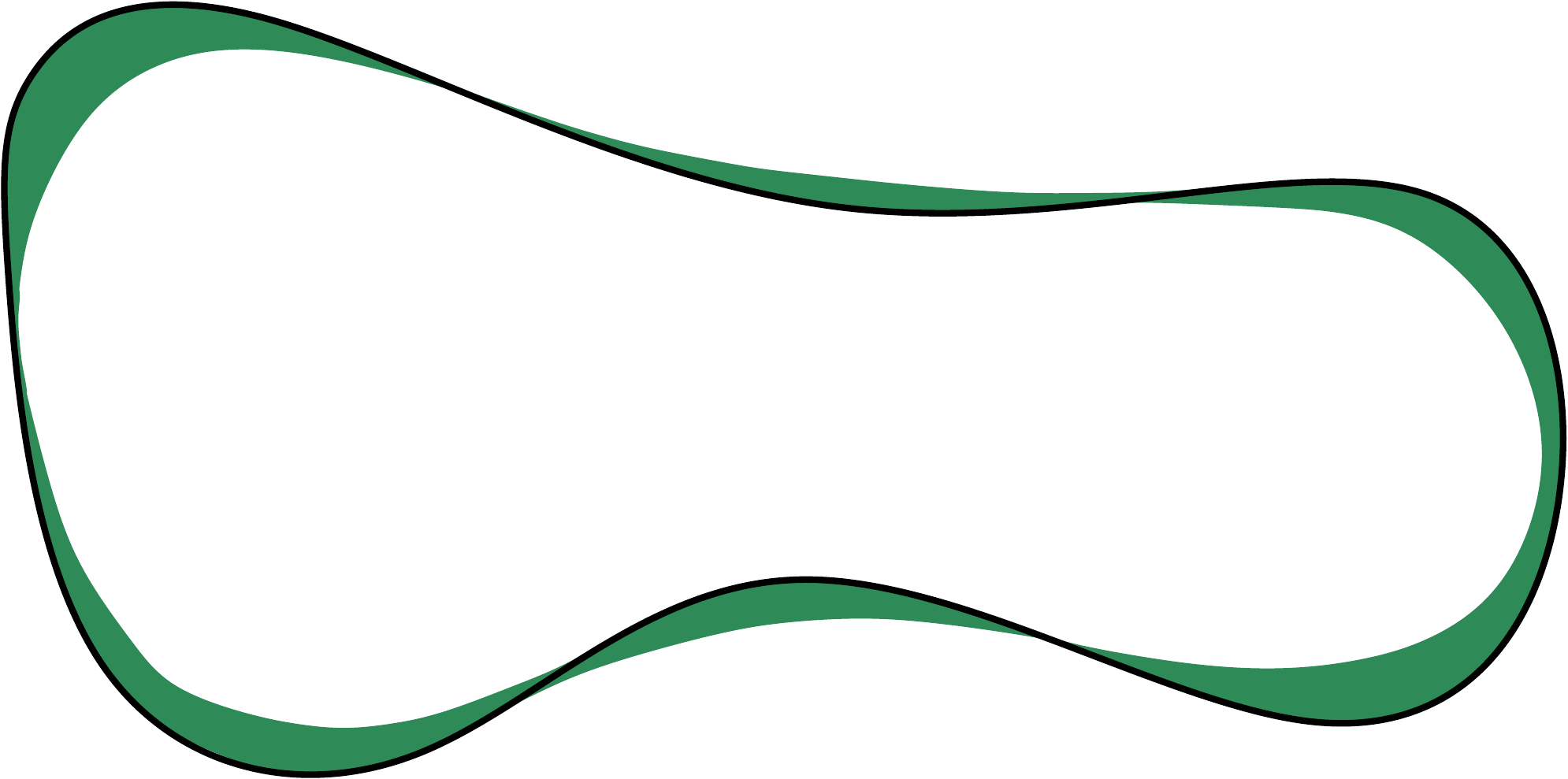}
\caption{The $t$-convex hull $\Conv(t,A)$ (in green) of a curve $A$ (in black).}\label{fig:tconvhull}
\end{figure}

Our main goal is to adapt the PCO method to the manifold inference setting. A first step consists in creating a family of estimators $(\hat M_t)_{t\geq 0}$ similar to kernel density estimators, but in the context of manifold estimation. This is made possible with $t$-convex hulls. For $t\geq 0$, the
$t$-convex hull $\Conv(t, A)$ of a set $A$ is an interpolation between the set $A$ ($t = 0$) and its convex hull $\Conv(A)$ ($t = \infty$). It is defined as

\begin{equation}
\Conv(t,A) \defeq \bigcup_{\substack{\sigma \subseteq A\\ r(\sigma)\leq t}} \Conv(\sigma),
\end{equation}
where $r(\sigma)$ is the radius of the set $\sigma$, that is the radius of the smallest enclosing ball of $\sigma$. See Figure \ref{fig:tconvhull} for an example. We prove in Section \ref{sec:approx} that for $A\subseteq M$, the Hausdorff distance between $\Conv(t, A)$ and $M$ can be efficiently controlled for values of $t$ a little larger than the approximation rate $\eps(A)\defeq \sup\{d(x, A):\ x \in M\}$ of $A$. More precisely, for such values of $t$,
Lemma \ref{lem3.3} states that $d_H(\Conv(t, A), M) \leq t^2 /\tau(M)$. Using this control on the $t$-convex hull enables us to show that the $t$-convex hull of the sample $\XX_n$ is a minimax estimator on the model $\QQm$ for a certain choice of $t$.

\begin{theorem}\label{thm1.1}
Let $\alpha_d$ be the volume of the $d$-dimensional unit ball. For the choice of scale $t_n = \frac{7}{4} (3 \ln n/(\alpha_d \fmin n))^{1/d}$, we have (for $n$ large enough)
\begin{equation}
R_n(\Conv(t_n,\XX_n), \QQm) \leq \frac{c_0}{\taumin(\alpha_d \fmin)^{2/d}} \p{\frac{\ln n}{n}}^{2/d}
\end{equation}
for some absolute constant $c_0$. In other words, $\Conv(t_n, \XX_n)$ is a minimax estimator of $M$ on $\QQm$.
\end{theorem}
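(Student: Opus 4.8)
The plan is to split the expected Hausdorff distance according to a high-probability ``good'' event on which $\XX_n$ is a fine approximation of $M$, apply the deterministic estimate of Lemma~\ref{lem3.3} on that event, and control the complementary event by a crude deterministic bound whose tiny probability kills it. Throughout, fix $\mu\in\QQm$ with support $M$ and write $\rho_n\defeq\p{3\ln n/(\alpha_d\fmin n)}^{1/d}$, so that $t_n=\tfrac74\rho_n$.

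Introduce the event $\mathcal E_n\defeq\{\eps(\XX_n)\le\tfrac32\rho_n\}$. For $n$ large enough that $\rho_n$ is small compared with $\taumin$, on $\mathcal E_n$ the pair $(t_n,\XX_n)$ meets the hypotheses of Lemma~\ref{lem3.3} (indeed $t_n=\tfrac74\rho_n$ is comfortably larger than $\eps(\XX_n)\le\tfrac32\rho_n$), so
\[
d_H(\Conv(t_n,\XX_n),M)\le\frac{t_n^2}{\tau(M)}\le\frac{t_n^2}{\taumin}=\frac{49}{16\,\taumin}\,\rho_n^2=\frac{49\cdot 3^{2/d}}{16\,\taumin(\alpha_d\fmin)^{2/d}}\p{\frac{\ln n}{n}}^{2/d},
\]
which, since $3^{2/d}\le 9$, is already of the announced form with an absolute constant.

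The crux is to show that $\mathbb P(\mathcal E_n^c)$ is negligible. I would fix a maximal $(\rho_n/2)$-separated subset $\{x_1,\dots,x_{N_n}\}$ of $M$, which is then a $(\rho_n/2)$-net of $M$: if every ball $B(x_j,\rho_n)$ meets $\XX_n$, then any $x\in M$ is within $\rho_n/2$ of some $x_j$ and hence within $\tfrac32\rho_n$ of $\XX_n$, i.e. $\mathcal E_n$ holds. Two facts control the failure probability of this covering condition. First, the balls $B(x_j,\rho_n/4)$ are pairwise disjoint and, because the lower reach bound makes $M$ nearly flat inside a ball of radius $\ll\taumin$, each satisfies $\mu(B(x_j,\rho_n/4))\ge c_d\fmin\alpha_d(\rho_n/4)^d$ for a dimensional constant $c_d>0$; summing and using $\mathrm{Vol}(M)\le 1/\fmin$ (since $\mu$ is a probability measure with density $\ge\fmin$) gives $N_n\le C_d\,n/\ln n$. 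Second, for the same reason $\mu(B(x_j,\rho_n))\ge\fmin\alpha_d\rho_n^d(1-o(1))=(3-o(1))\ln n/n$. A union bound and $(1-p)^n\le e^{-np}$ then yield
\[
\mathbb P(\mathcal E_n^c)\le N_n\Bigl(1-\tfrac{(3-o(1))\ln n}{n}\Bigr)^{n}\le C_d\,n^{-2}\quad\text{for }n\text{ large,}
\]
the factor $3$ built into $\rho_n$ being exactly what makes the exponent strictly below $-1$ so that the binomial tail beats $N_n$.

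Finally, on $\mathcal E_n^c$ use the deterministic bound $d_H(\Conv(t_n,\XX_n),M)\le\mathrm{diam}(M)+2t_n$: every point of $\Conv(t_n,\XX_n)$ lies in an enclosing ball of radius $\le t_n$ of a subset of $\XX_n\subseteq M$, hence within $2t_n$ of $M$, while $\XX_n\subseteq\Conv(t_n,\XX_n)$ controls the reverse direction by $\eps(\XX_n)\le\mathrm{diam}(M)$; and $\mathrm{diam}(M)\le C(d,\taumin,\fmin)$ by the same packing argument together with compactness (and connectedness) of $M$. Combining the three displays,
\[
R_n(\Conv(t_n,\XX_n),\mu)\le\frac{49\cdot 3^{2/d}}{16\,\taumin(\alpha_d\fmin)^{2/d}}\p{\frac{\ln n}{n}}^{2/d}+C(d,\taumin,\fmin)\,\mathbb P(\mathcal E_n^c),
\]
and for $n$ large the last term is $o\big((\ln n/n)^{2/d}\big)$ uniformly over $\mu\in\QQm$; taking the supremum over $\mu$ gives the claim with, say, $c_0=56$. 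The main obstacle is the covering step: one must obtain a lower bound on the mass of small balls that is uniform over all $\mu\in\QQm$ and all centres (this is where the geometry — the reach controlling the local volume of $M$ — enters), and then choose the numerical constants ($3$ inside $\rho_n$, $\tfrac74$ in $t_n$) so that the binomial tail simultaneously beats the net cardinality $N_n$ and leaves enough room above $\eps(\XX_n)$ for Lemma~\ref{lem3.3} to apply.
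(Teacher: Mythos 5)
Your overall strategy is the same as the paper's: split the risk along a high-probability event on which the $t$-convex hull is a good approximation, invoke Lemma~\ref{lem3.3} on that event, and control the complement crudely via a bounded diameter and a small failure probability. Your packing argument for the tail bound is a direct re-derivation of Proposition~\ref{prop3.9}, and your numerology (the factor $3$ inside $\rho_n$ giving an exponent strictly below $-1$ in $n$) matches the paper's choice $a=3$.

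There is, however, a genuine gap in the step where you claim that on $\mathcal E_n$ ``the pair $(t_n,\XX_n)$ meets the hypotheses of Lemma~\ref{lem3.3}'', and justify this by noting $t_n=\tfrac74\rho_n > \tfrac32\rho_n\ge\eps(\XX_n)$. Lemma~\ref{lem3.3} does not take $\eps(\XX_n)$ as its input; its hypothesis is $t>t^*(\XX_n)$, where $t^*$ is the projection-surjectivity scale, and $t^*(A)$ is \emph{not} dominated by $\eps(A)$ in general. Indeed Proposition~\ref{prop3.4} gives the two-sided comparison
$\eps(A)\le t^*(A)\bigl(1+t^*(A)/\tau(M)\bigr)$ and, when $\eps(A)<\tau(M)/8$, $t^*(A)\le\eps(A)\bigl(1+6\eps(A)/\tau(M)\bigr)$, so $t^*$ can exceed $\eps$. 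You must therefore \emph{invoke Proposition~\ref{prop3.4}} to pass from $\eps(\XX_n)\le\tfrac32\rho_n$ to $t^*(\XX_n)<t_n$. Doing so, the bound reads $t^*(\XX_n)\le\tfrac32\rho_n\bigl(1+9\rho_n/\tau(M)\bigr)$, which is below $\tfrac74\rho_n$ only once $\rho_n<\tau(M)/54$ (i.e.\ for $n$ large depending on $\taumin,\fmin,d$). That works, but the margin you left ($\tfrac74/\tfrac32=\tfrac76$) is tighter than the paper's: the paper chooses the event $\{\eps(\XX_n)\le\rho_n\}$ and exploits the full $\tfrac74$-factor that Proposition~\ref{prop3.4} supplies under the much weaker condition $\eps(\XX_n)<\tau(M)/8$. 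So once you insert Proposition~\ref{prop3.4} explicitly (and the accompanying ``for $n$ large'' caveat, or shrink the event threshold from $\tfrac32\rho_n$ to $\rho_n$ to recover the paper's slack), the proof is correct and follows essentially the paper's route.
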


To create an adaptive estimator, the next step is to build a selection procedure for the parameter $t$. An analog of the degenerate estimator $\hat f_{h_{\min}}$ is given by the choice $t = 0$, with $\Conv(0, \XX_n) = \XX_n$. The PCO method therefore suggests comparing the estimators $\Conv(t, \XX_n)$ with $\XX_n$, that is to study the function $t\mapsto h(t, \XX_n) \defeq d_H (\Conv(t, \XX_n), \XX_n)$. The function $h(\cdot, \XX_n)$ was actually already introduced under the name of "convexity defect function of the set $\XX_n$" in a paper by Attali, Lieutier and Salinas \cite{geo:attali2013vietoris}, where it was used to study the homotopy types of Rips complexes. The convexity defect function is nonnegative, nondecreasing, and satisfies $0 \leq h(t, A) \leq t$ for any set $A$. For $A = \XX_n$, this function is piecewise constant, while it may only change values at $t \in \Rad(\XX_n) \defeq \{r(\sigma):\ \sigma\subseteq \XX_n \}$. We show that the convexity defect function $h(t, \XX_n)$ of $\XX_n$ at scale $t$ exhibits different behaviors in two different regimes: for $t\leq \eps(\XX_n)$ it has a globally linear behavior (that is it stays close to its maximal value $t$), whereas roughly after $\eps(\XX_n)$, it is almost constant. The convexity defect function can be computed using only the dataset, so that we may in practice observe those two regimes. In practice, we fix a value $0 < \lambda < 1$, and let 

\begin{equation}
t_\lambda(\XX_n)\defeq \inf\{t\in \Rad(\XX_n):\ h(t,\XX_n) \leq \lambda t\}.
\end{equation}

Our main result states that $t_\lambda(\XX_n)$ is a little larger than $\eps(\XX_n)$ with high probability, so that we may control the risk of $\hat M = \Conv(t_\lambda(\XX_n), \XX_n)$, without having to know $d$, $\fmin$, $\fmax$ or the reach $\tau(M)$, leading to an adaptive estimator in a sense made precise in Theorem \ref{thm6.2}. 
The estimator $\hat M$ is to our knowledge the first minimax adaptive manifold estimator. 
Our procedure allows us to actually estimate (up to a multiplicative constant arbitrarily close to $1$) the approximation rate $\eps(\XX_n)$, while scale parameters in computational geometry typically have to be properly tuned with respect to this quantity. The parameter $t_{\lambda}(\XX_n)$ can therefore be used as a hyperparameter in different settings. To illustrate this general idea, we show how to create a data-driven minimax estimator of the tangent spaces of a manifold (see Corollary \ref{cor6.1}).

\subsection*{Related work}

"Localized" versions of convex hulls such as the $t$-convex hulls have already been introduced in the support estimation literature. For instance, slightly modified versions of the t-convex hull have been used as estimators in \cite{stat:aaron2016local} under the assumption that the support has a smooth boundary and in \cite{stat:rodriguez2007set} under reach constraints on the support, with different rates obtained in those models. Selection procedures were not designed in those two papers, and whether our selection procedure leads to an adaptive estimator in those frameworks is an interesting question. The statistical models we study in this article were introduced in \cite{stat:genovese2012manifold} and \cite{stat:aamari2018stability}, in which manifold estimators were also proposed. If the estimator in \cite{stat:genovese2012manifold} is of purely
theoretical interest, the estimator proposed by Aamari and Levrard in \cite{stat:aamari2018stability}, based on the Tangential Delaunay complex, is computable with $\OO(nD2^{\OO(d^2)})$ operations. Furthermore, it is a simplicial complex which is known to be ambient isotopic to the underlying manifold $M$ with high probability. It however requires the tuning of several hyperparameters in order to be
minimax, which may make its use delicate in practice. In contrast, the $t$-convex hull estimator with parameter $t_\lambda(\XX_n)$ is completely data-driven, computable in polynomial time (see Section \ref{sec:num}), while keeping the minimax property. However, unlike in the case of the Tangential Delaunay complex, we have no guarantees on the homotopy type of the corresponding estimator.

\section{Background on submanifold with positive reach}\label{sec:background}

Let us first introduce some notation. The Euclidean norm in $\mathbb{R}^{D}$ is denoted by $|\cdot|$ and $\langle\cdot, \cdot\rangle$ stands for the dot product. If $A \subseteq \mathbb{R}^{D}$ and $x \in \mathbb{R}^{D}$, then $d(x, A):=\inf \{|x-y|: y \in A\}$ is the distance to a set $A$ while $\operatorname{diam}(A):=\sup \{|x-y|: x, y \in A\}$ is its diameter. Given $r \geq 0$, $\mathcal{B}(x, r)$ is the open ball of radius $r$ centered at $x$ and we write $\mathcal{B}_{A}(x, r)$ for $\mathcal{B}(x, r) \cap A$. We let $\mathcal{M}^{d}$ be the set of $\mathcal{C}^{2}$ compact connected $d$-dimensional submanifolds of $\mathbb{R}^{D}$ without boundary. If $M \in \mathcal{M}^{d}$ and $x \in M$, then $T_{x} M$ is the tangent space of $M$ at $x$. It is identified with a $d$-dimensional subspace of $\mathbb{R}^{D}$, and we write $\pi_{x}$ for the orthogonal projection on $T_{x} M$, while $\pi_{x}^{\perp}=$ id $-\pi_{x}$ is the projection on the normal space $T_{x} M^{\perp}$. The asymmetric Hausdorff distance between sets $A, B \subseteq \mathbb{R}^{D}$ is defined as $d_{H}(A |B):=\sup \{d(x, B): x \in A\}$, while the Hausdorff distance is defined as $d_{H}(A, B)=\max \left\{d_{H}(A |B), d_{H}(B |A)\right\}$. For $A \subseteq M$, we denote by $\varepsilon(A):=d_{H}(A, M)$ the approximation rate of $A$.

The regularity of a submanifold $M \in \mathcal{M}^{d}$ is measured by its reach $\tau(M)$. This is the largest number $r$ such that if $d(x, M)<r$ for $x \in \mathbb{R}^{D}$, then there exists a unique point of $M$, denoted by $\pi_{M}(x)$, which is at distance $d(x, M)$ from $x$. Thus, the projection $\pi_{M}$ on the manifold $M$ is well-defined on the $r$-tubular neigborhood $M^{r}:=\{x \in M: d(x, M) \leq r\}$ for $r<\tau(M)$. The notion of reach was introduced for general sets by Federer in \cite{geo:federer1959curvature}, where it is also proven that $\mathcal{C}^{2}$ compact submanifolds without boundary have positive reach (see \cite[p.432]{geo:federer1959curvature}). Different geometric quantities of interest can be bounded in term of the reach. For instance, the volume $\operatorname{Vol}(M)$ of $M$ satisfies

\begin{equation}\label{eq2.1}
\operatorname{Vol}(M) \geq \omega_{d} \tau(M)^{d}
\end{equation}
where $\omega_{d}$ is the volume of a $d$-dimensional sphere (with equality obtained only for a sphere of radius $\tau(M)$), see \cite{geo:almgren1986optimal}. The reach also controls how points on $M$ deviate from their projections on some tangent space.

\begin{lemma}[Theorem 4.18 in \cite{geo:federer1959curvature}]\label{lem2.1} For $x, y \in M$, $\left|\pi_{x}^{\perp}(y-x)\right| \leq \frac{|y-x|^{2}}{2 \tau(M)}$.
\end{lemma}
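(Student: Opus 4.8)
The proof rests on a single geometric fact about sets of positive reach — the \emph{empty ball property} — after which the bound is just an expansion of a square. Precisely, the fact I would use is: for every $x\in M$, every unit vector $\nu\in T_xM^{\perp}$, and every $r<\tau(M)$, the open ball $\mathcal{B}(x+r\nu,r)$ contains no point of $M$. This is essentially a restatement of the definition of the reach: the center $c=x+r\nu$ satisfies $d(c,M)\le |c-x|=r<\tau(M)$, so it has a unique nearest point $\pi_M(c)$, and one checks $\pi_M(c)=x$. To see this last point I would argue that each fibre $\{z:\ d(z,M)=|z-x|<\tau(M)\}$ of $\pi_M$ is star-shaped around $x$: if $z$ lies in it and $w=x+t(z-x)$ with $t\in[0,1]$, then for any $q\in M$, $|w-q|\ge |z-q|-|z-w|\ge |z-x|-(1-t)|z-x|=|w-x|$, whence $d(w,M)=|w-x|$ and $\pi_M(w)=x$ as well. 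Combined with the fact that near $s=0$ the point $x$ is the unique nearest point of $M$ to $x+s\nu$ (because $\nu$ is normal and $M$ is $\mathcal{C}^{2}$), and with continuity of $\pi_M$ on the tubular neighborhood, this forces $\pi_M(x+s\nu)=x$ for all $0\le s<\tau(M)$. Alternatively, since the statement is quoted from Federer, one may simply invoke it.

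Granting the empty ball property, fix $x,y\in M$. If $\pi_x^{\perp}(y-x)=0$ the inequality is trivial, so assume otherwise and set $\nu=\pi_x^{\perp}(y-x)/|\pi_x^{\perp}(y-x)|\in T_xM^{\perp}$. For every $r<\tau(M)$, the point $y\in M$ lies outside $\mathcal{B}(x+r\nu,r)$, i.e. $|y-x-r\nu|^2\ge r^2$. Expanding,
\begin{equation*}
|y-x|^2-2r\langle y-x,\nu\rangle+r^2\ge r^2,\qquad\text{hence}\qquad \langle y-x,\nu\rangle\le\frac{|y-x|^2}{2r}.
\end{equation*}
Since $\nu\in T_xM^{\perp}$, we have $\langle y-x,\nu\rangle=\langle \pi_x^{\perp}(y-x),\nu\rangle=|\pi_x^{\perp}(y-x)|$, so $|\pi_x^{\perp}(y-x)|\le |y-x|^2/(2r)$; letting $r\uparrow\tau(M)$ gives the claim.

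The one step that is not bookkeeping is the empty ball property, i.e. checking $\pi_M(x+r\nu)=x$ for all $r<\tau(M)$: this is the real content of Federer's theorem. A self-contained proof must combine a local analysis at $x$ (where, $\nu$ being normal and $M$ being $\mathcal{C}^{2}$, the point $x$ is the unique nearest point of $M$ to $x+s\nu$ for small $s>0$) with the star-shapedness of the fibres and a continuity/compactness argument to propagate the conclusion to all scales below $\tau(M)$; everything downstream is the two-line computation displayed above.
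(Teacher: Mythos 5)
The paper offers no proof of this lemma: it is simply quoted from Federer's Theorem 4.18. Your route — deduce the bound from the ``empty ball'' property (for every unit $\nu\in T_xM^{\perp}$ and every $r<\tau(M)$, $\mathcal{B}(x+r\nu,r)\cap M=\emptyset$), then expand $|y-x-r\nu|^2\ge r^2$ with $\nu$ aligned with $\pi_x^{\perp}(y-x)$ and let $r\uparrow\tau(M)$ — is a standard and correct derivation, and the algebraic part is flawless. In effect you push the appeal to Federer one level back, from the tangent-plane inequality (Theorem 4.18) to the normal-ray characterization of the projection (Theorem 4.8), and then close the gap with a two-line computation. That is a perfectly legitimate alternative to the paper's bare citation.

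However, the self-contained sketch you give of the empty-ball property does not quite close. Your star-shapedness argument is correct, and continuity of $\pi_M$ gives relative closedness, so together they show that $S:=\{s\in[0,\tau(M)):\pi_M(x+s\nu)=x\}$ is a closed subinterval of $[0,\tau(M))$ containing $0$. But neither ingredient lets you extend past $\sup S$: star-shapedness only propagates membership in $S$ \emph{downward} (from $s$ to $s'<s$), and continuity only gives closedness, not relative openness. If $\sup S<\tau(M)$ nothing in your sketch produces a contradiction, and indeed ruling this out is the genuine content of Federer's theorem — one needs an additional argument (e.g.\ Federer's monotonicity of $r\mapsto \pi_M\bigl(\pi_M(a)+r(a-\pi_M(a))/|a-\pi_M(a)|\bigr)$ proved via a quantitative two-point estimate, or a curvature/second-fundamental-form bound, neither of which is elementary). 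You do acknowledge that one may ``simply invoke'' Federer for this step, which is exactly what the paper does; so treat the empty-ball property as a black box, and your proof is complete.
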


The following lemma asserts that the projection from a manifold to its tangent space is well-behaved.

\begin{lemma}\label{lem2.2} Let $x \in M$.

\begin{enumerate}
\item Let $y \in \mathbb{R}^{D}$ with $d(y, M)<\tau(M)$. Then, $\pi_{M}(y)=x$ if and only if $y -x\in T_{x} M^{\perp}$.

\item Let $y_{1}, y_{2} \in \mathbb{R}^{D}$ be two points at distance less than $\gamma<\tau(M)$ from $M$. Then, $|\pi_{M}(y_{1})- \pi_{M}(y_{2})| \leq \frac{\tau(M)}{\tau(M)-\gamma} | y_{1}-y_{2} |$.

\item For $r<\tau(M) / 3$, the map $\tilde{\pi}_{x}: y \mapsto \pi_{x}(y-x)$ is a diffeomorphism from $\mathcal{B}_{M}(x, r)$ to its image, and, if $r \leq \tau(M) / 2$, we have $\mathcal{B}_{T_{x} M}(0,7 r / 8) \subseteq \tilde{\pi}_{x}\left(\mathcal{B}_{M}(x, r)\right)$. In particular, if $y \in \mathcal{B}_{M}(x, 7 \tau(M) / 24)$, then
\begin{equation}\label{eq2.2}
\frac{7}{8}|y-x| \leq\left|\pi_{x}(y-x)\right| \leq|y-x|.
\end{equation}
\end{enumerate}
\end{lemma}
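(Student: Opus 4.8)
Here is how I would prove Lemma~\ref{lem2.2}.

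The plan is to derive all three items from the reach estimate of Lemma~\ref{lem2.1}, namely $|\pi_x^\perp(y-x)|\le|y-x|^2/(2\tau(M))$ for $x,y\in M$, together with elementary Euclidean geometry; throughout write $\tau=\tau(M)$. Items~(1) and~(2) are quick, while item~(3) carries the weight, its local half being where the threshold $\tau/3$ originates.

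For~(1), the forward implication is the first-order optimality condition for $z\mapsto|y-z|^2$ on $M$ at its global minimizer $x=\pi_M(y)$, which forces $y-x\in T_xM^\perp$. Conversely, if $y-x\in T_xM^\perp$ and $s:=|y-x|$, then for any $z\in M$ the orthogonality of $y-x$ to $T_xM$ gives $|y-z|^2=s^2-2\langle y-x,\pi_x^\perp(z-x)\rangle+|z-x|^2$, and by Cauchy--Schwarz and Lemma~\ref{lem2.1} the middle term is at least $-s|z-x|^2/\tau$, so $|y-z|^2\ge s^2+(1-s/\tau)|z-x|^2$; hence $x$ is the unique nearest point of $M$ to $y$ whenever $s<\tau$, i.e.\ $\pi_M(y)=x$. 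For~(2), set $z_i=\pi_M(y_i)$, so by~(1) the vectors $n_i:=y_i-z_i$ lie in $T_{z_i}M^\perp$ with $|n_i|=d(y_i,M)<\gamma$; then $\langle y_1-y_2,z_1-z_2\rangle=|z_1-z_2|^2+\langle n_1,z_1-z_2\rangle-\langle n_2,z_1-z_2\rangle$, and since $n_i\perp T_{z_i}M$ one may replace $z_1-z_2$ by $\pi_{z_i}^\perp(z_1-z_2)$ in the two scalar products and bound each below by $-\gamma|z_1-z_2|^2/(2\tau)$ using Cauchy--Schwarz and Lemma~\ref{lem2.1}. Thus $\langle y_1-y_2,z_1-z_2\rangle\ge(1-\gamma/\tau)|z_1-z_2|^2$, and comparing with $\langle y_1-y_2,z_1-z_2\rangle\le|y_1-y_2|\,|z_1-z_2|$ gives $|z_1-z_2|\le\frac{\tau}{\tau-\gamma}|y_1-y_2|$.

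For~(3), $\tilde\pi_x$ is $\CC^1$ (indeed $\CC^2$, since $M$ is $\CC^2$ and $\pi_x$ is linear), with differential at $y\in M$ equal to $\pi_x|_{T_yM}\colon T_yM\to T_xM$; the key point is that this linear map is an isomorphism whenever $|y-x|<\tau/3$, equivalently $T_yM\cap T_xM^\perp=\{0\}$. Suppose $w\in T_yM\cap T_xM^\perp$ is a unit vector and let $c$ be a unit-speed geodesic of $M$ with $c(0)=y$, $c'(0)=w$; being a geodesic, $c''$ is normal to $M$ along $c$, so $|c''|\le1/\tau$ by Lemma~\ref{lem2.1} (applied to the increments $c(t+\delta)-c(t)$ as $\delta\to0$). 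Put $h(t):=\langle c(t)-x,w\rangle=\langle\pi_x^\perp(c(t)-x),w\rangle$ and $a:=|y-x|$; then Lemma~\ref{lem2.1} and the triangle inequality give $|h(t)|\le(t+a)^2/(2\tau)$, while $h(0)\ge-a^2/(2\tau)$, $h'(0)=1$ and $|h''|\le1/\tau$, so Taylor's inequality forces $t^2+(a-\tau)t+a^2\ge0$ for all $t\ge0$ --- impossible when $a<\tau/3$, since the discriminant $-(3a-\tau)(a+\tau)$ is then strictly positive. Hence the differential is invertible, and the inverse function theorem makes $\tilde\pi_x$ a local diffeomorphism on $\mathcal{B}_M(x,r)$ for every $r<\tau/3$.

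The remaining claims of~(3) follow from a degree/covering argument. By Lemma~\ref{lem2.1}, $|y-x|=r\le\tau/2$ yields $|\pi_x^\perp(y-x)|\le r^2/(2\tau)\le r/4$, so $|\pi_x(y-x)|\ge r\sqrt{15}/4>\tfrac78 r$, i.e.\ $\tilde\pi_x$ maps $\partial\mathcal{B}_M(x,r)$ outside $\mathcal{B}_{T_xM}(0,7r/8)$; moreover the only preimage of $0$ is $x$ (if $\pi_x(y-x)=0$ with $y\ne x$ then $|\pi_x^\perp(y-x)|=|y-x|$, contradicting Lemma~\ref{lem2.1}). Since $\tilde\pi_x$ restricted to the compact set $\overline{\mathcal{B}_M(x,r)}$ is proper, is a local diffeomorphism on the connected interior when $r<\tau/3$, and has nonzero local degree at $x$, every $p\in\mathcal{B}_{T_xM}(0,7r/8)$ has exactly one preimage; this gives at once the injectivity of $\tilde\pi_x$ on $\mathcal{B}_M(x,r)$ for $r<\tau/3$ and the inclusion $\mathcal{B}_{T_xM}(0,7r/8)\subseteq\tilde\pi_x(\mathcal{B}_M(x,r))$ (for $\tau/3\le r\le\tau/2$ the Brouwer degree still gives the inclusion, the map no longer needing to be a local diffeomorphism). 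Finally, the bound~\eqref{eq2.2} is immediate from Lemma~\ref{lem2.1}: if $|y-x|<7\tau/24$ then $|\pi_x^\perp(y-x)|\le\tfrac{7}{48}|y-x|$, whence $\tfrac78|y-x|\le|\pi_x(y-x)|\le|y-x|$. The only genuinely substantial step is the local computation above --- the geodesic/Taylor argument, whose discriminant condition is precisely what produces the value $\tau/3$ --- while the trickier point in the rest is packaging the global injectivity and the surjectivity onto $\mathcal{B}_{T_xM}(0,7r/8)$ into a clean degree/covering argument, for which one also uses, as standard consequences of positive reach, that $\mathcal{B}_M(x,r)$ is connected and $\overline{\mathcal{B}_M(x,r)}$ a compact $d$-manifold with boundary for $r<\tau$, and one must track the small constants $\sqrt{15}/4$, $7/8$, $7/48$ that make the thresholds $\tau/3$, $\tau/2$, $7\tau/24$ come out.
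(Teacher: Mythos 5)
Your treatment of items~(1) and~(2) is correct and is genuinely different from the paper: where the paper simply cites Federer's Theorem~4.8, you derive both from Lemma~\ref{lem2.1} by hand, and the computations check out (the optimality condition for~(1), the expansion of $\langle y_1-y_2,\,z_1-z_2\rangle$ for~(2)). Likewise, your route to the local-diffeomorphism part of~(3) is different from the paper's: the paper invokes the tangent-space angle bound $\angle(T_xM,T_yM)\le 2|x-y|/\tau$ from Belkin et al.\ and shows injectivity and positivity of the Jacobian directly from it, whereas you argue that the differential $\pi_x|_{T_yM}$ is an isomorphism by running a unit-speed geodesic $c$ from $y$ with $c'(0)\in T_yM\cap T_xM^\perp$, bounding $|c''|\le 1/\tau$ via Lemma~\ref{lem2.1} applied to increments, and then deriving a contradiction from a Taylor estimate on $h(t)=\langle c(t)-x,w\rangle$; the algebra ($t^2+(a-\tau)t+a^2\ge0$ with discriminant $-(3a-\tau)(a+\tau)$) is right and does produce the threshold $\tau/3$. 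Your direct derivation of~\eqref{eq2.2} from the Pythagorean identity $|\pi_x(y-x)|^2=|y-x|^2-|\pi_x^\perp(y-x)|^2$ is also correct, and in fact cleaner than the paper's, which routes it through the inclusion $\mathcal{B}_{T_xM}(0,7r/8)\subseteq\tilde\pi_x(\mathcal{B}_M(x,r))$ plus injectivity.

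There is, however, a genuine gap in your degree/covering argument for global injectivity in item~(3). The degree argument yields that every $p\in\mathcal{B}_{T_xM}(0,7r/8)$ has exactly one preimage in $\mathcal{B}_M(x,r)$, but from this you conclude ``at once the injectivity of $\tilde\pi_x$ on $\mathcal{B}_M(x,r)$ for $r<\tau/3$.'' That step does not follow: two distinct points $y_1,y_2\in\mathcal{B}_M(x,r)$ could both be mapped to some $p$ with $7r/8\le|p|<r$, and such $p$ are outside the range of your degree computation. Trying to repair this by applying the argument at a larger radius $\rho$ only works when $8|p|/7<\tau/3$, i.e.\ $|p|<7\tau/24$, so the argument genuinely breaks on the sliver $7\tau/24\le r<\tau/3$; sharpening the boundary estimate (e.g.\ using $|\pi_x(y-x)|\ge|y-x|\sqrt{1-|y-x|^2/(4\tau^2)}$) narrows but does not close this window. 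The paper sidesteps the issue entirely by proving injectivity \emph{directly}: if $\tilde\pi_x(y)=\tilde\pi_x(y')$ with $|x-y|\ge|x-y'|$, a chain of elementary inequalities (no degree theory) forces $|x-y|\ge\tau/3$. That contrapositive argument is what actually delivers the stated $\tau/3$ threshold, and your proof needs something of this flavour rather than the preimage-count over $\mathcal{B}_{T_xM}(0,7r/8)$. A secondary, smaller concern: even the preimage-count claim (``exactly one'') implicitly uses that the local degrees of all preimages share a sign, which needs the connectedness and orientability of $\mathcal{B}_M(x,r)$ to be established (or a switch to a covering-map formulation); you gesture at this, but it deserves to be made explicit since it is not itself free. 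The part of the degree argument that \emph{is} sound is the surjectivity $\mathcal{B}_{T_xM}(0,7r/8)\subseteq\tilde\pi_x(\mathcal{B}_M(x,r))$ for $r\le\tau/2$ (a $\mathbb{Z}/2$-degree suffices there, no orientation needed), and it is a nice alternative to the paper's citation of an external lemma for that inclusion.
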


\begin{proof}
$\bullet$ For 1 and 2, see \cite[Theorem 4.8]{geo:federer1959curvature}.

$\bullet$ We first show that $\tilde{\pi}_{x}$ is injective on $\mathcal{B}_{M}(x, \tau(M) / 3)$. Assume that $\tilde{\pi}_{x}(y)=\tilde{\pi}_{x}\left(y^{\prime}\right)$ for some $y \neq y^{\prime} \in M$. Consider without loss of generality that $|x-y| \geq\left|x-y^{\prime}\right|$. The goal is to show that $|x-y| \geq \tau(M) / 3$. If $|x-y|>\tau(M) / 2$, the conclusion obviously holds, so we may assume that $|x-y| \leq \tau(M) / 2$. Define the angle between $T_{x} M$ and $T_{y} M$ as $\left\|\pi_{x}-\pi_{y}\right\|_{o p}$ (where $\|\cdot\|_{\text {op }}$ denotes the operator norm). Lemma $3.4$ in \cite{stat:belkin2009constructing} states that if $|x-y| \leq \tau(M) / 2$, then $\angle\left(T_{x} M, T_{y} M\right) \leq 2 \frac{|x-y|}{\tau(M)}$. Also, by definition,

\begin{equation*}
\begin{aligned}
\angle\left(T_{x} M, T_{y} M\right) & \geq \frac{\left|\left(\pi_{x}-\pi_{y}\right)\left(y-y^{\prime}\right)\right|}{\left|y-y^{\prime}\right|} \\
&=\frac{\left|\pi_{y}\left(y-y^{\prime}\right)\right|}{\left|y-y^{\prime}\right|} \geq \frac{\left|y-y^{\prime}\right|-\left|\pi_{y}^{\perp}\left(y-y^{\prime}\right)\right|}{\left|y-y^{\prime}\right|} \\
& \geq 1-\frac{\left|y-y^{\prime}\right|}{2 \tau(M)} \text { by Lemma \ref{lem2.1}} \\
& \geq 1-\frac{|x-y|}{\tau(M)} \text { by the triangle inequality. }
\end{aligned}
\end{equation*}

Therefore, we have $3|x-y| / \tau(M) \geq 1$, i.e. $|x-y| \geq \tau(M) / 3$ and $\tilde{\pi}_{x}$ is injective on $\mathcal{B}_{M}(x, \tau(M) / 3)$. To conclude that $\tilde{\pi}_{x}$ is a diffeomorphism, it suffices to show that its differential is always invertible. As $\tilde{\pi}_{x}$ is an affine application, the differential $d \tilde{\pi}_{x}(y)$ is equal to $\pi_{x}$. Therefore, the Jacobian of the function $\tilde{\pi}_{x}: M \rightarrow T_{x} M$ at $y$ is given by the determinant of the projection $\pi_{x}$ restricted to $T_{y} M$. In particular, it is larger than the smallest singular value of $\pi_{x} \circ \pi_{y}$ to the power $d$, which is larger than

\begin{equation*}
\left(1-\angle\left(T_{x} M, T_{y} M\right)\right)^{d} \geq\left(1-2 \frac{|x-y|}{\tau(M)}\right)^{d} \geq\left(\frac{1}{3}\right)^{d}
\end{equation*}

thanks to \cite[Lemma 3.4]{stat:belkin2009constructing} and using that $|x-y| \leq \tau(M) / 3$. In particular, the Jacobian is positive, and $\tilde{\pi}_{x}$ is a diffeormorphism from $\mathcal{B}_{M}(x, \tau(M) / 3)$ to its image. The second statement is stated in \cite[Lemma A.2]{stat:aamari2019nonasymptotic}. 

The second inequality of the last statement follows from the projection being 1-Lipschitz continuous. For the first one, let $y \in \mathcal{B}_{M}(x, 7 \tau(M) / 24)$, and let $u=$ $\pi_{x}(y-x)$. The point $u$ is in $\mathcal{B}_{T_{x} M}(0, h)$ for $h>|u|$. We have $\mathcal{B}_{T_{x} M}(0, h) \subseteq \tilde{\pi}_{x}\left(\mathcal{B}_{M}(x, 8 h / 7)\right) \subseteq$ $\tilde{\pi}_{x}\left(\mathcal{B}_{M}(x, \tau(M) / 3)\right)$. As $\tilde{\pi}_{x}$ is injective on $\mathcal{B}_{M}(x, \tau(M) / 3)$, this means that we necessarily have $y \in \mathcal{B}_{M}(x, 8 h / 7)$. Therefore, $|x-y|<8 h / 7$, and the conclusion holds by letting $h$ goes to $|u|$.
\end{proof}

 It will also be necessary to have precise bounds on the volume of balls on $M$. As expected, the volume of a small ball is asymptotically equivalent to the volume of an Euclidean ball. Let $\alpha_{d}$ be the volume of the $d$-dimensional unit ball.

\begin{lemma}\label{lem2.3}
Let $r \leq \tau(M) / 4$ and $x \in M.$ Then,
\begin{equation}
\left(\frac{47}{48}\right)^{d} \leq\left(1-\frac{r^{2}}{3 \tau(M)^{2}}\right)^{d} \leq \frac{\operatorname{Vol}\left(\mathcal{B}_{M}(x, r)\right)}{\alpha_{d} r^{d}} \leq\left(1+\frac{4 r^{2}}{3 \tau(M)^{2}}\right)^{d} \leq\left(\frac{13}{12}\right)^{d}.
\end{equation}
\end{lemma}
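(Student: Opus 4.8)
The plan is to realize $\mathcal{B}_M(x,r)$ as a graph over a subset of the tangent space $T_xM$ via the map $\tilde\pi_x\colon y\mapsto\pi_x(y-x)$ and then apply the area formula. Write $\tau=\tau(M)$. Since $r\le\tau/4<\tau/3$, Lemma~\ref{lem2.2}(3) guarantees that $\tilde\pi_x$ is a diffeomorphism from $\mathcal{B}_M(x,r)$ onto its open image $U\subseteq T_xM$; let $\phi=\tilde\pi_x^{-1}\colon U\to M$. Because $\tilde\pi_x$ is the restriction to $M$ of the $1$-Lipschitz linear map $\pi_x$, its differential at any $y\in M$ is $\pi_x|_{T_yM}$, whose singular values all lie in $[0,1]$; hence the area formula gives $\operatorname{Vol}(\mathcal{B}_M(x,r))=\int_U J_\phi(u)\,du$ with $J_\phi(u)=\det(\pi_x|_{T_{\phi(u)}M})^{-1}\ge1$. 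The whole proof then reduces to (i) sandwiching $U$ between two Euclidean balls of $T_xM$, and (ii) estimating $\det(\pi_x|_{T_yM})$ in terms of $|x-y|$.

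For the lower bound I would use only the inclusion $\mathcal{B}_{T_xM}(0,\rho_-)\subseteq U$ together with $J_\phi\ge1$, which gives $\operatorname{Vol}(\mathcal{B}_M(x,r))\ge\alpha_d\rho_-^d$. To identify $\rho_-$, take $u\in T_xM$ with $|u|<r\sqrt{1-r^2/(4\tau^2)}$, which is smaller than $r\le\tau/4<7\tau/24$; applying Lemma~\ref{lem2.2}(3) with radius $\tau/3$ shows $u\in\tilde\pi_x(\mathcal{B}_M(x,\tau/3))$, so there is a unique $y\in\mathcal{B}_M(x,\tau/3)$ with $\pi_x(y-x)=u$. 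Setting $a=|x-y|$, Lemma~\ref{lem2.1} yields $a^2=|u|^2+|\pi_x^\perp(y-x)|^2\le|u|^2+a^4/(4\tau^2)$, i.e.\ $a^2(1-a^2/(4\tau^2))\le|u|^2<r^2(1-r^2/(4\tau^2))$; since $s\mapsto s^2(1-s^2/(4\tau^2))$ is increasing on $[0,\tau\sqrt2]$ and both $a$ and $r$ lie in that interval, this forces $a<r$, hence $u\in U$. Thus one may take $\rho_-=r\sqrt{1-r^2/(4\tau^2)}$, and it remains to check the one-variable inequality $\sqrt{1-r^2/(4\tau^2)}\ge1-r^2/(3\tau^2)$ for $r^2/\tau^2\le1/16$, together with $1-r^2/(3\tau^2)\ge47/48$.

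For the upper bound I would use the trivial inclusion $U\subseteq\mathcal{B}_{T_xM}(0,r)$ together with a \emph{quadratic} lower bound on the denominator of the Jacobian. If $\theta_1,\dots,\theta_d$ denote the principal angles between $T_yM$ and $T_xM$, then the singular values of $\pi_x|_{T_yM}$ are $\cos\theta_1,\dots,\cos\theta_d$, so $\det(\pi_x|_{T_yM})=\prod_i\cos\theta_i\ge(\cos\theta_{\max})^d=(1-\sin^2\theta_{\max})^{d/2}=(1-\angle(T_xM,T_yM)^2)^{d/2}$, using the identity $\sin\theta_{\max}=\|\pi_x-\pi_y\|_{\mathrm{op}}=\angle(T_xM,T_yM)$ valid when $\dim T_xM=\dim T_yM$. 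Combining this with the curvature bound $\angle(T_xM,T_yM)\le2|x-y|/\tau$ of \cite[Lemma~3.4]{stat:belkin2009constructing} (valid since $|x-y|\le\tau/2$) and with $|x-y|<r$ gives $J_\phi(u)\le(1-4r^2/\tau^2)^{-d/2}$ uniformly on $U$, whence $\operatorname{Vol}(\mathcal{B}_M(x,r))\le\alpha_d r^d(1-4r^2/\tau^2)^{-d/2}$. One then checks the one-variable inequality $(1-4r^2/\tau^2)^{-1/2}\le(1+4r^2/(3\tau^2))^2$ on $r^2/\tau^2\le1/16$, and $1+4r^2/(3\tau^2)\le13/12$.

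I expect the only delicate point to be step (ii): one must resist bounding the Jacobian's denominator by $(1-\angle(T_xM,T_yM))^d$, which is the estimate used elsewhere in the paper (e.g.\ in the proof of Lemma~\ref{lem2.2}), since it only produces a linear-in-$r$ error $(1-2r/\tau)^{-d}$ and fails against the target $(1+O(r^2/\tau^2))^d$; going through the cosines of the principal angles is what yields the quadratic error. Everything else is the bookkeeping of elementary inequalities on $r^2/\tau^2\in(0,1/16]$, together with the two applications each of Lemma~\ref{lem2.2}(3) and of Lemma~\ref{lem2.1}.
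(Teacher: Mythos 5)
Your lower bound is correct, and your framing via $\phi=\tilde\pi_x^{-1}$ and the area formula is a legitimate alternative to the paper's route (the paper instead relates the Euclidean ball to the geodesic ball and imports the geodesic-ball volume estimate from Proposition~8.7 of \cite{stat:aamari2018stability}, together with a geodesic-vs.-Euclidean distance comparison from \cite{stat:arias2019unconstrained} and \cite{geo:niyogi2008finding}). In particular your inner-radius argument $\rho_-=r\sqrt{1-r^2/(4\tau^2)}$, the monotonicity of $s\mapsto s^2(1-s^2/(4\tau^2))$ on $[0,\tau\sqrt2]$, and the elementary check $\sqrt{1-z/4}\ge1-z/3$ for $z\in[0,1/16]$ are all fine.

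The upper bound, however, has a genuine gap. Your estimate produces
\begin{equation*}
\frac{\operatorname{Vol}(\mathcal{B}_M(x,r))}{\alpha_d r^d}\;\le\;\bigl(1-4r^2/\tau^2\bigr)^{-d/2},
\end{equation*}
and to conclude you would need the one-variable inequality $(1-4z)^{-1/2}\le 1+4z/3$ on $z=r^2/\tau^2\in(0,1/16]$ --- \emph{not} the inequality $(1-4z)^{-1/2}\le(1+4z/3)^{2}$ that you wrote, which does hold but is irrelevant after taking $d$-th roots. The inequality you actually need is false for every $z>0$: at $z=0$ both sides equal $1$, but the left-hand side has derivative $2$ there while the right-hand side has derivative $4/3$, and $(1-4z)^{-1/2}$ is convex while $1+4z/3$ is affine, so the left-hand side stays strictly above. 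Concretely, at $r=\tau/4$ your bound gives $(3/4)^{-d/2}=(2/\sqrt3)^d\approx(1.155)^d$, which exceeds the claimed $(13/12)^d\approx(1.083)^d$. The culprit is the factor $2$ in the cited angle bound $\angle(T_xM,T_yM)\le 2|x-y|/\tau$: after squaring inside the cosine it inflates the quadratic error by a factor of $4$. With a sharper bound of the form $\angle(T_xM,T_yM)\le c\,|x-y|/\tau$ your argument would go through for any $c\le1$ (indeed $(1-z)^{-1/2}\le1+4z/3$ does hold on $[0,1/16]$), but no such bound is among the ingredients cited in the paper, and \cite[Lemma~3.4]{stat:belkin2009constructing} only gives $c=2$. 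So as written the upper half of the lemma is not established.
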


\begin{proof}
 The proof of Proposition 8.7 in \cite{stat:aamari2018stability} implies that, if $\tilde{\mathcal{B}}_{M}(x, r)$ is the geodesic ball centered at $x$ of radius $r$, then
\begin{equation*}
\left(1-\frac{r^{2}}{3 \tau(M)^{2}}\right)^{d} \leq \frac{\operatorname{Vol}\left(\tilde{\mathcal{B}}_{M}(x, r)\right)}{\alpha_{d} r^{d}} \leq\left(1+\frac{r^{2}}{\tau(M)^{2}}\right)^{d}.
\end{equation*}

As $\tilde{\mathcal{B}}_{M}(x, r) \subseteq \mathcal{B}_{M}(x, r)$, we have in particular $\frac{\operatorname{Vol}\left(\mathcal{B}_{M}(x, r)\right)}{\alpha_{d} r^{d}} \geq\left(1-\frac{r^{2}}{3 \tau(M)^{2}}\right)^{d}$. Furthermore, by \cite[Lemma 3.12]{stat:arias2019unconstrained} and \cite[Proposition 6.3]{geo:niyogi2008finding}, if $|x-y| \leq \tau(M) / 4$, then the geodesic distance between $x$ and $y$ is smaller than

\begin{equation*}
|x-y|\left(1+\frac{\pi^{2}}{50 \tau(M)^{2}}|x-y|^{2}\right) \leq 1.05|x-y|.
\end{equation*}

This implies that $\mathcal{B}_{M}(x, r) \subseteq \tilde{\mathcal{B}}_{M}\left(x, r\left(1+\frac{\pi^{2} r^{2}}{50 \tau(M)^{2}}\right)\right)$. Therefore,

\begin{equation*}
\begin{aligned}
&\frac{\operatorname{Vol}\left(\mathcal{B}_{M}(x, r)\right)}{\alpha_{d} r^{d}} \leq\left(\left(1+\frac{\pi^{2} r^{2}}{50 \tau(M)^{2}}\right)\left(1+\frac{(1.05 r)^{2}}{\tau(M)^{2}}\right)\right)^{d} \\
&\leq\left(1+\left(\frac{\pi^{2}}{50}+(1.05)^{2}+\frac{\pi^{2}(1.05)^{2} r^{2}}{50 \tau(M)^{2}}\right) \frac{r^{2}}{\tau(M)^{2}}\right) \leq\left(1+\frac{4 r^{2}}{3 \tau(M)^{2}}\right)^{d},
\end{aligned}
\end{equation*}

where we used at the last line that $r \leq \tau(M) / 4$.
\end{proof}

\section{Approximation of manifolds with \texorpdfstring{$t$}{t}-convex hulls}\label{sec:approx}

Let $A \subseteq M$ be a finite set. We investigate in this section how the $t$-convex hull of $A$ approximates $M$ for different values of $t$, first in a deterministic setting, then in a random setting. The quantity of interest $d_{H}(\operatorname{Conv}(t, A), M)$ is by definition the maximum of the two quantities $d_{H}(\operatorname{Conv}(t, A) | M)$ and $d_{H}(M | \operatorname{Conv}(t, A))$. The first quantity $d_{H}(\operatorname{Conv}(t, A) | M)$ is given by the maximum of the distances $d_{H}(\operatorname{Conv}(\sigma) | M)$ over the simplexes $\sigma \subseteq A$ satisfying $r(\sigma) \leq t.$ A naive attempt to bound this quantity leads to a control of order $t$.

\begin{lemma}\label{lem3.1}
Let $\sigma \subseteq \mathbb{R}^{D}$ be a closed set. Then, $d_{H}(\operatorname{Conv}(\sigma) | \sigma) \leq r(\sigma)$.
\end{lemma}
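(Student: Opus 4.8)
The plan is to show that every point of $\operatorname{Conv}(\sigma)$ lies within distance $r(\sigma)$ of some point of $\sigma$, i.e. $d(z,\sigma)\leq r(\sigma)$ for all $z\in\operatorname{Conv}(\sigma)$. By definition $d_H(\operatorname{Conv}(\sigma)\mid\sigma)=\sup\{d(z,\sigma):z\in\operatorname{Conv}(\sigma)\}$, so this would finish the argument. First I would let $B=\mathcal{B}(c,r(\sigma))$ be (the closure of) the smallest enclosing ball of $\sigma$, so that $\sigma\subseteq \overline{B}$ by definition of $r(\sigma)$, and in particular $c$ is a fixed point at distance at most $r(\sigma)$ from every point of $\sigma$.

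The key observation is that a ball is convex, hence $\operatorname{Conv}(\sigma)\subseteq\overline{B}$ as well; thus every $z\in\operatorname{Conv}(\sigma)$ satisfies $|z-c|\leq r(\sigma)$. Now I would write $z=\sum_{i=1}^{k}\lambda_i x_i$ as a convex combination of points $x_i\in\sigma$ (possible by Carathéodory, though even an infinite convex combination suffices here). Then for any point $y\in\sigma$,
\begin{equation*}
d(z,\sigma)\leq |z-y|.
\end{equation*}
To get the bound $r(\sigma)$ I would not pick an arbitrary $y$ but instead argue as follows: since $|z-c|\leq r(\sigma)$ and $c$ need not lie in $\sigma$, I instead use that $z\in\operatorname{Conv}(\sigma)\subseteq\overline{B}$ together with the fact that $\overline B$ is the smallest enclosing ball, so there must be points of $\sigma$ "surrounding" $z$. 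Concretely, among $x_1,\dots,x_k$ appearing in the representation of $z$, at least one satisfies $\langle x_i-z,\, z-c\rangle\geq 0$ (otherwise $\sum_i\lambda_i\langle x_i-z,z-c\rangle<0$, contradicting $\sum_i\lambda_i(x_i-z)=0$). For that index, $|x_i-c|^2=|x_i-z|^2+2\langle x_i-z,z-c\rangle+|z-c|^2\geq |x_i-z|^2$, and since $|x_i-c|\leq r(\sigma)$ we conclude $d(z,\sigma)\leq |x_i-z|\leq |x_i-c|\leq r(\sigma)$.

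The main obstacle — really the only subtlety — is handling the case where $\sigma$ is infinite (the lemma only assumes $\sigma$ is closed), so that Carathéodory's theorem does not literally apply and $z$ may be a limit of convex combinations rather than a finite one. This is dealt with either by a limiting argument (approximate $z$ by finite convex combinations $z_n\to z$, apply the finite case to get $d(z_n,\sigma)\leq r(\sigma)$, and pass to the limit using closedness of $\sigma$) or by noting that the smallest enclosing ball of $\sigma$ is already determined by at most $D+1$ points of $\sigma$, reducing to the finite situation from the start. Everything else is elementary Euclidean geometry.
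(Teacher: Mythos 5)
Your proof is correct, but it follows a genuinely different route from the paper's. Where you invoke Carath\'eodory to write a point $z\in\operatorname{Conv}(\sigma)$ as a finite convex combination $z=\sum_i\lambda_i x_i$ and then find, by an averaging argument, a single vertex $x_i$ with $\langle x_i-z,\,z-c\rangle\geq 0$ (which forces $|x_i-z|\leq|x_i-c|\leq r(\sigma)$), the paper instead exhibits an explicit half-space
\[
H=\bigl\{x:|x-c|^{2}-r(\sigma)^{2}\leq|x-z|^{2}-d(z,\sigma)^{2}\bigr\}
\]
containing $\sigma$ (each side of the inequality has a definite sign for $x\in\sigma$), observes that $H$ is convex and therefore contains $\operatorname{Conv}(\sigma)\ni z$, and plugs $x=z$ into the defining inequality. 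The paper's argument is a touch slicker: it never has to select a particular vertex, it requires no discussion of whether the convex combination is finite (so the infinite-$\sigma$ caveat you raise simply never arises), and it delivers the slightly sharper bound $d(z,\sigma)^{2}\leq r(\sigma)^{2}-|z-c|^{2}$ for free. Your approach is more constructive and arguably more geometric in flavor; note, though, that your worry about infinite $\sigma$ is largely moot, since $\operatorname{Conv}(\sigma)$ consists by definition of finite convex combinations, and Carath\'eodory applies with at most $D+1$ points regardless of the cardinality of $\sigma$ (and for $\sigma$ closed and bounded the convex hull in $\mathbb{R}^D$ is already compact, so there is no gap between the convex hull and its closure).
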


\begin{proof}
Let $y \in \operatorname{Conv}(\sigma)$ and let $z$ be the center of the smallest enclosing ball of $\sigma$. The half-space $\left\{x \in \mathbb{R}^{D}:|x-z|^{2}-r(\sigma)^{2} \leq|x-y|^{2}-d(y, \sigma)^{2}\right\}$ contains $\sigma$. It thus contains $\operatorname{Conv}(\sigma)$, and in particular $y$. Therefore, $d(y, \sigma)^{2} \leq r(\sigma)^{2}-|y-z|^{2} \leq r(\sigma)^{2}$, concluding the proof.
\end{proof}

As $\sigma \subseteq M$, we have in particular that $d_{H}(\operatorname{Conv}(t, A) | M) \leq t$. We can actually obtain a much better bound by exploiting that $\sigma$ lies on $M$, which looks locally like a flat space. Consider for instance the case where $\sigma=\left\{x_{0}, x_{1}\right\}$ is made of two points. Then, the line $\left(x_{0}, x_{1}\right)$ should be approximately parallel to the tangent space $T_{x_{0}} M$, with the distance from $x_{1}$ to $T_{x_{0}} M$ being of order $\left|x_{0}-x_{1}\right|^{2}$. As a consequence, the distance from any point of the segment $\left[x_{0}, x_{1}\right]$ to $M$ is also of order $\left|x_{0}-x_{1}\right|^{2}$. More generally, we have the following result.

\begin{lemma}\label{lem3.2}
Let $\sigma \subseteq M$ with $r(\sigma)<\tau(M)$ and let $y \in \operatorname{Conv}(\sigma)$. Then,

\begin{equation}
d(y, M) \leq \tau(M)\left(1-\sqrt{1-\frac{r(\sigma)^{2}}{\tau(M)^{2}}}\right) \leq \frac{r(\sigma)^{2}}{2 \tau(M)}\left(1+\frac{r(\sigma)^{2}}{\tau(M)^{2}}\right).
\end{equation}

In particular, for any $t \geq 0$ and $A \subseteq M$,

\begin{equation}
d_{H}(\operatorname{Conv}(t, A) | M) \leq \frac{t^{2}}{2 \tau(M)}\left[\left(1+\frac{t^{2}}{\tau(M)^{2}}\right) \wedge 2\right] \leq \frac{t^{2}}{\tau(M)}.
\end{equation}
\end{lemma}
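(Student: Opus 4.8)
The plan is to reduce the statement to a single point $y \in \operatorname{Conv}(\sigma)$, bound $d(y,M)$ by a geometric argument using the ball of radius $\tau(M)$ tangent to $M$, and then obtain both displayed inequalities by elementary algebra. Let $z$ denote the center of the smallest enclosing ball of $\sigma$, so $\sigma \subseteq \mathcal{B}(z, r(\sigma))$ and hence $\operatorname{Conv}(\sigma) \subseteq \mathcal{B}(z, r(\sigma))$; in particular $|y - z| \leq r(\sigma) < \tau(M)$, so $y$ has a well-defined projection $\pi_M(y)$.

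The key step is the following claim: if $y$ lies in $\operatorname{Conv}(\sigma)$ with $|y - z| \leq r(\sigma)$ and $\sigma\subseteq M$, then $d(y,M) \leq \tau(M)(1 - \sqrt{1 - r(\sigma)^2/\tau(M)^2})$. To see this, set $p = \pi_M(y)$ and $w = d(y,M) = |y-p|$; if $w = 0$ we are done, so assume $w > 0$. By Lemma~\ref{lem2.2}(1), $y - p \in T_pM^\perp$. Consider the open ball $B^+ = \mathcal{B}(c, \tau(M))$ where $c = p + \tau(M)\,\frac{y-p}{|y-p|}$ is the center on the normal ray through $y$ on the same side as $y$. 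A defining property of reach (the tangent-ball condition: for every $p \in M$ and every unit normal $\nu$ at $p$, the open ball $\mathcal{B}(p + \tau(M)\nu, \tau(M))$ is disjoint from $M$) shows $B^+ \cap M = \emptyset$, hence $\sigma \cap B^+ = \emptyset$, hence $\operatorname{Conv}(\sigma) \cap B^+ = \emptyset$ since $B^+$ is convex, hence $y \notin B^+$, which reads $|y - c| \geq \tau(M)$. But $y$ lies on the segment from $p$ to $c$ at distance $w$ from $p$, so $|y-c| = \tau(M) - w$, giving $w \leq 0$ unless... — wait, this would force $w = 0$. The fix is that $y$ need not be the point realizing the obstruction: instead one uses that $\sigma \subseteq \mathcal{B}(z, r(\sigma))$ and that this ball must avoid $B^+$ is false in general, so the actual argument goes the other way: one shows $\mathcal{B}(z,r(\sigma))$ together with the empty ball $B^+$ of radius $\tau(M)$ tangent at $p$ and passing through $y$ constrains $|y-p|$. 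Concretely, since $\operatorname{Conv}(\sigma)\subseteq \mathcal{B}(z,r(\sigma))$ avoids $B^+=\mathcal{B}(c,\tau(M))$ only after intersecting with... The clean route: the empty ball $B^-=\mathcal{B}(c',\tau(M))$ on the \emph{opposite} side, $c' = p - \tau(M)\frac{y-p}{|y-p|}$, also avoids $M$ hence avoids $\sigma$; thus every point of $\sigma$ lies outside $B^-$, i.e. $|x - c'|\geq \tau(M)$ for all $x\in\sigma$, so the smallest enclosing ball argument gives, for the point $y$, the inequality $d(y,\{x\})^2 \geq$ (half-space bound as in Lemma~\ref{lem3.1}) — and combining $|y-z|\le r(\sigma)$ with $y$ being squeezed between the hyperplane $T_pM$ and the sphere $\partial B^-$ yields, via the chord-versus-sagitta computation $w = \tau(M) - \sqrt{\tau(M)^2 - \rho^2}$ with $\rho \le r(\sigma)$ the distance from $y$ to the normal line, the bound $w \le \tau(M)(1-\sqrt{1-r(\sigma)^2/\tau(M)^2})$.

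Granting the first inequality, the second is pure algebra: writing $u = r(\sigma)^2/\tau(M)^2 \in [0,1)$, one needs $1 - \sqrt{1-u} \leq \frac{u}{2}(1+u)$, which follows from $\sqrt{1-u} \geq 1 - \frac{u}{2}(1+u)$ by squaring (valid since the right side is nonnegative for $u\le 1$): $(1-\frac{u}{2}(1+u))^2 = 1 - u(1+u) + \frac{u^2}{4}(1+u)^2 \leq 1-u$ reduces to $\frac{u^2}{4}(1+u)^2 \leq u^2$, i.e. $(1+u)^2 \le 4$, true for $u\le 1$. Then taking the supremum over $\sigma\subseteq A$ with $r(\sigma)\le t$ gives $d_H(\operatorname{Conv}(t,A)\mid M) \leq \frac{t^2}{2\tau(M)}(1+t^2/\tau(M)^2)$; the $\wedge 2$ comes from the first expression $\tau(M)(1-\sqrt{1-u}) \le \tau(M)\cdot 1 \cdot$ — more precisely from $1-\sqrt{1-u}\le u$ (since $\sqrt{1-u}\ge 1-u$), giving the alternative bound $\frac{t^2}{\tau(M)}$, hence the factor $[(1+t^2/\tau(M)^2)\wedge 2]$; and finally $\frac{t^2}{2\tau(M)}\cdot 2 = \frac{t^2}{\tau(M)}$ gives the last inequality.

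The main obstacle is the geometric claim bounding $d(y,M)$: one must correctly identify which empty tangent ball of radius $\tau(M)$ to use and argue cleanly that $\operatorname{Conv}(\sigma)$, trapped in the small ball $\mathcal{B}(z,r(\sigma))$ and forced to avoid the large empty ball, cannot reach depth more than the sagitta $\tau(M)-\sqrt{\tau(M)^2-r(\sigma)^2}$. I expect the author handles this by a direct comparison of the point $y$ with the sphere of radius $\tau(M)$ through its own projection, using convexity of balls to pass from $\sigma$ to $\operatorname{Conv}(\sigma)$, exactly in the spirit of the half-space trick in Lemma~\ref{lem3.1}. Everything after that claim is routine.
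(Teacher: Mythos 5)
The paper's proof of this lemma is essentially a citation: the first displayed inequality is \cite[Lemma 12]{geo:attali2013vietoris}, and the rest is the elementary bound $\sqrt{1-u}\geq 1-u/2-u^2/2$ for $u\in[0,1]$ followed by the observation $1-\sqrt{1-u}\leq u$. Your algebraic steps reproduce exactly those two reductions (your $1-\frac{u}{2}(1+u)$ is the same as $1-u/2-u^2/2$), so that part is correct and matches the paper. The difference is that you try to reprove the cited geometric inequality from scratch, and that part has a genuine gap.

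Your first attempt with the tangent ball $B^+=\mathcal{B}(c,\tau(M))$ on the same side as $y$ fails at the step ``hence $\operatorname{Conv}(\sigma)\cap B^+=\emptyset$ since $B^+$ is convex'': convexity of $B^+$ does \emph{not} let you pass from $\sigma\cap B^+=\emptyset$ to $\operatorname{Conv}(\sigma)\cap B^+=\emptyset$ (two points outside a disk can have a connecting segment through it). You notice this forces $w=0$, which is the symptom of the overclaim. But the ``fix'' of switching to $B^-=\mathcal{B}(c',\tau(M))$ on the opposite side goes in the wrong direction: with $p=\pi_M(y)$, $\nu=(y-p)/|y-p|$, $c'=p-\tau(M)\nu$, and $y=p+w\nu$, one has $|y-c'|=\tau(M)+w$, which \emph{increases} in $w$, so any lower bound on $|y-c'|$ coming from $\sigma$ lying outside $B^-$ is vacuous and gives no control on $w$. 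The ``chord-versus-sagitta'' formula you invoke with ``$\rho\leq r(\sigma)$ the distance from $y$ to the normal line'' is also ill-posed, since by Lemma~\ref{lem2.2}(1) the point $y$ lies \emph{on} the normal line through $p$, so $\rho=0$.

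The correct route is the one you almost sketch at the very end but do not execute: keep $B^+$, and weaken the disjointness claim to the halfspace inclusion of Lemma~\ref{lem3.1}. Concretely, let $z$ be the center of the smallest enclosing ball of $\sigma$, $r=r(\sigma)$, $\tau=\tau(M)$. Each $x\in\sigma$ satisfies $|x-z|\leq r$ and (since $B^+$ is disjoint from $M$) $|x-c|\geq\tau$. The set $H=\{u:\ |u-z|^2-r^2\leq |u-c|^2-\tau^2\}$ is a halfspace (the quadratic terms cancel), it contains $\sigma$, hence it contains $\operatorname{Conv}(\sigma)\ni y$. Thus $|y-c|^2\geq \tau^2-r^2+|y-z|^2\geq \tau^2-r^2$. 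Combined with $|y-c|=\tau-w$ (here $y$ lies on the segment $[p,c]$), this gives $\tau-w\geq\sqrt{\tau^2-r^2}$, i.e.\ the desired $w\leq\tau\bigl(1-\sqrt{1-r^2/\tau^2}\bigr)$. Note the halfspace trick yields $|y-c|\geq\sqrt{\tau^2-r^2}$, not $|y-c|\geq\tau$ — that weakening is exactly what replaces your false disjointness claim.
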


\begin{proof} Lemma 12 in \cite{geo:attali2013vietoris} states that if $\sigma \subseteq M$ satisfies $r(\sigma)<\tau(M)$ and $y \in \operatorname{Conv}(\sigma)$, then,

\begin{equation*}
d(y, M) \leq \tau(M)\left(1-\sqrt{1-\frac{r(\sigma)^{2}}{\tau(M)^{2}}}\right).
\end{equation*}

As $\sqrt{1-u} \geq 1-u / 2-u^{2} / 2$ for $u \in[0,1]$, one obtains the conclusion.
\end{proof}

The other asymmetric distance $d_{H}(M | \operatorname{Conv}(t, A))$ is apparently more delicate to handle. It can actually be controlled efficiently if the parameter $t$ is large enough. Indeed, assume that $t$ is large enough so that every point $x$ of $M$ is the projection of some point $y$ of $\operatorname{Conv}(t, A)$. Then we have

\begin{equation}\label{eq3.3}
\begin{split}
d(x, \operatorname{Conv}(t, A)) &\leq|x-y|=\left|\pi_{M}(y)-y\right|=d(y, M) \\
&\leq d_{H}(\operatorname{Conv}(t, A) | M) \leq \frac{t^{2}}{\tau(M)}.
\end{split}
\end{equation}

This suggests defining the parameter

\begin{equation}
t^{*}(A):=\inf \left\{t<\tau(M): \pi_{M}(\operatorname{Conv}(t, A))=M\right\}.
\end{equation}

Lemma \ref{lem3.2} and \eqref{eq3.3} imply directly the following  lemma. 

\begin{figure}\label{fig3}
\includegraphics[width=\textwidth]{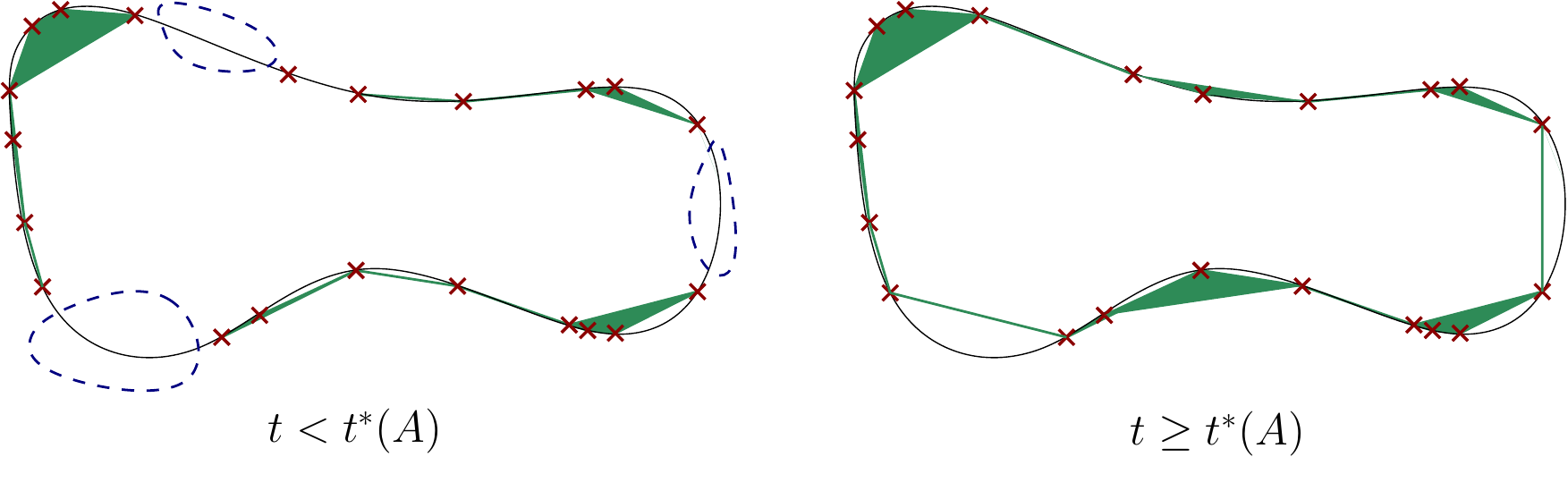}
\caption{The $t$-convex hull of the finite set $A$ (red crosses) is displayed (in green) for two values of $t$. The black curve represents the (one dimensional) manifold $M$. On the first display, the value of $t$ is smaller than $t^{*}(A)$, as there are regions of the manifold (circled in blue) which are not attained by the projection $\pi_{M}$ restricted to the $t$-convex hull. The value of $t$ is larger than $t^{*}(A)$ on the second display.}
\end{figure}

\begin{lemma}\label{lem3.3}
Let $A \subseteq M$ and $t > t^{*}(A)$. Then,
\begin{equation}
d_{H}(\operatorname{Conv}(t, A), M) \leq \frac{t^{2}}{\tau(M)}.
\end{equation}
\end{lemma}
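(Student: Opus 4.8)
The plan is to combine the two asymmetric bounds established just above. By definition of $t^*(A)$, for any $t > t^*(A)$ there exists a value $s$ with $t^*(A) \le s < t$ such that $\pi_M(\operatorname{Conv}(s,A)) = M$; since $\operatorname{Conv}(s,A) \subseteq \operatorname{Conv}(t,A)$ and $s < t < \tau(M)$ (we may assume $t < \tau(M)$, since otherwise the bound $t^2/\tau(M) \ge \tau(M) \ge \operatorname{diam}(M)$ is trivial, $M$ being connected and compact), we also have $\pi_M(\operatorname{Conv}(t,A)) = M$, so the hypothesis of the displayed computation \eqref{eq3.3} is met.

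First I would bound $d_H(\operatorname{Conv}(t,A)\,|\,M)$: this is immediate from Lemma \ref{lem3.2}, which gives $d_H(\operatorname{Conv}(t,A)\,|\,M) \le t^2/\tau(M)$ directly (using $r(\sigma) \le t < \tau(M)$ for every simplex $\sigma$ entering the union defining $\operatorname{Conv}(t,A)$). Second I would bound $d_H(M\,|\,\operatorname{Conv}(t,A))$: fix any $x \in M$. Since $\pi_M$ restricted to $\operatorname{Conv}(t,A)$ is onto $M$, pick $y \in \operatorname{Conv}(t,A)$ with $\pi_M(y) = x$. Then $d(x,\operatorname{Conv}(t,A)) \le |x-y| = |y - \pi_M(y)| = d(y,M) \le d_H(\operatorname{Conv}(t,A)\,|\,M) \le t^2/\tau(M)$, exactly as in \eqref{eq3.3}. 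Taking the supremum over $x \in M$ gives $d_H(M\,|\,\operatorname{Conv}(t,A)) \le t^2/\tau(M)$.

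Finally, $d_H(\operatorname{Conv}(t,A),M) = \max\{d_H(\operatorname{Conv}(t,A)\,|\,M),\, d_H(M\,|\,\operatorname{Conv}(t,A))\} \le t^2/\tau(M)$, which is the claim. There is essentially no obstacle here: the lemma is a bookkeeping corollary of Lemma \ref{lem3.2} and the observation \eqref{eq3.3}, and the only mild point to be careful about is the reduction to the case $t < \tau(M)$ so that Lemma \ref{lem3.2} applies and so that the set over which the infimum defining $t^*(A)$ is taken is nonempty for $t$ in the relevant range; both are handled by the connectedness-and-compactness remark above (or one could simply restrict attention to $t^*(A) < t < \tau(M)$, which is the only regime in which the bound is informative).
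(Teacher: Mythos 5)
Your main argument is correct and is exactly the paper's: one direction from the \emph{in particular} bound of Lemma~\ref{lem3.2}, the other from the computation~\eqref{eq3.3}, with the monotonicity $\operatorname{Conv}(s,A)\subseteq\operatorname{Conv}(t,A)$ for $s<t$ supplying the surjectivity of $\pi_M$ on $\operatorname{Conv}(t,A)$ once $t>t^*(A)$.

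However, the parenthetical reduction to $t<\tau(M)$ contains a false claim: $\tau(M)\geq\operatorname{diam}(M)$ does \emph{not} hold for compact connected submanifolds. A $d$-sphere of radius $R$ has $\tau=R$ but $\operatorname{diam}=2R$, and in general $\tau(M)\leq\operatorname{diam}(M)$. So the chain $t^2/\tau(M)\geq\tau(M)\geq\operatorname{diam}(M)\geq d_H(\operatorname{Conv}(t,A),M)$ breaks at the middle inequality, and your fallback (``restrict attention to $t<\tau(M)$'') doesn't prove the stated lemma, which places no upper bound on $t$. The case $t\geq\tau(M)$ is easy to patch correctly: Lemma~\ref{lem3.1} already gives $d_H(\operatorname{Conv}(t,A)\mid M)\leq t\leq t^2/\tau(M)$ (this is what the $\wedge\,2$ in Lemma~\ref{lem3.2} encodes), and for the other direction one picks $s\in(t^*(A),\tau(M))$, applies~\eqref{eq3.3} to $\operatorname{Conv}(s,A)\subseteq\operatorname{Conv}(t,A)$, and gets $d_H(M\mid\operatorname{Conv}(t,A))\leq s^2/\tau(M)\leq\tau(M)\leq t^2/\tau(M)$. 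With that substitution your proof is complete.
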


A crucial result in the analysis of the $t$-convex hull estimator is given by the next proposition, that indicates that the quantity $t^{*}(A)$ is almost equal to the approximation rate $\varepsilon(A)$.

\begin{prop}\label{prop3.4}
Let $A \subseteq M$ be a finite set. Then, $\varepsilon(A) \leq t^{*}(A)\left(1+\frac{t^{*}(A)}{\tau(M)}\right).$ Furthermore, if $\varepsilon(A)<\tau(M) / 8$, then, $t^{*}(A) \leq \varepsilon(A)\left(1+6 \frac{\varepsilon(A)}{\tau(M)}\right)$
\end{prop}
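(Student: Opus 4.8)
The plan is to establish the two inequalities separately, using in both directions the bound $d_H(\operatorname{Conv}(t,A),M)\leq t^2/\tau(M)$ from Lemma \ref{lem3.3} together with the elementary covering property of the smallest enclosing ball.

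For the first inequality $\varepsilon(A)\leq t^*(A)(1+t^*(A)/\tau(M))$, I would argue as follows. Fix any $t>t^*(A)$; then $\pi_M(\operatorname{Conv}(t,A))=M$, so Lemma \ref{lem3.3} gives $\varepsilon(A)=d_H(A,M)$. But wait — $\varepsilon(A)=d_H(M|A)=\sup_{x\in M}d(x,A)$, and this is \emph{not} directly $d_H(\operatorname{Conv}(t,A),M)$ since $A\subseteq \operatorname{Conv}(t,A)$. Instead I need to relate $d(x,A)$ to $d(x,\operatorname{Conv}(t,A))$. The point is: take $x\in M$; by definition of $t^*(A)$ there is $y\in\operatorname{Conv}(t,A)$ with $\pi_M(y)=x$, and $y$ lies in some $\operatorname{Conv}(\sigma)$ with $\sigma\subseteq A$, $r(\sigma)\leq t$. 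By Lemma \ref{lem3.1} applied in $\mathbb{R}^D$, $y$ is within distance $r(\sigma)\leq t$ of a point $a\in\sigma\subseteq A$, so $d(x,A)\leq |x-a|\leq |x-y|+|y-a|\leq d(y,M)+t$. By \eqref{eq3.3}, $d(y,M)\leq t^2/\tau(M)$; letting $t\downarrow t^*(A)$ yields $\varepsilon(A)\leq t^*(A)+t^*(A)^2/\tau(M)$, which is the claim.

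For the second inequality, assume $\varepsilon:=\varepsilon(A)<\tau(M)/8$. The goal is to show $t^*(A)\leq \varepsilon(1+6\varepsilon/\tau(M))$, i.e. that for $t$ slightly larger than $\varepsilon$, the map $\pi_M$ restricted to $\operatorname{Conv}(t,A)$ is already surjective onto $M$. Fix $x\in M$. Since $\varepsilon(A)=\sup_{x'\in M}d(x',A)$, I would work in the tangent space $T_xM$ via the diffeomorphism $\tilde\pi_x$ of Lemma \ref{lem2.2}(3): the set $\tilde\pi_x(\mathcal{B}_M(x,r))$ contains a ball $\mathcal{B}_{T_xM}(0,7r/8)$ for suitable $r$. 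The idea is to find a simplex $\sigma\subseteq A\cap\mathcal{B}_M(x,O(\varepsilon))$ of radius $\leq t$ whose projection to $T_xM$ surrounds the origin $0=\tilde\pi_x(x)$, so that $0\in\operatorname{Conv}(\pi_x(\sigma-x))$; then the fiber $\pi_x^{-1}(0)$ meets $\operatorname{Conv}(\sigma)$ (a compactness/continuity argument on the graph of the manifold over $T_xM$), and the corresponding point $y\in\operatorname{Conv}(\sigma)\subseteq\operatorname{Conv}(t,A)$ satisfies $\pi_M(y)=x$ by Lemma \ref{lem2.2}(1) provided $d(y,M)<\tau(M)$. To produce such a $\sigma$: cover a small sphere around $x$ in $T_xM$ of radius comparable to $\varepsilon$ by lifting points — each point of $M$ near $x$ is within $\varepsilon$ of $A$, so choosing $d+1$ lifted directions roughly $\varepsilon$-apart and their nearby sample points gives a simplex with $0$ in the interior of its tangential convex hull and radius at most $\varepsilon+O(\varepsilon^2/\tau(M))+\varepsilon = $ something like $2\varepsilon(1+O(\varepsilon/\tau))$; a careful choice of the covering radius sharpens the constant. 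The quadratic correction $6\varepsilon/\tau(M)$ comes from (i) the reach-distortion \eqref{eq2.2} between $|y-x|$ and $|\pi_x(y-x)|$, (ii) the normal deviation bound of Lemma \ref{lem2.1}, and (iii) the geodesic-vs-Euclidean distortion from Lemma \ref{lem2.3}.

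The main obstacle is the surjectivity argument in the second part: showing that having the origin in the tangential convex hull of $\pi_x(\sigma-x)$ forces the true fiber $\pi_M^{-1}(x)$ (equivalently, the affine normal fiber $x+T_xM^\perp$) to intersect $\operatorname{Conv}(\sigma)\subseteq\mathbb{R}^D$. This requires a topological/degree argument — $\operatorname{Conv}(\sigma)$ projects onto a neighborhood of $0$ in $T_xM$ under $\pi_x$, and one must check the fiber over $0$ is nonempty and lands close enough to $M$ that Lemma \ref{lem2.2}(1) applies. Controlling simultaneously that $\sigma$ has small radius (so $r(\sigma)\leq t$ with $t$ close to $\varepsilon$) \emph{and} that its tangential projection robustly encloses $0$ is where the bookkeeping of constants is delicate; I expect this to be handled by a clean geometric lemma about simplices whose vertices lie near a $(d-1)$-sphere in $T_xM$, combined with the estimates already assembled in Section \ref{sec:background}.
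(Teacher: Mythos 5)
Your proof of the first inequality $\varepsilon(A)\leq t^*(A)(1+t^*(A)/\tau(M))$ is correct and is essentially the paper's argument: take $x\in M$, use the surjectivity of $\pi_M$ on $\operatorname{Conv}(t,A)$ to produce $y\in\operatorname{Conv}(\sigma)$ with $\pi_M(y)=x$ and $r(\sigma)\leq t$, bound $|x-y|=d(y,M)\leq t^2/\tau(M)$ via Lemma \ref{lem3.2} and $d(y,A)\leq r(\sigma)\leq t$ via Lemma \ref{lem3.1}, then let $t\downarrow t^*(A)$.

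For the second inequality, your overall strategy (work in $T_xM$, find a small simplex $\sigma$ whose tangential convex hull contains $0$, then use Lemma \ref{lem2.2}(1) to conclude $x\in\pi_M(\operatorname{Conv}(\sigma))$) is the right skeleton, and indeed matches the paper's reduction. However, there is a genuine gap in how you produce $\sigma$: your ``cover a small sphere by $d+1$ lifted directions, approximate each by a nearby sample point'' construction cannot achieve the sharp radius bound. As you yourself compute, each of your $d+1$ points sits at distance roughly $\rho+\varepsilon(A)$ from $x$ (where $\rho$ is the sphere radius), so the resulting simplex has radius at least on the order of $2\varepsilon(A)$ — and in fact worse, since to guarantee that $0$ stays in the convex hull after $\varepsilon(A)$-perturbations of the vertices you would need $\rho$ to exceed the inradius of the simplex, which for a $d$-simplex forces $\rho\gtrsim d\,\varepsilon(A)$. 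Neither of these yields $r(\sigma)\leq\varepsilon(A)(1+6\varepsilon(A)/\tau(M))$. Just ``choosing the covering radius carefully'' cannot fix this: the leading constant is structurally wrong in a direct approximation argument.

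The missing idea is to replace the hand-picked simplex by a \emph{Delaunay} simplex of $\tilde A:=\pi_x(A\cap\mathcal{B}(x,R))$. One first shows (as the paper does via Lemmas \ref{lem3.5} and \ref{lem3.6}) that $\tilde A$ is $\varepsilon(A)$-dense in $\mathcal{B}_{T_xM}(0,\varepsilon(A))$ and therefore $0\in\operatorname{Conv}(\tilde A)$. Then $0$ lies in some simplex $\tilde\sigma$ of the Delaunay triangulation of $\tilde A$. The Delaunay empty-circumball property is exactly what bounds the radius with the right constant: the open circumball of $\tilde\sigma$, centered at $\tilde q$, contains no point of $\tilde A$; since it also contains $0$, the ball $\mathcal{B}(\tilde q,|\tilde q|)$ is empty of $\tilde A$, and if $|\tilde q|>\varepsilon(A)$ one could fit inside it a ball of radius $\varepsilon(A)$ centered within $\varepsilon(A)$ of $0$, contradicting the $\varepsilon(A)$-density. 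Hence $|\tilde q|\leq\varepsilon(A)$, and then $\operatorname{circ}(\tilde\sigma)=d(\tilde q,\tilde A)\leq\varepsilon(A)$ again by density, so $r(\tilde\sigma)\leq\varepsilon(A)$. Finally a distortion lemma (Lemma \ref{lem3.7}) lifts this back to the original simplex: $r(\sigma)\leq r(\tilde\sigma)(1+6r(\tilde\sigma)/\tau(M))$. Without this Delaunay step your argument does not close; it is precisely the ``clean geometric lemma'' you anticipated needing.
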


The proof of Proposition \ref{prop3.4} relies on considering Delaunay triangulations. Given $d+1$ points $\sigma$ in $\mathbb{R}^{d}$ that do not lie on a hyperplane, there exists a unique ball that contains the points on its boundary. It is called the circumball of $\sigma$, and its radius is called the circumradius $\mathrm{circ}(\sigma)$ of $\sigma$. Given a finite set $A \subseteq \mathbb{R}^{d}$ that does not lie on a hyperplane, there exists a triangulation of $A$, called the Delaunay triangulation, such that for each simplex $\sigma$ in the triangulation, the circumball of $\sigma$ contains no point of $A$ in its interior. Note that there may exist several Delaunay triangulations of a set $A$, should the set $A$ not be in general position. With a slight abuse, we will still refer to "the" Delaunay triangulation of $A$, by simply choosing a Delaunay triangulation among the possible ones should several exist. If the set $A$ lies on a lower dimensional subspace, we consider the Delaunay triangulation of $A$ in the affine vector space spanned by $A$. Therefore, for every set $A$, the Delaunay triangulation is well defined (for instance, the Delaunay triangulation of three points aligned in the plane is the 1-dimensional triangulation obtained by joining the middle point with the two others).

\begin{proof}
 Let $x \in M$ be such that $d(x, A)=\varepsilon(A)$. By definition, there exists a simplex $\sigma \subseteq A$ of radius smaller than $t^{*}(A)$ with $x=\pi_{M}(y)$ for some point $y \in \operatorname{Conv}(\sigma)$. We have, using Lemma \ref{lem3.1} and Lemma \ref{lem3.2},

\begin{equation*}
\varepsilon(A)=d(x, A) \leq|x-y|+d(y, A) \leq \frac{t^{*}(A)^{2}}{\tau(M)}+t^{*}(A)
\end{equation*}

proving the first inequality.

To prove the other inequality, without loss of generality, we assume that $0 \in M$ and we show that $0 \in \pi_{M}(\operatorname{Conv}(t, A))$ for $t=\varepsilon(A)(1+6 \varepsilon(A) / \tau(M)).$ Let $\tilde{A}=\pi_{0}(A \cap \mathcal{B}(0, R))$ for $R=\varepsilon(A)\left(2+c_{0} \varepsilon(A) / \tau(M)\right)$ and $c_{0}=32 / 49$. Note that the condition $\varepsilon(A) \leq \tau(M) / 8$ implies that $R<7 \tau(M) / 24$. We first state two lemmas.

\begin{lemma}\label{lem3.5} Assume that $\varepsilon(A) \leq 7 \tau(M) / 24$. Let $\tilde{x} \in T_{0} M$ with $|\tilde{x}| \leq \varepsilon(A).$ Then $d(\tilde{x}, \tilde{A}) \leq$ $\varepsilon(A)$.
\end{lemma}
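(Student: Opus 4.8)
The plan is to lift the point $\tilde x$ from the tangent space back to a point $z\in M$, pick a sample point $a\in A$ close to $z$, and then check that $a$ lies in the ball $\mathcal{B}(0,R)$, so that $\pi_0(a)\in\tilde A$. Once this is achieved, since $\pi_0$ is $1$-Lipschitz and $\pi_0(z)=\tilde x$,
\[
d(\tilde x,\tilde A)\le|\tilde x-\pi_0(a)|=|\pi_0(z-a)|\le|z-a|,
\]
and it only remains to ensure $|z-a|\le\varepsilon(A)$, which will follow from the definition of $\varepsilon(A)$.

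Since $0\in M$, the map $\tilde\pi_0\colon y\mapsto\pi_0(y-0)=\pi_0(y)$ of Lemma~\ref{lem2.2}(3) is $\pi_0$ restricted to $M$, and because $|\tilde x|\le\varepsilon(A)\le 7\tau(M)/24=\tfrac78\cdot\tfrac{\tau(M)}{3}$, the inclusion $\mathcal{B}_{T_0M}(0,7r/8)\subseteq\tilde\pi_0(\mathcal{B}_M(0,r))$ (valid for $r<\tau(M)/3$, letting $r\uparrow\tau(M)/3$) produces a point $z\in M$ with $\pi_0(z)=\tilde x$ and $|z|<\tau(M)/3$; the borderline case $|\tilde x|=7\tau(M)/24$ I would settle by a short compactness argument. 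Applying Lemma~\ref{lem2.1} to the pair $(0,z)$ gives $|\pi_0^\perp(z)|\le|z|^2/(2\tau(M))$, hence
\[
|z|^2=|\tilde x|^2+|\pi_0^\perp(z)|^2\le|\tilde x|^2+\frac{|z|^4}{4\tau(M)^2},
\]
and since $|z|<\tau(M)/3$ the last term is at most $|z|^2/36$, so that $|z|^2\le\tfrac{36}{35}|\tilde x|^2$. Re-injecting this crude bound into the displayed inequality refines it to $|z|\le|\tilde x|\bigl(1+C|\tilde x|^2/\tau(M)^2\bigr)$ for an explicit small constant $C$, and then, using $|\tilde x|\le\varepsilon(A)\le 7\tau(M)/24$ and the monotonicity of $t\mapsto t+c_0t^2/\tau(M)$, one gets $|z|\le\varepsilon(A)+c_0\varepsilon(A)^2/\tau(M)$ — this is precisely where the constants $7/24$ and $c_0=32/49$ are calibrated so that the inequality goes through.

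Since $z\in M$ and $\varepsilon(A)=d_H(M\mid A)=\sup_{w\in M}d(w,A)$ with $A$ finite, there is $a\in A$ with $|z-a|\le\varepsilon(A)$, and therefore
\[
|a|\le|z|+|z-a|\le\varepsilon(A)+c_0\frac{\varepsilon(A)^2}{\tau(M)}+\varepsilon(A)=\varepsilon(A)\Bigl(2+c_0\frac{\varepsilon(A)}{\tau(M)}\Bigr)=R,
\]
so $a\in A\cap\mathcal{B}(0,R)$ (the strictness in the bound on $|z|$ is what keeps $a$ in the open ball), whence $\pi_0(a)\in\tilde A$, and the first paragraph concludes that $d(\tilde x,\tilde A)\le|z-a|\le\varepsilon(A)$.

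The only genuine computation here is the control of $|z|$: the curvature-induced excess $|z|-|\tilde x|$ is of order $\varepsilon(A)^3/\tau(M)^2$, and the argument succeeds only because this is dominated by the slack term $c_0\varepsilon(A)^2/\tau(M)$ built into $R$, so the constants $7/24$ and $32/49$ must be tracked with some care. The one remaining subtlety is the equality case $|\tilde x|=7\tau(M)/24$ when invoking the open-ball inclusion of Lemma~\ref{lem2.2}(3), which a limiting argument disposes of.
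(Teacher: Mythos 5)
Your proof is correct and follows essentially the same route as the paper's: lift $\tilde x$ to a point on $M$ using the surjectivity part of Lemma~\ref{lem2.2}(3) with $r=8\varepsilon(A)/7\le\tau(M)/3$, take the nearest sample point $a\in A$ to that lift, verify $|a|<R$ so that $\pi_0(a)\in\tilde A$, and conclude via the $1$-Lipschitzness of $\pi_0$. The one small difference is the bound on the lift: the paper simply writes $|x|\le|\tilde x|+|\pi_0^\perp(x)|\le\varepsilon(A)+|x|^2/(2\tau(M))$ and feeds in $|x|\le 8\varepsilon(A)/7$, which is where $c_0=32/49$ comes from, whereas you use the Pythagorean identity plus a bootstrap — sharper than needed but equally valid. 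Also note that the inclusion in Lemma~\ref{lem2.2}(3) is stated for $r\le\tau(M)/2$, so no limiting argument $r\uparrow\tau(M)/3$ is required; and the paper handles the boundary case by the cleaner reduction "assume $|\tilde x|<\varepsilon(A)$, then pass to the limit by continuity," rather than singling out $|\tilde x|=7\tau(M)/24$.
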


\begin{proof}
 By continuity, it suffices to prove the claim for $|\tilde{x}|<\varepsilon(A)$. In this case, according to Lemma \ref{lem2.2}, if $\varepsilon(A) \leq 7 \tau(M) / 24$, then there exists $x \in \mathcal{B}_{M}(0,8 \varepsilon(A) / 7)$ with $\pi_{0}(x)=\tilde{x}$. Furthermore, by Lemma \ref{lem2.1},

\begin{equation*}
|x| \leq|\tilde{x}|+|x-\tilde{x}| \leq \varepsilon(A)+\frac{|x|^{2}}{2 \tau(M)} \leq \varepsilon(A)\left(1+\frac{32 \varepsilon(A)}{49 \tau(M)}\right).
\end{equation*}

We have $d(x, A)=|x-a|$ for some point $a \in A$, and 
\[|a| \leq|x-a|+|x| \leq \varepsilon(A)\left(2+c_{0} \varepsilon(A) / \tau(M)\right).\]
 As $\pi_{0}(a) \in \tilde{A}$, we have $d(\tilde{x}, \tilde{A}) \leq\left|\tilde{x}-\pi_{0}(a)\right| \leq|x-a|=d(x, A) \leq \varepsilon(A)$.
\end{proof}

\begin{lemma}\label{lem3.6} Let $V \subseteq \mathbb{R}^{d}$ be a finite set and $t>0$. If $d_{H}(\mathcal{B}(0, t) | V) \leq t$, then $0 \in \operatorname{Conv}(V)$.
\end{lemma}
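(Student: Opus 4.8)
The plan is to prove the contrapositive by a separating hyperplane argument: assuming $0 \notin \operatorname{Conv}(V)$, I will exhibit a point $x \in \mathcal{B}(0,t)$ with $d(x,V) > t$, contradicting the hypothesis $d_H(\mathcal{B}(0,t) \mid V) \le t$, i.e. the fact that every point of the ball is within distance $t$ of $V$.

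First I would use that $V$ is finite: then $\operatorname{Conv}(V)$ is a compact convex polytope, and the assumption $0 \notin \operatorname{Conv}(V)$ places $0$ at positive distance from it. Let $p$ be the point of $\operatorname{Conv}(V)$ nearest to $0$, and set $\delta := |p| > 0$ and $u := p/\delta$. The first-order optimality (obtuse angle) condition for the metric projection onto a convex set gives $\langle v - p, p\rangle \ge 0$ for every $v \in \operatorname{Conv}(V)$, hence $\langle v, u\rangle \ge \delta$ for all $v \in V$. Thus $u$ separates $0$ from $V$ with a strictly positive margin $\delta$.

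Next I would choose a radius $s \in (0,t)$ with $s^2 + 2 s \delta > t^2$, which is possible since $s^2 + 2 s\delta \to t^2 + 2 t\delta > t^2$ as $s \uparrow t$, and take $x := -s u$. Then $|x| = s < t$, so $x \in \mathcal{B}(0,t)$, while for every $v \in V$,
\[
|x - v|^2 = s^2 + 2 s \langle u, v\rangle + |v|^2 \ge s^2 + 2 s \delta > t^2 ,
\]
so (using again that $V$ is finite) $d(x,V) > t$, which is the desired contradiction.

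I do not expect a genuine obstacle here: the statement is essentially the separating hyperplane theorem combined with the metric hypothesis on $\mathcal{B}(0,t)$. The only points needing a little care are checking that the separation margin $\delta$ is strictly positive — this is exactly where finiteness (hence compactness) of $V$ is used — and choosing $s$ so that the strict inequality $s^2 + 2 s \delta > t^2$ holds while keeping $x$ strictly inside the open ball; the latter is automatic since $|x| = s < t$.
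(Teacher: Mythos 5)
Your proof is correct and takes essentially the same separating-hyperplane route as the paper, which simply notes that $0 \notin \operatorname{Conv}(V)$ yields an open half-space containing $V$ and then observes $d(tx,V) > t$ for $x$ the outward unit normal. Your version is slightly more careful in choosing $s < t$ so that the witness point $-su$ lies strictly inside the open ball $\mathcal{B}(0,t)$, whereas the paper's point $tx$ sits on the boundary and one must appeal (implicitly) to continuity of $d(\cdot,V)$; this extra care is a genuine, if minor, improvement.
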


\begin{proof} We prove the contrapositive. If $0 \notin \operatorname{Conv}(V)$, then there exists an open half-space which contains $V$. Let $x$ be the unit vector orthogonal to this half-space. Then, $d(t x, V)>t$.
\end{proof}

Apply Lemma \ref{lem3.6} to $V=\tilde A$ and $t=\varepsilon(A)$. For $\tilde{x} \in \mathcal{B}_{T_{0} M}(0, \varepsilon(A))$, we have $d(\tilde{x}, \tilde A) \leq \varepsilon(A)$ according to Lemma \ref{lem3.5}. Therefore, we have $0 \in \operatorname{Conv}(\tilde A)$. Consider the Delaunay triangulation of $\tilde{A}$. The point 0 belongs to the convex hull of some simplex $\tilde{\sigma}$ of the triangulation, with circumradius $\mathrm{circ}(\tilde{\sigma})$ and center of the circumball $\tilde{q}$. The simplex $\tilde{\sigma}$ corresponds to some simplex $\sigma$ in $A$, and the point 0 is equal to $\pi_{0}(y)$ for some point $y \in \operatorname{Conv}(\sigma)$. By Lemma \ref{lem2.2}, we actually have $\pi_{M}(y)=0$, and to conclude, it suffices to show that $r(\sigma) \leq \varepsilon(A)\left(1+6 \frac{\varepsilon(A)}{\tau(M)}\right)$. To do so, we use the next lemma (recall that $\sigma \subseteq \mathcal{B}_{M}(0, R)$ with $\left.R<7 \tau(M) / 24\right)$.

\begin{lemma}\label{lem3.7} Let $\sigma \subseteq \mathcal{B}_{M}(0,7 \tau(M) / 24)$ and $\tilde{\sigma}=\tilde{\pi}_{0}(\sigma)$. Assume that $0 \in \operatorname{Conv}(\tilde{\sigma})$. Then,
\begin{equation}
r(\tilde{\sigma}) \leq r(\sigma) \leq r(\tilde{\sigma})\left(1+6 \frac{r(\tilde{\sigma})}{\tau(M)}\right).
\end{equation}
\end{lemma}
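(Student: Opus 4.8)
The plan is to establish the two inequalities separately. The lower bound $r(\tilde\sigma)\le r(\sigma)$ is immediate: the map $\tilde\pi_0=\pi_0$ is an orthogonal projection, hence $1$-Lipschitz, so applying it to the smallest enclosing ball of $\sigma$ shows that $\tilde\sigma=\pi_0(\sigma)$ is contained in a ball of $T_0M$ of radius $r(\sigma)$. (This direction uses neither the hypothesis $0\in\operatorname{Conv}(\tilde\sigma)$ nor the reach.)

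For the upper bound, write $\rho:=r(\tilde\sigma)$ and let $\tilde q\in T_0M$ be the center of the smallest enclosing ball of $\tilde\sigma$, so that $|\pi_0(x)-\tilde q|\le\rho$ for all $x\in\sigma$. The strategy is to trap $\sigma$ inside a ball centered at $\tilde q$ of radius only slightly larger than $\rho$, by combining two facts: $\sigma$ projects into the disk $\mathcal{B}_{T_0 M}(\tilde q,\rho)$, and $\sigma\subseteq M$ stays quadratically close to the tangent space $T_0M$. First, since $0\in\operatorname{Conv}(\tilde\sigma)$ lies in the smallest enclosing ball of $\tilde\sigma$, we get $|\tilde q|\le\rho$, hence $|\pi_0(x)|\le 2\rho$ for every $x\in\sigma$; as $x\in\mathcal{B}_M(0,7\tau(M)/24)$, inequality \eqref{eq2.2} then yields $|x|\le\frac{8}{7}|\pi_0(x)|\le\frac{16}{7}\rho$. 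Second, Lemma \ref{lem2.1} applied at the point $0\in M$ gives $|\pi_0^\perp(x)|=|\pi_0^\perp(x-0)|\le\frac{|x|^2}{2\tau(M)}\le\frac{128\,\rho^2}{49\,\tau(M)}$. Since $\tilde q\in T_0M$ we have $\pi_0^\perp(\tilde q)=0$, so
\[
|x-\tilde q|^2=|\pi_0(x)-\tilde q|^2+|\pi_0^\perp(x)|^2\le\rho^2+\Bigl(\tfrac{128\,\rho^2}{49\,\tau(M)}\Bigr)^2 .
\]
Thus $\sigma$ is contained in the ball of center $\tilde q$ and radius $\sqrt{\rho^2+(128\rho^2/(49\tau(M)))^2}$, so $r(\sigma)$ is bounded by this quantity. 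To conclude I would use $\sqrt{a^2+b^2}\le a+b^2/(2a)$ together with $\rho=r(\tilde\sigma)\le r(\sigma)\le 7\tau(M)/24$; a one-line computation then gives $\sqrt{\rho^2+(128\rho^2/(49\tau(M)))^2}\le\rho\bigl(1+6\rho/\tau(M)\bigr)$, which is the desired inequality.

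I do not expect a real obstacle here: the argument is short once set up, and the only delicate point is keeping the constants honest. The single substantive idea is that the hypothesis $0\in\operatorname{Conv}(\tilde\sigma)$ is precisely what forces $|\tilde q|\le\rho$, and hence $|x|\lesssim\rho$; without it one would only be able to control $|x|$ by the a priori larger quantity $r(\sigma)$, and the second-order normal deviation $|\pi_0^\perp(x)|$ would come out too large to close the estimate.
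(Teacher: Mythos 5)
Your proof is correct, and it takes a genuinely different route from the paper's. Both proofs bound $r(\sigma)$ by exhibiting an explicit ball containing $\sigma$, but they choose different centers. The paper writes $\tilde z=\sum_j\lambda_j\tilde y_j$ for the center of the smallest enclosing ball of $\tilde\sigma$, then \emph{lifts} it to $z=\sum_j\lambda_j y_j\in\operatorname{Conv}(\sigma)$ and estimates $|z-y_i|$ by the triangle inequality through $\tilde z$ and $\tilde y_i$; this picks up two additive errors $|z-\tilde z|$ and $|\tilde y_i-y_i|$, each of order $r(\tilde\sigma)^2/\tau(M)$, which sum to give the coefficient $256/49\le 6$. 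You instead keep the center $\tilde q\in T_0M$ where it already sits and split $x-\tilde q$ into its tangential part ($|\pi_0(x)-\tilde q|\le\rho$ by definition of $\tilde q$) and normal part ($|\pi_0^\perp(x)|\lesssim\rho^2/\tau$, using $|\tilde q|\le\rho$ from the hypothesis $0\in\operatorname{Conv}(\tilde\sigma)$, then \eqref{eq2.2} and Lemma~\ref{lem2.1}). Because these two pieces are orthogonal, the normal error enters only through a Pythagorean sum, so the correction to $\rho$ is of order $\rho^3/\tau^2$ rather than $\rho^2/\tau$: your argument actually proves a slightly sharper inequality than the stated one, and even the crude bound $\sqrt{a^2+b^2}\le a+b$ already gives coefficient $128/49<6$, with no need for $\rho\le 7\tau(M)/24$ in that last step. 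Your observation that the hypothesis $0\in\operatorname{Conv}(\tilde\sigma)$ is exactly what forces $|\tilde q|\le\rho$ (and thence $|x|\lesssim\rho$) is the same key point the paper exploits via $|\tilde y_i|\le 2r(\tilde\sigma)$. The one small thing to keep honest: $\sqrt{a^2+b^2}\le a+b^2/(2a)$ degenerates when $\rho=0$, but in that degenerate case $\tilde\sigma=\{0\}$ forces $\sigma=\{0\}$ by injectivity of $\tilde\pi_0$ (Lemma~\ref{lem2.2}), so both sides vanish; alternatively just use $\sqrt{a^2+b^2}\le a+b$ throughout.
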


\begin{proof}
 As the projection is 1-Lipschitz, it is clear that $r(\tilde{\sigma}) \leq r(\sigma)$. Let us prove the other inequality. Let $\sigma=\left\{y_{0}, \ldots, y_{k}\right\}, \tilde{\sigma}=\left\{\tilde{y}_{0}, \ldots, \tilde{y}_{k}\right\}$ and fix $0 \leq i \leq k.$ As $y_{i} \in \mathcal{B}_{M}(0,7 \tau(M) / 24)$, we have by \eqref{eq2.2}

\begin{equation}\label{eq3.7}
\left|y_{i}\right| \leq \frac{8}{7}\left|\tilde{y}_{i}\right| \leq \frac{16}{7} r(\tilde{\sigma}).
\end{equation}

where we used that $\left|\tilde{y}_{i}\right| \leq 2 r(\tilde{\sigma})$ as $0 \in \operatorname{Conv}(\tilde{\sigma})$. Let $\tilde{z}$ be the center of the minimum enclosing ball of $\tilde{\sigma}$. Write $\tilde{z}=\sum_{j=0}^{k} \lambda_{j} \tilde{y}_{j}$ as a convex combination of the $\tilde{y}_{j}$s and let $z=\sum_{j=0}^{k} \lambda_{j} y_{j} \in$ $\operatorname{Conv}(\sigma)$. Then, we have

\begin{equation*}
\begin{aligned}
\left|z-y_{i}\right| & \leq|z-\tilde{z}|+\left|\tilde{z}-\tilde{y}_{i}\right|+\left|\tilde{y}_{i}-y_{i}\right| \\
& \leq \sum_{j=0}^{k} \lambda_{j}\left|y_{j}-\tilde{y}_{j}\right|+r(\tilde{\sigma})+\frac{\left|y_{i}\right|^{2}}{2 \tau(M)} \text { using Lemma \ref{lem2.1}}  \\
& \leq \sum_{j=0}^{k} \lambda_{j} \frac{\left|y_{j}\right|^{2}}{2 \tau(M)}+r(\tilde{\sigma})+\frac{128}{49} \frac{r(\tilde{\sigma})^{2}}{\tau(M)} \text { using Lemma \ref{lem2.1} and \eqref{eq3.7}} \\
& \leq r(\tilde{\sigma})+\frac{256}{49} \frac{r(\tilde{\sigma})^{2}}{\tau(M)} \leq r(\tilde{\sigma})+6 \frac{r(\tilde{\sigma})^{2}}{\tau(M)} \text { using \eqref{eq3.7}. }
\end{aligned}
\end{equation*}

We obtain the conclusion as $\sigma$ is included in the ball of radius $\max _{i}\left|z-y_{i}\right|$ and center $z$.
\end{proof}

Using the previous lemma, we are left with showing that $r(\tilde{\sigma}) \leq \varepsilon(A)$. We will actually show the stronger inequality $\operatorname{circ}(\tilde{\sigma}) \leq \varepsilon(A)$ (the radius of a set is always smaller than its circumradius). As 0 is in the circumball (that is centered at $\tilde{q}$), the ball centered at $\tilde{q}$ of radius $|\tilde{q}|$ does not intersect $\tilde A$. This enforces $|\tilde{q}| \leq \varepsilon(A)$: otherwise, there would exist a ball not intersecting $\tilde A$, of radius $\varepsilon(A)$, and whose center is at distance less than $\varepsilon(A)$ from 0, a contradiction with Lemma \ref{lem3.5} (see Figure \ref{fig4}). As $|\tilde{q}| \leq \varepsilon(A)$, we obtain, once again according to Lemma \ref{lem3.5}, that $\operatorname{circ}(\tilde{\sigma})=d(\tilde{q}, \tilde{A}) \leq \varepsilon(A)$ concluding the proof.
\end{proof}

\begin{figure}
\centering
\includegraphics[width=0.5\textwidth]{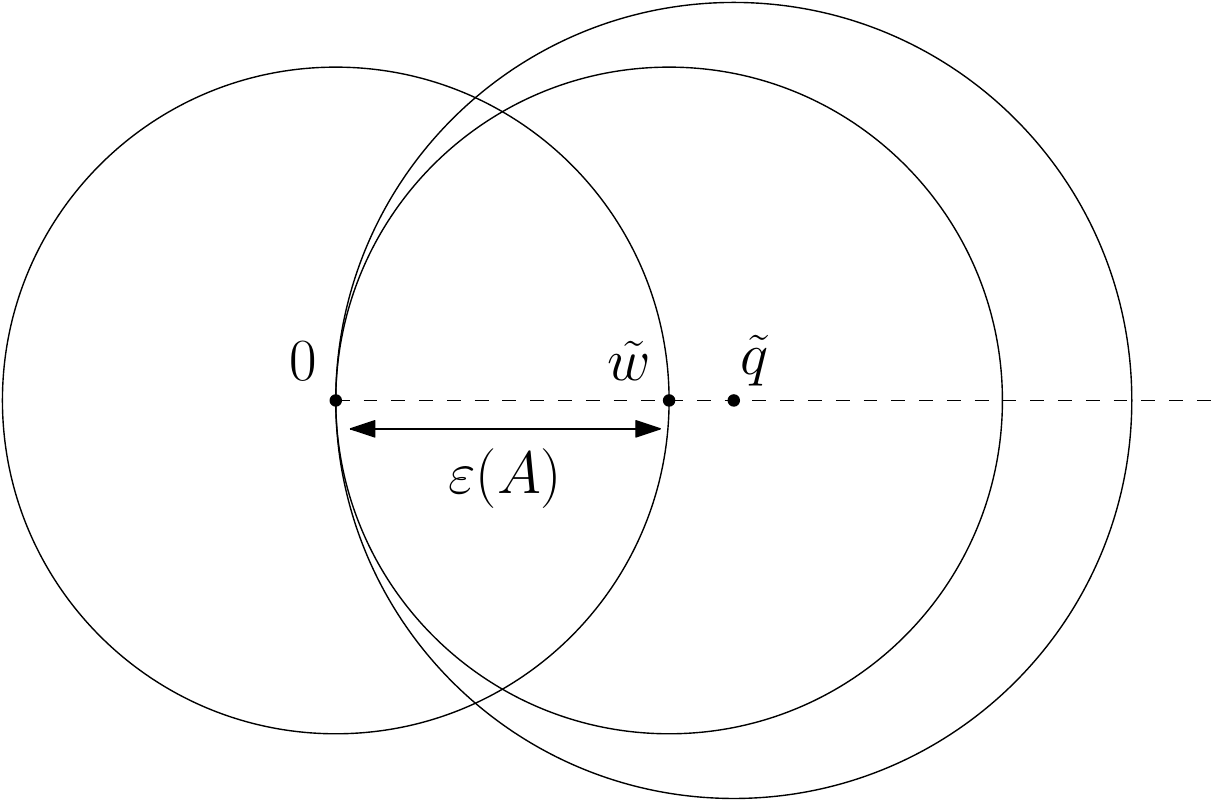}
\caption{If $|\tilde{q}|>\varepsilon(A)$, then the ball $\mathcal{B}_{T_{0} M}(\tilde{q},|\tilde{q}|)$ contains a ball of radius $\varepsilon(A)$ centered at a point (here denoted by $\tilde{w}$) at distance less than $\varepsilon(A)$ from 0.}\label{fig4}
\end{figure}

\begin{rem}\label{rem3.8} In the case where the dimension $d$ is known, one can consider a variant of the $t$-convex hull, $\operatorname{Conv}_{d}(t, A)$, where one restricts the union to be over simplices of dimension less than $d$. The set $\operatorname{Conv}_{d}(t, A)$ is simpler to compute as it contains less simplices (see Section \ref{sec:num}). Furthermore, if $t_{d}^{*}(A):=\inf \left\{t: \pi_{M}(\operatorname{Conv}_{d}(t, A))=M\right\}$, then both Lemma \ref{lem3.3} and Proposition \ref{prop3.4} hold with $t_{d}^{*}(A)$ and $\operatorname{Conv}_{d}(t, A)$ instead of $t^{*}(A)$ and $\operatorname{Conv}(t, A)$. Indeed, only simplices of dimension less than $d$ (corresponding to simplices of a Delaunay triangulation on a tangent space) were considered in the previous proof.
\end{rem}

We have now shown that the quality of the $t$-convex hull on $A$ can be controlled for $t \geq \varepsilon(A)(1+6 \varepsilon(A) / \tau(M))$ (that is slightly larger than the approximation rate $\varepsilon(A))$. In a random setting, the approximation rate is known to be of order $(\ln n / n)^{1 / d}$: this is enough to show that the $t$-convex hull is a minimax estimator. Recall the definition of the statistical model $\mathcal{Q}_{\tau_{\min }, f_{\min }, f_{\max }}^{d}$ from the introduction: it consists of laws $\mu$ supported on some manifold $M \in \mathcal{M}^{d}$ 
with $\tau(M) \geq \tau_{\min }$, having a density $f$ lower bounded by $f_{\min }$ and upper bounded by $f_{\max }$. The minimax result will actually hold on the larger model $\mathcal{Q}_{\tau_{\min }, f_{\min }}^{d}:=\bigcup_{f_{\max }} \mathcal{Q}_{\tau_{\min }, f_{\min }, f_{\max }}^{d}$ (that is without imposing any upper bound on $f$).

Let $\mu \in \mathcal{Q}_{\tau_{\min }, f_{\min }}^{d}$ and let $\mathcal{X}_{n}$ be a $n$-sample from law $\mu$. We consider the estimator $\operatorname{Conv}(t, \mathcal{X}_{n})$. Note first that $\operatorname{Conv}(t, \mathcal{X}_{n})$ is indeed an estimator, that is the application

\begin{equation*}
\left(x_{1}, \ldots, x_{n}\right) \in\left(\mathbb{R}^{D}\right)^{n} \mapsto \operatorname{Conv}(t,\left\{x_{1}, \ldots, x_{n}\right\})
\end{equation*}

is measurable (with respect to the Borel $\sigma$-field associated with the metric $d_{H}$ on the set $\mathcal{K}\left(\mathbb{R}^{D}\right)$ of all nonempty compact subsets of $\mathbb{R}^{D}$). Indeed, for $E$ a measurable subset of $\mathcal{K}\left(\mathbb{R}^{D}\right)$ and $A, B \in \mathcal{K}\left(\mathbb{R}^{D}\right)$, introduce the notation $G_{E}(A, B)=A$ if $A \in E$ and $B$ otherwise. This function is measurable, and $\operatorname{Conv}(t,\left\{x_{1}, \ldots, x_{n}\right\})$ can be written as

\begin{equation*}
\bigcup_{I \subseteq\{1, \ldots, n\}} G_{E}\left(\operatorname{Conv}(\left\{x_{i}\right\}_{i \in I}\right),\left\{x_{i}\right\}_{i \in I})
\end{equation*}

where $E$ is the subset of $\mathcal{K}\left(\mathbb{R}^{D}\right)$ given by $\left\{K \in \mathcal{K}\left(\mathbb{R}^{D}\right): r(K) \leq t\right\}$, which is closed \cite[Lemma 16]{geo:attali2013vietoris}. As the functions $\cup$ and Conv are measurable, the measurability follows \cite[Proposition III.7]{stat:aamari2017vitesses}.
\medskip

For a fixed $t>0$, we obtain the following control of $\mathbb{E}\left[d_{H}(\operatorname{Conv}(t, \mathcal{X}_{n}),M)\right].$

\begin{equation*}
\begin{aligned}
\mathbb{E}\left[d_{H}(\operatorname{Conv}(t, \mathcal{X}_{n}),M)\right] &=\mathbb{E}\left[d_{H}(\operatorname{Conv}(t, \mathcal{X}_{n}),M) \mathbf{1}\left\{t \geq t^{*}(\mathcal{X}_{n})\right\}\right]\\
&\qquad\qquad+\mathbb{E}\left[d_{H}(\operatorname{Conv}(t, \mathcal{X}_{n}),M) \mathbf{1}\left\{t<t^{*}(\mathcal{X}_{n})\right\}\right]\\
& \leq \frac{t^{2}}{\tau(M)}+\operatorname{diam}(M) \mathbb{P}\left(t^{*}(\mathcal{X}_{n})>t\right)
\end{aligned}
\end{equation*}

By Proposition \ref{prop3.4}, if $\varepsilon(\mathcal{X}_{n})<\tau(M) / 8$, then 
\[t^{*}(\mathcal{X}_{n}) \leq \varepsilon(\mathcal{X}_{n})\left(1+6 \frac{\varepsilon(\mathcal{X}_{n})}{ \tau(M)}\right) \leq \frac{7}{4} \varepsilon(\mathcal{X}_{n}).\]
 Therefore,   if $t$ is small enough,
 \begin{align*}
 \mathbb{P}\left(t^{*}(\mathcal{X}_{n})>t\right) &\leq \mathbb{P}\left(\varepsilon(\mathcal{X}_{n})>\tau(M) / 8\right)+\mathbb{P}\left(\varepsilon(\mathcal{X}_{n})>4 t / 7\right) \\
 &\leq 2 \mathbb{P}\left(\varepsilon(\mathcal{X}_{n})>4 t / 7\right).
 \end{align*}
We obtain

\begin{equation}\label{eq3.8}
\mathbb{E}\left[d_{H}(\operatorname{Conv}(t, \mathcal{X}_{n}),M)\right] \leq \frac{t^{2}}{\tau(M)}+2 \operatorname{diam}(M) \mathbb{P}\left(\varepsilon(\mathcal{X}_{n})>4 t / 7\right)
\end{equation}

Hence, to control the risk, it suffices to bound the tail of $\varepsilon(\mathcal{X}_{n})$.

\begin{prop}\label{prop3.9} Let $\mu \in \mathcal{Q}_{\tau_{\min }, f_{\min }}^{d}$ and let $\mathcal{X}_{n}=\left\{X_{1}, \ldots, X_{n}\right\}$ be a $n$-sample of law $\mu.$ If $r \leq \tau_{\min } / 4$, then, for any $\eta \in(0,1)$

\begin{equation}
\mathbb{P}\left(\varepsilon(\mathcal{X}_{n})>r\right) \leq \frac{c_{d, \eta}}{f_{\min } r^{d}} \exp \left(-n \alpha_{d} f_{\min }\left(1-\frac{r^{2}}{3 \tau_{\min }^{2}}\right)^{d} \eta r^{d}\right),
\end{equation}

where $c_{d, \eta}$ depends on $d$ and $\eta$. Furthermore, for any $a>0$, for $n$ large enough (with respect to $d$, $f_{\min }, \tau_{\min }$ and $a$), with probability $1-c(\ln n)^{d-1} n^{1-a}$ (where c depends also on those parameters), we have

\begin{equation}
\varepsilon(\mathcal{X}_{n}) \leq\left(\frac{a \ln n}{\alpha_{d} f_{\min } n}\right)^{1 / d}.
\end{equation}
\end{prop}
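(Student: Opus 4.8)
The plan is to prove Proposition \ref{prop3.9} by a covering argument: the event $\{\varepsilon(\mathcal{X}_n) > r\}$ means some point of $M$ is far from all sample points, which by compactness can be detected on a finite net, and each ``bad'' net point contributes a small probability because a geodesic ball around it has positive $\mu$-mass bounded below by Lemma \ref{lem2.3}.

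First I would fix $r \le \tau_{\min}/4$ and construct a finite $\delta$-net $N$ of $M$ for some $\delta$ to be chosen of order $\eta' r$; its cardinality is controlled by a volume argument, $|N| \le \operatorname{Vol}(M)/(\text{vol of disjoint balls of radius } \delta/2) \le C_d \operatorname{Vol}(M)/(f_{\min} \delta^d)$ via Lemma \ref{lem2.3}, and here I can absorb $\operatorname{Vol}(M)$ into the constant or keep it; actually since the statement has no $\operatorname{Vol}(M)$ dependence, I'd instead note that the relevant covering number of the $\eps$-thickened bad region scales like $1/(f_{\min} r^d)$ after the mass lower bound is used — the cleaner route is to bound the number of net points that matter by using that $\mu$ is a probability measure and each disjoint ball has mass at least $\alpha_d f_{\min}(1 - r^2/(3\tau_{\min}^2))^d (\delta/2)^d$, so $|N| \le (\alpha_d f_{\min}(1-r^2/(3\tau_{\min}^2))^d(\delta/2)^d)^{-1}$. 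Then if $\varepsilon(\mathcal{X}_n) > r$, there is a net point $y \in N$ with $d(y, \mathcal{X}_n) > r - \delta$, hence $\mathcal{B}_M(y, r-\delta)$ contains no sample point. A union bound gives
\begin{equation*}
\mathbb{P}(\varepsilon(\mathcal{X}_n) > r) \le |N| \max_{y \in N}\, (1 - \mu(\mathcal{B}_M(y, r-\delta)))^n \le |N| \exp\bigl(-n\, \mu(\mathcal{B}_M(y, r-\delta))\bigr),
\end{equation*}
and $\mu(\mathcal{B}_M(y,r-\delta)) \ge f_{\min}\operatorname{Vol}(\mathcal{B}_M(y,r-\delta)) \ge f_{\min}\alpha_d (1-r^2/(3\tau_{\min}^2))^d (r-\delta)^d$ by Lemma \ref{lem2.3}. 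Choosing $\delta = \delta(\eta) r$ small enough that $(1-\delta/r)^d \ge \eta$ (and adjusting the net-count exponent accordingly, which only changes $c_{d,\eta}$) yields the first displayed inequality with $c_{d,\eta}$ depending only on $d$ and $\eta$.

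For the second part, I would plug $r = r_n := (a \ln n / (\alpha_d f_{\min} n))^{1/d}$ into the first bound. Then $n \alpha_d f_{\min}(1-r_n^2/(3\tau_{\min}^2))^d \eta r_n^d = a \eta (1 - o(1)) \ln n$, so for $n$ large (depending on $d, f_{\min}, \tau_{\min}, a$) the exponential is at most $n^{-a\eta(1-o(1))}$; meanwhile $c_{d,\eta}/(f_{\min} r_n^d) = c_{d,\eta} n / (a \ln n)$. Since $\eta \in (0,1)$ is free, this does not immediately give the claimed $c(\ln n)^{d-1} n^{1-a}$ exponent with the right power — so the net must be more efficient. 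The fix is to not union-bound over a net of $M$ directly but over a net of the region where a ball of radius $\approx r_n$ can be empty, exploiting that for $\varepsilon(\mathcal{X}_n)$ slightly smaller than $r_n$ the relevant $(\ln n)^{(d-1)/d}$-type gain comes from a finer analysis: cover $M$ by $\approx \operatorname{Vol}(M)/(\alpha_d f_{\min} (c r_n)^d) \asymp n/\ln n$ balls of radius $c r_n$ with $c$ close to $1$, so $|N| \asymp n/\ln n$, and on each the empty-ball probability is $\exp(-n \alpha_d f_{\min} \text{(const)} r_n^d) = n^{-a'}$ with $a'$ close to $a$; taking $c \to 1$ pushes $a' \to a$, and the $(\ln n)^{d-1}$ factor appears from a sharper count of net points needed when one optimizes the radius, following the standard argument (e.g. as in \cite{stat:genovese2012manifold, stat:aamari2018stability}).

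The main obstacle is getting the polynomial factor exactly right: a crude net gives $n/\ln n$ times $n^{-a+o(1)}$, i.e.\ essentially $n^{1-a}$ up to a $\ln n$ power, but matching the stated $c(\ln n)^{d-1} n^{1-a}$ requires care in how the net radius is tuned relative to $r_n$ and in tracking the $(1 - r^2/(3\tau_{\min}^2))^d$ correction — this is where I expect the bookkeeping to be delicate, though conceptually it is just optimizing constants in the exponent. The probabilistic content (union bound over a net plus the ball-mass lower bound from Lemma \ref{lem2.3}) is routine; everything hard is in the constants.
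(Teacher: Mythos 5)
Your first displayed inequality is proved essentially the same way the paper does it: the paper invokes a ready-made $(a,b)$-standard-measure estimate from \cite[Prop.~III.14]{stat:aamari2017vitesses}, which itself is exactly your covering-plus-union-bound argument, with the net radius $\delta$ playing the role your $\delta(\eta)r$ plays. That part is fine, and your bookkeeping ($|N| \lesssim 1/(f_{\min}\alpha_d(\delta/2)^d)$ and $\mu(\mathcal B(y,r-\delta))\geq f_{\min}\alpha_d(1-r^2/(3\tau^2))^d(r-\delta)^d$, then absorb $\delta$-dependence into $c_{d,\eta}$) closes it.

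The gap is in the second statement, and you have correctly diagnosed it but proposed the wrong repair. With $\eta$ fixed, plugging in $r_n = (a\ln n/(\alpha_d f_{\min}n))^{1/d}$ gives a prefactor of order $n/\ln n$ and an exponential of order $n^{-a\eta(1-o(1))}$, hence $n^{1-a\eta}$ and not $n^{1-a}$ — yes. But the fix is \emph{not} to switch to a cleverer net; it is simply to let $\eta$ depend on $n$ in the bound you already proved. Take $\eta_n = 1 - 1/\ln n$. Then the exponent $n\alpha_d f_{\min}(1-r_n^2/(3\tau^2))^d\eta_n r_n^d = a\ln n\,(1-1/\ln n)(1-O((\ln n/n)^{2/d})) \geq a\ln n - C_a$, so the exponential is $\leq C n^{-a}$. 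Meanwhile, $c_{d,\eta}$ scales like $(1-\eta^{1/d})^{-d}$ (from your $|N|\sim 1/\delta(\eta)^d$ with $\delta(\eta)/r \sim 1-\eta^{1/d}$), and $1-\eta_n^{1/d}\asymp 1/(d\ln n)$, so $c_{d,\eta_n}\asymp(\ln n)^d$. Multiplying by $1/(f_{\min}r_n^d)\asymp n/\ln n$ gives the prefactor $(\ln n)^{d-1}n$, and in total $(\ln n)^{d-1}n^{1-a}$ as claimed. This is the whole trick, and it is cheaper than what you sketch.

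Your proposed alternative fix is also not internally consistent as written: a net of radius $cr_n$ has $|N|\asymp n/(c^d\ln n)$ but then the empty ball around a net point has radius only $(1-c)r_n$, giving $a'=(1-c)^d a\to 0$ as $c\to 1$, not $a'\to a$; and if instead you shrink the net radius to $(1-c)r_n$ so that $a'=c^d a\to a$, then $|N|\asymp n/((1-c)^d\ln n)\to\infty$, contradicting your claim $|N|\asymp n/\ln n$. So the trade-off you are wrestling with is real, but the clean resolution is to note that your first displayed inequality, being uniform in $\eta$, already contains the answer once $\eta$ is allowed to drift toward $1$ at a logarithmic rate.
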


\begin{proof} A measure $\nu$ is said to be $(a, b)$-standard at scale $r_{0}$ if $\nu(\mathcal{B}(x, r)) \geq a r^{b}$ for all $r \leq r_{0}$ and $x$ in the support of $\nu$. Let $\mu \in \mathcal{Q}_{\tau_{\min }, f_{\min }}^{d}$ with support $M$. Lemma \ref{lem2.3} indicates that the measure $\mu$ is $(a, b)$-standard at scale $r_{0}$ for any $r_{0} \leq \tau(M) / 4$, with $a=f_{\min } \alpha_{d}\left(1-\frac{r_{0}^{2}}{3 \tau(M)^{2}}\right)^{d}$ and $b=d$. It is stated in the proof \cite[Proposition III.14]{stat:aamari2017vitesses} that for such a measure, and for any $\delta \leq 2 r_{0}$ with $0<r-\delta \leq r_{0}$, we have

\begin{equation*}
\mathbb{P}\left(\varepsilon(\mathcal{X}_{n})>r\right) \leq \frac{2^{b}}{a \delta^{b}} \exp \left(-n a(r-\delta)^{b}\right).
\end{equation*}

Letting $r=r_{0}$ and $\delta=\left(1-\eta^{1 / d}\right) r$ for some $\eta \in(0,1)$, we obtain that

\begin{equation}\label{eq3.11}
\begin{aligned}
&\mathbb{P}\left(\varepsilon(\mathcal{X}_{n})>r\right) \leq \frac{\left(2 /\left(1-\eta^{1 / d}\right)\right)^{d}}{f_{\min } \alpha_{d}\left(1-\frac{r^{2}}{3 \tau(M)^{2}}\right)^{d} r^{d}} \exp \left(-n f_{\min } \alpha_{d}\left(1-\frac{r^{2}}{3 \tau(M)^{2}}\right)^{d} \eta r^{d}\right) \\
&\leq \frac{c_{0}^{d}}{f_{\min } \alpha_{d} r^{d}} \exp \left(-n f_{\min } \alpha_{d}\left(1-\frac{r^{2}}{3 \tau(M)^{2}}\right)^{d} \eta r^{d}\right)
\end{aligned}
\end{equation}

for $c_{0}=96 /\left(47\left(1-\eta^{1 / d}\right)\right)$, where we used at the last line that $r \leq \tau(M) / 4$.

To prove the second statement, we let $r=\left(\frac{a \ln n}{\alpha_{d} f_{\min } n}\right)^{1 / d}$. Then, we have $n \alpha_{d} f_{\min } r^{d}=a \ln n$. Letting $\eta=1-1 / \ln n$, we obtain that

\begin{equation*}
\begin{aligned}
n f_{\min } \alpha_{d}\left(1-\frac{r^{2}}{3 \tau(M)^{2}}\right)^{d} \eta r^{d}&=(a \ln n)\left(1-\frac{1}{\ln n}\right)\left(1-c\left(\frac{\ln n}{n}\right)^{2 / d}\right) \\
&\geq(a \ln n)-C_{a}.
\end{aligned}
\end{equation*}

In particular, we obtain that the upper bound in \eqref{eq3.11} is of order $(\ln n)^{d-1} n^{1-a}$. 
\end{proof}

 Choose $t$ such that $4 t / 7=\left(\frac{3 \ln n}{\alpha_{d} f_{\min n} n}\right)^{1 / d}$. Then, according to Proposition \ref{prop3.9}, we have $\mathbb{P}\left(\varepsilon(\mathcal{X}_{n})>32 t / 7\right) \leq c(\ln n)^{d-1} n^{-2}.$ As $\operatorname{diam}(M)$ is also bounded by a constant depending on $\tau_{\min }, f_{\min }$ and $d$ (see \cite[Lemma III.24]{stat:aamari2017vitesses}), we obtain Theorem \ref{thm1.1} from \eqref{eq3.8} (without even the need of assuming that the density $f$ is upper bounded).

\section{Selection procedure for the \texorpdfstring{$t$}{t}-convex hulls}\label{sec:selection}

Assuming that we have observed a $n$-sample $\mathcal{X}_{n}$ having a distribution $\mu \in \mathcal{P}_{\tau_{\min }, f_{\min }}^{d}$, we were able in the previous section to build a minimax estimator of the underlying manifold $M$. The tuning of this estimator requires the knowledge of $d$ and $f_{\min }$: if the dimension $d$ can be efficiently estimated, this is not the case for $f_{\min }$, which will likely not be accessible in practice. An idea to overcome this issue is to design a selection procedure for the family of estimators $\left(\operatorname{Conv}(t, \mathcal{X}_{n})\right)_{t \geq 0}$. As the loss of the estimator $\operatorname{Conv}(t, \mathcal{X}_{n})$ is controlled efficiently for $t \geq t^{*}(\mathcal{X}_{n})$ a good idea is to select a scale $t$ larger than $t^{*}(\mathcal{X}_{n})$. We however do not have access to this quantity based on the observations $\mathcal{X}_{n}$, as the manifold $M$ is unknown. To select a scale close to $t^{*}(\mathcal{X}_{n})$, we monitor how the estimators $\operatorname{Conv}(t, \mathcal{X}_{n})$ deviate from $\mathcal{X}_{n}$ as $t$ increases. Namely, we use the convexity defect function introduced in \cite{geo:attali2013vietoris}.

\begin{definition} Let $A \subseteq \mathbb{R}^{D}$ and $t>0$. The convexity defect function at scale t of $A$ is defined $a s$
\begin{equation}
h(t, A):=d_{H}(\operatorname{Conv}(t, A), A).
\end{equation}
\end{definition}

As its name indicates, the convexity defect function measures the (lack of) convexity of a set $A$ at a given scale $t$. The next proposition states preliminary results on the convexity defect function.

\begin{prop} Let $A \subseteq \mathbb{R}^{D}$ be a closed set and $t \geq 0$.
\begin{enumerate}
\item We have $0 \leq h(t, A) \leq t$.
\item The set $A$ is convex if and only if $h(\cdot, A) \equiv 0$.
\item If $M \in \mathcal{M}^{d}$, then $h(t, M) \leq \frac{t^{2}}{2 \tau(M)}\left(1+\frac{t^{2}}{\tau(M)^{2}}\right)$.
\end{enumerate}
\end{prop}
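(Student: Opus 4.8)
The plan is to verify each of the three claims in turn, since they are essentially immediate consequences of the lemmas already proved in Sections \ref{sec:approx} and of elementary properties of the $t$-convex hull.

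\textbf{Part 1.} For the inequality $0 \leq h(t,A) \leq t$: the lower bound is trivial since $h$ is a supremum of distances. For the upper bound, I would unwind the definition $h(t,A) = d_H(\operatorname{Conv}(t,A),A) = \max\{d_H(\operatorname{Conv}(t,A)\mid A),\, d_H(A\mid \operatorname{Conv}(t,A))\}$. Since $A \subseteq \operatorname{Conv}(t,A)$, the second asymmetric distance $d_H(A\mid \operatorname{Conv}(t,A))$ is zero. For the first one, every point of $\operatorname{Conv}(t,A)$ lies in $\operatorname{Conv}(\sigma)$ for some $\sigma \subseteq A$ with $r(\sigma) \leq t$, and Lemma \ref{lem3.1} gives $d(y,\sigma) \leq r(\sigma) \leq t$, hence $d(y,A) \leq t$. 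Taking the supremum yields $h(t,A) \leq t$.

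\textbf{Part 2.} For the characterization of convexity: if $A$ is convex then $\operatorname{Conv}(t,A) \subseteq \operatorname{Conv}(A) = A$, while $A \subseteq \operatorname{Conv}(t,A)$ always, so $\operatorname{Conv}(t,A) = A$ and $h(\cdot,A)\equiv 0$. Conversely, if $h(\cdot,A) \equiv 0$, then $\operatorname{Conv}(t,A) = A$ for all $t$; letting $t \to \infty$, every finite subset $\sigma \subseteq A$ has finite radius, so $\operatorname{Conv}(\sigma) \subseteq \bigcup_t \operatorname{Conv}(t,A) = A$, which is exactly the statement that $A$ is convex (for a closed set, containing all segments between its points is equivalent to convexity).

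\textbf{Part 3.} For $M \in \mathcal{M}^d$: since $A = M$ here, $d_H(M\mid \operatorname{Conv}(t,M)) = 0$ again because $M \subseteq \operatorname{Conv}(t,M)$, so $h(t,M) = d_H(\operatorname{Conv}(t,M)\mid M)$, and this is bounded by Lemma \ref{lem3.2} (the first displayed bound there, $d_H(\operatorname{Conv}(t,A)\mid M) \leq \frac{t^2}{2\tau(M)}[(1+t^2/\tau(M)^2)\wedge 2]$, which implies the stated $\frac{t^2}{2\tau(M)}(1+t^2/\tau(M)^2)$). I do not anticipate any real obstacle: the only mild subtlety is the case $t \geq \tau(M)$, where Lemma \ref{lem3.2} as stated requires $r(\sigma) < \tau(M)$, but then $t^2/(2\tau(M))(1+t^2/\tau(M)^2) \geq \tau(M) \geq \operatorname{diam}$-type quantities, so the bound holds trivially; I would add a one-line remark to that effect.
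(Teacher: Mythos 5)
Your proof of Parts 1 and 2 is correct, and the overall approach (unpack the Hausdorff distance into its two one-sided parts, observe that $A \subseteq \operatorname{Conv}(t,A)$ kills one side, and cite Lemmas \ref{lem3.1} and \ref{lem3.2} for the other) is exactly what the paper does, just spelled out in more detail. Part 3 is also correct as long as you simply invoke the ``in particular'' inequality of Lemma \ref{lem3.2}, which the paper already states for all $t \geq 0$.

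The problem is the parenthetical one-line remark you propose to add for the case $t \geq \tau(M)$. You assert that $\frac{t^{2}}{2\tau(M)}\left(1+\frac{t^{2}}{\tau(M)^{2}}\right) \geq \tau(M) \geq \operatorname{diam}$-type quantities, so the bound is trivial. The second inequality is false: the reach gives no upper bound whatsoever on the diameter of $M$ or of $\operatorname{Conv}(M)$. A long, thin tube or ellipsoid-like surface has small reach (dictated by the tight curvature at the ends) but arbitrarily large diameter, and the distance from its barycenter to the surface easily exceeds $\tau(M)$. So $\tau(M)$ does not dominate $d_{H}(\operatorname{Conv}(t,M) \mid M)$ for large $t$, and your remark as written is wrong. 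If you wish to justify the $t \geq \tau(M)$ case from first principles rather than citing Lemma \ref{lem3.2}, the correct argument is the one already latent in Lemma \ref{lem3.1}: for $\sigma \subseteq M$ with $r(\sigma) \leq t$ and $y \in \operatorname{Conv}(\sigma)$, one has $d(y,M) \leq d(y,\sigma) \leq r(\sigma) \leq t$, and since $t \geq \tau(M)$ the factor $1 + t^{2}/\tau(M)^{2}$ is at least $2$, so $t \leq t^{2}/\tau(M) \leq \frac{t^{2}}{2\tau(M)}\left(1+\frac{t^{2}}{\tau(M)^{2}}\right)$. Either cite the lemma as stated and drop the remark, or replace the remark with this argument.
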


\begin{proof}
Point 1 follows from Lemma \ref{lem3.1}. Point 2 is clear and Point 3 is a consequence of Lemma \ref{lem3.2}.
\end{proof} 

\begin{figure}
\centering

\raisebox{0.1\height}{\includegraphics[width=0.3\textwidth]{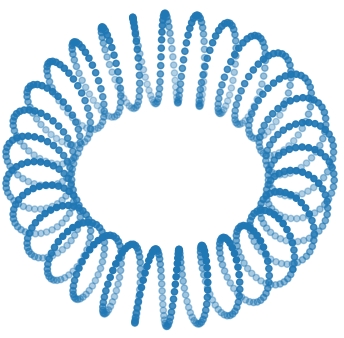}} \hspace{1cm}
\includegraphics[width=0.5\textwidth]{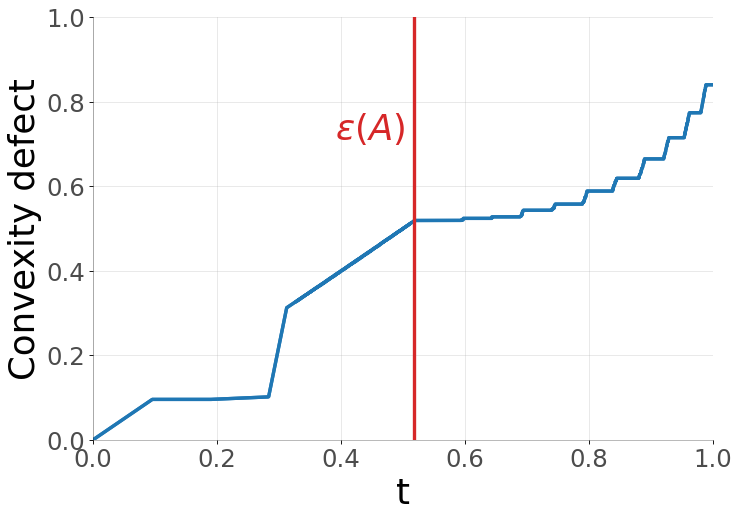}

\raisebox{0.1\height}{\includegraphics[width=0.3\textwidth]{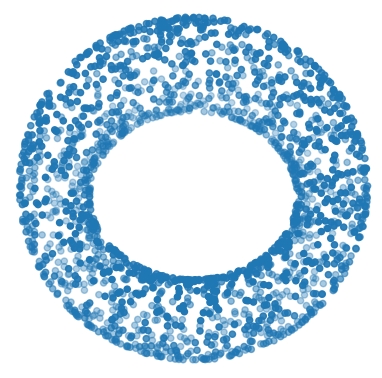}}
\hspace{1cm}
\includegraphics[width=0.5\textwidth]{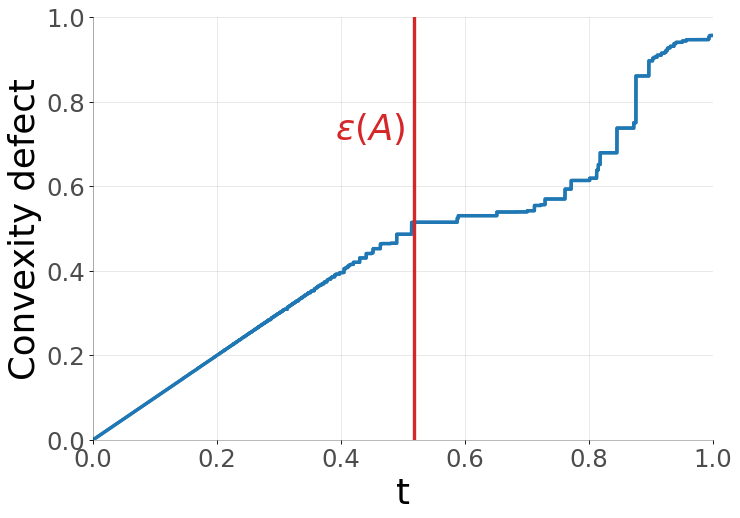}

\caption{ Two subsets of the torus having the same approximation rate, but whose convexity defect functions exhibit different behaviors on $\left[0, t^{*}(A)\right]$.}\label{fig5}
\end{figure}

As expected, the convexity defect of a convex set is null, whereas for small values of $t$, the convexity defect of a manifold $h(t, M)$ is very small (compared to the maximum value possible, which is $t$): when looked at locally, $M$ is "almost flat" (and thus "almost locally convex"). As already noted in the introduction, if $A$ is a finite set, then the convexity defect function is a piecewise constant function, whose value may only change at $t$ if $t \in \operatorname{Rad}(A):=\{r(\sigma): \sigma \subseteq A\}$.

For a set $A \subseteq M$, we recover the subquadratic behavior of the convexity defect function for values of $t$ above the threshold value $t^{*}(A)$. Namely, we have the following proposition.

\begin{prop}\label{prop4.3} Let $A \subseteq M$. For $t^{*}(A)<t<\tau(M)$,
\begin{equation}
h(t, A) \leq \frac{t^{2}}{2 \tau(M)}\left(1+\frac{t^{2}}{\tau(M)^{2}}\right)+t^{*}(A)\left(1+\frac{t^{*}(A)}{\tau(M)}\right).
\end{equation}
\end{prop}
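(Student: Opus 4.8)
The plan is to bound $h(t,A) = d_H(\operatorname{Conv}(t,A), A)$ by decomposing it into the two asymmetric Hausdorff distances and controlling each separately, drawing on the results already established. Since $h(t,A) = \max\{d_H(\operatorname{Conv}(t,A)\mid A), d_H(A\mid \operatorname{Conv}(t,A))\}$ and $A \subseteq \operatorname{Conv}(t,A)$, the second term $d_H(A\mid \operatorname{Conv}(t,A))$ is zero; so it suffices to bound $d_H(\operatorname{Conv}(t,A)\mid A)$, i.e.\ to show every point $y \in \operatorname{Conv}(t,A)$ is close to $A$.

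Fix $y \in \operatorname{Conv}(t,A)$, so $y \in \operatorname{Conv}(\sigma)$ for some $\sigma \subseteq A$ with $r(\sigma) \leq t$. First I would bound $d(y,M)$: by Lemma \ref{lem3.2}, $d(y,M) \leq \frac{r(\sigma)^2}{2\tau(M)}(1 + \frac{r(\sigma)^2}{\tau(M)^2}) \leq \frac{t^2}{2\tau(M)}(1+\frac{t^2}{\tau(M)^2})$ (using $r(\sigma) \leq t < \tau(M)$). Let $x := \pi_M(y) \in M$ be the projection; then $|y - x| = d(y,M)$, which is bounded by the above. Next I want to say that $x$ is close to $A$ — this is where $t^*(A)$ enters. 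Since $t > t^*(A)$ and $x \in M = \pi_M(\operatorname{Conv}(t,A))$... wait, that gives $x$ is the projection of \emph{some} point of $\operatorname{Conv}(t,A)$, not directly that $x$ is near $A$. Instead I would use Proposition \ref{prop3.4}, which gives $\varepsilon(A) = d_H(A,M) \leq t^*(A)(1 + \frac{t^*(A)}{\tau(M)})$; in particular $d(x, A) \leq \varepsilon(A) \leq t^*(A)(1+\frac{t^*(A)}{\tau(M)})$. Then by the triangle inequality,
\begin{equation*}
d(y, A) \leq |y - x| + d(x, A) \leq \frac{t^2}{2\tau(M)}\left(1 + \frac{t^2}{\tau(M)^2}\right) + t^*(A)\left(1 + \frac{t^*(A)}{\tau(M)}\right),
\end{equation*}
and taking the supremum over $y \in \operatorname{Conv}(t,A)$ yields the claimed bound on $h(t,A)$.

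The main subtlety — rather than a genuine obstacle — is making sure the projection $\pi_M(y)$ is well-defined, which requires $d(y,M) < \tau(M)$; this follows because the first term in our bound is $< \tau(M)$ when $t < \tau(M)$ (indeed $\frac{t^2}{2\tau(M)}(1+\frac{t^2}{\tau(M)^2}) < \frac{1}{2}\cdot 2 \cdot \tau(M)$... one should double-check this is $<\tau(M)$, but it holds comfortably in the regime of interest, and in any case $d(y,M) \leq \frac{t^2}{\tau(M)} < \tau(M)$ by the cruder bound in Lemma \ref{lem3.2}). Everything else is a direct assembly of Lemma \ref{lem3.2} and Proposition \ref{prop3.4}, so the proof is short.
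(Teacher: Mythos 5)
Your proof is correct and takes a genuinely different route from the paper's. The paper argues through the Hausdorff triangle inequality with an intermediate set $\operatorname{Conv}(s,A)$ for $t^*(A)<s<t$: it bounds $h(t,A)\leq d_H(\operatorname{Conv}(t,A),M)+d_H(M,\operatorname{Conv}(s,A))+d_H(\operatorname{Conv}(s,A),A)$, controls the first two terms via the sharpened form of Lemma~\ref{lem3.3} (which requires both $t$ and $s$ to exceed $t^*(A)$), uses the trivial bound $h(s,A)\leq s$ for the last term, and then lets $s\to t^*(A)$. You avoid both the limiting argument and the auxiliary set by (i) observing that $h(t,A)=d_H(\operatorname{Conv}(t,A)\,|\,A)$ since $A\subseteq\operatorname{Conv}(t,A)$, (ii) passing through the projection $\pi_M(y)$ for each $y\in\operatorname{Conv}(t,A)$ and bounding $|y-\pi_M(y)|$ via Lemma~\ref{lem3.2}, and (iii) invoking the first inequality of Proposition~\ref{prop3.4} to get $d(\pi_M(y),A)\leq\varepsilon(A)\leq t^*(A)\bigl(1+t^*(A)/\tau(M)\bigr)$. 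The underlying geometry is of course the same --- Proposition~\ref{prop3.4} was itself derived from the mechanism behind Lemma~\ref{lem3.3} --- but your route is shorter and avoids taking a limit. A small dividend you half-noticed: since your bound on $d(y,M)$ via Lemma~\ref{lem3.2} does not need $t>t^*(A)$, the displayed inequality is actually valid for every $0<t<\tau(M)$ (though it is only informative above $t^*(A)$, where it improves on the trivial $h(t,A)\leq t$).
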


\begin{proof}
 By using that $h(t, A) \leq t$ and Lemma \ref{lem3.3}, for any $t^{*}(A)<s<t$,

\begin{equation*}
\begin{aligned}
h(t, A) &=d_{H}(\operatorname{Conv}(t, A), A) \\
& \leq d_{H}(\operatorname{Conv}(t, A), M)+d_{H}(M, \operatorname{Conv}(s, A))+d_{H}(\operatorname{Conv}(s, A), A) \\
& \leq \frac{t^{2}}{2 \tau(M)}\left(1+\frac{t^{2}}{\tau(M)^{2}}\right)+\frac{s^{2}}{2 \tau(M)}\left(1+\frac{s^{2}}{\tau(M)^{2}}\right)+s
\end{aligned}
\end{equation*}

The conclusion is obtained by letting $s$ go to $t^{*}(A)$.
\end{proof}

For $0<t<t^{*}(A)$, the convexity defect function may exhibit very different behaviors, as shown in Figure \ref{fig5}. However, when the set $A=\mathcal{X}_{n}$ is a random $n$-sample, it appears that the graph of the convexity defect function stays close to the diagonal $\{x=y\}$ for small values of $t$. This is explained by the fact that for two points $X_{1}, X_{2}$ in the sample at very small distance $2 t$ from one another, it is very unlikely that there is a third point at distance of order $t$ from $X_{1}$ and $X_{2}$, so that $d_{H}(\operatorname{Conv}(\left\{X_{1}, X_{2}\right\}) |\mathcal{X}_{n})=d_{H}(\operatorname{Conv}(\left\{X_{1}, X_{2}\right\}) |\left\{X_{1}, X_{2}\right\})=t$.

This suggests the following strategy to select a value of $t$ larger than $t^{*}(\mathcal{X}_{n})$ using the convexity defect function:

\begin{definition} Let $A \subseteq M$ be a finite set and $0<\lambda \leq 1$. We define
\begin{equation}
t_{\lambda}(A):=\inf \{t \in \operatorname{Rad}(A): h(t, A) \leq \lambda t\}.
\end{equation}
\end{definition}

Restricting to values $t \in \operatorname{Rad}(A)$ is necessary, for otherwise we would always have $t_{\lambda}(A)=$ 0 (as $h(t, A)=0$ for $t$ small enough). Proposition \ref{prop4.3} implies that $t_{\lambda}(A)$ cannot be too large. More precisely, we have the following lemma.

\begin{lemma}\label{lem4.5} Let $A \subseteq M$ with $t^{*}(A) \leq \lambda^{2} \tau(M) / 4$. Let $r_{0}=\frac{t^{*}(A)}{\lambda}\left(1+\frac{8}{\lambda^{2}} \frac{t^{*}(A)}{\tau(M)}\right)$ and $r_{1}=$ $\lambda \tau(M) / 2.$ If $t \in \operatorname{Rad}(A) \cap\left[r_{0}, r_{1}\right]$, then $t_{\lambda}(A) \leq t$.
\end{lemma}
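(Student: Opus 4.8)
The plan is to show that any $t \in \operatorname{Rad}(A)\cap[r_0,r_1]$ satisfies the defining condition $h(t,A)\leq \lambda t$, which immediately gives $t_\lambda(A)\leq t$ since $t_\lambda(A)$ is the infimum over such values. So the whole task reduces to bounding $h(t,A)$ from above by $\lambda t$ on this range. First I would observe that since $t\geq r_0 \geq t^*(A)/\lambda \geq t^*(A)$ (using $\lambda\leq 1$), and since $t\leq r_1 = \lambda\tau(M)/2 \leq \tau(M)/2 < \tau(M)$, the hypotheses of Proposition \ref{prop4.3} are met, so
\begin{equation*}
h(t,A) \leq \frac{t^2}{2\tau(M)}\left(1+\frac{t^2}{\tau(M)^2}\right) + t^*(A)\left(1+\frac{t^*(A)}{\tau(M)}\right).
\end{equation*}
Now I bound each of the two terms by $\tfrac{\lambda}{2} t$.

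For the first term, since $t\leq r_1 = \lambda\tau(M)/2$, we have $t^2/\tau(M)^2 \leq \lambda^2/4 \leq 1/4$, so $1+t^2/\tau(M)^2 \leq 5/4$, and $\frac{t^2}{2\tau(M)}\cdot\frac54 = \frac{5t}{8}\cdot\frac{t}{\tau(M)} \leq \frac{5t}{8}\cdot\frac{\lambda}{2} = \frac{5\lambda t}{16} < \frac{\lambda t}{2}$. (In fact even the cruder bound $\frac{t^2}{2\tau(M)}\cdot 2 = t^2/\tau(M) \leq \lambda t/2$ using $t\leq\lambda\tau(M)/2$ suffices, which is probably cleaner.) For the second term, I use the hypothesis $t^*(A)\leq \lambda^2\tau(M)/4$, which gives $t^*(A)/\tau(M)\leq \lambda^2/4 \leq 1/4$, hence $1+t^*(A)/\tau(M)\leq 5/4$; combined with $t \geq r_0 \geq t^*(A)/\lambda$ — equivalently $t^*(A)\leq \lambda t$ — this yields $t^*(A)(1+t^*(A)/\tau(M)) \leq \frac{5}{4}\lambda t$, which is too big. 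So here the extra slack in $r_0$ is what is needed: one must exploit that $r_0 = \frac{t^*(A)}{\lambda}(1+\frac{8}{\lambda^2}\frac{t^*(A)}{\tau(M)})$ is genuinely larger than $t^*(A)/\lambda$, so that $t^*(A)/\lambda$ is bounded by $t$ divided by $(1+\frac{8}{\lambda^2}\frac{t^*(A)}{\tau(M)})$, giving room to absorb the $1+t^*(A)/\tau(M)$ factor. Concretely, $t\geq r_0$ gives $t^*(A) \leq \lambda t\big/(1+\tfrac{8}{\lambda^2}\tfrac{t^*(A)}{\tau(M)})$, so $t^*(A)(1+\tfrac{t^*(A)}{\tau(M)}) \leq \lambda t \cdot \frac{1+t^*(A)/\tau(M)}{1+(8/\lambda^2)t^*(A)/\tau(M)} \leq \lambda t\cdot\frac{1}{1} \cdot \frac{1}{2}$ — here I would check that $\frac{1+u}{1+(8/\lambda^2)u}\leq \tfrac12$ for $u = t^*(A)/\tau(M)\geq 0$, which holds whenever $8/\lambda^2 \geq 2 + (\text{something})/u$; since $\lambda\leq 1$ we have $8/\lambda^2\geq 8$, and the inequality $1+u\leq \tfrac12(1+8u/\lambda^2)$ rearranges to $\tfrac12 \leq u(4/\lambda^2 - 1)$, which unfortunately fails for small $u$. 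So the constant needs slightly more care: the right reading is that the factor $1+t^*(A)/\tau(M)$ is itself at most $5/4$, and one needs $\frac{5}{4}\cdot\frac{1}{1+(8/\lambda^2)t^*(A)/\tau(M)} \leq \frac12$, i.e. $1+(8/\lambda^2)t^*(A)/\tau(M) \geq 5/2$; this still fails when $t^*(A)$ is tiny.

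Given that tension, the honest version of the argument is: write $h(t,A)\leq \frac{t^2}{\tau(M)} + t^*(A) + \frac{t^*(A)^2}{\tau(M)}$, bound the first term by $\frac{\lambda t}{2}$ as above, and show the remaining two terms sum to at most $\frac{\lambda t}{2}$. From $t\geq r_0$ we get $\lambda t \geq t^*(A)(1 + \frac{8}{\lambda^2}\frac{t^*(A)}{\tau(M)}) = t^*(A) + \frac{8}{\lambda^2}\frac{t^*(A)^2}{\tau(M)}$, so $\frac{\lambda t}{2} \geq \frac{t^*(A)}{2} + \frac{4}{\lambda^2}\frac{t^*(A)^2}{\tau(M)}$. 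We want this to dominate $t^*(A) + \frac{t^*(A)^2}{\tau(M)}$ — still a factor $2$ short on the linear piece. The resolution must be that one actually only needs $h(t,A)\leq\lambda t$, not $\leq \lambda t/2$ from the non-$t^2/\tau(M)$ part, so I should allocate the budget as: first term $\leq \lambda t/2$ (comfortably, via $t\leq\lambda\tau(M)/2$), and then show $t^*(A) + t^*(A)^2/\tau(M) \leq \lambda t / 2$. From $\lambda t\geq t^*(A) + \frac{8}{\lambda^2}\frac{t^*(A)^2}{\tau(M)}$ and $t^*(A)^2/\tau(M)\leq \frac{\lambda^2}{4}t^*(A)$ (using $t^*(A)\leq\lambda^2\tau(M)/4$), we get $\lambda t \geq t^*(A) + 8\cdot\frac{1}{4}\cdot(t^*(A)^2/\tau(M))/(t^*(A)^2/\tau(M))\cdots$ — the cleanest is: $\lambda t \geq t^*(A)$ gives the linear term, and separately $\lambda t \geq \frac{8}{\lambda^2}\frac{t^*(A)^2}{\tau(M)}\geq 8\cdot\frac{t^*(A)^2}{\tau(M)}$ gives the quadratic term, so $t^*(A)+t^*(A)^2/\tau(M)\leq \lambda t + \lambda t/8$ — still not $\leq\lambda t/2$. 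I therefore expect the actual proof to split $h(t,A)\leq \lambda t$ directly (not into halves) and to track the constant $8$ carefully so that $\frac{t^2}{\tau(M)} + t^*(A) + \frac{t^*(A)^2}{\tau(M)} \leq \lambda t$ using $t\leq\lambda\tau(M)/2$, $t\geq r_0$, and $t^*(A)\leq\lambda^2\tau(M)/4$ all three; the main obstacle — and the reason the constant in $r_0$ is $8/\lambda^2$ rather than something smaller — is exactly this bookkeeping, ensuring the $t^2/\tau(M)$ term (which can be as large as $\lambda t/2$) leaves just enough room for the $t^*(A)$-terms, so one must be slightly generous in $r_0$ to guarantee $t^*(A)$ is small relative to $\lambda t$. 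Once the inequality $h(t,A)\leq \lambda t$ is established for $t\in\operatorname{Rad}(A)\cap[r_0,r_1]$, the conclusion $t_\lambda(A)\leq t$ is immediate from the definition of $t_\lambda(A)$ as an infimum.
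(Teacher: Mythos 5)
Your starting point is correct (invoke Proposition~\ref{prop4.3} and then show $h(t,A)\leq\lambda t$ for $t\in[r_0,r_1]$), and you correctly diagnose that the difficulty is one of budget allocation between the $t^2/\tau(M)$-type term and the $t^*(A)$-type terms. But the proposal never actually completes the argument: it documents several budget splits that fail, and ends with ``I therefore expect the actual proof to \dots track the constant $8$ carefully,'' which is a conjecture about the proof rather than the proof itself. That is a genuine gap.

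There are two concrete issues. First, you replace the sharper bound $\frac{t^2}{2\tau(M)}\left(1+\frac{t^2}{\tau(M)^2}\right)$ from Proposition~\ref{prop4.3} with the cruder $t^2/\tau(M)$ early on (``which is probably cleaner''). This is not just a convenience: it throws away a factor that the proof actually needs. At $t=r_1=\lambda\tau(M)/2$, the crude first term already equals the full budget $\lambda t/2$, and the target inequality $\frac{t^2}{\tau(M)}+t^*(A)+\frac{t^*(A)^2}{\tau(M)}\leq\lambda t$ simply fails when $t^*(A)$ is near its maximal allowed value $\lambda^2\tau(M)/4$ (it is rescued only because in that regime $r_0>r_1$ so the hypothesis is vacuous --- but you neither note this nor can you rely on it for the general range). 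Keeping the factor $\tfrac12\bigl(1+\lambda^2/4\bigr)$ is what makes the discriminant work out. Second, and more fundamentally, the resolution is not any fixed split of $\lambda t$ into two pieces. The paper views
\begin{equation*}
P(t)\;=\;\frac{t^2}{2\tau(M)}\left(1+\frac{\lambda^2}{4}\right)\;-\;\lambda t\;+\;t^*(A)\left(1+\frac{t^*(A)}{\tau(M)}\right)
\end{equation*}
as a quadratic in $t$, notes that the hypothesis $t^*(A)\leq\lambda^2\tau(M)/4$ makes the discriminant nonnegative (the quantity $u$ in the proof satisfies $u\leq 1$), and shows that the two real roots $t_0\leq t_1$ satisfy $t_0\leq r_0$ (via $\sqrt{1-u}\geq 1-u/2-u^2/2$) and $t_1\geq r_1$. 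That gives $P(t)\leq 0$, hence $h(t,A)\leq\lambda t$, on all of $[r_0,r_1]$ simultaneously. The ``$8/\lambda^2$'' in $r_0$ is exactly what the root-location estimate produces; no term-by-term allocation of $\lambda t$ reproduces it, which is why each of your attempted splits came up short by a constant factor. You would need to import this quadratic-root step to turn your outline into a proof.
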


\begin{proof} By Proposition \ref{prop4.3}, we have, for $t^{*}(A) < t \leq \lambda \tau(M) / 2$,
\begin{equation*}
\begin{aligned}
h(t, A) & \leq \frac{t^{2}}{2 \tau(M)}\left(1+\frac{t^{2}}{\tau(M)^{2}}\right)+t^{*}(A)\left(1+\frac{t^{*}(A)}{\tau(M)}\right) \\
& \leq \frac{t^{2}}{2 \tau(M)}\left(1+\lambda^{2} / 4\right)+t^{*}(A)\left(1+\frac{t^{*}(A)}{\tau(M)}\right)-\lambda t+\lambda t=: P(t)+\lambda t.
\end{aligned}
\end{equation*}

Let $u=2 t^{*}(A)\left(1+\frac{t^{*}(A)}{\tau(M)}\right)\left(1+\lambda^{2} / 4\right) /\left(\lambda^{2} \tau(M)\right)$. The condition $t^{*}(A) \leq \lambda^{2} \tau(M) / 4$ ensures that $u \leq 1$. The quantity $P(t)$ is nonpositive if $t$ is between $t_{0}$ and $t_{1}$, where

\begin{equation*}
t_{0}=\frac{\tau(M) \lambda}{1+\lambda^{2} / 4}(1-\sqrt{1-u}) \quad \text { and } \quad t_{1}=\frac{\tau(M) \lambda}{1+\lambda^{2} / 4}(1+\sqrt{1-u}).
\end{equation*}

We have $t_{1} \geq r_{1}$ and, using the inequality $\sqrt{1-u} \geq 1-\frac{u}{2}-\frac{u^{2}}{2}$ for $0\leq u \leq 1$, we obtain that $t_{0} \leq r_{0}$. Therefore, any $t \in\left[r_{0}, r_{1}\right]$ satisfies $h(t, A) \leq \lambda t$ (note that $r_0>t^*(A)$). In particular, if $t$ is also in $\operatorname{Rad}(A)$, we have $t_{\lambda}(A) \leq t$.
\end{proof}

Our main theorem states that, with high probability, the parameter $t_{\lambda}(\mathcal{X}_{n})$ is larger than $t^{*}(\mathcal{X}_{n})$

\begin{theorem}\label{thm4.6}
\begin{enumerate}
\item Let $\mu \in \mathcal{Q}_{\tau_{\min }, f_{\min }, f_{\max }}^{d}$. Let $0<b \leq 2$ and let $\mathcal{X}_{n}$ be a n-sample of law $\mu$. Let $a=(d-1) \vee 2$ if $b=2$, and $a=d-1$ otherwise. For $n$ large enough, and with probability larger than $1-c(\ln n)^{a} n^{-b}$, we have for $0<\lambda<(1+b)^{-1 / d}$,

\begin{equation}\label{eq4.4}
t^{*}(\mathcal{X}_{n}) \leq t_{\lambda}(\mathcal{X}_{n}) \leq \frac{t^{*}(\mathcal{X}_{n})}{\lambda}\left(1+C\left(\frac{(\ln n)^{2}}{n}\right)^{1 / d}\right)
\end{equation}

where the constant $c$ depends on $b$, and $\mu$, and $C$ depends on $f_{\min }$, $f_{\max }$, $d$, $\tau_{\min }$ and $\lambda$.
\item  Furthermore, if $\mu$ is the uniform distribution on the circle of radius $\tau_{\min }$, then, for $\lambda> (1+b)^{-1}$, we have

\begin{equation}\label{eq4.5}
\mathbb{P}\left(t^{*}(\mathcal{X}_{n})>t_{\lambda}(\mathcal{X}_{n})\right) \geq c n^{-b}
\end{equation}

for some constant c depending on $\tau_{\min }$ and $b$.
\end{enumerate} 
\end{theorem}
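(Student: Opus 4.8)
For part (1), the strategy is to run the machinery of Lemma \ref{lem4.5} together with Proposition \ref{prop3.9}, and to separately establish a matching \emph{lower} bound on $t_\lambda(\mathcal{X}_n)$, namely that $h(t,\mathcal{X}_n)>\lambda t$ for all $t\in\operatorname{Rad}(\mathcal{X}_n)$ below roughly $t^*(\mathcal{X}_n)$. First I would use Proposition \ref{prop3.9} to get, on an event of probability $1-c(\ln n)^{d-1}n^{-b}$, the two-sided control $(\ln n / n)^{1/d}\lesssim \varepsilon(\mathcal{X}_n)\lesssim (\ln n/n)^{1/d}$ (the upper bound directly from the second part of Prop.~\ref{prop3.9} with $a=1+b$; the lower bound from a standard covering/coupon-collector argument, or again from the tail bound with a small exponent), and hence by Proposition \ref{prop3.4}, $t^*(\mathcal{X}_n)$ is comparable to $\varepsilon(\mathcal{X}_n)$ and in particular $t^*(\mathcal{X}_n)\leq \lambda^2\tau_{\min}/4$ for $n$ large. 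Feeding this into Lemma \ref{lem4.5}: since $\operatorname{Rad}(\mathcal{X}_n)$ contains points arbitrarily close to (and at least one just above) $r_0=\tfrac{t^*(\mathcal{X}_n)}{\lambda}(1+\tfrac{8}{\lambda^2}\tfrac{t^*(\mathcal{X}_n)}{\tau(M)})$ — here I need a density argument for $\operatorname{Rad}(\mathcal{X}_n)$ near $r_0$, or more carefully, I take the smallest element of $\operatorname{Rad}(\mathcal{X}_n)$ that is $\geq r_0$ and show it does not exceed $r_0(1+o(1))$ because the sample is $\varepsilon$-dense, so balls of radius $\sim r_0$ contain $\Omega(n r_0^d)$ points and thus generate many radii in a short interval — I obtain $t_\lambda(\mathcal{X}_n)\leq r_0(1+o(1))=\tfrac{t^*(\mathcal{X}_n)}{\lambda}(1+C((\ln n)^2/n)^{1/d})$, using $t^*(\mathcal{X}_n)/\tau(M)\lesssim (\ln n/n)^{1/d}$, which is the right-hand inequality in \eqref{eq4.4}.

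The left-hand inequality $t^*(\mathcal{X}_n)\leq t_\lambda(\mathcal{X}_n)$ is the substantive probabilistic statement and is where the constraint $\lambda<(1+b)^{-1/d}$ enters. I must show that for every $t\in\operatorname{Rad}(\mathcal{X}_n)$ with $t<t^*(\mathcal{X}_n)$, we have $h(t,\mathcal{X}_n)>\lambda t$, i.e.\ there is a simplex $\sigma\subseteq\mathcal{X}_n$ with $r(\sigma)\leq t$ such that $\operatorname{Conv}(\sigma)$ contains a point at distance $>\lambda t$ from $\mathcal{X}_n$. As indicated in the heuristic before the statement, for the smallest such $t$ this simplex can be taken to be a pair $\{X_i,X_j\}$ with $|X_i-X_j|=2t$: the midpoint is at distance $t$ from both, and it suffices that no \emph{third} sample point lies within distance $\lambda t$ of that midpoint. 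The plan is a union bound / first-moment argument over pairs: the expected number of "bad" pairs $\{X_i,X_j\}$ at separation $\approx 2t$ whose midpoint \emph{is} within $\lambda t$ of a third point is controlled by $n^2\cdot \mu(\mathcal B)\cdot n\,\mu(\mathcal B')$ where $\mathcal B$ has radius $\sim t$ and $\mathcal B'$ has radius $\sim\lambda t$; since on $M$ the relevant scale below which we must rule this out is $t^*(\mathcal{X}_n)\sim\varepsilon(\mathcal{X}_n)\sim((1+b)\ln n/(\alpha_d f_{\min}n))^{1/d}$, one gets $n\,\mu(\mathcal B')\sim n\alpha_d f_{\min}(\lambda t)^d\lesssim \lambda^d(1+b)\ln n$, and the probability that a \emph{fixed} midpoint escapes all sample points is $(1-\mu(\mathcal B'))^{n-2}\approx \exp(-n\alpha_d f_{\min}\lambda^d t^d)\approx n^{-\lambda^d(1+b)}$; summing over the $\lesssim n\,(\text{or }n^2)$ candidate pairs near the minimal radius and integrating over $t\in\operatorname{Rad}(\mathcal{X}_n)\cap(0,t^*(\mathcal{X}_n))$ — which has $\mathrm{poly}(n)$ many values — keeps the total small precisely when $\lambda^d(1+b)>1$, i.e.\ $\lambda>(1+b)^{-1/d}$; we \emph{want} the complementary failure to be rare, so in fact I use $\lambda<(1+b)^{-1/d}$ to make $\exp(-n\alpha_d f_{\min}\lambda^d t^d)$ \emph{large} (close to $1$) for the relevant $t$, forcing a third point to be absent with overwhelming probability... let me restate: the argument shows that for $t$ up to (essentially) $\varepsilon(\mathcal{X}_n)$, with high probability the bad pair exists hence $h(t,\mathcal{X}_n)=t>\lambda t$, so $t_\lambda(\mathcal{X}_n)\geq \varepsilon(\mathcal{X}_n)(1-o(1))\geq t^*(\mathcal{X}_n)$ using Proposition \ref{prop3.4} once more (the first inequality there, $\varepsilon(A)\leq t^*(A)(1+t^*(A)/\tau(M))$, needs to be inverted, costing only a $(1+o(1))$ factor). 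The main obstacle is making this first-moment computation rigorous uniformly over all $t\in\operatorname{Rad}(\mathcal{X}_n)\cap(0,t^*(\mathcal{X}_n))$ and over all simplices, not just the minimal pair — one must argue that it is enough to check the finitely many radii in $\operatorname{Rad}(\mathcal{X}_n)$, and that at each such radius one "fresh" far-away midpoint coming from a short-edge pair suffices, while controlling the dependence between the event "$|X_i-X_j|\approx 2t$'' and "no third point near the midpoint'' (these involve disjoint regions, so a conditioning argument on $X_i,X_j$ handles it cleanly). The sharp constant $(1+b)^{-1/d}$ is exactly the break-even point of this Poissonian computation.

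For part (2), the manifold is the circle of radius $\tau_{\min}$ and everything is one-dimensional, so $\operatorname{Rad}(\mathcal{X}_n)$ consists of half-lengths of arcs spanned by pairs (for the smallest relevant scales, $t$-convex hulls are just chords), and $t^*(\mathcal{X}_n)$ equals the largest gap $\varepsilon(\mathcal{X}_n)$ up to the curvature correction. The plan is to exhibit, with probability $\geq c n^{-b}$, a configuration in which $t_\lambda(\mathcal{X}_n)<t^*(\mathcal{X}_n)$: namely, an event on which there exists a scale $t<t^*(\mathcal{X}_n)$ with $h(t,\mathcal{X}_n)\leq\lambda t$. The natural such event is that the sample has one unusually large gap (of length $\approx 2t^*$ with $t^*\approx((1+b)\ln n/n)^{1/d}$-ish scaled appropriately, an event of probability polynomially small — exactly $\asymp n^{-b}$ by a direct computation of the largest-spacing distribution on the circle) while simultaneously the sample is otherwise so regular that at some scale $t$ strictly below $t^*$ every midpoint of every short pair already has a sample point within $\lambda t$ — which, by the reverse of the part-(1) computation, is \emph{typical} precisely when $\lambda^d(1+b)>1$, i.e.\ $\lambda>(1+b)^{-1/d}$ (the statement writes $(1+b)^{-1}$, matching $d=1$ for the circle). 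So I would: (i) compute $\mathbb P(\text{largest arc-gap}>\text{threshold}) \asymp n^{-b}$; (ii) show that conditionally on this (or just intersecting with it, using FKG-type positive correlation or a crude union bound, since the two events live at different scales) the convexity defect function drops below $\lambda t$ at some $t<t^*(\mathcal{X}_n)$ with probability bounded below; (iii) combine to get $\mathbb P(t_\lambda(\mathcal{X}_n)<t^*(\mathcal{X}_n))\geq c n^{-b}$. The one-dimensional exact computations make part (2) more of a careful bookkeeping exercise than a conceptual hurdle; the only delicate point is ensuring the "otherwise regular'' event has probability bounded away from $0$ (not merely positive), for which independence across well-separated arcs of the circle suffices.
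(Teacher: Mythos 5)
Your sketch captures the broad contour (upper bound via Lemma~\ref{lem4.5} plus density of $\operatorname{Rad}(\mathcal{X}_n)$; lower bound via showing $h(t,\mathcal{X}_n)$ stays close to its maximal value $t$ below $\varepsilon(\mathcal{X}_n)$; part~(2) via circle spacing statistics), but the crucial steps are under-specified in ways the paper itself flags as the hard part.

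For the lower bound in \eqref{eq4.4}, you propose a first-moment/union-bound argument over pairs: for each $t$, find a pair $\{X_i,X_j\}$ at separation $\approx 2t$ whose midpoint is $\lambda t$-far from $\mathcal{X}_n$, and bound the probability that no such pair exists. This is precisely where the paper points out (after its display \eqref{eq4.7}) that a naive first- or second-moment count is \emph{far from sufficient}: at the relevant scale $t\asymp(\ln n/n)^{1/d}$, $(nt^d)^2\asymp(\ln n)^2$, so the triple-counting bound gives only $(\ln n)^2$, not $n^{-b}$, and the pair variant has the same issue because the events ``the midpoint of pair $(X_i,X_j)$ escapes'' across different pairs are strongly positively correlated and cannot be handled by a union bound or a one-line conditioning. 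The paper's actual mechanism to break the dependence is the Besicovitch-type construction of Lemma~\ref{lem4.9} producing $K\sim t^{-d}$ disjoint cells $U_k$ of equal measure; on the event $N_k=2$, Lemma~\ref{lem4.11} gives a constant conditional probability that the pair in $U_k$ certifies $h(t,\mathcal{X}_n)\geq\lambda t$, and then Lemma~\ref{lem4.12} (a binomial-urn concentration estimate proved in the appendix) shows $\sum_k\mathbf{1}\{N_k=2\}$ is large with probability $1-\exp(-C n\phi(nm))$. Your outline has no analogue of this disjointification-plus-concentration step, and without it the argument does not close. You also omit the geometric chaining (the sequence $t^{(i+1)}=t^{(i)}/\lambda$ combined with monotonicity of $h(\cdot,A)$ and Lemma~\ref{lem4.13}) that upgrades control at $O(\ln n)$ discrete scales to a statement about $t_\lambda(\mathcal{X}_n)$; this is needed because the set $\operatorname{Rad}(\mathcal{X}_n)$ has $\Theta(2^n)$ elements, not $\mathrm{poly}(n)$ as you assert.

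For the upper bound, your ``density of $\operatorname{Rad}(\mathcal{X}_n)$ near $r_0$'' is the right idea but is stated as a heuristic; the paper's Lemma~\ref{lem4.7} is a nontrivial quantitative statement (every interval of length $\gtrsim(\ln n/n)^{2/d}$ in the relevant range meets $\operatorname{Rad}(\mathcal{X}_n)$ with probability $1-n^{-2}$), proved via sample-splitting, a packing of disjoint balls, and Bernstein's inequality — it is not a straightforward counting of pairwise distances. For part~(2), the correct quantity to control is not the size of the largest gap but the \emph{ratio} of the largest to the second-largest spacing $t_{(n)}/t_{(n-1)}$ (together with Lemma~\ref{lem4.14}, which bounds the jump of $h$ between consecutive spacings); the paper computes $\mathbb{P}(A_{(n)}/A_{(n-1)}\geq s)\sim\Gamma(s+1)n^{1-s}$ exactly via the Beta function representation. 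Your ``one large gap while the sample is otherwise regular'' event is not quite the right event, and the correlation argument you gesture at (FKG or crude union bound) is not needed once one works with the spacing ratio directly.
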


Inequality \eqref{eq4.5} implies that the probability $1-c(\ln n)^{a} n^{-b}$ appearing in the theorem is close to being tight.

\subsection*{Proof of the upper bound in \eqref{eq4.4}}
 Let $\mu \in \mathcal{Q}_{\tau_{\min }, f_{\min }, f_{\max }}^{d}$ be a probability distribution with support $M$ and density $f$. We assume without loss of generality that $f_{\min }$ is the essential infimum of $f$. Recall the notation $r_{1}=\lambda \tau(M) / 2$ and $r_{0}=\frac{t^{*}(\mathcal{X}_{n})}{\lambda}\left(1+\frac{8}{\lambda^{2}} \frac{t^{*}(\mathcal{X}_{n})}{\tau(M)}\right)$ from Lemma \ref{lem4.5}. The proof of the upper bound is based on the following lemma.

\begin{lemma}\label{lem4.7} There exists a positive constant $\beta>0$ (depending on $f_{\min }, f_{\max }, d$ and $\left.\tau_{\min }\right)$ such that the following holds. Let $\alpha>0$ and let $I=[a, b]$ be an interval of length at least $\ell=\alpha\left(\frac{\ln n}{n}\right)^{2 / d}$ with $b \leq \beta \alpha\left(\frac{\ln n}{n}\right)^{1 / d}$ and $a \geq \ell / 2$. Then, the probability that $\operatorname{Rad}(\mathcal{X}_{n})$ does not intersect $I$ is smaller than $n^{-1 / 2}$.
\end{lemma}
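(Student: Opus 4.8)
The plan is to show that, with high probability, there exists at least one pair of sample points $\{X_i, X_j\}$ whose pairwise distance $|X_i - X_j|$ — hence $r(\{X_i,X_j\})$, which equals $|X_i-X_j|/2$ — lands in the interval $I = [a,b]$. Since such a radius belongs to $\operatorname{Rad}(\mathcal{X}_n)$, this will establish that $\operatorname{Rad}(\mathcal{X}_n) \cap I \neq \emptyset$ with the claimed probability. Concretely, I would first fix a point $x_0 \in M$ and consider the geodesic (or Euclidean) annulus $\mathcal{A}(x_0) = \mathcal{B}_M(x_0, 2a')\setminus \mathcal{B}_M(x_0, 2a)$ where $a' = a + \ell/2 \leq b$ (using $a \geq \ell/2$ to keep $2a' \leq 2b$ within the reach-controlled regime, and $b \leq \beta\alpha(\ln n/n)^{1/d}$ to stay in the range where Lemma \ref{lem2.3} applies). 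By \eqref{eq2.2} and Lemma \ref{lem2.3}, the $\mu$-mass of this annulus is, up to constants depending only on $d$, of order $f_{\min}\alpha_d\big((a')^d - a^d\big) \gtrsim f_{\min} \alpha_d \, a^{d-1}\ell \gtrsim f_{\min}\alpha_d (\ln n / n)^{(2d-1)/d}\cdot(\text{something})$ — I need to be a little careful here: the relevant lower bound is $\mu(\mathcal{A}(x_0)) \gtrsim f_{\min} a^{d-1} \ell$, and since $a$ can be as small as $\ell/2$, the weakest case gives $\mu(\mathcal{A}(x_0)) \gtrsim f_{\min}\ell^d = f_{\min}\alpha^d (\ln n/n)^2$.

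Next I would condition on the event that some sample point $X_1$ lies near $x_0$ — or more cleanly, for each $i$, let $\mathcal{A}(X_i)$ be the annulus centered at $X_i$ as above, and estimate $\mathbb{P}(\text{no other sample point lies in } \mathcal{A}(X_i) \text{ for any } i)$. The cleanest route is a second-moment / Poissonization-type argument, but a direct union-type bound also works: the probability that $\operatorname{Rad}(\mathcal{X}_n)\cap I = \emptyset$ is at most the probability that, for every $i\neq j$, $|X_i - X_j| \notin [2a, 2b]$. I would instead look at a single reference point: with probability $\geq 1 - (1-c f_{\min}\ell^d)^{n-1}$ over $X_2, \dots, X_n$, at least one $X_j$ ($j \geq 2$) lies in the annulus centered at $X_1$ with inner radius $2a$, outer radius $2a' \leq 2b$; since $\ell = \alpha(\ln n/n)^{2/d}$, we have $c f_{\min}\ell^d = c f_{\min}\alpha^d (\ln n/n)^2$, so the failure probability is $\leq \exp(-c f_{\min}\alpha^d (n-1)(\ln n/n)^2) \leq \exp(-c' f_{\min}\alpha^d (\ln n)^2 / n)$ — which is \emph{not} small. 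So the single-reference-point bound is too weak; I must exploit all $\binom{n}{2}$ pairs.

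The fix, and what I expect to be the main obstacle, is handling the dependence among the pair-distance events. The cleanest way: partition $M$ (or a large geodesic ball in $M$) into $\sim (\ln n / n)^{-1}$ disjoint cells each of diameter $\sim (\ln n / n)^{1/d} \lesssim \beta\alpha(\ln n/n)^{1/d}$, so that any two points in the same cell have radius automatically $\leq b$; then within a single cell, rescale and repeat the argument — the number of sample points in a fixed cell is of order $\ln n$ with high probability (Poisson-type concentration via $(a,b)$-standardness, as in Proposition \ref{prop3.9}), and within that cell the event "some pair has radius in $I$" has probability bounded below by a constant independent of $n$ once there are $\gtrsim \ln n$ points spread over a region of diameter $\gg$ the cell's inner scale. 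Alternatively — and perhaps more in the spirit of the paper — I would use the annulus argument but with $\sim n/\ln n$ nearly-independent reference points (e.g., greedily chosen so their annuli are disjoint, or simply noting that the events for a maximal packing are negatively associated enough): then the failure probability becomes $\big(1 - c f_{\min}\alpha^d(\ln n/n)^2\big)^{n/\ln n} \leq \exp(-c f_{\min}\alpha^d \ln n) = n^{-c f_{\min}\alpha^d}$, and choosing $\beta$ large enough (equivalently $\alpha$ bounded below by the constraint $b \leq \beta\alpha(\ln n/n)^{1/d}$) forces this exponent below $-1/2$, giving the bound $n^{-1/2}$. The delicate points are (i) ensuring the reference annuli are genuinely disjoint so independence across reference points is exact, which costs only constant factors in the cell count, and (ii) verifying that $b \leq \beta\alpha(\ln n/n)^{1/d}$ with $\beta$ chosen large keeps every annulus inside the radius $\tau_{\min}/4$ regime required by Lemma \ref{lem2.3} and \eqref{eq2.2}, for $n$ large.
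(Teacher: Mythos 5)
Your opening diagnosis is right: a single reference annulus has failure probability $\exp(-cnf_{\min}\ell^d)\approx\exp(-c(\ln n)^2/n)\to 1$, so one must exploit many reference points. Unfortunately both proposed fixes break down quantitatively, and a key structural step from the paper is missing.

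\emph{Fix 1 fails.} In a cell of diameter $\sim b\sim\beta\alpha(\ln n/n)^{1/d}$ containing $\sim\ln n$ points, the expected number of pairs whose radius lands in a given sub-interval of $I$ of length $\ell/2$ is of order $(\ln n)^2\,a^{d-1}\ell/b^d$. In the worst case $a\sim\ell$ this is $\sim(\ln n)^3/n\to 0$, and even at $a\sim b$ it is $\sim(\ln n)^{2+1/d}/n^{1/d}\to 0$. So ``probability bounded below by a constant per cell'' is false; a single cell essentially never produces a useful pair.

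\emph{Fix 2 has an arithmetic error and a sign error on $\beta$.} The quantity $1-cf_{\min}\ell^d$ is the probability that a single fixed candidate misses a single fixed annulus. The per-annulus failure probability is $(1-cf_{\min}\ell^d)^{n-1}\approx\exp(-cnf_{\min}\ell^d)$, and over $N$ disjoint annuli (conceding negative association) one gets $\exp(-cNnf_{\min}\ell^d)$. Your written formula $(1-cf_{\min}\alpha^d(\ln n/n)^2)^{n/\ln n}$ instead evaluates to $\exp(-cf_{\min}\alpha^d(\ln n)/n)\to 1$: you dropped the $(n-1)$-power per annulus and replaced it by a single factor. When done correctly, taking $N\gtrsim\tau_{\min}^d/(a')^d$ disjoint annuli of mass $\gtrsim f_{\min}(a')^{d-1}\ell$, the exponent becomes $\gtrsim \tau_{\min}^df_{\min}\,n\ell/(a')$, and since $a'\leq b\leq\beta\alpha(\ln n/n)^{1/d}$ while $\ell=\alpha(\ln n/n)^{2/d}$, this is $\gtrsim \tau_{\min}^df_{\min}n^{1-1/d}(\ln n)^{1/d}/\beta$ (the $\alpha$ cancels). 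In particular the exponent scales like $1/\beta$, so you must choose $\beta$ \emph{small} enough, not ``large enough.''

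\emph{The dependence is the real work, and is not handled.} The annuli are centered at sample points, so the events ``annulus $i$ is empty'' and the annuli themselves are all built from the same $\XX_n$. The paper resolves this by splitting $\XX_n$ into a small reference set $\XX_n^0$ and candidates $\XX_n^1$, conditioning on $\XX_n^0$, and exploiting conditional independence of candidate counts across disjoint balls (plus a Bernstein inequality to get enough disjoint balls, and a union bound after decomposing $[0,b]$ into $\sim(n/\ln n)^{1/d}$ sub-intervals $I_k=[k\ell/2,(k+1)\ell/2]$). The key conditional estimate — that a candidate conditioned to lie in a ball of radius $(k+1)\ell$ lands in the annulus of radii $[k\ell,(k+1)\ell]$ with probability $\gtrsim 1/k$ — is what allows the mass to be accumulated cleanly across scales, and it does not appear in your sketch. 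You correctly flag the dependence as ``the main obstacle,'' but hand-waving ``negatively associated enough'' without the sample splitting and the $I_k$-decomposition leaves the proof incomplete.
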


Before proving the lemma, let us use it to obtain the upper bound in \eqref{eq4.4}. By Proposition \ref{prop3.9} and Proposition \ref{prop3.4}, we have $t^{*}(\mathcal{X}_{n}) \leq\left(\frac{4 \ln n}{\alpha_{d} f_{\min n}}\right)^{1 / d}$ everywhere but on a set of probability smaller than $n^{-2}$. We will assume that this condition is satisfied. In particular, the condition $t^{*}(\mathcal{X}_{n}) \leq \lambda^{2} \tau(M) / 4$ of Lemma \ref{lem4.5} is satisfied. Let $u=\delta\left(\frac{(\ln n)^{2}}{n}\right)^{1 / d}$ (for some constant $\delta$ to fix) and let
\begin{equation*}
R_{0}:=r_{0}(1+u) \leq \frac{t^{*}(\mathcal{X}_{n})}{\lambda}\left(1+2 \delta\left(\frac{(\ln n)^{2}}{n}\right)^{1 / d}\right) \leq \frac{2}{\lambda}\left(\frac{4 \ln n}{\alpha_{d} f_{\min } n}\right)^{1 / d} \leq r_{1}.
\end{equation*}

\begin{lemma}\label{lem4.8} Let $A \subseteq M$ be a finite set of cardinality $n$. Then,
\begin{equation*}
\varepsilon(A) \geq c_{d} \tau(M) n^{-1 / d}.
\end{equation*}
\end{lemma}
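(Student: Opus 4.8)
The statement asserts a lower bound on the approximation rate $\varepsilon(A)=d_H(A,M)$ for any finite subset $A\subseteq M$ of cardinality $n$. The plan is a straightforward volumetric (covering) argument. If $\varepsilon(A)=\varepsilon$, then by definition every point of $M$ lies within distance $\varepsilon$ of some point of $A$, i.e.\ the balls $\mathcal B(a,\varepsilon)$ for $a\in A$ cover $M$. Hence $M=\bigcup_{a\in A}\mathcal B_M(a,\varepsilon)$, and taking volumes gives $\operatorname{Vol}(M)\leq \sum_{a\in A}\operatorname{Vol}(\mathcal B_M(a,\varepsilon))\leq n\cdot \max_{a}\operatorname{Vol}(\mathcal B_M(a,\varepsilon))$.

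The two ingredients that close the argument are already available in the excerpt. First, the lower bound on the total volume: by \eqref{eq2.1}, $\operatorname{Vol}(M)\geq \omega_d\,\tau(M)^d$. Second, an upper bound on the volume of a small geodesic/Euclidean ball on $M$: by Lemma \ref{lem2.3}, for $\varepsilon\leq\tau(M)/4$ we have $\operatorname{Vol}(\mathcal B_M(a,\varepsilon))\leq (13/12)^d\,\alpha_d\varepsilon^d$. Combining, $\omega_d\tau(M)^d\leq n(13/12)^d\alpha_d\varepsilon^d$, which rearranges to
\begin{equation*}
\varepsilon\geq \Bigl(\tfrac{\omega_d}{\alpha_d}\Bigr)^{1/d}\tfrac{12}{13}\,\tau(M)\,n^{-1/d}=:c_d\,\tau(M)\,n^{-1/d}.
\end{equation*}
This handles the regime $\varepsilon\leq\tau(M)/4$; in the complementary regime $\varepsilon>\tau(M)/4$ the desired bound holds trivially provided $c_d\leq 1/4$ (which we may arrange, shrinking the constant if necessary, since $n\geq 1$ forces $n^{-1/d}\leq 1$), so the lemma holds in all cases with a single dimensional constant $c_d$.

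There is essentially no obstacle here; the only point requiring a word of care is the case distinction on whether $\varepsilon(A)$ is below $\tau(M)/4$, since Lemma \ref{lem2.3} is only stated in that range — but as noted the large-$\varepsilon$ case is immediate. One could alternatively avoid Lemma \ref{lem2.3} entirely and bound $\operatorname{Vol}(\mathcal B_M(a,\varepsilon))$ by $\alpha_d\varepsilon^d$ times a crude constant using that $M$ is $d$-dimensional and the projection to $T_aM$ is $1$-Lipschitz (Lemma \ref{lem2.2}), but invoking Lemma \ref{lem2.3} is cleanest and gives an explicit constant.
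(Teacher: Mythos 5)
Your proof is correct and follows exactly the same approach as the paper: the same case split on whether $\varepsilon(A)$ exceeds $\tau(M)/4$, the same covering argument using $M\subseteq\bigcup_{a\in A}\mathcal B_M(a,\varepsilon(A))$, and the same two ingredients (Lemma \ref{lem2.3} for the ball volume upper bound and inequality \eqref{eq2.1} for the total volume lower bound). You simply spell out the constant more explicitly than the paper does.
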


\begin{proof} If $\varepsilon(A) \geq \tau(M) / 4$, the conclusion holds. Otherwise, as $M \subseteq \bigcup_{x \in A} \mathcal{B}_{M}(x, \varepsilon(A))$, one has $\operatorname{Vol}(M) \leq n c_{d} \varepsilon(A)^{d}$ (using Lemma \ref{lem2.3}). We conclude with inequality \eqref{eq2.1}.
\end{proof}

According to Lemma \ref{lem4.8} and Proposition \ref{prop3.4}, the interval $\left[r_{0}, R_{0}\right]$ is of length $r_{0} u \geq$ $C_{1} \delta\left(\frac{\ln n}{n}\right)^{2 / d}=: \ell$ for some constant $C_{1}$. Choose $\delta$ large enough so that $\frac{2}{\lambda}\left(\frac{4}{\alpha_{d} f_{\min }}\right)^{1 / d} \leq \beta C_{1} \delta$. Then, as $r_{0} \geq \ell / 2$ (once again by Lemma \ref{lem4.8}), one can apply Lemma \ref{lem4.7}: the interval $\left[r_{0}, R_{0}\right]$ intersects $\operatorname{Rad}(\mathcal{X}_{n})$ with probability $1-n^{-2}$. Lemma \ref{lem4.5} then yields the conclusion.

\begin{proof}[Proof of Lemma \ref{lem4.7}]
 Let $I_{k}=[k \ell / 2,(k+1) \ell / 2]$ for $k$ an integer. Assume that we show that $\operatorname{Rad}(\mathcal{X}_{n})$ intersects every interval $I_{k}$ for $k=1, \ldots, K$, where $K$ is chosen so that $b \leq$ $\beta \alpha\left(\frac{\ln n}{n}\right)^{1 / d} \leq(K+1) \ell / 2$, say $K+1=\left\lceil 2 \beta \alpha\left(\frac{\ln n}{n}\right)^{1 / d} / \ell\right\rceil=\left\lceil 2 \beta(n / \ln n)^{1 / d}\right\rceil$. As the interval $I$ is of length at least $\ell$, and as $\ell / 2 \leq a$, the interval $I$ contains one of the interval $I_{k}$ for some $1 \leq k \leq K$. In particular, the interval $I$ also intersects $\mathcal{X}_{n}.$ Therefore, it suffices to bound the probability that $\operatorname{Rad}(\mathcal{X}_{n})$ does not intersect $I_{k}$. If we show that this probability is of order at most $n^{-3}$, we may then conclude by a union bound: the probability that $\operatorname{Rad}(\mathcal{X}_{n})$ intersects all the $I_{k}$ is larger than $1-2 K n^{-3} \geq 1-4 \beta n^{1 / d-3} /(\ln n)^{1 / d} \geq 1-n^{-2}.$

To bound the probability that $\operatorname{Rad}(\mathcal{X}_{n})$ does not intersect $I_{k}$, we split the set $\mathcal{X}_{n}$ into two groups: the set $\mathcal{X}_{n}^{0}=\left\{X_{1}, \ldots, X_{L}\right\}$ (for some integer $L$ to fix), and the set $\mathcal{X}_{n}^{1}=\left\{X_{L+1}, \ldots, X_{n}\right\}$. If some distance $\left|X_{i}-X_{j}\right|$ is between $k \ell$ and $(k+1) \ell$, then $\operatorname{Rad}(\mathcal{X}_{n})$ intersects $I_{k}$. We will show that it is very likely that $\left|X_{i}-X_{j}\right| \in[k \ell,(k+1) \ell]$ for some $i \leq L$ and $j>L$. To do so, we consider the ball $B_{i}$ centered at the point $X_{i}$, of radius $(k+1) \ell$. Let $Y$ be a point sampled according to $\mu$, conditioned on being in $B_{i}$. Then, according to Lemma \ref{lem2.3}, we have
\begin{equation*}
\begin{aligned}
&\mathbb{P}\left(\left|Y-X_{i}\right| \in[k \ell,(k+1) \ell] | X_{i}\right)=\frac{\mu\left(\mathcal{B}\left(X_{i},(k+1) \ell\right) \backslash \mathcal{B}\left(X_{i}, k \ell\right)\right)}{\mu\left(\mathcal{B}\left(X_{i},(k+1) \ell\right)\right)} \\
&\geq \frac{c_{d} f_{\min }}{f_{\max }(k+1)^{d}}\left((k+1)^{d}\left(1-\frac{(k+1)^{2} \ell^{2}}{3 \tau(M)^{2}}\right)^{d}-k^{d}\left(1+\frac{4 k^{2} \ell^{2}}{3 \tau(M)^{2}}\right)^{d}\right) \\
&\geq \frac{c_{d} f_{\min }}{f_{\max }(k+1)^{d}}\Bigg(d k^{d-1}\left(1-\frac{(k+1)^{2} \ell^{2}}{3 \tau(M)^{2}}\right)^{d}-\\
&\qquad \qquad k^{d}\left(\left(1+\frac{4 k^{2} \ell^{2}}{3 \tau(M)^{2}}\right)^{d}-\left(1-\frac{(k+1)^{2} \ell^{2}}{3 \tau(M)^{2}}\right)^{d}\right)\Bigg) \\
&\geq \frac{c_{d} f_{\min }}{f_{\max }(k+1)^{d}}\left(d k^{d-1} / 2-C_{4} k^{d} \frac{k^{2} \ell^{2}}{\tau(M)^{2}}\right) \geq \frac{C_{5}}{k}
\end{aligned}
\end{equation*}

where we used the inequality $C_{4} \frac{k^{2} \ell^{2}}{\tau(M)^{2}} \leq d k^{-1} / 4$ at the last line: this inequality holds as $\ell^{2}$ is of order $(\ln n / n)^{4 / d}$ and $k^{-3}$ is at least of order $(\ln n / n)^{3 / d}$.

If $Y_{1}, \ldots, Y_{N}$ are i.i.d. random variables of law $\mu$, conditioned on being in $B_{i}$, we therefore have
\begin{equation*}
\mathbb{P}\left(\forall j \in\{1, \ldots, N\},\left|Y_{j}-X_{i}\right| \notin[k \ell,(k+1) \ell] | X_{i}\right) \leq \exp \left(-C_{5} N / k\right)
\end{equation*}

For each ball $B_{i}$, we let $J_{i} \subseteq\{L+1, \ldots, n\}$ be the set of indexes $j>L$ such that $X_{j} \in B_{i}$. Assume that there exists a set of $A$ balls $B_{i_{1}}, \ldots, B_{i_{A}}$ that are pairwise disjoint. Then, the corresponding sets $J_{i}$ are also pairwise disjoint. Conditionally on $\mathcal{X}_{n}^{0}$ and on $N_{a}:=\left|J_{i_{a}}\right|$, the sets $\left\{X_{j}: j \in J_{i_{a}}\right\}$ are independent for $a=1, \ldots, A$, and each consists of a sample of $N_{i_{a}}$ independent points sampled according to $\mu$ conditioned on being in $B_{i_{a}}$. Therefore, if $E$ is the event that $\operatorname{Rad}(\mathcal{X}_{n})$ does not intersect $I_{k}$, we have
\begin{equation*}
\mathbb{P}(E | \mathcal{X}_{n}^{0},\left(N_{i}\right)_{i \leq L}) \leq \exp \left(-\sum_{a=1}^{A} N_{i_{a}} C_{5} / k\right).
\end{equation*}

The random variable $\sum_{a=1}^{A} N_{i_{a}}$ is the number of points of $\mathcal{X}_{n}^{1}$ in $\bigcup_{a=1}^{A} B_{i_{a}}$. It follows a binomial disribution of parameters $n-L$ and $p=\sum_{a=1}^{A} \mu\left(B_{i_{a}}\right) \geq C_{6} A(k \ell)^{d}$, so that we have

\begin{equation*}
\begin{aligned}
\mathbb{P}\left(E | \mathcal{X}_{n}^{0}\right) &\leq \mathbb{E}\left[\exp \left. \left(-\sum_{a=1}^{A} N_{i_{a}} C_{5} / k\right) \right| \mathcal{X}_{n}^{0}\right] \\
& \leq \exp \left(-C_{6}(n-L) A(k \ell)^{d}\left(1-e^{-C_{5} / k}\right)\right) \\
& \leq \exp \left(-C_{7}(n-L) A(k \ell)^{d} / k\right).
\end{aligned}
\end{equation*}

The quantity $A$ can be chosen equal to the maximal number of balls $B_{i}$ that are pairwise disjoint. A procedure to create a set of pairwise disjoint balls is the following. Start with $X_{i_{1}}=X_{1}$, and throw away all the points of $\mathcal{X}_{n}^{0}$ at distance less than $2(k+1) \ell$ from $X_{1}$. Take any point $X_{i_{2}}$ that has not been thrown away, and throw away all the remaining points that are distance less than $2(k+1) \ell$ from $X_{i_{2}}$. Repeating this procedure for $\tilde{A}$ steps until no points are left, we obtain a set of indexes for which the corresponding balls are pairwise disjoint. In particular, $\tilde{A} \leq A$. The number of points that are thrown away at the step $a$ follows a binomial distribution of parameters $m$ and $q$, where $m \leq L$ is the number of points in $M_{a}:=M \backslash \bigcup_{a^{\prime}<a} \mathcal{B}\left(X_{i_{a^{\prime}}}, 2(k+1) \ell\right)$, and, as long as $f_{\max } c_{d} a(k \ell)^{d} \leq 1 / 2$

\begin{equation*}
q=\frac{\mu\left(B_{i_{a}}\right)}{\mu\left(M_{a}\right)} \leq \frac{c_{d} f_{\max }(k \ell)^{d}}{1-a c_{d} f_{\max }(k \ell)^{d}} \leq C_{8}(k \ell)^{d}.
\end{equation*}

In particular, the number of points that have been thrown away after $a$ steps is stochastically dominated by the sum of $a$ independent binomial random variables of parameter $L$ and $C_{8}(k \ell)^{d}$, that is a binomial random variable $Z_{a}$ of parameters $a L$ and $C_{8}(k \ell)^{d}$. This implies that

\begin{equation*}
\mathbb{P}(A \leq a) \leq \mathbb{P}(\tilde{A} \leq a) \leq \mathbb{P}\left(Z_{a} \geq L\right).
\end{equation*}

Let $a=\left\lfloor 1 /\left(C_{9}(k \ell)^{d}\right)\right\rfloor$, where $C_{9}$ is choosen so that $f_{\max } c_{d} a(k \ell)^{d} \leq 1 / 2$ and $\mathbb{E} Z_{a}=a L C_{8}(k \ell)^{d} \leq$ $L / 2$. Then, $\mathbb{P}\left(Z_{a} \geq L\right) \leq \mathbb{P}\left(Z_{a}-\mathbb{E} Z_{a} \geq L / 2\right) \leq \exp \left(-C_{10} L\right)$ using Bernstein's inequality. Therefore, letting $L=\left(3 / C_{10}\right)(\ln n)$, we obtain that

\begin{equation*}
\begin{aligned}
\mathbb{P}(E) & \leq \mathbb{E}\left[\exp \left(-C_{7}(n-L) A(k \ell)^{d} / k\right)\right] \\
&\leq \mathbb{E}\left[\exp \left(-C_{7}(n-L) A(k \ell)^{d} / k\right) 1\{A \geq a\}\right]+\mathbb{P}(A \leq a) \\
& \leq \exp \left(-C_{11} n\left\lfloor 1 /\left(C_{9}(k \ell)^{d}\right)\right\rfloor(k \ell)^{d} / k\right)+n^{-3} \\
& \leq \exp \left(-C_{12} n^{1-1 / d}(\ln n)^{1 / d} /(2 \beta)\right)+n^{-3}
\end{aligned}
\end{equation*}

If $d \geq 2$, the first term in the above sum is smaller than $n^{-3}$. If $d=1$, it is equal to $n^{-C_{12} /(2 \beta)}$ and we choose $\beta=C_{12} / 6$ to conclude. 
\end{proof}

\subsection*{Proof of the lower bound in \eqref{eq4.4}}

 A first naive attempt to lower bound $t_{\lambda}(\mathcal{X}_{n})$ is the following. Remark that if two points $X_{1}, X_{2}$, at distance $2 t$, are such that the ball centered at their middle, of radius $t$, does not contain any point of $\mathcal{X}_{n}$, then $d_{H}(\operatorname{Conv}(\left\{X_{1}, X_{2}\right\}) | \mathcal{X}_{n})=t$. Fix $t>0$, and assume that there is some $t^{\prime} \in \operatorname{Rad}(\mathcal{X}_{n})$ smaller than $t$ such that $h(t^{\prime}, \mathcal{X}_{n})<t^{\prime}$. There must then exist a simplex of size at least 3 of radius smaller than $t$ in $\mathcal{X}_{n}$. In particular, there are three points $X_{1}, X_{2}$ and $X_{3}$ of $\mathcal{X}_{n}$ so that $X_{2}, X_{3} \in \mathcal{B}\left(X_{1}, 2 t\right)$. Therefore, according to Lemma \ref{lem2.3}, if $t \leq \tau(M) / 8$,

\begin{equation}\label{eq4.7}
\begin{aligned}
\mathbb{P}\left(t_{\lambda}(\mathcal{X}_{n})<t\right) & \leq \mathbb{P}\left(\exists X_{1}, X_{2}, X_{3} \text { with } X_{2}, X_{3} \in \mathcal{B}\left(X_{1}, 2 t\right)\right) \\
& \leq \mathbb{E}\left[\mathbb{P}\left(\exists X_{2}, X_{3} \in \mathcal{B}\left(X_{1}, 2 t\right) | X_{1}\right)\right] \\
&\leq \mathbb{E}\left[(n \mu(\mathcal{B}\left(X_{1}, 2 t))^{2}\right] \leq (\alpha_{d} f_{\max } n(13 t / 6)^{d})^{2}\right.  \leq C_{0}(n t^{d})^{2}.
\end{aligned}
\end{equation}

We know from the previous section that $t^{*}(\mathcal{X}_{n})$ is of order $t \simeq(\ln n / n)^{1 / d}$, while $\left(n t^{d}\right)^{2} \simeq(\ln n)^{2}$ for such a value of $t$. Hence, the previous inequality is far from sufficient to obtain Theorem \ref{thm4.6}. We therefore consider a more elaborate construction.

\begin{lemma}\label{lem4.9} Let $\delta>0$. For $t$ small enough (depending on $\mu$ and $\delta)$, there exist $K$ pairwise disjoint measurable subsets $U_{1}, \ldots, U_{K}$, so that $K \geq c_{u, \delta} t^{-d}$ and each set $U_{k}$ contains a ball $V_{k}$ of radius $t$ and satisfies
\begin{equation}\label{eq4.8}
\mu\left(U_{k}\right)=m(t):=\alpha_{d}(1+\delta) f_{\min } t^{d}.
\end{equation}
\end{lemma}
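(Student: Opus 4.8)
The plan is to locate a small region of $M$ on which the density $f$ is essentially no larger than $f_{\min}$, to pack that region with $\Theta(t^{-d})$ pairwise disjoint balls of radius $t$, and then to fatten each of these balls slightly so that it carries exactly the mass $m(t)=\alpha_d(1+\delta)f_{\min}t^d$. All "$t$ small enough" conditions below will be thresholds depending only on $\mu$ and $\delta$ (through the region found in the first step and through $\tau(M)\ge\tau_{\min}$).

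The first step is to produce the region. Since $f_{\min}$ is the essential infimum of $f$, the set $\{x\in M: f(x)<(1+\delta/4)f_{\min}\}$ has positive volume. By the Lebesgue differentiation theorem, applicable on the compact $\mathcal C^2$ manifold $M$ (it is locally bi-Lipschitz to $\mathbb R^d$), $\mathrm{Vol}$-almost every $x\in M$ is a Lebesgue point of $f$, so that $\mathrm{Vol}(\mathcal{B}_M(x,s))^{-1}\int_{\mathcal{B}_M(x,s)}f\,\mathrm d\mathrm{Vol}\to f(x)$ as $s\to 0$. Intersecting the full-measure set of Lebesgue points with the positive-volume set above and splitting the result according to how small $s$ must be taken, one obtains a Borel set $W\subseteq M$ with $\mathrm{Vol}(W)>0$ and a threshold $s_0>0$ such that $\mathrm{Vol}(\mathcal{B}_M(x,s))^{-1}\int_{\mathcal{B}_M(x,s)}f\,\mathrm d\mathrm{Vol}\le(1+\delta/2)f_{\min}$ for every $x\in W$ and every $s\le s_0$.

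Next, set $r_t:=2(1+\delta)^{1/d}t$ and record two mass estimates, both coming from Lemma \ref{lem2.3}, valid once $t$ is small: for $x\in W$ one has $\mu(\mathcal{B}_M(x,t))\le(1+\delta/2)f_{\min}\mathrm{Vol}(\mathcal{B}_M(x,t))\le(1+\delta/2)f_{\min}\alpha_d(1+4t^2/(3\tau(M)^2))^d t^d\le m(t)$, while for any $x\in M$, using $f\ge f_{\min}$, one has $\mu(\mathcal{B}_M(x,r_t))\ge f_{\min}\alpha_d(1-r_t^2/(3\tau(M)^2))^d r_t^d\ge m(t)$. Now take a maximal $2r_t$-separated subset $\{x_1,\dots,x_K\}$ of $W$, so that the balls $\mathcal{B}_M(x_k,r_t)$ are pairwise disjoint; by maximality $W\subseteq\bigcup_k\mathcal{B}_M(x_k,2r_t)$, and Lemma \ref{lem2.3} gives $\mathrm{Vol}(W)\le K\alpha_d(13/12)^d(2r_t)^d$, hence $K\ge c_{u,\delta}t^{-d}$ with $c_{u,\delta}=\mathrm{Vol}(W)/(\alpha_d(13/12)^d4^d(1+\delta))$ depending only on $\mu$ and $\delta$. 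Finally, since $\mu$ has a density it is non-atomic, so by Sierpiński's theorem one can choose for each $k$ a Borel set $S_k\subseteq\mathcal{B}_M(x_k,r_t)\setminus\mathcal{B}_M(x_k,t)$ with $\mu(S_k)=m(t)-\mu(\mathcal{B}_M(x_k,t))$ — a legitimate value because the two displayed estimates give $0\le m(t)-\mu(\mathcal{B}_M(x_k,t))\le\mu(\mathcal{B}_M(x_k,r_t)\setminus\mathcal{B}_M(x_k,t))$. Put $V_k:=\mathcal{B}_M(x_k,t)$ and $U_k:=V_k\cup S_k$; then $\mu(U_k)=m(t)$, each $U_k$ contains the radius-$t$ ball $V_k$, and $U_k\subseteq\mathcal{B}_M(x_k,r_t)$, so $U_1,\dots,U_K$ are pairwise disjoint (requiring $2r_t\le\tau(M)/4$ and $t\le s_0$ throughout).

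The only genuinely non-routine step is the first one: upgrading the pointwise Lebesgue differentiation statement to a set $W$ of positive volume together with a single uniform threshold $s_0$, which is precisely what turns a "one density point" argument into a packing of order $t^{-d}$ balls. Everything else is volume bookkeeping via Lemma \ref{lem2.3} together with the classical fact that a non-atomic measure attains every value between $0$ and the measure of a given set.
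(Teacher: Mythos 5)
Your proof is correct and, while built on the same two pillars as the paper's (the fact that $f_{\min}$ being the essential infimum forces $\{f<(1+\delta')f_{\min}\}$ to have positive volume, plus the Lebesgue differentiation theorem), it takes a genuinely different route from there. The paper defines $A_t$ to be the set of centres $x\in M$ with $\mu(\mathcal{B}(x,t))\leq\alpha_d(1+\delta)f_{\min}t^d$, applies Besicovitch's covering theorem to $A_t$ to relate $\mu(A_t)$ to the maximal number $K_t$ of pairwise disjoint such balls, and then lets $t\to 0$ (using only the pointwise differentiation statement) to conclude $\liminf K_t t^d>0$. You instead first upgrade the pointwise differentiation statement to a single positive-volume Borel set $W$ with a uniform radius threshold $s_0$ via a stratification argument, and then pack $W$ by a maximal $2r_t$-separated family; the packing count then follows from the two-sided volume estimate of Lemma \ref{lem2.3} alone, so Besicovitch is not needed. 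Your construction of the $U_k$ is also cleaner: you enlarge each ball $V_k=\mathcal{B}_M(x_k,t)$ inside the fixed disjoint annulus $\mathcal{B}_M(x_k,r_t)\setminus V_k$, so disjointness is automatic, whereas the paper builds the $U_k$ inductively by grabbing mass from whatever is left of $M$ (which is why it needs the preliminary reduction $K\mapsto\lfloor K(t)/(2(1+\delta))\rfloor$ to ensure $Km(t)\leq 1$). The trade-off is that your uniformization step requires a measurability check on the sets $W_n=\{x:\forall s\in(0,1/n],\ \text{Vol}(\mathcal{B}_M(x,s))^{-1}\int_{\mathcal{B}_M(x,s)}f\leq(1+\delta/2)f_{\min}\}$; this does hold since $s\mapsto\text{Vol}(\mathcal{B}_M(x,s))^{-1}\int_{\mathcal{B}_M(x,s)}f$ is continuous in $s$ (spheres on $M$ have zero volume) so the supremum over $s$ can be taken over rationals, but it is worth spelling out. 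A minor typo: $c_{u,\delta}$ should read $c_{\mu,\delta}$, and the two small-$t$ thresholds needed are $(1+\delta/2)(1+4t^2/(3\tau_{\min}^2))^d\leq 1+\delta$ and $2r_t\leq\tau_{\min}/4$ together with $t\leq s_0$, all of which depend only on $\mu$ and $\delta$ as required.
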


Before proving the lemma, note that we also have $K m(t) \leq 1$ by a union bound.

\begin{proof}
 Consider the collection $\mathcal{F}$ of balls $V$ of radius $t$ centered at a point of $M$ satisfying $\mu(V) \leq \alpha_{d}(1+\delta) f_{\min } t^{d}$, and let $A_{t}$ be the set of the centers of such balls. By Besicovitch's covering theorem \cite[Theorem 2.8.14]{federer1969geometric}, there exist $N_{M}$ collections $\mathcal{G}_{1}, \ldots, \mathcal{G}_{N_{M}}$ of disjoint balls in $\mathcal{F}$ such that
\begin{equation*}
A_{t} \subseteq \bigcup_{l=1}^{N_{M}} \bigcup_{V \in \mathcal{G}_{l}} V.
\end{equation*}

Letting $K_{t}$ be the maximal number of pairwise disjoint balls in $\mathcal{F}$, we have $\mu\left(A_{t}\right) \leq N_{M} K_{t} \alpha_{d}(1+ \delta) f_{\min } t^{d}$. By the Lebesgue differentiation theorem, for almost all points $x \in M$ with $f(x)<$ $(1+\delta) f_{\min }$, we have 
\[\lim _{t \rightarrow 0} \frac{\mu(\mathcal{B}(x, t))}{ \alpha_{d} t^{d}} <f_{\min }(1+\delta).\]
 For such a $x$, we then have $x \in$ $\liminf _{t \rightarrow 0} A_{t}.$ Therefore,
\begin{align*}
c_{\mu}&=\mu\left(\left\{x \in M: f(x)<(1+\delta) f_{\min }\right\}\right) \leq \mu\left(\liminf _{t \rightarrow 0} A_{t}\right) \leq \liminf _{t \rightarrow 0} \mu\left(A_{t}\right) \\
&\leq N_{M} \alpha_{d}(1+\delta) \liminf _{t \rightarrow 0} K_{t} t^{d}.
\end{align*}

By the definition of $f_{\min }, c_{\mu}>0$. Therefore, for $t$ small enough, we have the inequality $K_{t} \geq \frac{c_{\mu}}{2 N_M \alpha_{d}(1+\delta)} t^{-d}$. Let $V_{1}, \ldots, V_{K(t)}$ be a set of pairwise disjoint balls in $\mathcal{F}$. By construction, each ball $V_{k}$ satisfies $\mu\left(V_{k}\right) \leq m(t)$. Also, we have $\mu\left(V_{k}\right) \geq \alpha_{d} f_{\min } t^{d} / 2$ for $t$ small enough by Lemma \ref{lem2.3}. This implies by a union bound that $1 \geq K(t) \alpha_{d} f_{\min } t^{d} / 2$. Therefore, $K(t) m(t) \leq 2(1+\delta)$. We define $K=\lfloor K(t) /(2(1+\delta))\rfloor$, a number that satisfies $K \geq c_{\mu, \delta} t^{-d}$ and $K m(t) \leq 1$.

Eventually, we build the sets $U_{k}$ by induction by choosing any measurable set $W_{k}$ in $M \backslash(\bigcup_{k^{\prime}<k} U_{k^{\prime}} \cup V_{k} )$ with $\mu\left(W_{k}\right)=m(t)-\mu\left(V_{k}\right) \geq 0$ and letting $U_{k}=V_{k} \cup W_{k}$. This is possible as
\begin{equation*}
\mu\left(M \backslash(\bigcup_{k^{\prime}<k} U_{k^{\prime}} \cup V_{k}) \right) \geq 1-(k-1) m(t)-\mu\left(V_{k}\right) \geq m(t)-\mu\left(V_{k}\right).
\end{equation*}

By construction, $\mu\left(U_{k}\right)=m(t)$ for every $k$. Note that we used the fact that for any $A \subseteq M$ and $0 \leq p \leq \mu(A)$, there exists a subset $V \subseteq A$ with $\mu(V)=p$: this holds as $\mu$ is absolutely continuous with respect to the volume measure on $M$.
\end{proof}

\begin{figure}
\centering
\includegraphics[width=0.7\textwidth]{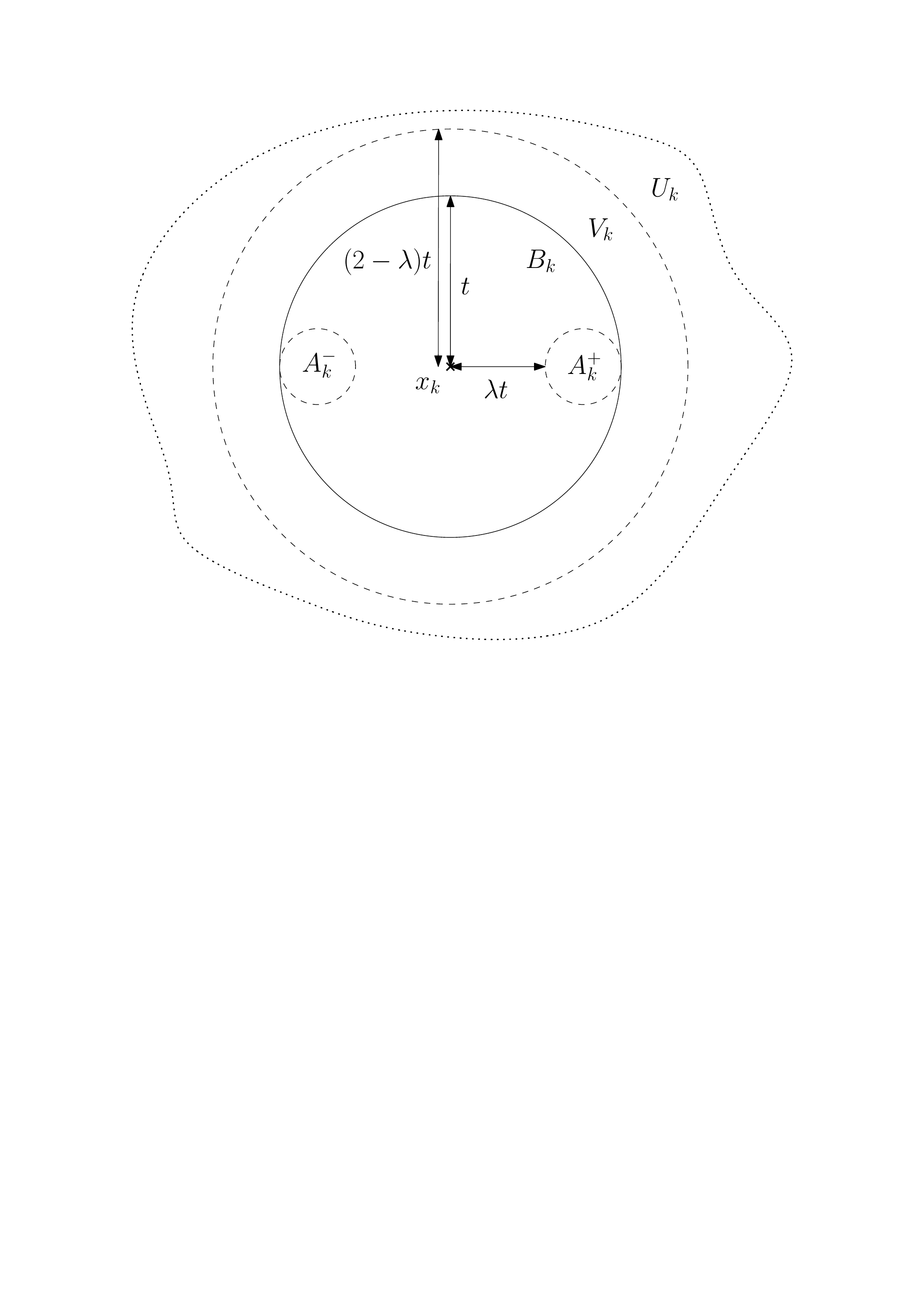}
\caption{ Any ball with diameter whose one extremity is in $A_{k}^{-}$and the other in $A_{k}^{+}$is included in $U_{k}$.
}\label{fig6}
\end{figure}

We fix such a partition in the following, with balls $V_{k}$ of radius $(2-\lambda) t$. We write $m$ for $m((2-\lambda) t)$. Let $B_{k}$ be the ball sharing its center with $V_{k}$, of radius $t$. For $W \subseteq M$, let $N(W)$ be the number of points of $\mathcal{X}_{n}$ in $W$. We also write $N_{k}$ for $N\left(U_{k}\right)$. Let $x_{k}$ be the center of $B_{k}$ and $e$ be a unit vector in $T_{x} M$, and denote by $A_{k}^{+}$(resp. $\left.A_{k}^{-}\right)$the ball of radius $(1-\lambda) t / 2$ centered at $x^{+}=x_{k}+e(1+\lambda) t / 2$ (resp. $\left.x^{-}=x_{k}-e(1+\lambda) t / 2\right)$, see Figure \ref{fig6}.

\begin{lemma}\label{lem4.10}
Fix $k=1, \ldots, K$. If $h(t, \mathcal{X}_{n})<\lambda t$ and $N_{k}=2$, then we cannot have both $N(A_{k}^{+})=1$ and $N(A_{k}^{-})=1$
\end{lemma}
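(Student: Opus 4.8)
The plan is to argue by contradiction. Assume $h(t,\mathcal{X}_{n}) < \lambda t$, $N_{k} = 2$, and both $N(A_{k}^{+}) = 1$ and $N(A_{k}^{-}) = 1$; write $X_{1}$ for the point of $\mathcal{X}_{n}$ in $A_{k}^{+}$ and $X_{2}$ for the one in $A_{k}^{-}$. The goal is to exhibit a point of $\operatorname{Conv}(t,\mathcal{X}_{n})$ at distance at least $\lambda t$ from $\mathcal{X}_{n}$, which contradicts $h(t,\mathcal{X}_{n}) = d_{H}(\operatorname{Conv}(t,\mathcal{X}_{n}),\mathcal{X}_{n}) < \lambda t$. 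The natural candidate is the midpoint $m = (X_{1}+X_{2})/2$ of the segment $[X_{1},X_{2}]$.

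First I would record the containments forced by the radii in the construction. Since $x^{+}$ is at distance $(1+\lambda)t/2$ from $x_{k}$ and $A_{k}^{+}$ has radius $(1-\lambda)t/2$, we get $A_{k}^{\pm} \subseteq \mathcal{B}(x_{k},t) = B_{k} \subseteq V_{k} \subseteq U_{k}$, using $\lambda \le 1$ so that $2-\lambda \ge 1$. Hence $X_{1},X_{2} \in U_{k}$, and since $N_{k} = 2$ these are the only two points of $\mathcal{X}_{n}$ in $U_{k}$. A triangle inequality using $|x^{+}-x^{-}| = (1+\lambda)t$ then gives $2\lambda t < |X_{1}-X_{2}| < 2t$; in particular $\sigma := \{X_{1},X_{2}\} \subseteq \mathcal{X}_{n}$ has $r(\sigma) = |X_{1}-X_{2}|/2 < t$, so $\operatorname{Conv}(\sigma) \subseteq \operatorname{Conv}(t,\mathcal{X}_{n})$ and $m \in \operatorname{Conv}(t,\mathcal{X}_{n})$, while also $|m - X_{i}| = |X_{1}-X_{2}|/2 > \lambda t$ for $i = 1,2$.

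Next I would locate $m$ relative to $x_{k}$: the displacements $x^{+}-x_{k}$ and $x^{-}-x_{k}$ cancel, so $m - x_{k} = \tfrac{1}{2}\big((X_{1}-x^{+}) + (X_{2}-x^{-})\big)$ and thus $|m-x_{k}| < (1-\lambda)t/2$. Consequently any point of $\mathcal{B}(m,\lambda t)$ lies within $(1-\lambda)t/2 + \lambda t = (1+\lambda)t/2 \le (2-\lambda)t$ of $x_{k}$ (again by $\lambda \le 1$), so $\mathcal{B}(m,\lambda t) \subseteq V_{k} \subseteq U_{k}$. Since the only points of $\mathcal{X}_{n}$ in $U_{k}$ are $X_{1}$ and $X_{2}$, and both lie at distance $> \lambda t$ from $m$, the ball $\mathcal{B}(m,\lambda t)$ contains no point of $\mathcal{X}_{n}$, i.e. $d(m,\mathcal{X}_{n}) \ge \lambda t$. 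Combining with $m \in \operatorname{Conv}(t,\mathcal{X}_{n})$ yields $h(t,\mathcal{X}_{n}) \ge d_{H}(\operatorname{Conv}(t,\mathcal{X}_{n}) \mid \mathcal{X}_{n}) \ge d(m,\mathcal{X}_{n}) \ge \lambda t$, contradicting the hypothesis.

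I do not expect a real obstacle: the argument is bookkeeping with the radii, and the radii of $V_{k}$, $B_{k}$ and $A_{k}^{\pm}$ were evidently chosen precisely so that the two inequalities $\mathcal{B}(m,\lambda t) \subseteq V_{k}$ and $|X_{1}-X_{2}|/2 > \lambda t$ hold simultaneously. The only points deserving care are checking that $r(\sigma) \le t$ (so that $[X_{1},X_{2}]$ genuinely sits inside $\operatorname{Conv}(t,\mathcal{X}_{n})$ rather than merely inside $\operatorname{Conv}(\mathcal{X}_{n})$) and keeping track of strictness in the chain of inequalities, all of which go through with slack because $\lambda \le 1$.
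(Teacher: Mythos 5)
Your proof is correct and takes essentially the same approach as the paper's: both consider the midpoint $m$ of $X_1,X_2$, note that it belongs to $\operatorname{Conv}(t,\mathcal{X}_n)$ since $r(\{X_1,X_2\})\le t$, and deduce $d(m,\mathcal{X}_n)\ge\lambda t$ by showing that the $\lambda t$-neighborhood of $m$ lies inside $V_k\subseteq U_k$, whose only sample points $X_1,X_2$ are at distance greater than $\lambda t$. The only (cosmetic) difference is that the paper bounds $|m-x_k|$ via a split along $e$ and $e^{\perp}$ to obtain the looser containment radius $(2-\lambda)t$, whereas your direct estimate $|m-x_k|<(1-\lambda)t/2$ gives the tighter $\mathcal{B}(m,\lambda t)\subseteq B_k$.
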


\begin{proof} Let $\sigma=\mathcal{X}_{n} \cap U_{k}$. Assume by contradiction that $h(t, \mathcal{X}_{n})<\lambda t$, $N_{k}=2$, and $N(A_{k}^{+})=N(A_{k}^{-})=1$. Then, $\sigma$ is made of two points, $x_{1}$ and $x_{2}$, respectively in $A_{k}^{+}$ and $A_{k}^{-}.$ As both points belong to $B_{k}$, we have $r(\sigma) \leq t$. Therefore, $d_{H}(\operatorname{Conv}(\sigma) | \mathcal{X}_{n}) \leq h(t, \mathcal{X}_{n})<\lambda t$. 
 In particular, the middle point $x_{0}$ of $x_{1}$ and $x_{2}$ is at distance less than $\lambda t$ from $\mathcal{X}_{n}$. Let us show that $\mathcal{B}_{M}\left(x_{0},\left|x_{1}-x_{0}\right|\right) \subseteq V_{k}$. 
  If this is the case, then $d(x_{0}, \mathcal{X}_{n})=\left|x_{1}-x_{2}\right| / 2 \geq \lambda t$, a contradiction with having $d_{H}(\operatorname{Conv}(\sigma) | \mathcal{X}_{n})<\lambda t$. Let $z \in \mathcal{B}_{M}\left(x_{0},\left|x_{1}-x_{0}\right|\right)$ and denote by $\pi_{e}$ the projection on $e$. Then,
\begin{equation*}
\begin{aligned}
\left|z-x_{k}\right| \leq &\left|z-x_{0}\right|+\left|x_{0}-x_{k}\right| \leq \frac{\left|x_{1}-x_{2}\right|}{2}+\left|\pi_{e}\left(x_{0}-x_{k}\right)\right|+\left|\pi_{e}^{\perp}\left(x_{0}-x_{k}\right)\right| \\
& \leq t+\frac{(1-\lambda) t}{2}+\frac{(1-\lambda) t}{2} \leq(2-\lambda) t
\end{aligned}
\end{equation*}
concluding the proof.
\end{proof}

Denote by $F_{k}$ the complementary event of the event $N(A_{k}^{+})=N(A_{k}^{-})=1$. We obtain the bound
\begin{equation*}
\begin{aligned}
&\mathbb{P}\left(h(t, \mathcal{X}_{n})<\lambda t\right) \leq \mathbb{P}\left(\forall k=1, \ldots, K, N_{k} \neq 2 \text { or }\left(N_{k}=2 \text { and } F_{k}\right)\right) \\
&=\mathbb{E}\left[\mathbb{P}\left(\forall k=1, \ldots, K, N_{k} \neq 2 \text { or }\left(N_{k}=2 \text { and } F_{k}\right) |\left(N_{k}\right)_{k=1, \ldots, K}\right)\right] \\
& \leq \mathbb{E}\left[\prod_{k=1}^{K}\left(\mathbf{1}\left\{N_{k} \neq 2\right\}+\mathbb{P}\left(F_{k} | N_{k}=2\right) \mathbf{1}\left\{N_{k}=2\right\}\right)\right] \\
& \leq \mathbb{E}\left[\prod_{k=1}^{K}\left(1-\left(1-\mathbb{P}\left(F_{k} | N_{k}=2\right)\right) \mathbf{1}\left\{N_{k}=2\right\}\right)\right].
\end{aligned}
\end{equation*}

\begin{lemma}\label{lem4.11}
There exists a positive constant $C_{1}$ such that
\begin{equation}
\mathbb{P}\left(F_{k} | N_{k}=2\right) \leq e^{-C_{1}} \text { for } k=1, \ldots, K.
\end{equation}
\end{lemma}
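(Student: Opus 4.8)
The goal is to show that, conditionally on $N_k = 2$ (that is, exactly two of the sample points fall in $U_k$), the probability that these two points are \emph{not} split one into $A_k^+$ and one into $A_k^-$ is bounded away from $1$ by an absolute constant. The natural approach is to observe that, given $N_k=2$, the two points in $U_k$ are i.i.d.\ with law $\mu$ restricted and renormalized to $U_k$, i.e.\ with law $\mu_{U_k}=\mu(\cdot\cap U_k)/\mu(U_k)$. Hence
\[
\mathbb{P}(F_k^c \mid N_k = 2) \;=\; 2\,\mu_{U_k}(A_k^+)\,\mu_{U_k}(A_k^-),
\]
the factor $2$ coming from the two ways of assigning the ordered pair. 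So it suffices to lower bound $\mu(A_k^+)$ and $\mu(A_k^-)$ from below by a constant multiple of $\mu(U_k) = m$, since then $\mathbb{P}(F_k^c\mid N_k=2) \geq 2c^2 =: 1 - e^{-C_1}$ for an absolute $C_1>0$.

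The key geometric point is that $A_k^+$ and $A_k^-$ are balls of radius $(1-\lambda)t/2$ centered at points lying on the manifold near $x_k$, and they are contained in $V_k\subseteq U_k$ (one checks $|x^\pm - x_k| + (1-\lambda)t/2 = (1+\lambda)t/2 + (1-\lambda)t/2 = t < (2-\lambda)t$, using that $V_k$ has radius $(2-\lambda)t$; one should also verify that the centers $x^\pm$, which are defined as points in the tangent direction $e$, are close enough to $M$ that the balls $\mathcal{B}_M(x^\pm, (1-\lambda)t/2)$ are nonempty and comparable to Euclidean balls — this uses Lemma \ref{lem2.1} to control the gap between $T_{x_k}M$ and $M$, which is $O(t^2/\tau(M))$ and hence negligible for $t$ small). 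Then Lemma \ref{lem2.3} gives, for $t$ small enough relative to $\tau_{\min}$,
\[
\mu(A_k^\pm) \;\geq\; f_{\min}\,\mathrm{Vol}\!\left(\mathcal{B}_M\!\left(x^\pm,(1-\lambda)t/2\right)\right) \;\geq\; f_{\min}\,\alpha_d\,\Big(\tfrac{47}{48}\Big)^d\Big(\tfrac{(1-\lambda)t}{2}\Big)^d,
\]
while $m = \alpha_d(1+\delta)f_{\min}((2-\lambda)t)^d$. Dividing, $\mu_{U_k}(A_k^\pm) \geq c$ with
\[
c \;=\; \frac{(47/48)^d}{1+\delta}\left(\frac{1-\lambda}{2(2-\lambda)}\right)^d \;>\; 0,
\]
a constant depending only on $d$, $\lambda$, $\delta$. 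This yields $\mathbb{P}(F_k\mid N_k=2) \leq 1 - 2c^2 \leq e^{-C_1}$ with $C_1 = -\ln(1-2c^2)>0$ (after checking $2c^2<1$, which holds since $c\leq (1/2)^d \cdot (\text{stuff} <1)$; if needed one shrinks the constant or notes $c<1/2$ automatically).

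The main obstacle I anticipate is not the counting identity — which is routine once one recalls that conditioning a Poisson-like sample on the count in a region gives i.i.d.\ points with the restricted law, and here the binomial structure gives exactly the same conditional law — but rather the geometric verification that $A_k^\pm \subseteq V_k$ and, more delicately, that $A_k^\pm$ intersected with $M$ has $\mu$-mass comparable to that of a full $d$-dimensional ball. The centers $x^\pm$ are defined by moving in a tangent direction, so they are not on $M$; one must argue that the projection $\pi_M(x^\pm)$ is within $O(t^2/\tau(M))$ of $x^\pm$ and that $\mathcal{B}_M(x^\pm,(1-\lambda)t/2)$ contains, say, $\mathcal{B}_M(\pi_M(x^\pm), (1-\lambda)t/2 - O(t^2/\tau(M)))$, whose volume is controlled by Lemma \ref{lem2.3}. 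For $t$ small enough (depending on $\lambda$ and $\tau_{\min}$) the correction is absorbed into a slightly worse constant, so the final bound is uniform in $k$ and in $n$, as required.
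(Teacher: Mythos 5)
Your proposal is correct and essentially mirrors the paper's own proof: same conditioning identity $\mathbb{P}(F_k^c\mid N_k=2)=2\,\mu_{U_k}(A_k^+)\,\mu_{U_k}(A_k^-)$, same geometric step to lower-bound $\mu(A_k^\pm)$ via Lemma~\ref{lem2.3} after accounting for the fact that $x^\pm$ sit off $M$ by $O(t^2/\tau(M))$, and same division by $m=\alpha_d(1+\delta)f_{\min}((2-\lambda)t)^d$. The only cosmetic difference is that the paper handles the off-manifold shift by finding $y_\pm\in M$ with $\pi_{x_k}(y_\pm-x_k)=x^\pm-x_k$ (via Lemma~\ref{lem2.2}) rather than projecting $x^\pm$ to $M$, and it invokes $1-x\leq e^{-x}$ to set $C_1=2c^2$ directly instead of $C_1=-\ln(1-2c^2)$; both yield the same conclusion.
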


\begin{proof}
 If $\left|x_{+}-x_{k}\right| \leq t \leq 7 \tau(M) / 24$, then there exists $y_{+} \in M$ that satifies $\pi_{x_{k}}\left(y_{+}-x_{k}\right)=x_{+}-x_{k}$ by Lemma \ref{lem2.2}. Furthermore, we have $\left|y_{+}-x_{k}\right| \leq 8 t / 7$ and, by Lemma \ref{lem2.1}, we have $\left|y_{+}-x_{+}\right| \leq$ $(8 t / 7)^{2} /(2 \tau(M))=32 t^{2} /(49 \tau(M))$. In particular,
\begin{equation*}
\mathcal{B}\left(x_{+},(1-\lambda) t / 2\right) \supseteq \mathcal{B}\left(y_{+},(1-\lambda) t / 2-32 t^{2} /(49 \tau(M))\right) \supseteq \mathcal{B}\left(y_{+},(1-\lambda) t / 4\right)
\end{equation*}

if $t \leq 49(1-\lambda) \tau(M) / 128$. According to Lemma \ref{lem2.3}, we therefore have, also assuming that $t \leq \tau(M) / 4$
\begin{equation*}
\mu\left(\mathcal{B}\left(x_{+},(1-\lambda) t / 2\right)\right) \geq f_{\min } \alpha_{d}\left(\frac{(1-\lambda) t}{4} \frac{47}{48}\right)^{d}
\end{equation*}
and the same inequality holds for $x_{-}$.

Let $Y_{1}, Y_{2}$ be two independent random variables sampled according to $\mu$, conditioned on being in $U_{k}$. Then, as $\mu\left(U_{k}\right)=m=\alpha_{d}(1+\delta) f_{\min }(2-\lambda)^{d} t^{d}$,

\begin{equation*}
\begin{aligned}
\mathbb{P}\left(F_{k} | N_{k}=2\right) &=1-2 \mathbb{P}\left(Y_{1} \in A_{k}^{+}\right) \mathbb{P}\left(Y_{2} \in A_{k}^{-}\right) \\
&=1-2 \frac{\mu\left(\mathcal{B}\left(x^{+},(1-\lambda) t / 2\right)\right) \mu\left(\mathcal{B}\left(x^{-},(1-\lambda) t / 2\right)\right)}{\mu\left(U_{k}\right)^{2}} \\
& \leq 1-2\left(\frac{\left(\frac{47}{48} \frac{1-\lambda}{4}\right)^{d}}{(1+\delta)(2-\lambda)^{d}}\right)^{2} \leq e^{-C_{1}}
\end{aligned}
\end{equation*}

where $C_{1}=2\left(\frac{\left(\frac{47}{48} \frac{1-\lambda}{4}\right)^{d}}{(1+\delta)(2-\lambda)^{d}}\right)^{2}$.
\end{proof}

We finally obtain

\begin{equation}
\mathbb{P}\left(h(t, \mathcal{X}_{n})<\lambda t\right) \leq \mathbb{E}\left[\exp \left(-C_{1} \sum_{k=1}^{K} \mathbf{1}\left\{N_{k}=2\right\}\right)\right].
\end{equation}

\begin{lemma}\label{lem4.12}
Assume that $n m \leq \max \left(m^{-1},(\ln n)^{2}\right)$. Let $\phi: x \in[0,+\infty) \mapsto \min (1, x) e^{-x}$. Then,
\begin{equation}
\mathbb{E}\left[\exp \left(-C_{1} \sum_{k=1}^{K} 1\left\{N_{k}=2\right\}\right)\right] \leq C_{2} \exp \left(-C_{3} n \phi(n m)\right)
\end{equation}

for some positive constants $C_{2}, C_{3}$.
\end{lemma}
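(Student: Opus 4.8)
Recall that $Z=\sum_{k=1}^{K}\mathbf 1\{N_k=2\}$, where $N_k$ is the number of points of $\mathcal X_n$ in $U_k$; since the $U_k$ are disjoint, $(N_1,\dots,N_K)$ is multinomial, each $N_k\sim\mathrm{Bin}(n,m)$, and (as in Lemma \ref{lem4.9}) $Km\le 1$ while $K\ge c_0 m^{-1}$ for a positive constant $c_0$. I would argue in four steps: (1) lower bound $\mathbb{E}[Z]$; (2) reduce the claim to the lower-tail bound $\mathbb{P}(Z\le\tfrac12\mathbb{E}[Z])\le C\exp(-C'n\phi(nm))$; (3) prove that bound by a Bernstein-type inequality whose variance term is of order $\max(1,nm)\,\mathbb{E}[Z]$; (4) finish by an elementary computation. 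For (1), put $p=\mathbb{P}(N_1=2)=\binom n2 m^2(1-m)^{n-2}$; the hypothesis $nm\le\max(m^{-1},(\ln n)^2)$ forces $nm^2\le1$ for $n$ large (immediate if $nm\le m^{-1}$, and if $nm\le(\ln n)^2$ then $m\le(\ln n)^2/n$ so $nm^2\le(\ln n)^4/n\le1$), hence $(1-m)^{n-2}\ge e^{-nm-1}$ and, with $\binom n2\ge n^2/4$, $p\ge c_1(nm)^2e^{-nm}$ with $c_1=e^{-1}/4$; multiplying by $K\ge c_0 m^{-1}$ and using $(nm)e^{-nm}\ge\min(1,nm)e^{-nm}=\phi(nm)$ gives $\mathbb{E}[Z]=Kp\ge c_0c_1\,n(nm)e^{-nm}\ge c_0c_1\,n\phi(nm)$. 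For (2), since $e^{-C_1 Z}\le1$ I may assume that $n\phi(nm)$ exceeds a large constant $T$ (otherwise $C_2\exp(-C_3 n\phi(nm))\ge C_2 e^{-C_3 T}\ge1$ once $C_2$ is large enough, and the conclusion is trivial), which forces $nm\le\ln n$ for $n$ large and $\mathbb{E}[Z]\ge1$; then $\mathbb{E}[e^{-C_1 Z}]\le e^{-C_1\mathbb{E}[Z]/2}+\mathbb{P}(Z\le\tfrac12\mathbb{E}[Z])$ with $e^{-C_1\mathbb{E}[Z]/2}\le e^{-C_1 c_0c_1 n\phi(nm)/2}$, so only the lower-tail bound remains.

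For step (3): modifying one coordinate of $Z(X_1,\dots,X_n)$ changes the counts of at most two cells by $1$ each, hence changes $Z$ by at most $2$; a plain bounded-difference inequality therefore gives only $\exp(-s^2/(2n))$, which is too weak. I would instead apply Freedman's inequality to the Doob martingale $Z_i=\mathbb{E}[Z\mid X_1,\dots,X_i]$. Its increments are bounded by $2$, and its predictable quadratic variation $V$ satisfies $\mathbb{E}[V]\le 4\sum_i\mathbb{P}(Z\neq\tilde Z_i)$, where $\tilde Z_i$ is the value obtained after resampling $X_i$ by an i.i.d. copy; the event $\{Z\neq\tilde Z_i\}$ is contained in the union of the event that $X_i$ falls in a cell containing $2$ or $3$ points and the event that the resampled point falls in a cell containing $1$ or $2$ of the remaining $n-1$ points, and summing over $i$ turns this into $\mathbb{E}[\sum_k N_k\mathbf 1\{N_k\in\{2,3\}\}]$ plus a term of the same flavour, both of which are, using $K\le m^{-1}$ and the bound on $\mathbb{E}[Z]$ from step (1), at most $C\max(1,nm)\,\mathbb{E}[Z]$. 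Feeding a high-probability bound $V\le C\max(1,nm)\,\mathbb{E}[Z]$ into Freedman's inequality with $s=\tfrac12\mathbb{E}[Z]$ yields $\mathbb{P}(Z\le\tfrac12\mathbb{E}[Z])\le\exp(-c\,\mathbb{E}[Z]/\max(1,nm))$. For step (4), note $\mathbb{E}[Z]\asymp n(nm)e^{-nm}$ (the matching upper bound comes from $\binom n2\le n^2/2$ and $K\le m^{-1}$) and use the elementary identity $nm/\max(1,nm)=\min(1,nm)$ to write $\mathbb{E}[Z]/\max(1,nm)\asymp n\min(1,nm)e^{-nm}=n\phi(nm)$; together with step (2) this gives $\mathbb{E}[e^{-C_1 Z}]\le C_2\exp(-C_3 n\phi(nm))$.

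The main obstacle is step (3): Freedman's inequality needs a high-probability (not just in-expectation) bound on $V$, and $V$ is, up to constants, an affine combination of the occupancy counts $\#\{k:N_k=j\}$ for $j\in\{1,2,3\}$, whose typical orders are $n\phi(nm)$, $\mathbb{E}[Z]$ and $nm\,\mathbb{E}[Z]$; one must therefore concentrate these auxiliary counts at scale $\max(1,nm)\,\mathbb{E}[Z]$, which cannot be done by bounded differences. The subtle regime is $nm\asymp\ln n$, where $n\phi(nm)=ne^{-nm}$ is only polynomially large in $n$ while the sample has $n$ points and cells with exactly two points are rare — so Poissonisation (which loses a factor $e^{\Theta(\sqrt n)}$ on this non-monotone functional) and cell-by-cell revealing (which loses a factor $e^{\Theta(nm)}$) both fail. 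A clean route is to condition on the number $M_{\mathrm{in}}\sim\mathrm{Bin}(n,Km)$ of points lying in $\bigcup_k U_k$ — itself exponentially concentrated around $nKm\asymp n$ — and then to invoke a variance-sensitive (sub-Poisson) concentration bound for the occupancy statistic given $M_{\mathrm{in}}$; alternatively, one can avoid the martingale entirely and study the exact identity $\mathbb{E}[e^{-C_1 Z}]=n!\,[x^n]\,e^{x}\bigl(1-\tfrac{1-e^{-C_1}}{2}(mx)^2 e^{-mx}\bigr)^{K}$ by the saddle-point method, the saddle sitting near $x=n$, where the above heuristics reappear as the leading exponential order.
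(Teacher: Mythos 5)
Your steps (1) and (2) are correct and match the paper's overall strategy: a lower bound $\mathbb{E}[Z]\geq c\,n\phi(nm)$ via $p_2=\mathbb{P}(N_1=2)=\binom{n}{2}m^2(1-m)^{n-2}$, and a reduction to the lower-tail event $\mathbb{P}\bigl(Z\leq\tfrac12\mathbb{E}[Z]\bigr)$. The gap, however, is in step (3), and you identify it yourself: the Doob-martingale/Freedman route requires a high-probability (not in-expectation) bound on the predictable quadratic variation at scale $\max(1,nm)\,\mathbb{E}[Z]$, and you cannot obtain it in the delicate regime $nm\asymp\ln n$. As written, the argument does not close, so the proposal is incomplete.

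The ``clean route'' you sketch at the very end is exactly the paper's proof, and if you had carried it out you would be essentially on top of it. The paper conditions on $\tilde n$, the number of points of $\mathcal{X}_n$ in $\bigcup_k U_k$ (your $M_{\mathrm{in}}\sim\mathrm{Bin}(n,Km)$). Given $\tilde n$, since $\mu(U_k)=m$ for all $k$, $S=\sum_k\mathbf 1\{N_k=2\}$ becomes the number of urns containing exactly two balls when $\tilde n$ balls are dropped uniformly at random into $K$ urns, with conditional mean $Kp_2$ where $p_2=\binom{\tilde n}{2}K^{-2}(1-K^{-1})^{\tilde n-2}$. The paper then cites a ready-made sub-Poisson concentration inequality for occupancy counts (its reference \texttt{ben2017concentration}, Proposition 3.5): if $Kp_2/2\geq\sqrt{4vs}+2s/3$ with $v=2K\max(2p_2,3p_3)$, then $\mathbb{P}\bigl(|S-Kp_2|>Kp_2/2\,\big|\,\tilde n\bigr)\leq 4e^{-s}$. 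The remaining work is a case split on whether $n/K$ (equivalently $nm$) is small or bounded below: in the first regime one takes $s=Kp_2/100$, in the second $s=c_5\tilde n e^{-\tilde n/K}$, and $\tilde n$ is concentrated around $nKm$ by Hoeffding. This is precisely the ``variance-sensitive concentration for the occupancy statistic given $M_{\mathrm{in}}$'' you invoke without proving; without it (or the saddle-point alternative you also merely mention), the proposal stops one lemma short.
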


Lemma \ref{lem4.12} relies on concentration inequalities and is proved in Appendix \ref{appA}. As $m$ is of order $t^{d}$, the condition $n m \leq \max \left(m^{-1},(\ln n)^{2}\right)$ is satisfied as long as $t^{d} \ll(\ln n)^{2} / n$. Remark also that the function $\phi$ is increasing on $[0,1]$ and decreasing on $[1,+\infty)$.

Assume that $t_{1} \leq t \leq t_{2}$, where

\begin{equation*}
t_{1}=\frac{1}{2-\lambda}\left(\frac{1}{\alpha_{d} f_{\min }(1+\delta) n}\right)^{1 / d} \text { and } t_{2}=\frac{1}{2-\lambda}\left(\frac{\beta \ln n}{\alpha_{d} f_{\min }(1+\delta) n}\right)^{1 / d}
\end{equation*}

for some $0<\beta<1$. Then, $1 \leq n m \leq \beta \ln n$, so that $\phi(n m) \geq \phi(\beta \ln n)=n^{-\beta}$ and

\begin{equation}\label{eq4.11}
\forall t \in\left[t_{1}, t_{2}\right], \quad \mathbb{P}\left(h(t, \mathcal{X}_{n})<\lambda t\right) \leq C_{2} \exp \left(-C_{3} n^{1-\beta}\right) \leq n^{-2}
\end{equation}

for $n$ large enough. Assume now that $t \in\left[t_{0}, t_{1}\right]$, where \[t_{0}=\frac{1}{2-\lambda}\left(\frac{\kappa \ln n}{\alpha_{d} f_{\min }(1+\delta) n^{2}}\right)^{1 / d}\] for some $\kappa>0$. Then, $\kappa(\ln n / n) \leq n m \leq 1$, so that $\phi(n m) \geq \phi(\kappa(\ln n / n)) \geq \kappa \ln n /(2 n)$ for $n$ large enough. Choosing $\kappa \geq 4 / C_{3}$, we obtain that

\begin{equation}\label{eq4.12}
\forall t \in\left[t_{0}, t_{1}\right], \quad \mathbb{P}\left(h(t, \mathcal{X}_{n})<\lambda t\right) \leq C_{2} n^{-C_{3} \kappa / 2} \leq C_{2} n^{-2}.
\end{equation}

The picture is now as follows. We know from \eqref{eq4.7} that $t_{\lambda}(\mathcal{X}_{n}) \geq t_{0}$ with probability at least $1-c_{3}(\ln n / n)^{2}$. For each $t$ between $t_{0}$ and $t_{2}$, we also have $h(t, \mathcal{X}_{n}) \geq \lambda t$ with probability at least $n^{-2}$ (at least for $n$ large enough with respect to $\lambda$ and $\mu$). Consider a sequence $t^{(i)}$ with $t^{(0)}=t_{0}$ and $t^{(i+1)}=t^{(i)} / \lambda$ for $i=0, \ldots, I$, with $I$ chosen so that

\begin{equation*}
t_{2} / \lambda \leq t^{(I)} \leq t_{2}
\end{equation*}

Assume that $t_{\lambda}(\mathcal{X}_{n}) \geq t_{0}$ and that $h(t^{(i)}, \mathcal{X}_{n}) \geq \lambda t^{(i)}$ for every $i$. If $t$ belongs to the interval $\left[t^{(i)}, t^{(i+1)}\right]$, then $h(t, \mathcal{X}_{n}) \geq h(t^{(i)}, \mathcal{X}_{n}) \geq \lambda t^{(i)} \geq \lambda^{2} t$. Therefore, $t_{\lambda^{2}}(\mathcal{X}_{n}) \geq t_{2}$ Let $\lambda^{\prime}=\lambda^{2}$. As $I$ is of order $\ln n$, by a union bound, we obtain that, for any $0<\beta, \delta<1$, $\lambda^{\prime} \in(0,1)$ and $n$ large enough

\begin{equation}
\label{eq4.13}
\begin{aligned}
\mathbb{P}&\left(t_{\lambda^{\prime}}(\mathcal{X}_{n}) \leq \frac{1}{2-\sqrt{\lambda^{\prime}}}\left(\frac{\beta \ln n}{\alpha_{d} f_{\min }(1+\delta) n}\right)^{1 / d}\right) \\
 &\leq \mathbb{P}\left(t_{\lambda}(\mathcal{X}_{n})<t_{0}\right)+\sum_{i=0}^{I} \mathbb{P}\left(h(t^{(i)}, \mathcal{X}_{n})<\lambda t^{(i)}\right) \\
& \leq c_{3}(\ln n / n)^{2}+c_{4}(\ln n) n^{-2} \\
& \leq c_{5}(\ln n / n)^{2}.
\end{aligned}
\end{equation}

\begin{lemma}\label{lem4.13} Let $A \subseteq M$. Let $0<\lambda \leq \lambda^{\prime}<1$. Then, $t_{\lambda}(A) \geq \frac{\lambda^{\prime}}{\lambda} t_{\lambda^{\prime}}(A)$.
\end{lemma}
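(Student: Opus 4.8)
The plan is to reformulate the claim as the assertion that the map $\lambda\mapsto\lambda\, t_\lambda(A)$ is nonincreasing on $(0,1]$, i.e. $\lambda\, t_\lambda(A)\ge\lambda'\, t_{\lambda'}(A)$ whenever $\lambda\le\lambda'$. Since $t_\lambda(A)$ is an infimum over $\operatorname{Rad}(A)$, it is enough to prove the statement at the level of scales: \emph{if $t\in\operatorname{Rad}(A)$ satisfies $h(t,A)\le\lambda t$, then $t\ge\frac{\lambda'}{\lambda}\,t_{\lambda'}(A)$}; taking the infimum over such $t$ then yields the lemma.

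So I would fix such a $t$ and set $s:=\frac{\lambda}{\lambda'}\,t\le t$. The first step is a monotonicity transfer: since $\operatorname{Conv}(t,A)$ is nondecreasing in $t$ by its very definition, $h(\cdot,A)=d_H(\operatorname{Conv}(\cdot,A)\,|\,A)$ is nondecreasing, so for every scale $u\in[s,t]$,
\begin{equation*}
h(u,A)\le h(t,A)\le\lambda t=\lambda' s\le\lambda' u,
\end{equation*}
i.e. the defining inequality of $t_{\lambda'}(A)$ holds on the whole window $[s,t]$.

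The second step is to cash this in at the level of the infimum over $\operatorname{Rad}(A)$. Here I would use that $h(\cdot,A)$ is a nondecreasing, right-continuous, piecewise-constant step function whose jumps occur only at points of $\operatorname{Rad}(A)$. If $\operatorname{Rad}(A)$ already meets $[s,t)$ we get a smaller witness directly; otherwise $h(\cdot,A)$ is constant on $[s_0,t)$, where $s_0$ is the largest element of $\operatorname{Rad}(A)$ strictly below $t$, with value $h(s_0,A)=h(t,A)\le\lambda t$, and one pushes $s$ left and iterates. Working with the finite list of jump points $r_1<r_2<\cdots$ of $h(\cdot,A)$, writing $t_\lambda(A)=r_j$ and $t_{\lambda'}(A)=r_k$ (so $r_k\le r_j$ by the inclusion of the defining sets), and combining $h(r_i,A)>\lambda' r_i$ for all $i<k$ with the inequality $h(r_j,A)\le\lambda r_j$ propagated down by monotonicity, the statement reduces to an elementary inequality relating $r_j$, $r_k$, $\lambda$ and $\lambda'$, from which $t_{\lambda'}(A)\le\frac{\lambda}{\lambda'}t_\lambda(A)$ follows.

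The main obstacle is precisely this second step: controlling the interplay between the jumps of the step function $h(\cdot,A)$ and the two ``diagonal'' thresholds $u\mapsto\lambda u$ and $u\mapsto\lambda' u$, so that $h$ cannot drop below the line $u\mapsto\lambda' u$ only after a large gap of $\operatorname{Rad}(A)$ (an ``overshoot'' at the threshold would be what threatens the quantitative factor $\lambda'/\lambda$). I expect the cleanest route is to carry out the index comparison between $r_j$ and $r_k$ above rather than to argue with continuous scales directly, so that the only inequalities used are $h(r_{k-1},A)>\lambda' r_{k-1}$, $h(r_k,A)\le h(r_j,A)\le\lambda r_j$, and monotonicity of $h$.
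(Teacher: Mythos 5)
Your first step is sound: for a fixed $t \in \operatorname{Rad}(A)$ with $h(t,A) \le \lambda t$, setting $s = (\lambda/\lambda')\,t$ indeed gives $h(u,A) \le h(t,A) \le \lambda t = \lambda' s \le \lambda' u$ for every $u \in [s,t]$. The paper's proof attacks the same inequality from the other side: it invokes monotonicity of $h$ to claim $h(u,A) \ge \lambda' t_{\lambda'}(A)$ for $u \ge t_{\lambda'}(A)$, hence $h(u,A) \ge \lambda u$ on $[t_{\lambda'}(A), (\lambda'/\lambda)\,t_{\lambda'}(A)]$, so that no scale in $\operatorname{Rad}(A)$ strictly below $(\lambda'/\lambda)\,t_{\lambda'}(A)$ can satisfy $h(u,A)\le\lambda u$.

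The gap is precisely the one you flag, and your second step does not close it. Knowing $h(u,A) \le \lambda' u$ on the continuous window $[s,t]$ does not yield $t_{\lambda'}(A) \le s$, because the infimum defining $t_{\lambda'}(A)$ ranges only over the discrete set $\operatorname{Rad}(A)$: the window only gives $t_{\lambda'}(A) \le u'$ where $u'$ is the smallest element of $\operatorname{Rad}(A)\cap[s,t]$, and $u'$ may be as large as $t$ itself. The index comparison you propose does not repair this. Writing $t_\lambda(A)=r_j$ and $t_{\lambda'}(A)=r_k$ with $r_k\le r_j$, the only usable inequalities are $h(r_{k-1},A)>\lambda' r_{k-1}$, $h(r_j,A)\le\lambda r_j$, and $h(r_{k-1},A)\le h(r_j,A)$; these combine to $\lambda' r_{k-1}<\lambda r_j$, i.e.\ $r_j>(\lambda'/\lambda)\,r_{k-1}$, which is strictly weaker than the target $r_j\ge(\lambda'/\lambda)\,r_k$ by exactly the gap between $r_{k-1}$ and $r_k$ in $\operatorname{Rad}(A)$. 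This is a genuine obstruction rather than an artifact of your route: if $\operatorname{Rad}(A)$ has a large gap just below $r_k$, the step function $h(\cdot,A)$ can sit just above the line $u\mapsto\lambda' u$ at $r_{k-1}$ and drop strictly below the line $u\mapsto\lambda u$ already at $r_k$, in which case $t_\lambda(A)=t_{\lambda'}(A)=r_k$ and the stated inequality fails. The paper's one-line argument is exposed to the same issue, since its monotonicity step needs $h(t_{\lambda'}(A)^{-},A)\ge\lambda' t_{\lambda'}(A)$ while the definition only gives $h(t_{\lambda'}(A),A)\le\lambda' t_{\lambda'}(A)$; a clean fix, compatible with how the lemma is used, is to replace $t_{\lambda'}(A)$ on the right-hand side by the unrestricted threshold $\inf\{u>0:h(u,A)\le\lambda' u\}$, which is $\le t_{\lambda'}(A)$ and whose left limit does satisfy the needed inequality by construction.
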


\begin{proof} The function $h(\cdot, A)$ is nondecreasing, and is therefore larger than $\lambda^{\prime} t_{\lambda^{\prime}}(A)$ for $t \geq t_{\lambda^{\prime}}(A)$. Therefore, for $t \in\left[t_{\lambda^{\prime}}(A),\left(\lambda^{\prime} / \lambda\right) t_{\lambda^{\prime}}(A)\right]$, we have $h(t, A) \geq \lambda^{\prime} t_{\lambda^{\prime}}(A) \geq \lambda t$, yielding the conclusion.
\end{proof}

Let $0<\lambda<(1+b)^{-1 / d}$. From Proposition \ref{prop3.9}, we know that with probability $1-$ $c(\ln n)^{d-1} n^{-b}$, we have $\varepsilon(\mathcal{X}_{n}) \leq(1+b)^{1 / d}\left(\ln n /\left(n \alpha_{d} f_{\min }\right)\right)^{1 / d}$. For any $r>1$, if $n$ is large enough, by Proposition \ref{prop3.4}, this entails that $t^{*}(\mathcal{X}_{n}) \leq r \varepsilon(\mathcal{X}_{n})$. Choose $\lambda^{\prime}, \beta$ and $r$ close enough to 1, and $\delta$ small enough, so that
\begin{equation*}
\frac{\lambda^{\prime}}{\lambda\left(2-\sqrt{\lambda^{\prime}}\right)} \frac{\beta^{1 / d}}{(1+\delta)^{1 / d}} \geq r(1+b)^{1 / d}.
\end{equation*}

Such a choice is possible as $\frac{1}{\lambda}>(1+b)^{1 / d}$. Then, assuming that the complementary of the event described in \eqref{eq4.13} also holds, we have
\begin{equation*}
t_{\lambda}(\mathcal{X}_{n}) \geq \frac{\lambda^{\prime}}{\lambda} t_{\lambda^{\prime}}(\mathcal{X}_{n}) \geq \frac{\lambda^{\prime}}{\lambda} \frac{1}{2-\sqrt{\lambda^{\prime}}}\left(\frac{\beta \ln n}{\alpha_{d} f_{\min }(1+\delta) n}\right)^{1 / d} \geq r \varepsilon(\mathcal{X}_{n}) \geq t^{*}(\mathcal{X}_{n})
\end{equation*}

As the probability appearing in \eqref{eq4.13} is smaller than a quantity of order $(\ln n)^{2} n^{-2} \leq(\ln n)^{a} n^{-b}$ for any $0<b \leq 2$, we obtain inequality \eqref{eq4.4}, concluding the proof of the first statement of Theorem \ref{thm4.6}.

\subsection*{Proof of \eqref{eq4.4}}
 Consider a $n$-sample $\left\{X_{1}, \ldots, X_{n}\right\}$ on the circle $M$ of radius 1. Without loss of generality, we assume that $X_{1}=(0,1)$. Each point $X_{i}$ is equal to $\exp \left(i \theta_{i}\right)$ where $\theta_{i} \in[0,2 \pi)$. Consider the ordering
\begin{equation*}
0=\theta_{(1)} \leq \cdots \leq \theta_{(n)}
\end{equation*}

and the associated points $X_{(1)}, \ldots, X_{(n)}$. Define the spacings $V_{i}=\theta_{(i+1)}-\theta_{(i)}$ for $i=1, \ldots, n$ (with by convention $\left.\theta_{(n+1)}=2 \pi\right)$. The corresponding edge lenth $\left|X_{(i+1)}-X_{(i)}\right|=: 2 t_{i}$ satisfies $V_{i}=\arccos \left(1-2 t_{i}^{2}\right)$.

We write $V_{(1)} \leq \cdots \leq V_{(n)}$ for the ordered spacings (and $t_{(1)} \leq \cdots \leq t_{(n)}$ for the associated lengths). Note that we have $t^{*}(\mathcal{X}_{n})=t_{(n)}$. The next lemma asserts that the convexity defect function cannot increase too much between two consecutive $t_{(i)} \mathrm{s}$.

\begin{lemma}\label{lem4.14} For $t \in\left[t_{(i)}, t_{(i+1)}\right)$, we have $h(t, \mathcal{X}_{n}) \leq t_{(i)}+t_{(i+1)}^{2}$.
\end{lemma}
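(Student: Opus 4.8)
The plan is to start from $h(t,\mathcal{X}_{n})=d_{H}(\operatorname{Conv}(t,\mathcal{X}_{n})\mid\mathcal{X}_{n})=\sup\{d(y,\mathcal{X}_{n}):y\in\operatorname{Conv}(t,\mathcal{X}_{n})\}$ and to bound $d(y,\mathcal{X}_{n})$ for a point $y\in\operatorname{Conv}(\sigma)$ with $\sigma\subseteq\mathcal{X}_{n}$ and $r(\sigma)\le t$. The first step is a structural reduction. Since $r(\sigma)\le t<t_{(i+1)}\le\tau(M)=1$, the points of $\sigma$ are contained in an arc of $M$ shorter than a semicircle, and the chord of that arc is at most $\operatorname{diam}(\sigma)\le 2t$. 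The sample points met by this arc are cyclically consecutive, say $X_{(k)},\dots,X_{(l)}$ with $X_{(k)},X_{(l)}\in\sigma$; since on arcs shorter than a semicircle the chord is increasing in arc length, every consecutive spacing among $X_{(k)},\dots,X_{(l)}$ has chord at most $2t<2t_{(i+1)}$, hence edge half-length strictly less than $t_{(i+1)}$, i.e. at most $t_{(i)}$ because $t_{(i)}$ and $t_{(i+1)}$ are consecutive in the ordered list. As $\operatorname{Conv}(\sigma)\subseteq\operatorname{Conv}(R)$ with $R:=\{X_{(k)},\dots,X_{(l)}\}$, it is enough to bound $\sup\{d(y,\mathcal{X}_{n}):y\in\operatorname{Conv}(R)\}$ over runs $R$ of this kind.

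Fix $y\in\operatorname{Conv}(R)$. The polygon $\operatorname{Conv}(R)$ sits inside the convex circular segment cut off by the arc $\gamma_{R}$ spanned by $R$, and $d(y,M)<\tau(M)=1$ by Lemma~\ref{lem3.2}, so $z:=\pi_{M}(y)$ is well defined and lies on $\gamma_{R}$. Lemma~\ref{lem3.2} with $\tau(M)=1$ and $r(R)\le t$ gives $|y-z|=d(y,M)\le 1-\sqrt{1-t^{2}}$, a quantity of order $t^{2}$ and in particular at most $t_{(i+1)}^{2}$. The point $z$ lies on $\gamma_{R}$ between two consecutive sample points $X_{(j)},X_{(j+1)}$ of the run, whose edge half-length $t_{(j)}$ is at most $t_{(i)}$ by the first step; projecting $z$ orthogonally onto the chord $[X_{(j)},X_{(j+1)}]$ (a displacement of at most the sagitta of that sub-arc, of order $t_{(j)}^{2}$) and then moving to the nearer endpoint of that chord (a displacement of at most its half-length $t_{(j)}\le t_{(i)}$) gives $d(z,\mathcal{X}_{n})\le t_{(i)}$ up to a correction of order $t_{(i)}^{2}$. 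Hence $d(y,\mathcal{X}_{n})\le|y-z|+d(z,\mathcal{X}_{n})\le t_{(i)}+t_{(i+1)}^{2}$, and taking the supremum over $y$ and over admissible runs yields the claim.

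The delicate point is the second-order bookkeeping in the last inequality: one must check that the three $O(t^{2})$ error pieces — $1-\sqrt{1-t^{2}}$, the sub-arc sagitta, and the gap between $\sin(V_{(j)}/2)$ and the Euclidean distance from a point of that sub-arc to its endpoints — together fit inside the single term $t_{(i+1)}^{2}$. This can be organized by a dichotomy: if $t_{(i)}\ge t_{(i+1)}\bigl(1-t_{(i+1)}\bigr)$ then already $h(t,\mathcal{X}_{n})\le t\le t_{(i)}+t_{(i+1)}^{2}$ by Lemma~\ref{lem3.1}; otherwise $t_{(i)}$ is forced to be small, so each error is a small multiple of $t_{(i+1)}^{2}$ and the estimate goes through via the elementary bounds $1-\sqrt{1-x}\le x$ and $\arcsin x\le\tfrac{\pi}{2}x$. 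The genuine content of the lemma — and the only part that is not bookkeeping — is the structural claim of the first step (every simplex admissible at scale $t<t_{(i+1)}$ lives in a cluster of sample points whose internal edges all have half-length $\le t_{(i)}$), together with the observation that the $t$-convex hull of such a cluster is a thin polygon hugging the arc it spans, so that its distance to $\mathcal{X}_{n}$ is governed by $t_{(i)}$ up to curvature corrections of order $t_{(i+1)}^{2}$.
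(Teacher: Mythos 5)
Your plan is sound, and after some work one can make it close, but it takes a genuinely different route from the paper and the bookkeeping as stated has a real gap. You project $y\in\operatorname{Conv}(R)$ first onto $M$ via $z=\pi_{M}(y)$ and then orthogonally onto the chord $[X_{(j)},X_{(j+1)}]$, which produces two separate second-order errors, $d(y,M)\le 1-\sqrt{1-t^{2}}$ and the sagitta $1-\sqrt{1-t_{j}^{2}}$, and you then try to squeeze their sum inside the single allowance $t_{(i+1)}^{2}$ via the dichotomy. The paper avoids this entirely by projecting $x=re^{i\theta}$ \emph{radially} (along the ray from the origin) onto the sub-chord $[X_{(j)},X_{(j+1)}]$, hitting it at a point $y$ with $\theta\in[\theta_{(j)},\theta_{(j+1)}]$; since $x$, $y$, and $e^{i\theta}$ are collinear with $|x|\le|y|\le 1$, one gets $|x-y|=|y|-|x|\le 1-|x|=d(x,M)$ for free, and $d(y,\mathcal{X}_{n})\le t_{j}$ because $y$ lies on a chord of half-length $t_{j}$. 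This yields $d(x,\mathcal{X}_{n})\le d(x,M)+t_{j}\le t^{2}+t_{(i)}$ in one stroke, with a single second-order term and no dichotomy.

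The concrete gap in your version: in case 2 of the dichotomy the bound $1-\sqrt{1-x}\le x$ that you invoke only gives a total error $\le t^{2}+t_{j}^{2}$, and with $t<t_{(i+1)}$, $t_{j}\le t_{(i)}<t_{(i+1)}(1-t_{(i+1)})$ this is bounded by $t_{(i+1)}^{2}+t_{(i+1)}^{2}(1-t_{(i+1)})^{2}$, which exceeds $t_{(i+1)}^{2}$. To make the dichotomy close you need the sharper identity $1-\sqrt{1-x}=x/(1+\sqrt{1-x})$; with it one checks that $\bigl(1-\sqrt{1-u^{2}}\bigr)+\bigl(1-\sqrt{1-u^{2}(1-u)^{2}}\bigr)\le u^{2}$ (equivalent to $(1-u)^{3}\le 1+u$), which then does absorb both errors. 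So your argument is rescuable, just not with the estimates you name; the paper's radial projection is the cleaner move because it makes the sagitta disappear by construction. One thing your write-up does better: by passing to $\operatorname{Conv}(\sigma)\subseteq\operatorname{Conv}(R)$ you explicitly handle simplices of all dimensions, whereas the paper's proof text only speaks of points lying on an edge $[X_{(k)},X_{(l)}]$ and leaves the extension to higher simplices implicit.
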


\begin{proof}
 Let $\left[X_{(k)}, X_{(l)}\right]$ be an edge of length smaller than $2 t$ with $k<l$. We assume without loss of generality that $X_{0}$ does not lie on the arc between $X_{(k)}$ and $X_{(l)}$. Let $x$ be a point on this edge, of the form $r e^{i \theta}$ for some angle $\theta_{(k)} \leq \theta \leq \theta_{(l)}$. The angle $\theta$ belongs to the segment $\left[\theta_{(j)}, \theta_{(j+1)}\right]$ for some index $j$. As $t<t_{(i+1)}$ we have $t_{j}<t_{(i+1)}$, that is $t_{j} \leq t_{(i)}$. The ray of angle $\theta$ hits the line $\left[X_{(j)}, X_{(j+1)}\right]$ at some point $y$, and

\begin{equation*}
d(x, \mathcal{X}_{n}) \leq|x-y|+d(y, \mathcal{X}_{n}) \leq d(x, M)+t_{j} \leq t^{2}+t_{(i)}
\end{equation*}

by Lemma \ref{lem3.2}. As $t \leq t_{(i+1)}$, we obtain the conclusion.
\end{proof}

Let $\lambda>(1+b)^{-1}$ and fix an arbitrary $\lambda^{\prime}$ satisfying $(1+b)^{-1}<\lambda^{\prime}<\lambda$. Assume that $t_{(n-1)} \leq \lambda^{\prime} t_{(n)}$. Then, for $t \in\left[t_{(n-1)}, t_{(n)}\right)$, we have according to the previous lemma that
\begin{equation*}
h(t, \mathcal{X}_{n}) \leq \lambda^{\prime} t_{(n)}+t_{(n)}^{2}.
\end{equation*}
Choosing $t \in I:=\left[t_{(n)}\left(\lambda^{\prime}+t_{(n)}\right) / \lambda, t_{(n)}\right)$, we have $h(t, \mathcal{X}_{n})<\lambda t$. The interval $I$ satisfies the conditions of Lemma \ref{lem4.7}, and therefore intersects $\operatorname{Rad}(\mathcal{X}_{n})$ with probability $1-n^{-2}$. In particular, $t_{\lambda}(\mathcal{X}_{n})$ is smaller than the upper endpoint of $I$, that is $t_{(n)}=t^{*}(\mathcal{X}_{n})$. Note that such a choice of $t$ is possible as long as $\lambda-\lambda^{\prime}>t_{(n)}$. To put it another way, we have
\begin{equation}\label{eq4.14}
\mathbb{P}\left(t_{\lambda}(\mathcal{X}_{n})<t^{*}(\mathcal{X}_{n})\right) \leq \mathbb{P}\left(t_{(n-1)} \leq \lambda^{\prime} t_{(n)}\right)+\mathbb{P}\left(\lambda-\lambda^{\prime}<t_{(n)}\right)+n^{-2}.
\end{equation}

\begin{figure}
\centering
\includegraphics[width=0.5\textwidth]{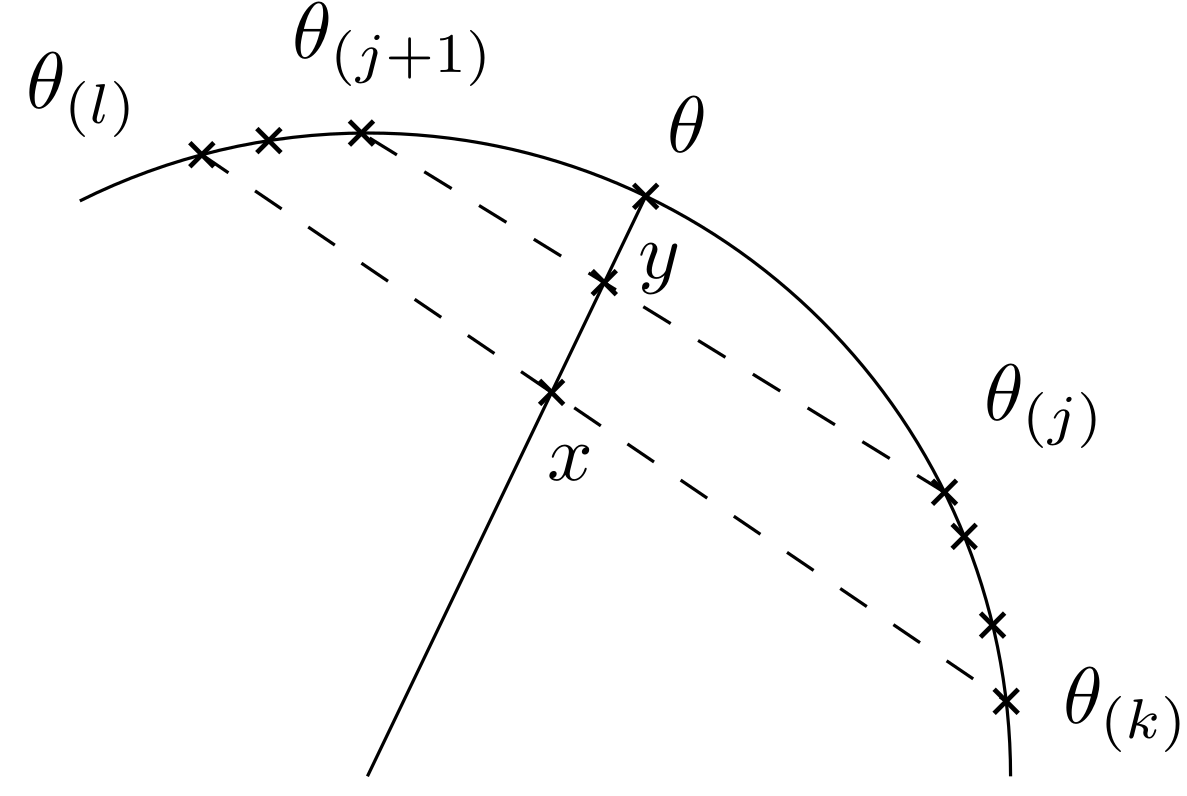}
\caption{Construction used in the proof of Lemma \ref{lem4.14}. The distance between $X_{(j)}=e^{i \theta_{(j)}}$ and $X_{(j+1)}=e^{i \theta_{(j+1)}}$ is equal to $2 t_{j}$, while the distance between $X_{(k)}=e^{i \theta_{(k)}}$ and $X_{(l)}=e^{i \theta_{(l)}}$ is smaller than $2 t$.}
\end{figure}

The second probability in the above equation is exponentially small by Proposition \ref{prop3.9}. It remains to study the probability that $t_{(n-1)} \leq \lambda^{\prime} t_{(n)}$. Let $A_{1}, \ldots, A_{n}$ be a $n$-sample following an exponential distribution. According to \cite[Section 6.4]{david1970order}, we have
\begin{equation*}
\left(V_{1}, \ldots, V_{n}\right) \sim 2 \pi\left(\frac{A_{1}}{\sum_{i=1}^{n} A_{i}}, \ldots, \frac{A_{n}}{\sum_{i=1}^{n} A_{i}}\right)
\end{equation*}

In particular, the law of $V_{(n)} / V_{(n-1)}$ is equal to the law of $A_{(n)} / A_{(n-1)}$, the largest of the $A_{i}$s divided by the second largest. Furthermore, according to \cite[Theorem 2.1]{zhang2017limit}, we have, for any $s>1$,
\begin{equation}\label{eq4.15}
\begin{aligned}
\mathbb{P}\left(A_{(n)} / A_{(n-1)} \geq s\right) &=n(n-1) \sum_{k=0}^{n-2} \binom{n-2}{k} \frac{(-1)^{n-2-k}}{n-1-k+s} \\
&=n(n-1) \sum_{k=0}^{n-2} \binom{n-2}{k}(-1)^{n-2-k} \int_{0}^{1} x^{n-2-k+s} \mathrm{~d} x \\
&=n(n-1) \int_{0}^{1} x^{s}(1-x)^{n-2} \mathrm{~d} x \\
&=n(n-1) B(s+1, n-1)\\
& \sim n^{2} \Gamma(s+1) n^{-(s+1)} \sim \Gamma(s+1) n^{1-s},
\end{aligned}
\end{equation}

where $B$ is the Beta function. Also, by writing a Taylor expansion of arccos at 1, we obtain that for $t_{(n)}$ small enough,
\begin{equation*}
\frac{V_{(n)}}{V_{(n-1)}}=\frac{\arccos \left(1-2 t_{(n)}^{2}\right)}{\arccos \left(1-2 t_{(n-1)}^{2}\right)} \leq \frac{t_{(n)}}{t_{(n-1)}}\left(1+\frac{5 t_{(n)}^{2}}{24}\right)
\end{equation*}

If $t_{(n-1)} \leq \lambda^{\prime} t_{(n)}$, then we have $\frac{V_{(n)}}{V_{(n-1)}} \geq\left(\lambda^{\prime}\right)^{-1}\left(1+\frac{5 t_{(n)}^{2}}{24}\right) \geq 1+b$ if $t_{(n)}$ is smaller than some constant $c_{0}$ (recall that $\left.\lambda^{\prime}>(1+b)^{-1}\right)$. Therefore,
\begin{equation*}
\mathbb{P}\left(t_{(n-1)} \leq \lambda^{\prime} t_{(n)}\right) \geq \mathbb{P}\left(t_{(n)}>c_{0}\right)+\mathbb{P}\left(\frac{V_{(n)}}{V_{(n-1)}} \geq 1+b\right)
\end{equation*}

for some small constant $c_{0}$ (depending on the distance between $(1+b)^{-1}$ and $\lambda^{\prime}$ ). The first probability is exponentially small, and the second one is of order $n^{-b}$ by \eqref{eq4.15}. Inequality \eqref{eq4.14} then yields the conclusion.

\section{Sampling with noise}

So far, we have always considered that the point cloud $\mathcal{X}_{n}$ lies exactly on the manifold $M$. However, all the constructions presented are stable with respect to tubular noise. 

Let $0<\gamma<\tau_{\min }$. Let $X=Y+Z$, with the law $\nu$ of $Y$ being in $\mathcal{Q}_{\tau_{\min }, f_{\min }, f_{\max }}^{d}$ and $Z \in T_{Y} M^{\perp}$ satisfying $|Z| \leq \gamma.$ We let $\mathcal{Q}_{\tau_{\min }, f_{\min }, f_{\max }}^{d, \gamma}$ be the set of laws of such random variables $X$. Observe that, as we do not assume that the conditional noise $Z  |  Y$ is centered, the model is not identifiable, that is $M$ is not determined by the law $\mu$ of $X$. To simplify matters, for each law $\mu \in \mathcal{Q}_{\tau_{\min },\fmin,\fmax}^{d, \gamma}$, we will make an arbitrary choice among the admissible couples $(Y, Z)$ with $Y+Z \sim \mu$. The "underlying manifold $M$ of the law $\mu$ " will be the support of the law of $Y$, while the results of this section will hold for any choice of couple $(Y, Z)$.

\begin{rem}[On the orthogonality assumption] The assumption that the noise is orthogonal (that is $\left.Z \in T_{Y} M^{\perp}\right)$ is not restrictive. Let $\gamma<\tau_{\min }$, $\nu \in \mathcal{Q}_{\tau_{\min }, f_{\min }, f_{\max }}^{d}$ with density $f$ and $Y \sim \nu$. Let $Z$ be any random variable supported on $\mathcal{B}(0, \gamma)$, and $X=Y+Z$ (without necessarily $\left.Z \in T_{Y} M^{\perp}\right)$. We may write $X=\pi_{M}(X)+\left(X-\pi_{M}(X)\right)=Y^{\prime}+Z^{\prime}$. By Lemma \ref{lem2.2}, we have $Z^{\prime} \in T_{Y}, M^{\perp}$. Furthermore, the density of $Y^{\prime}$ can be explicitely computed in terms of the density of $f$ and of the Jacobian of the function $G_{z}: y \in M \mapsto \pi_{M}(y+z)$. More precisely, one can show that $G_{z}$ is bijective, of class $\mathcal{C}^{1}$, and, by a change of variable, that the density $f^{\prime}$ of $Y^{\prime}$ at $y$ is given by $\mathbb{E}\left[f(G_{Z}^{-1}(y)) J(G_{Z}^{-1})(y)\right]$. The derivative of $G_{Z}$ is expressed in terms of the second fundamental form of $M$ (whose operator norm is bounded by the reach $\tau(M)$ \cite{geo:niyogi2008finding}). In particular, the Jacobian is upper and lower bounded, so that $f^{\prime}$ is lower and upper bounded on $M$. In other words, the law of $X$ belongs to $\mathcal{Q}_{\tau_{\min }, a f_{\min }, f_{\max } / a}^{d, \gamma}$ for some $0<a<1$ depending on $d, \tau_{\min }$ and $\tau_{\min }-\gamma$.
\end{rem}

We first show that the $t$-convex hull with parameter $t$ of order $(\ln n / n)^{1 / d}$ has a risk of the same order if tubular noise is added.

\begin{prop}\label{prop5.2}
 Let $A, B \subseteq \mathbb{R}^{D}$ and let $d_{H}(A, B) \leq \gamma$. Then,
\begin{equation}
d_{H}(\operatorname{Conv}(t, A)  |  \operatorname{Conv}(t+\gamma, B)) \leq \gamma
\end{equation}
\end{prop}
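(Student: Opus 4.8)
The plan is to push each point of $\operatorname{Conv}(t,A)$ to a nearby point of $\operatorname{Conv}(t+\gamma,B)$ by replacing the vertices of a generating simplex with $\gamma$-close points of $B$, keeping the same barycentric coordinates. First I would fix $y\in\operatorname{Conv}(t,A)$. By definition there is a set $\sigma\subseteq A$ with $r(\sigma)\le t$ and $y\in\operatorname{Conv}(\sigma)$; applying Carathéodory's theorem inside $\operatorname{Conv}(\sigma)$ we may assume $\sigma=\{a_0,\dots,a_k\}$ is finite (a subset of $\sigma$ is contained in the smallest enclosing ball of $\sigma$, so it still has radius $\le t$), and write $y=\sum_{i=0}^k\lambda_i a_i$ as a convex combination.

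Next I would use the hypothesis. Since $d_H(A,B)\le\gamma$ we have in particular $d_H(A\,|\,B)\le\gamma$, so for each $i$ there is $b_i\in B$ with $|a_i-b_i|\le\gamma$. Set $\sigma'=\{b_0,\dots,b_k\}\subseteq B$ and $y'=\sum_{i=0}^k\lambda_i b_i\in\operatorname{Conv}(\sigma')$. Then
\begin{equation*}
|y-y'|=\Big|\sum_{i=0}^k\lambda_i(a_i-b_i)\Big|\le\sum_{i=0}^k\lambda_i|a_i-b_i|\le\gamma .
\end{equation*}
To control $r(\sigma')$, let $z$ be the center of the smallest enclosing ball of $\sigma$, so $|a_i-z|\le r(\sigma)\le t$ for every $i$; then $|b_i-z|\le|b_i-a_i|+|a_i-z|\le\gamma+t$, i.e.\ $\sigma'$ is contained in the ball of center $z$ and radius $t+\gamma$. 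Since the radius of the smallest enclosing ball is bounded by the radius of any enclosing ball, $r(\sigma')\le t+\gamma$, hence $\operatorname{Conv}(\sigma')\subseteq\operatorname{Conv}(t+\gamma,B)$.

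Combining the two displays, $d\big(y,\operatorname{Conv}(t+\gamma,B)\big)\le|y-y'|\le\gamma$, and taking the supremum over $y\in\operatorname{Conv}(t,A)$ gives $d_H\big(\operatorname{Conv}(t,A)\,|\,\operatorname{Conv}(t+\gamma,B)\big)\le\gamma$. I do not expect any real obstacle here: the argument is elementary, and the only points needing a word of care are the Carathéodory reduction to a finite simplex and the monotonicity of the enclosing-ball radius under translation and passing to subsets.
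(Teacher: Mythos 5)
Your proof is correct and follows essentially the same route as the paper's: fix a simplex $\sigma\subseteq A$ with $r(\sigma)\le t$, perturb its vertices to $\gamma$-close points of $B$ to get $\sigma'\subseteq B$, and observe that $r(\sigma')\le r(\sigma)+\gamma$ and $d_H(\operatorname{Conv}(\sigma)\mid\operatorname{Conv}(\sigma'))\le\gamma$. The only difference is cosmetic: the paper cites Lemma~16 of Attali, Lieutier and Salinas for the radius bound, whereas you reprove it inline via the center of the smallest enclosing ball, and you spell out the barycentric-coordinate argument for the Hausdorff bound.
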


\begin{proof} Let $\sigma \subseteq A$. By definition, there exists $\sigma^{\prime} \subseteq B$ such that $d_{H}(\sigma  |  \sigma^{\prime}) \leq \gamma$. We have $r\left(\sigma^{\prime}\right) \leq r(\sigma)+\gamma \leq t+\gamma$ (see \cite[Lemma 16]{geo:attali2013vietoris}) and $d_{H}(\operatorname{Conv}(\sigma)  |  \operatorname{Conv}(\sigma^{\prime})) \leq \gamma$.
\end{proof}

Let $\mathcal{X}_{n}=\left\{X_{1}, \ldots, X_{n}\right\}$ be a $n$-sample of law $\mu$, with $\mathcal{Y}_{n}=\left\{Y_{1}, \ldots, Y_{n}\right\}$ the corresponding sample on $M$ (that is $Y_{i}=\pi_{M}(X_{i})$. If $t \geq t^{*}(\mathcal{Y}_{n})+\gamma$, then
\begin{equation*}
\begin{aligned}
d_{H}(M  |  \operatorname{Conv}(t, \mathcal{X}_{n})) &\leq d_{H}(M  |  \operatorname{Conv}(t-\gamma, \mathcal{Y}_{n}))+d_{H}(\operatorname{Conv}(t-\gamma, \mathcal{Y}_{n})  |  \operatorname{Conv}(t, \mathcal{X}_{n})) \\
& \leq \frac{(t-\gamma)^{2}}{\tau(M)}+\gamma \\
\text { and } d_{H}(\operatorname{Conv}(t, \mathcal{X}_{n})  |  M) & \leq d_{H}(\operatorname{Conv}(t, \mathcal{X}_{n})  |  \operatorname{Conv}(t+\gamma, \mathcal{Y}_{n}))+d_{H}(\operatorname{Conv}(t+\gamma, \mathcal{Y}_{n})  |  M) \\
& \leq \gamma+\frac{(t+\gamma)^{2}}{\tau(M)}.
\end{aligned}
\end{equation*}

Therefore, we obtain that, for $t \geq t^{*}\left(\mathcal{Y}_{n}\right)+\gamma$,
\begin{equation}
d_{H}(\operatorname{Conv}(t, \mathcal{X}_{n}), M) \leq \frac{(t+\gamma)^{2}}{\tau(M)}+\gamma.
\end{equation}

Assume that $\gamma \leq \eta(\ln n / n)^{2 / d}$ for some $\eta>0$ and let $\tilde t_{n}=2 t_{n}$, where $t_{n}$ is the radius appearing in Theorem \ref{thm1.1}. The probability that $\tilde{t}_{n} \leq t^{*}\left(\mathcal{Y}_{n}\right)+\gamma$ is smaller than the probability that $t_{n} \leq t^{*}\left(\mathcal{Y}_{n}\right)$, a probability that we control by Proposition \ref{prop3.9}. As $\tilde{t}_{n}+\gamma \leq 3 t_{n}$ for $n$ large enough, we obtain that
\begin{equation*}
\mathbb{E}\left[d_{H}(\operatorname{Conv}(\tilde{t}_{n}, \mathcal{X}_{n}), M)\right] \leq\left(\frac{c_{1}}{\tau_{\min }\left(\alpha_{d} f_{\min }\right)^{2 / d}}+\eta\right)\left(\frac{\ln n}{n}\right)^{2 / d}
\end{equation*}

for some absolute constant $c_{1}$.

Let us now analyze how the selection procedure is impacted by the presence of noise. We mimick the proof of Theorem \ref{thm4.6}. Let $0<b \leq 2$ and let $0<\lambda<(1+b)^{-1 / d}$. If $t_{\lambda}(\mathcal{X}_{n})<t$, then in particular there exist three points $X_{1}, X_{2}$ and $X_{3}$ such that $X_{2}, X_{3} \in \mathcal{B}\left(X_{1}, 2 t\right)$. We then have by Lemma \ref{lem2.2} that $Y_{2}, Y_{3} \in \mathcal{B}\left(Y_{1}, \frac{2 \tau(M)}{\tau(M)-\gamma} t\right)$. We obtain as in \eqref{eq4.7} that
\begin{equation}
\mathbb{P}\left(t_{\lambda}(\mathcal{X}_{n})<t\right) \leq C_{1} \frac{\left(n t^{d}\right)^{2}}{\left(\tau_{\min }-\gamma\right)^{2}}.
\end{equation}

Fix $t \in\left[t_{0}, t_{2}\right]$ (where $t_{0}$ and $t_{2}$ are defined in the proof of Theorem \ref{thm4.6}) and let $0<\gamma<t$. We have, by Proposition \ref{prop5.2},
\begin{equation*}
\begin{aligned}
&h(t, \mathcal{X}_{n}) =d_{H}(\operatorname{Conv}(t, \mathcal{X}_{n})  |  \mathcal{X}_{n}) \\
& \geq d_{H}(\operatorname{Conv}(t-\gamma, \mathcal{Y}_{n})  |  \mathcal{Y}_{n})-d_{H}(\mathcal{X}_{n}  |  \mathcal{Y}_{n})-d_{H}(\operatorname{Conv}(t-\gamma, \mathcal{Y}_{n})  |  \operatorname{Conv}(t, \mathcal{X}_{n})) \\
& \geq h(t-\gamma, \mathcal{Y}_{n})-2 \gamma.
\end{aligned}
\end{equation*}

Therefore, if $\frac{\lambda t+2 \gamma}{t-\gamma} \leq \lambda^{\prime}<1$ and $h(t-\gamma, \mathcal{Y}_{n}) \geq \lambda^{\prime}(t-\gamma)$, then $h(t, \mathcal{X}_{n}) \geq \lambda t$. Assume that $\gamma \leq \eta(\ln n / n)^{2 / d}$ for some $\eta>0$ and fix $\lambda^{\prime}=(1+\lambda) / 2$. Then, for $t \geq \tilde{t}_{0}:=6 \gamma /(1-\lambda)$, the condition $\frac{\lambda t+\gamma}{t-\gamma} \leq \lambda^{\prime}$ is satisfied. Furthermore, according to the proof of Theorem \ref{thm4.6}, for such a $t$, the condition $h(t-\gamma, \mathcal{Y}_{n}) \geq \lambda^{\prime}(t-\gamma)$ is satisfied with probability at least $1-c n^{-2}$. Using the same argument than in the proof of Theorem \ref{thm4.6}, we then obtain that

\begin{equation}
\begin{aligned}
\mathbb{P}\left(t_{\lambda}(\mathcal{X}_{n}) \leq \frac{1}{2-\sqrt{\lambda}}\left(\frac{\beta \ln n}{\alpha_{d} f_{\min }(1+\delta) n}\right)^{1 / d}\right) & \leq c_{1}(\ln n) n^{-2}+c_{2}\left(n \tilde{t}_{0}^{d}\right)^{2} \\
& \leq 2 c_{2} \frac{(\ln n)^{4}}{n^{2}}.
\end{aligned}
\end{equation}

We may conclude as in the previous proof that we have $t_{\lambda}(\mathcal{X}_{n}) \geq t^{*}\left(\mathcal{Y}_{n}\right)+\gamma$ with probability equal to $1-c(\ln n)^{\tilde{a}} n^{-b}$, where $\tilde{a}=4 \vee(d-1)$ if $b=2$ and $d-1$ otherwise.

 Let us now provide an upper bound on $t_{\lambda}(\mathcal{X}_{n})$. Consider the interval \[I=[(1-\lambda / 8) t_{\lambda / 2}(\mathcal{Y}_{n}),(1-\lambda / 16) t_{\lambda / 2}(\mathcal{Y}_{n})).\] By Theorem \ref{thm4.6}, Proposition \ref{prop3.4}, Lemma \ref{lem4.8} and Proposition \ref{prop3.9}, $t_{\lambda / 2}\left(\mathcal{Y}_{n}\right)$ is at least of order $n^{-1 / d}$ and at most of order $(\ln n / n)^{1 / d}$ with probability $1-n^{-2}$. By Lemma \ref{lem4.7}, this implies that $\operatorname{Rad}\left(\mathcal{Y}_{n}\right)$ intersects $I$ with the same probability. Let $t^{\prime} \in \operatorname{Rad}\left(\mathcal{Y}_{n}\right) \cap I$. This scale corresponds to some simplex $\sigma^{\prime}=\left\{y_{1}, \ldots, y_{K}\right\}$, and we let $\sigma=\left\{x_{1}, \ldots, x_{K}\right\} \subseteq \mathcal{X}_{n}$ where $y_{i}=\pi_{M}\left(x_{i}\right)$. We have $t:=r(\sigma) \leq \gamma+t^{\prime}$ according to \cite[Lemma 16]{geo:attali2013vietoris}. Furthermore, if $z$ is the center of the smallest enclosing ball of $\sigma$, we have using Lemma \ref{lem2.2}, $\left|y_{i}-\pi_{M}(z)\right| \leq \frac{\tau(M)}{\tau(M)-\gamma}\left|x_{i}-z\right| \leq \frac{t \tau(M)}{\tau(M)-\gamma}$, indicating that $t^{\prime} \leq \frac{t \tau(M)}{\tau(M)-\gamma}$. Recalling that $\gamma$ is of order $(\ln n / n)^{2 / d} \ll n^{-1 / d} \ll t_{\lambda / 2}\left(\mathcal{Y}_{n}\right)$, this means we have found a scale $t \in \operatorname{Rad}(\mathcal{X}_{n})$ satisfying

\begin{equation}\label{eq5.6}
(1-\lambda / 4) t_{\lambda / 2}(\mathcal{Y}_{n}) \leq\left(1-\frac{\gamma}{\tau(M)}\right) t^{\prime} \leq t \leq t^{\prime}+\gamma \leq(1-\lambda / 8) t_{\lambda / 2}(\mathcal{Y}_{n}).
\end{equation}

Using Proposition \ref{prop4.3} and \eqref{eq5.6}, we obtain
\begin{equation*}
\begin{aligned}
h(t, \mathcal{X}_{n}) & \leq d_{H}(\operatorname{Conv}(t, \mathcal{X}_{n})  |  \operatorname{Conv}(t_{\lambda / 2}(\mathcal{Y}_{n}), \mathcal{Y}_{n}))+h(t_{\lambda / 2}(\mathcal{Y}_{n}), \mathcal{Y}_{n})+d_{H}(\mathcal{X}_{n}, \mathcal{Y}_{n}) \\
& \leq (t_{\lambda / 2}(\mathcal{Y}_{n})-t)+\frac{\lambda}{2} t_{\lambda / 2}(\mathcal{Y}_{n})+\gamma \\
& \leq \frac{\lambda}{4} t_{\lambda / 2}(\mathcal{Y}_{n})+\frac{\lambda}{2} t_{\lambda / 2}(\mathcal{Y}_{n})+\gamma \leq \frac{3 \lambda}{4} t_{\lambda / 2}(\mathcal{Y}_{n})+\gamma \leq \lambda(1-\lambda / 4) t_{\lambda / 2}(\mathcal{Y}_{n}) \\
& \leq \lambda t
\end{aligned}
\end{equation*}

where at the second to last line we used that $\gamma \leq \lambda \frac{(1-\lambda)}{4} t_{\lambda / 2}(\mathcal{Y}_{n})$ (as $t_{\lambda / 2}(\mathcal{Y}_{n})$ is of order at least $\left.n^{-1 / d}\right)$. This implies that $t_{\lambda}(\mathcal{X}_{n}) \leq t \leq t_{\lambda / 2}(\mathcal{Y}_{n})$. Using the upper bound on $t_{\lambda / 2}(\mathcal{Y}_{n})$ given in Theorem \ref{thm4.6}, we have that, with probability $1-c(\ln n)^{\tilde{a}} n^{-b}$,

\begin{equation}\label{eq5.7}
t^{*}(\mathcal{Y}_{n})+\gamma \leq t_{\lambda}(\mathcal{X}_{n}) \leq \frac{2 t^{*}(\mathcal{Y}_{n})}{\lambda}\left(1+C\left(\frac{(\ln n)^{2}}{n}\right)^{1 / d}\right)
\end{equation}

that is an analog of Theorem \ref{thm4.6} also holds in a setting where tubular noise of size $(\ln n / n)^{2 / d}$ is present.

\section{Adaptive estimation with the selected scale}

In this section, we show that the estimator $\hat{M}=\operatorname{Conv}(t_{\lambda}(\mathcal{X}_{n}), \mathcal{X}_{n})$ is minimax adaptive on the scale of models $\mathcal{Q}_{\tau_{\min }, f_{\min }, f_{\max }}^{d}$. For the sake of exposition, we focus on the noiseless case $\gamma=0$. We first have to be careful when defining the scale of models. Indeed, by \eqref{eq2.1}, we have for $\mu \in \mathcal{Q}_{\tau_{\min }, f_{\min }, f_{\max }}^{d}$ supported on $M$

\begin{equation*}
1=\mu(M) \geq f_{\min } \omega_{d} \tau_{\min }^{d},
\end{equation*}

so that the model $\mathcal{Q}_{\tau_{\min }, f_{\min }, f_{\max }}^{d}$ is empty if $f_{\min } \omega_{d} \tau_{\min }^{d}>1$. Also, if we have $f_{\min } \omega_{d} \tau_{\min }^{d}=1$, then $\mu$ is the uniform distribution on a sphere. In this case $d+1$ observations characterize $M$, and the $\operatorname{minimax}$ rate on $\mathcal{Q}_{\tau_{\min }, f_{\min }, f_{\max }}^{d}$ is zero for $n \geq d+1$. To discard such degenerate cases, we will assume that there exists a constant $\kappa<1$ so that $\omega_{d} f_{\min } \tau_{\min }^{d}<\kappa$. We have already mentioned in the introduction that Kim and Zhou \cite{stat:kim2015tight} showed that the minimax risk $\mathcal{R}_{n}(\mathcal{Q}_{\tau_{\min }, f_{\min }, f_{\text {max }}}^{d})$ is of order $(\ln n / n)^{2 / d}$. They were however not concerned with precise constants. We indicate in Appendix \ref{appB} how to modify their proof to obtain a more precise result.

\begin{prop}\label{prop6.1} There exists a constant $C$ depending only on $\kappa$ such that

\begin{equation*}
\lim _{n} \inf \frac{\mathcal{R}_{n}(\mathcal{Q}_{\tau_{\min }, f_{\min }, f_{\max }}^{d})}{(\ln n / n)^{2 / d}} \geq \frac{C}{\left(\alpha_{d} f_{\min }\right)^{2 / d} \tau_{\min }}.
\end{equation*}
\end{prop}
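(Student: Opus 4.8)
The plan is to adapt the classical Fano / Assouad-style lower bound argument for minimax manifold estimation (as carried out by Kim and Zhou in \cite{stat:kim2015tight}) while tracking constants carefully. The starting point is the observation already used implicitly in the excerpt: under the condition $\omega_d f_{\min}\tau_{\min}^d < \kappa < 1$, there is ``room to spare'' on a reference manifold, so one can perturb it locally without violating the reach or density constraints. Concretely, I would fix a reference submanifold $M_0 \in \mathcal{M}^d$ with reach slightly above $\tau_{\min}$ (for instance a sphere of radius just above $\tau_{\min}$, or a product thereof), equipped with a density $f_0$ constant equal to some value safely between $f_{\min}$ and $f_{\max}$, so that $\mu_0 = f_0 \,\mathrm{vol}_{M_0} \in \mathcal{Q}^d_{\tau_{\min},f_{\min},f_{\max}}$ with strict inequalities in all the constraints. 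The slack is quantified by $\kappa$: the gap $1 - \omega_d f_{\min}\tau_{\min}^d \geq 1-\kappa$ leaves enough mass and enough geometric flexibility for the bump construction below.

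Next I would build a family of competitor manifolds by a bump (hypothesis) construction. Choose a scale $\delta \asymp (\ln n / n)^{1/d}$ and pick $m \asymp \delta^{-d} \cdot \mathrm{Vol}(M_0)$ disjoint geodesic balls of radius $\asymp \delta$ on $M_0$; on each such ball, either leave $M_0$ unchanged or push it out in the normal direction by a smooth bump of height $\asymp \delta^2/\tau_{\min}$ and width $\asymp \delta$, chosen so that the perturbed surface still has reach $\geq \tau_{\min}$ (this is where the ``$\delta^2/\tau_{\min}$'' height is forced: a bump of height $h$ and width $w$ has curvature radius $\asymp w^2/h$, so $h \asymp w^2/\tau_{\min}$ is the largest admissible height) and the pushed-forward density stays in $[f_{\min},f_{\max}]$ (using the $\kappa$-slack and choosing the bump's Jacobian distortion small). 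This yields $2^m$ hypotheses indexed by $\omega \in \{0,1\}^m$, pairwise at Hausdorff distance $\asymp \rho(\omega,\omega')\,\delta^2/\tau_{\min}$ where $\rho$ is Hamming distance. The key probabilistic input is that the KL divergence (or Hellinger affinity) between the law of $n$ i.i.d.\ draws under two hypotheses differing in one bump is controlled: each bump region carries mass $\asymp f_{\min}\alpha_d\delta^d$, so $\mathrm{KL} \lesssim n f_{\min}\alpha_d \delta^d \cdot (\text{per-bump discrepancy})$, and with $\delta^d \asymp \ln n / (n f_{\min}\alpha_d)$ this is $\asymp \ln n \lesssim \ln(2^m) / \text{(const)}$ when $m \gg \ln n$, which holds since $m \asymp \delta^{-d} \asymp n/\ln n \gg \ln n$. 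Applying Assouad's lemma (or a global Fano bound over the $2^m$ hypotheses) then gives a lower bound on the minimax risk of order $\delta^2/\tau_{\min} \asymp (\ln n/n)^{2/d} / ((\alpha_d f_{\min})^{2/d}\tau_{\min})$, with the multiplicative constant depending only on $\kappa$, $d$ — and one can absorb the $d$-dependence or leave it, matching the stated form $C/((\alpha_d f_{\min})^{2/d}\tau_{\min})$ after pushing $n\to\infty$.

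The main obstacle, and the place where care with constants is really needed, is the simultaneous verification that each perturbed manifold genuinely lies in $\mathcal{Q}^d_{\tau_{\min},f_{\min},f_{\max}}$: the reach constraint and the two-sided density constraint both have to survive the bump construction \emph{with the bump height calibrated to be exactly of order $\delta^2/\tau_{\min}$}, since it is this height that determines the resulting rate constant. Getting a clean constant $C=C(\kappa)$ requires that the bump profile be chosen so that (i) its normal curvature never exceeds $1/\tau_{\min}$ — this is a one-dimensional calculus fact about the chosen smooth profile, forcing the height/width ratio; (ii) the ``bottleneck'' part of the reach is not destroyed — guaranteed because the bumps are small, disjoint, and the original $M_0$ has reach bounded away from $\tau_{\min}$; and (iii) the density of the pushforward of $f_0\,\mathrm{vol}_{M_0}$ under the normal-graph map stays in $[f_{\min},f_{\max}]$ — controlled by the Jacobian of that map, whose deviation from $1$ is $O(\delta^2/\tau_{\min}^2)\to 0$, so for $n$ large the $\kappa$-slack suffices. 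Since the statement only claims a $\liminf$ as $n\to\infty$, all the ``for $n$ large enough'' caveats in (ii)–(iii) are harmless, and the remaining work is the (routine but constant-sensitive) bookkeeping in Appendix~\ref{appB} that tightens the Kim–Zhou argument to the explicit constant $C/((\alpha_d f_{\min})^{2/d}\tau_{\min})$.
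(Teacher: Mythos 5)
Your geometric setup is essentially right and matches the paper's: take a sphere of radius $R$ a bit larger than $\tau_{\min}$ (the $\kappa$-slack guarantees both that $\mathrm{Vol}(M_0)\le 1/f_{\min}$ and that there is reach to spare), attach radial bumps of height $\varepsilon\asymp\delta^2/\tau_{\min}$ on a $\delta$-net, and verify via Jacobian and second-derivative estimates that reach and density constraints survive. Appendix~\ref{appB} of the paper does exactly this, with $\Phi_s^\varepsilon$ the radial bump map, $\delta=\sqrt{R\varepsilon}\,\nu$, and an explicit choice of $R$ exploiting $\omega_d f_{\min}\tau_{\min}^d<\kappa$. So the ``which manifolds to use and how tall to make the bumps'' part of your sketch is sound.

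The genuine gap is in the information-theoretic step. You propose pairwise Fano/Assouad over $2^m$ individual bump hypotheses, but neither tool applies here as you have stated it. First, two bump-perturbed manifolds $M_s$ and $M_{s'}$ with $s\ne s'$ have \emph{disjoint} supports near the differing bump, so the pairwise KL is infinite and KL-based Fano cannot be invoked; the pairwise total variation is $1-(1-p)^n$ with $p\asymp f_{\min}\alpha_d\delta^d\asymp \ln n/n$, which tends to $1$ like $1-n^{-c}$, so a pairwise-testing lower bound collapses to something vanishing. Second, Assouad's lemma requires the loss to decompose additively over the cube coordinates (Hamming-type), but the Hausdorff distance between $M_s$ and $M_{s'}$ is a \emph{max} over bumps, not a sum: it is $\asymp\varepsilon$ whenever $s\ne s'$, independent of the Hamming distance $\rho(s,s')$. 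Your claim that the Hausdorff distance scales like $\rho(\omega,\omega')\,\delta^2/\tau_{\min}$ is therefore incorrect, and the Assouad decomposition does not go through. The paper (following Kim--Zhou \cite{stat:kim2015tight}) resolves both issues via Le Cam's two-point lemma applied to \emph{mixtures}: $\mathcal{P}^{(1)}$ is the uniform mixture over sign vectors with exactly $m$ plus signs, $\mathcal{P}^{(2)}$ over those with $m\pm1$ plus signs. These two families are $2\varepsilon$-separated as sets of manifolds, while the testing affinity between the two mixture distributions tends to $1$ when $4m=n/\ln n$ — the averaging over which bump is flipped is precisely what rescues the affinity from the $1-n^{-c}$ degeneracy. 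Without this mixture device the $(\ln n)^{2/d}$ factor in the lower bound is not recoverable by the route you describe, so this step cannot be treated as a drop-in ``standard Fano/Assouad'' move.
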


Our adaptivity result then reads as follows.

\begin{theorem}\label{thm6.2}
Let $d \geq 2$. Let $\mu \in \mathcal{Q}_{\tau_{\min }, f_{\min }, f_{\max }}^{d}$ and let $0<\lambda<(1+2 / d)^{-1 / d}$. Then, for $n$ large enough, we have
\begin{equation}\label{eq6.2}
\begin{aligned}
\mathbb{E}\left[d_{H}(\operatorname{Conv}(t_{\lambda}(\mathcal{X}_{n}), \mathcal{X}_{n}), M)\right] & \leq \frac{c_{0}}{\lambda^{2}\left(\alpha_{d} f_{\min }\right)^{2 / d} \tau_{\min }}\left(\frac{\ln n}{n}\right)^{2 / d} \\
&\leq \frac{c_{1}}{\lambda^{2}} \mathcal{R}_{n}(\mathcal{Q}_{\tau_{\min }}^{d}, f_{\min }, f_{\max }),
\end{aligned}
\end{equation}

where $c_{0}$ is a numerical constant and $c_{1}$ only depends on $\kappa$.
\end{theorem}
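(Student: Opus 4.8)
The plan is to combine the two-sided control of the data-driven scale $t_\lambda(\mathcal{X}_n)$ provided by Theorem~\ref{thm4.6}, the deterministic approximation estimate of Lemma~\ref{lem3.3}, and the refined minimax lower bound of Proposition~\ref{prop6.1}. Since all of these inputs hold without any knowledge of $d$, $\fmin$, $\fmax$ or $\tau(M)$, the resulting bound will be genuinely adaptive.

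First I would fix once and for all an exponent $b$ with $2/d<b\le 2$ and $\lambda<(1+b)^{-1/d}$. Such a $b$ exists: the hypothesis $\lambda<(1+2/d)^{-1/d}$ means $\lambda^{-d}-1>2/d$, so $(1+b)^{-1/d}>\lambda$ for $b$ slightly above $2/d$, while $d\ge 2$ guarantees $2/d\le 1<2$. Let $a=a(b,d)$ be the corresponding exponent from Theorem~\ref{thm4.6}, and let $\Omega_n$ be the intersection of the event of Theorem~\ref{thm4.6} with the event of Proposition~\ref{prop3.9} applied with parameter $1+b$; then $\mathbb{P}(\Omega_n^{c})\le c(\ln n)^{a\vee(d-1)}n^{-b}$, where $c$ depends on $\mu$ and $b$. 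On $\Omega_n$ and for $n$ large one has $\eps(\mathcal{X}_n)\le\rho_n:=\big((1+b)\ln n/(\alpha_d\fmin n)\big)^{1/d}<\tau(M)/8$, so Proposition~\ref{prop3.4} gives $t^*(\mathcal{X}_n)\le\eps(\mathcal{X}_n)\big(1+6\eps(\mathcal{X}_n)/\tau(M)\big)$, and Theorem~\ref{thm4.6} yields
\[
t^*(\mathcal{X}_n)\ \le\ t_\lambda(\mathcal{X}_n)\ \le\ \frac{t^*(\mathcal{X}_n)}{\lambda}\Big(1+C\big(\tfrac{(\ln n)^2}{n}\big)^{1/d}\Big)\ \le\ \frac{\rho_n}{\lambda}\big(1+o(1)\big),
\]
where $o(1)\to0$ uniformly over $\Omega_n$.

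Next I would convert this into a risk bound. On $\Omega_n$ one has $t_\lambda(\mathcal{X}_n)\to 0$, so for $n$ large Lemma~\ref{lem3.3} applies (its extension to $t=t^*(\mathcal{X}_n)$ being immediate, since for a finite set the infimum defining $t^*$ is attained), and using $t^*(\mathcal{X}_n)\le t_\lambda(\mathcal{X}_n)$,
\[
d_H\big(\hat M,M\big)\ \le\ \frac{t_\lambda(\mathcal{X}_n)^2}{\tau(M)}\ \le\ \frac{\rho_n^2}{\lambda^2\tau(M)}\big(1+o(1)\big)\ \le\ \frac{3}{\lambda^2\,\taumin\,(\alpha_d\fmin)^{2/d}}\Big(\frac{\ln n}{n}\Big)^{2/d}\big(1+o(1)\big),
\]
using $\tau(M)\ge\taumin$ and $(1+b)^{2/d}\le 3^{2/d}\le 3$ for $d\ge 2$. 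Off $\Omega_n$ I would use the crude universal bound $d_H(\hat M,M)\le\operatorname{diam}(M)$, valid because $\hat M\subseteq\Conv(M)$ (so $d_H(\hat M\,|\,M)\le\operatorname{diam}(M)$) while $d_H(M\,|\,\hat M)\le d_H(M\,|\,\mathcal{X}_n)=\eps(\mathcal{X}_n)\le\operatorname{diam}(M)$, and $\operatorname{diam}(M)$ is bounded by a constant depending only on $d,\taumin,\fmin$. Splitting the expectation over $\Omega_n$ and $\Omega_n^{c}$ then gives
\[
\E\big[d_H(\hat M,M)\big]\ \le\ \frac{3}{\lambda^2\,\taumin\,(\alpha_d\fmin)^{2/d}}\Big(\frac{\ln n}{n}\Big)^{2/d}\big(1+o(1)\big)\ +\ \operatorname{diam}(M)\,c(\ln n)^{a\vee(d-1)}n^{-b}.
\]
Because $b>2/d$, the last term is $o\big((\ln n/n)^{2/d}\big)$, which proves the first inequality of \eqref{eq6.2} with a numerical constant $c_0$. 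The second inequality follows from Proposition~\ref{prop6.1}: for $n$ large, $\mathcal{R}_n(\QQm)\ge\frac{C}{2(\alpha_d\fmin)^{2/d}\taumin}(\ln n/n)^{2/d}$ with $C=C(\kappa)$, hence $(\ln n/n)^{2/d}\le\frac{2(\alpha_d\fmin)^{2/d}\taumin}{C}\,\mathcal{R}_n(\QQm)$, and one may take $c_1:=2c_0/C$, which depends only on $\kappa$.

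The main obstacle, and the only place requiring care, is the calibration of $b$ in the first step: Theorem~\ref{thm4.6} couples the quality of the control on $t_\lambda(\mathcal{X}_n)$ to the rate $n^{-b}$ of its failure probability through the constraint $\lambda<(1+b)^{-1/d}$, so $b$ cannot be taken arbitrarily large, yet the leading term of the risk is of order $(\ln n/n)^{2/d}$ and therefore the failure term is negligible only if $b>2/d$. Reconciling these two requirements is exactly what the hypotheses $d\ge 2$ and $\lambda<(1+2/d)^{-1/d}$ make possible; everything else is a direct substitution of results from Sections~\ref{sec:approx} and~\ref{sec:selection}.
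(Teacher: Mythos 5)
Your proof is correct and follows essentially the same route as the paper's: choose $b$ with $2/d<b\le 2$ and $\lambda<(1+b)^{-1/d}$ (possible exactly when $\lambda<(1+2/d)^{-1/d}$ and $d\ge 2$), combine the two-sided control of $t_\lambda(\mathcal{X}_n)$ from Theorem~\ref{thm4.6} with Propositions~\ref{prop3.4} and~\ref{prop3.9} and Lemma~\ref{lem3.3} on the good event, bound the risk by $\operatorname{diam}(M)$ on the complementary event whose probability $c(\ln n)^{a}n^{-b}$ is negligible since $b>2/d$, and invoke Proposition~\ref{prop6.1} for the second inequality. The only cosmetic differences are that you track the constants explicitly via $\rho_n$ and spell out the diameter bound, whereas the paper applies Proposition~\ref{prop3.9} with a fixed exponent $a=4$; the mathematical content is the same.
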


\begin{proof}
 Choose $b \in(0,2]$ such that $\lambda<(1+b)^{-1 / d}<(1+2 / d)^{-1 / d}$. Assume that the event described in \eqref{eq4.4} is satisfied (that is with probability larger than $\left.1-c(\ln n)^{d-1} n^{-b}\right)$. Then, we have by Lemma \ref{lem3.3}
\begin{equation*}
d_{H}(\operatorname{Conv}(t_{\lambda}(\mathcal{X}_{n}), \mathcal{X}_{n}), M) \leq \frac{t_{\lambda}(\mathcal{X}_{n})^{2}}{\tau_{\min }} \leq \frac{t^{*}(\mathcal{X}_{n})^{2}}{\lambda^{2} \tau_{\min }}\left(1+C\left(\frac{(\ln n)^{2}}{n}\right)^{1 / d}\right)^{2}
\end{equation*}

We also assume that $\varepsilon(\mathcal{X}_{n}) \leq\left(\frac{4 \ln n}{\alpha_{d} f_{\min n} n}\right)^{1 / d}$, an event that happens with probability $1-(\ln n)^{d-1} n^{-3}$ by Proposition \ref{prop3.9}. Then, for $n$ large enough, we have $t^{*}(\mathcal{X}_{n}) \leq 2 \varepsilon(\mathcal{X}_{n})$ by Proposition \ref{prop3.4}. In particular, we obtain that, for $n$ large enough

\begin{equation*}
d_{H}(\operatorname{Conv}(t_{\lambda}(\mathcal{X}_{n}), \mathcal{X}_{n}), M) \leq \frac{c_{0}}{\lambda^{2}\left(\alpha_{d} f_{\min }\right)^{2 / d} \tau_{\min }}\left(\frac{\ln n}{n}\right)^{2 / d}
\end{equation*}

for some absolute constant $c_{0}$. The probability that this inequality is not satisfied is of order $(\ln n)^{d-1} n^{-b} \ll(\ln n / n)^{2 / d}$, and if this is the case we bound the risk by $\operatorname{diam}(M)$ (that is bounded by a constant depending on $\tau_{\min }, f_{\min }$ and $d$ \cite[Lemma III.24]{stat:aamari2017vitesses}). We therefore obtain the first inequality of \eqref{eq6.2}, while the second one follows directly from Proposition \ref{prop6.1}. 
\end{proof}

\begin{rem}
In the case $d=1$, the minimax risk is of order $(\ln n / n)^{2} /\left(\left(\alpha_{d} f_{\min }\right)^{2} \tau_{\min }\right)$, whereas, with $b=2$, the probability with which \eqref{eq5.7} holds is of order $(\ln n / n)^{2}$. As such, one can show that the risk of $\operatorname{Conv}(t_{\lambda}(\mathcal{X}_{n}), \mathcal{X}_{n})$ is of order $(\ln n / n)^{2}$ for $d=1$, but with a leading constant that will depend on the constants appearing in Theorem \eqref{thm4.6}. This leading constant is therefore not anymore of order $1 /\left(\left(\alpha_{d} f_{\min }\right)^{2} \tau_{\min }\right)$, and we do not have a clean inequality of the form \eqref{eq6.2}. Still, $\operatorname{Conv}(t_{\lambda}(\mathcal{X}_{n}), \mathcal{X}_{n})$ is a data-driven minimax estimator even in this case.
\end{rem}

With a choice of $\lambda$ smaller than $1 / \sqrt{2}$ (say $\lambda=1 / 2$ ), the condition $\lambda<(1+2 / d)^{-1 / d}$ is satisfied for every $d \geq 2$. With such a choice, we obtain a completely data-driven estimator that attains asymptotically the minimax rate $\mathcal{R}_{n}(\mathcal{Q}_{\tau_{\min }, f_{\min }, f_{\max }}^{d})$ up to an absolute constant, for every admissible choice of $\tau_{\min }, f_{\min }, f_{\max }$ and $d \geq 2$. The slope $\lambda$ in our selection procedure is akin to a regularization parameter that appears in most selection methods (such as in the LASSO \cite{stat:tibshirani1996regression}, or the PCO and Goldenshluger-Lepski methods already mentioned). If every choice of parameter $\lambda<1 / \sqrt{2}$ is admissible from a theoretical point of view, the practical choice of the parameter $\lambda$ is more delicate. We develop in Section \ref{sec:num} a heuristic, similar to the slope heuristics \cite{arlot2019minimal}, to choose the parameter $\lambda$.

\begin{rem} We insist that our result is of an asymptotic nature, as the "large enough" in the above theorem depends on the probability measure $\mu$. A similar behavior occurs with the PCO method mentioned in the introduction \cite{stat:lacour2016estimator} (or with the Goldenshluger-Lepski method \cite[Proposition 1]{lacour2016minimal}). Indeed, the remainder term $C(n,|\mathcal{H}|)$ appearing in \eqref{eq1.5} depends on $\mu$ through the $\infty$-norm of its density function, whereas the minimax risk does not depend on this $\infty$-norm (see \cite[Theorem 2.8]{tsybakov2008introduction}). As such, the remainder term $C(n,|\mathcal{H}|)$ becomes negligible in front of the minimax risk only for $n$ large enough with respect to $\mu$ (and not only with respect to the parameters defining the statistical model), as this is the case in Theorem \ref{thm6.2}.
\end{rem}

The parameter $t_{\lambda}(\mathcal{X}_{n})$ actually gives us the approximation rate $\varepsilon(\mathcal{X}_{n})$ up to a multiplicative constant (roughly equal to $\lambda^{-1}$). As such, it can be also used to design other data-driven estimators. As an example, we consider the estimation of the tangent spaces of a manifold. Let $x \in M$ and $A \subseteq M$ be a finite set. We denote by $T_{x}(A, t)$ the $d$-dimensional vector space $U$ that minimizes $d_{H}(A \cap \mathcal{B}(x, t)  |  x+U)$. This estimator was originally studied in \cite{stat:belkin2009constructing}. Recall that the angle between subspaces is denoted by $\angle$.

\begin{cor}\label{cor6.1} Let $\mu \in \mathcal{Q}_{\tau_{\min }, f_{\min }, f_{\text {max }}}^{d}$ with support $M$ and let $0<\lambda<(1+1 / d)^{-1 / d}$. Then, for $n$ large enough (with respect to $\mu)$, we have

\begin{equation*}
\mathbb{E} \angle\left(T_{x} M, T_{p}\left(\mathcal{X}_{n}, 11 t_{\lambda}(\mathcal{X}_{n})\right)\right) \leq c\left(\frac{\ln n}{n}\right)^{1 / d}
\end{equation*}

for some constant $c$ depending on $\lambda, d, \tau_{\min }$ and $f_{\text {min }}$
\end{cor}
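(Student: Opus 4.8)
The plan is to reduce everything to a deterministic estimate for the local min‑max fit $T_x(A,t)$ and then plug in the control of $t_\lambda(\mathcal{X}_n)$ provided by Theorem~\ref{thm4.6}. The deterministic ingredient, which is essentially the content of \cite{stat:belkin2009constructing}, is: there are absolute constants $c_1,c_2\in(0,1)$ and $C_0>0$ such that for every finite $A\subseteq M$, every $x\in M$ and every scale $t$ with $\varepsilon(A)\le c_1 t$ and $t\le c_2\tau(M)$, one has $\angle\big(T_xM,\,T_x(A,t)\big)\le C_0\, t/\tau(M)$. The point --- and this is why there is no spurious $\varepsilon(A)/t$ term --- is that $x+T_xM$ is one of the competitors in the definition of $T_x(A,t)$, so by Lemma~\ref{lem2.1} the optimal value $\delta^{\ast}:=d_H\big(A\cap\mathcal{B}(x,t)\,\big|\,x+T_x(A,t)\big)$ is at most $t^2/(2\tau(M))$; writing a sample point $a\in A\cap\mathcal{B}(x,t)$ with $|\pi_x(a-x)|\ge t/4$ as $a-x=s_a v_a+w_a$ with $v_a\in T_xM$ a unit vector and $w_a\in T_xM^{\perp}$, Lemma~\ref{lem2.1} gives $|w_a|\le t^2/(2\tau(M))$ and hence $|\pi_{T_x(A,t)}^{\perp}(v_a)|\le 4t/\tau(M)$; finally, when $\varepsilon(A)\le c_1 t$ the directions $v_a$ arising from such points form a $\rho$‑net of the unit sphere of $T_xM$ with $\rho<1$ (using Lemma~\ref{lem2.2} to produce manifold points in every tangent direction), and a self‑bounding estimate for the operator norm of $\pi_{T_x(A,t)}^{\perp}$ restricted to $T_xM$ absorbs the net resolution and yields the bound. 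This deterministic step is the main obstacle: reading a bound off the definition of $T_x(A,t)$ alone only gives an extra $\varepsilon(A)/t$, which does not vanish when $t$ is of the same order as $\varepsilon(A)$ --- and here $11\,t_\lambda(\mathcal{X}_n)$ is exactly of order $\varepsilon(\mathcal{X}_n)$, so that term must be removed by exploiting the almost‑uniform sampling.

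Granting this, the rest is bookkeeping. Since $\lambda<(1+1/d)^{-1/d}$ we have $\lambda^{-d}-1>1/d$, so fix $b$ with $1/d<b<\lambda^{-d}-1$; then $\lambda<(1+b)^{-1/d}$ and Theorem~\ref{thm4.6} applies with this $b$. Combining Theorem~\ref{thm4.6}, Proposition~\ref{prop3.4} and Proposition~\ref{prop3.9}, there is an event of probability at least $1-c(\ln n)^{a}n^{-b}$ on which, for $n$ large enough (depending on $\mu$), one has simultaneously $t^{\ast}(\mathcal{X}_n)\le t_\lambda(\mathcal{X}_n)\le 2t^{\ast}(\mathcal{X}_n)/\lambda$, $\ \tfrac45\varepsilon(\mathcal{X}_n)\le t^{\ast}(\mathcal{X}_n)\le 2\varepsilon(\mathcal{X}_n)$, and $\varepsilon(\mathcal{X}_n)\le\big(4\ln n/(\alpha_d f_{\min}n)\big)^{1/d}$.

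On this event the scale $t:=11\,t_\lambda(\mathcal{X}_n)$ satisfies $\varepsilon(\mathcal{X}_n)\le\tfrac54 t_\lambda(\mathcal{X}_n)=\tfrac{5}{44}\,t\le c_1 t$ (the factor $11$ is chosen precisely so that $11\,t_\lambda(\mathcal{X}_n)\ge c_1^{-1}\varepsilon(\mathcal{X}_n)$) and, since $\varepsilon(\mathcal{X}_n)\to0$, also $t\le\tfrac{44}{\lambda}\varepsilon(\mathcal{X}_n)\le c_2\tau_{\min}\le c_2\tau(M)$ for $n$ large; hence the deterministic bound applies and gives
\[
\angle\big(T_xM,\,T_x(\mathcal{X}_n,11\,t_\lambda(\mathcal{X}_n))\big)\ \le\ \frac{11C_0}{\tau(M)}\,t_\lambda(\mathcal{X}_n)\ \le\ \frac{44C_0}{\lambda\,\tau_{\min}}\,\varepsilon(\mathcal{X}_n)\ \le\ \frac{c}{\lambda\,\tau_{\min}}\Big(\frac{\ln n}{n}\Big)^{1/d},
\]
with $c$ depending only on $d$ and $f_{\min}$. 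On the complementary event, of probability at most $c(\ln n)^{a}n^{-b}$, we bound $\angle$ by $\pi/2$; since $b>1/d$, the quantity $(\ln n)^{a}n^{-b}$ is $o\big((\ln n/n)^{1/d}\big)$, so this contributes a negligible term to the expectation. Adding the two contributions yields $\mathbb{E}\,\angle\big(T_xM,\,T_x(\mathcal{X}_n,11\,t_\lambda(\mathcal{X}_n))\big)\le c'(\ln n/n)^{1/d}$ for $n$ large, with $c'$ depending on $\lambda,d,\tau_{\min},f_{\min}$; exactly as in Theorem~\ref{thm6.2}, the threshold on $n$ depends on $\mu$ through the ``$n$ large enough'' of Theorem~\ref{thm4.6}. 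The only genuinely delicate point remains the deterministic bound in the form $\angle\le C_0 t/\tau(M)$; once that is in hand, the verification that $11\,t_\lambda(\mathcal{X}_n)$ lies in the admissible window $\big[c_1^{-1}\varepsilon(\mathcal{X}_n),\,c_2\tau(M)\big]$ with high probability is precisely what Theorem~\ref{thm4.6}, Proposition~\ref{prop3.4} and Proposition~\ref{prop3.9} deliver.
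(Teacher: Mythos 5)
Your proposal follows essentially the same route as the paper: both rest on the Belkin--Niyogi--Sindhwani deterministic tangent-estimation bound (Theorem 3.2 in their paper, cited directly in the proof of Corollary~\ref{cor6.1}: if $t<\tau(M)/2$ and $t\geq 10\varepsilon(A)$ then $\angle(T_x(A,t),T_xM)\leq 6t/\tau(M)$), then choose $b$ with $1/d<b$ so that $\lambda<(1+b)^{-1/d}$, invoke Theorem~\ref{thm4.6} together with Propositions~\ref{prop3.4} and~\ref{prop3.9} to put $11\,t_\lambda(\mathcal{X}_n)$ in the admissible window with probability $1-c(\ln n)^a n^{-b}$, and bound the angle by a constant on the bad event. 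One small arithmetic slip: with the actual constants ($c_1=1/10$), the relation $\varepsilon(\mathcal{X}_n)\le\tfrac54 t_\lambda(\mathcal{X}_n)$ gives only $\varepsilon\le\tfrac{5}{44}\,t$, which exceeds $t/10$; you need the tighter $\varepsilon(\mathcal{X}_n)\le\tfrac{11}{10}t_\lambda(\mathcal{X}_n)$, which Proposition~\ref{prop3.4} does give for $n$ large since $t^*(\mathcal{X}_n)\geq\varepsilon(\mathcal{X}_n)/(1+o(1))$, so the fix is immediate.
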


This rate is the minimax rate (up to logarithmic factors) according to \cite[Theorem 3]{stat:aamari2019nonasymptotic}.

\begin{proof}
 Theorem $3.2$ in \cite{stat:belkin2009constructing} states that for $A \subseteq M$, if $t<\tau(M) / 2$ and $t \geq 10 \varepsilon(A)$, then

\begin{equation*}
\angle\left(T_{x}(A, t), T_{x} M\right) \leq 6 \frac{t}{\tau(M)}.
\end{equation*}

As in the previous proof, we may choose $b \in(0,2]$ such that $\lambda<(1+b)^{-1 / d}<(1+1 / d)^{-1 / d}$, and assume that the event described in Theorem \ref{thm4.6} is satisfied. We also assume that $\varepsilon(\mathcal{X}_{n}) \leq$ $\left(\frac{4 \ln n}{\alpha_{d} f_{\min } n}\right)^{1 / d}$. Then, the quantity $t=11 t_{\lambda}(\mathcal{X}_{n})$ is larger than $10 \varepsilon(\mathcal{X}_{n})$ for $n$ large enough, and furthermore satisfies $t \leq c_{0}\left(\frac{\ln n}{\alpha_{d} f_{\min n} n}\right)^{1 / d}$ for some absolute constant $c_{0}$ if $n$ is large enough. We then have
\begin{equation*}
\angle\left(T_{x}\left(\mathcal{X}_{n}, t\right), T_{x} M\right) \leq \frac{c_{1}}{\left(\alpha_{d} f_{\min }\right)^{1 / d} \tau_{\min }}\left(\frac{\ln n}{n}\right)^{1 / d}
\end{equation*}

for some absolute constant $c_{1}$ large enough. If one of the two conditions does not hold (this happens with a probability smaller than $\left.(\ln n)^{a} n^{-b}=o\left(n^{-1 / d}\right)\right)$, we bound the angle by 2, concluding the proof.
\end{proof}

\begin{rem} Authors in \cite{stat:berenfeld2020estimating} also propose to use the convexity defect function of a set $A \subseteq M$ to estimate the reach of $M$, while their method requires only the knowledge of $\varepsilon(A)$. As such, we may use their technique by using the scale $t_{\lambda}(\mathcal{X}_{n})$ instead of $\varepsilon(\mathcal{X}_{n})$. This leads to a reach estimator that attains a risk of order $(\ln n / n)^{1 /(3 d)}$. As the minimax risk is of order $n^{-1 / d}$ up to logarithmic factors for this problem (at least on a statistical model made of $\mathcal{C}^{3}$ manifolds), this is far from being minimax. Still, this yields a consistent fully data-driven reach estimator. We refer to \cite{stat:berenfeld2020estimating} for details on the construction.
\end{rem}

\section{Numerical considerations\protect\footnote{Code is made available at  \href{https://github.com/vincentdivol/local-convex-hull}{github.com/vincentdivol/local-convex-hull}.}}\label{sec:num}

There are two distinct procedures to investigate: first, the computation of the $t$-convex hull $\operatorname{Conv}(t, \mathcal{X}_{n})$, and second, the computation of the scale $t_{\lambda}(\mathcal{X}_{n})$. To compute the $t$-convex hull $\operatorname{Conv}(t, \mathcal{X}_{n})$, it suffices to compute the Čech complex $\operatorname{Cech}(t, \mathcal{X}_{n}):=\left\{\sigma \subseteq \mathcal{X}_{n}: \ r(\sigma) \leq t\right\}.$ 
For $x \in \mathbb{R}^{D}$, let $N(x)$ be the number of points of $\mathcal{X}_{n}$ at distance less than $2 t$ of $x$. Assume that one has access to the set $E_{t}(\mathcal{X}_{n})$ of edges of $\mathcal{X}_{n}$ of length smaller than $2 t$. Then, authors in \cite{le2015construction} propose an algorithm of complexity $C_{D} \sum_{i=1}^{n} N\left(X_{i}\right)^{D}$ to compute $\operatorname{Cech}(t, \mathcal{X}_{n})$. 
When $t$ is of order $(\ln n / n)^{1 / d}$, $N(X_{i})$ is on average of order $\ln n$ and we obtain an average complexity of order $C_{D} n(\ln n)^{D}$. In high dimension, the complexity can be reduced if one has access to the dimension $d$ by computing $\operatorname{Conv}_{d}(t, \mathcal{X}_{n})$ instead (see Remark \ref{rem3.8}). Indeed, according to \cite{le2015construction}, the set of simplices of $\operatorname{Cech}(t, \mathcal{X}_{n})$ of dimension smaller than $d$ can be computed with average time complexity of order $C_{d} D n(\ln n)^{d}$. We also have to consider the computation of the edges $E_{t}(\mathcal{X}_{n})$. A naive algorithm to compute this set leads to a complexity of order $D n^{2}$, but in practice this can be considerably sped up by using e.g. a RP tree \cite{dasgupta2008random}.

 We now adress the selection procedure described in Section \ref{sec:selection}. To choose the scale $t_{\lambda}(\mathcal{X}_{n})$, we have to compute the convexity defect function of $\mathcal{X}_{n}$. To do so, we need for each simplex $\sigma \subseteq \mathcal{X}_{n}$ to (i) compute its radius $r(\sigma)$ and (ii) compute $d_{H}(\operatorname{Conv}(\sigma)  |  \mathcal{X}_{n})$. We will simplify this problem by considering only simplexes $\sigma$ of dimension 1 (i.e. edges). Let $\operatorname{Graph}(t, \mathcal{X}_{n})$ be the union of the edges of $\mathcal{X}_{n}$ of length $2 t$. We may define a graph convexity defect function $\tilde h(t, \mathcal{X}_{n})=d_{H}(\operatorname{Graph}(t, \mathcal{X}_{n}), \mathcal{X}_{n})$, as well as a graph scale parameter

\begin{equation*}
\tilde{t}_{\lambda}(\mathcal{X}_{n}):=\inf \left\{t \in \widetilde{\operatorname{Rad}}(\mathcal{X}_{n}): \tilde{h}(t, \mathcal{X}_{n}) \leq \lambda t\right\},
\end{equation*}

where $\widetilde{\operatorname{Rad}}(\mathcal{X}_{n}):=\left\{\left|X_{i}-X_{j}\right| / 2: 1 \leq i, j \leq n\right\}.$ A careful read of the proof of Theorem \ref{thm4.6} shows that only edges are considered to obtain the different inequalities of the theorem. In particular, this theorem also holds with $\tilde{t}_{\lambda}(\mathcal{X}_{n})$ instead of $t_{\lambda}(\mathcal{X}_{n})$. When $e$ is an edge of $\mathcal{X}_{n}$, the distance $d_{H}(\operatorname{Conv}(e)  |  \mathcal{X}_{n})$ can be computed in $\mathcal{O}(n(D+\ln n))$ operations \cite{alt2003computing}. By looping over the $\mathcal{O}\left(n^{2}\right)$ edges of the dataset, we may compute $\tilde{h}(\cdot, \mathcal{X}_{n})$ with a time complexity of $\mathcal{O}\left(n^{3}(D+\ln n)\right)$.

The choice of the slope value $\lambda$ has an impact on the selection procedure. Ideally, we would like to choose $\lambda$ so that it is just below 
\[\lambda_{\max }(\mathcal{X}_{n}):=\max \left\{\lambda: t_{\lambda}(\mathcal{X}_{n})>t^{*}(\mathcal{X}_{n})\right\}.\]

 Let $t_{\max }(\mathcal{X}_{n})=t_{\lambda_{\max }(\mathcal{X}_{n})}(\mathcal{X}_{n})$. According to Proposition \ref{prop4.3}, the function $h(\cdot, \mathcal{X}_{n})$ is almost constant after $t^{*}(\mathcal{X}_{n})$, and therefore also almost constant after $t_{\max }(\mathcal{X}_{n})$. This implies that $t_{\lambda}(\mathcal{X}_{n})$ should increase proportionally with $1 / \lambda$ for $\lambda<\lambda_{\max }(\mathcal{X}_{n})$ (at least approximately). On the opposite, for $\lambda>\lambda_{\max }(\mathcal{X}_{n})$, we expect $t_{\lambda}(\mathcal{X}_{n})$ to go to 0 quickly. By plotting the graph of the function $g_{\mathcal{X}_{n}}: \lambda \mapsto 1 / t_{\lambda}(\mathcal{X}_{n})$, those two behaviors should be observed (first linear and then diverging), so that a "jump" should occur around the value $\lambda_{\max }(\mathcal{X}_{n})$. We indeed observe such a phenomenon, see Figure \ref{fig8}. In practice, we use a grid $0=\lambda_{1} \leq \cdots \leq \lambda_{L}=1$ and the jump is defined by the smallest $l$ such that the condition $g_{\mathcal{X}_{n}}\left(\lambda_{l+1}\right)-g_{\mathcal{X}_{n}}\left(\lambda_{l}\right)>0.5 g_{\mathcal{X}_{n}}(0)$ is satisfied. We then select $\lambda_{\text {choice}}(\mathcal{X}_{n})=0.8 \lambda_{\text {jump }}(\mathcal{X}_{n})$ and let $t_{\text {sel}}(\mathcal{X}_{n}):=t_{\lambda_{\text {choice}}(\mathcal{X}_{n})}(\mathcal{X}_{n})$ (other constants than $0.5$ and $0.8$ would work as well).

\begin{figure}
\includegraphics[width=0.37\textwidth]{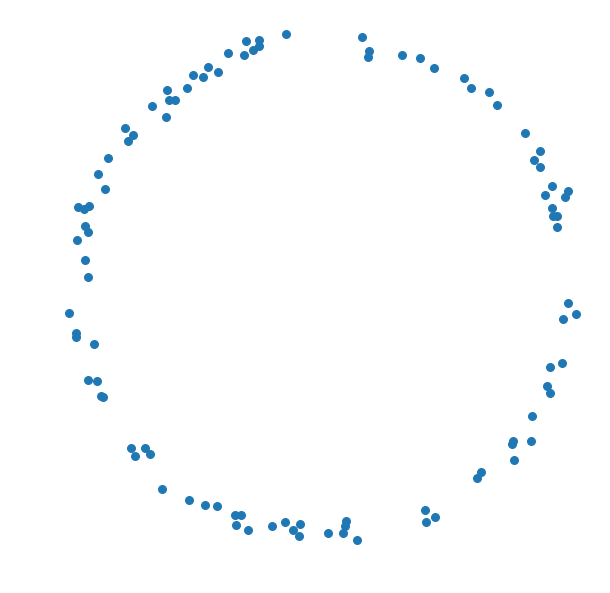}
\hfill
\includegraphics[width=0.5\textwidth]{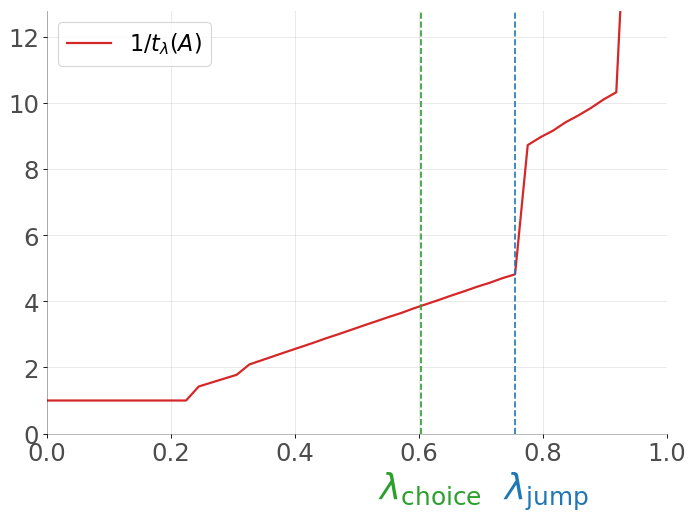}
\includegraphics[width=0.4\textwidth]{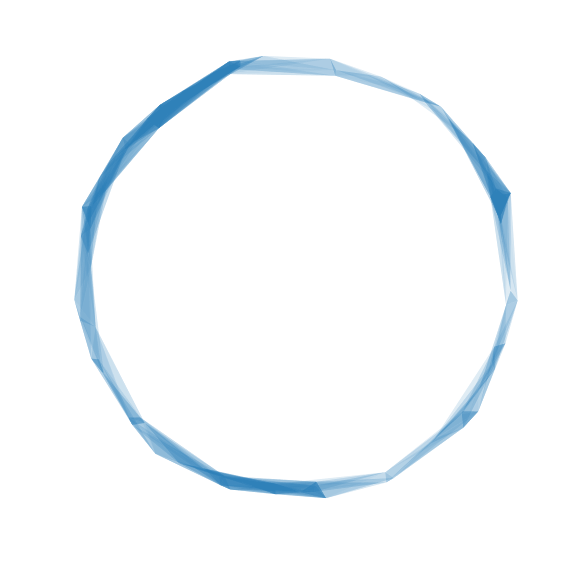}
\hfill
\includegraphics[width=0.5\textwidth]{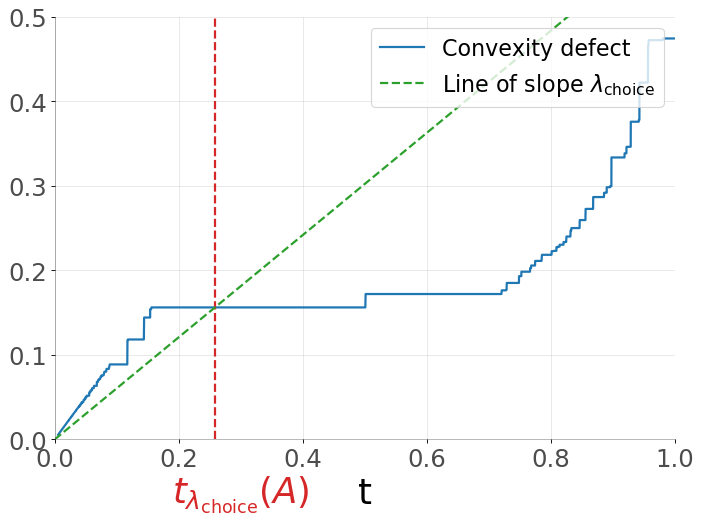}

\caption{Top left. Sample $\mathcal{X}_{n}$. Top right. The value of $\lambda_{\text {choice}}(\mathcal{X}_{n})$ is equal to $0.8 \lambda_{\text {jump }}(\mathcal{X}_{n})$. Bottom left. The set $\operatorname{Conv}(t_{\text {sel}}(\mathcal{X}_{n}), \mathcal{X}_{n})$. Bottom right. The graph convexity defect function $h(\cdot, \mathcal{X}_{n})$.}\label{fig8}
\end{figure}

\begin{rem}
 This method to select the slope $\lambda$ is similar to the slope heuristics in model selection. Consider for instance the fixed-design regression setting where $Y=F+\varepsilon \in \mathbb{R}^{n}$ is observed with a Gaussian noise $\varepsilon \sim \mathcal{N}\left(0, \sigma^{2} \mathrm{Id}\right)$. The goal is to reconstruct the signal $F$ for the $\ell_{2}$-loss, by selecting an estimator among the estimators $\hat{F}_{m}=\pi_{S_{m}}(Y)$, where $\left\{S_{m}: m\right\}$ is a collection of linear subspaces, each $S_{m}$ being of dimension $D_{m}$. A classical method to select the estimator $F_{m}$ is to choose

\begin{equation*}
\hat{m}(C) \in \underset{m}{\arg \min }\left\{\left|\hat{F}_{m}-Y\right|^{2}+C D_{m}\right\}
\end{equation*}

where $C$ is a constant to fix. In theory, any value of $C$ smaller than $\sigma^{2}$ will lead to overfitting, whereas values of $C$ larger than $\sigma^{2}$ are admissible. We then say that $C=\sigma^{2}$ is the minimal penalty. The exact value of the minimal penalty $C=\sigma^{2}$ is of an asymptotic nature. However, we still see a minimal penalty phenomenon occurring in practice: for $C$ too small, the selected dimension $D_{\hat{m}(C)}$ will be very large, whereas at some value $\hat{C}_{\text {jump }}$ it will suddendly decrease and gets smaller. This jump is detected and is used to select the value of $C$. We refer to \cite{arlot2019minimal} for details. A similar phenomenon occurs in our setting: the slope $\lambda$ plays the role of the parameter $C$ (or rather $1 / C$), and we have a maximal penalty phenomenon: every value of $\lambda$ smaller than 1 is theoretically admissible. The quantity $1 / t$ is the analog of the dimension $D_{m}$, as it is a measure of the complexity of the estimator $\operatorname{Conv}(t, \mathcal{X}_{n})$: choosing $t=+\infty$ amounts to assuming that $M$ is a convex set, whereas choosing very small values of $t$ amounts to assuming that $M$ has a small reach. In practice, we observe a jump in the function $g_{\mathcal{X}_{n}}: \lambda \mapsto 1 / t_{\lambda}(\mathcal{X}_{n})$, and we use this phenomenon to choose the parameter $\lambda$.
\end{rem}

\begin{figure}

\centering
     \begin{subfigure}[b]{0.48\textwidth}
         \centering
\includegraphics[width=\textwidth]{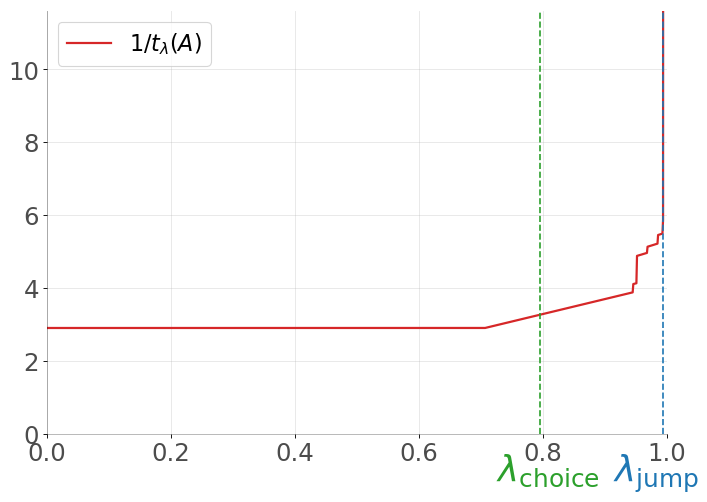}
         \caption{Choice of $\lambda$ - Torus}
     \end{subfigure}
     \hfill
     \begin{subfigure}[b]{0.48\textwidth}
         \centering
\includegraphics[width=\textwidth]{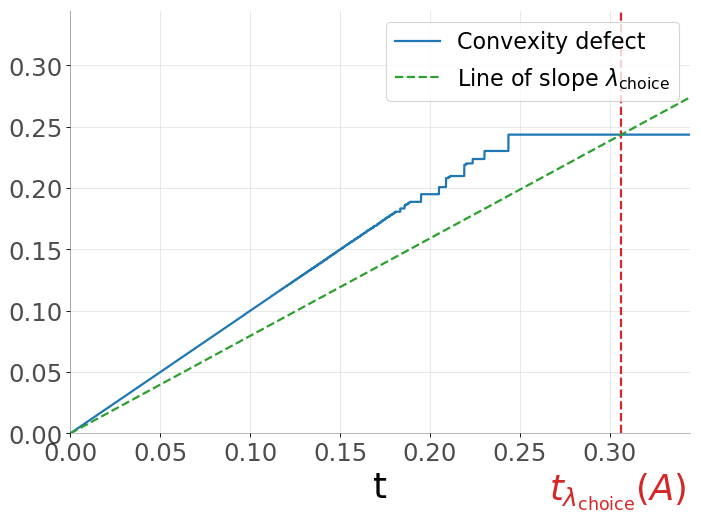}
         \caption{Choice of $t$ - Torus}
     \end{subfigure}
     \hfill
     \begin{subfigure}[b]{0.48\textwidth}
         \centering
\includegraphics[width=\textwidth]{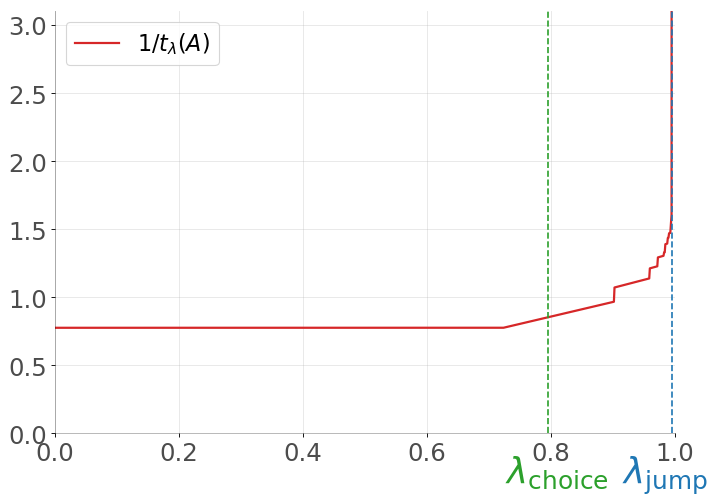}
         \caption{Choice of $\lambda$ - Swissroll}
     \end{subfigure}
     \hfill
          \begin{subfigure}[b]{0.48\textwidth}
         \centering
\includegraphics[width=\textwidth]{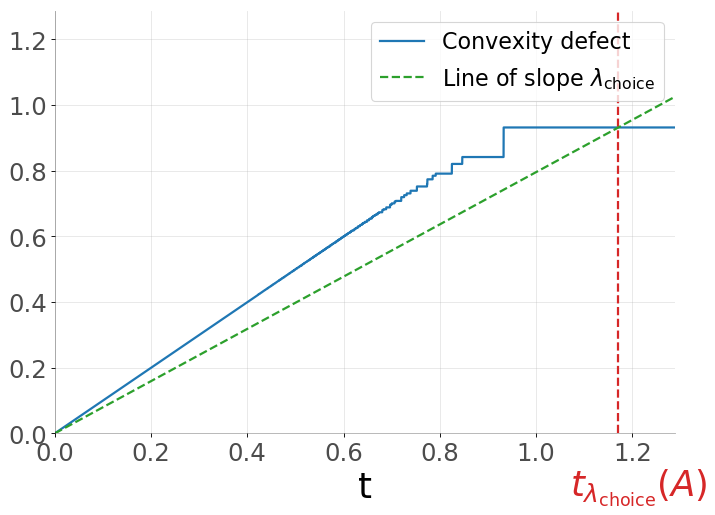}
         \caption{Choice of $t$ - Swissroll}
     \end{subfigure}

\caption{For a set $\mathcal{X}_{n}$ made of $10^{4}$ points sampled on the torus (resp. on the swiss roll), we compute $g_{\mathcal{X}_{n}}$ and $h(\cdot, \mathcal{X}_{n})$ up to the value $t=\ell_{K_{\max }}(\mathcal{X}_{n})$. The selected values of $\lambda$ are respectively $0.796$ and $0.792$, while the selected values of $t_{\text {sel}}(\mathcal{X}_{n})$ are $0.309$ and 1.126. In both cases, we also estimate the approximation rate $\varepsilon(\mathcal{X}_{n})$, respectively equal to $0.254$ and $0.891$. Both times, we indeed have $t_{\text {sel}}(\mathcal{X}_{n}) \geq \varepsilon(\mathcal{X}_{n})$, and furthermore, the Čech complex of parameter $2 t_{\text {sel}}(\mathcal{X}_{n})$ has the same homology as the torus (resp. the swiss roll).}
\label{fig10}
\end{figure}

In Figure \ref{fig8}, we display the graph convexity defect function $\tilde{h}(\cdot, \mathcal{X}_{n})$ for a set $\mathcal{X}_{n}$ made of $n=100$ uniformly sampled point on the unit circle $M$, with a tubular uniform noise of size $\gamma=0.1$. Both the "jump" phenomenon in the function $g_{\mathcal{X}_{n}}$ and the expected behavior of the function $h(\cdot, \mathcal{X}_{n})$ occur. We evaluate $\varepsilon(\mathcal{X}_{n})=0.16$, while $\lambda_{\text {choice}}(\mathcal{X}_{n})=0.60$ and $t_{\text {sel}}(\mathcal{X}_{n})=0.26$. According to \cite[Proposition 3.1]{geo:niyogi2008finding}, the Cech complex $\operatorname{Cech } ( \mathcal { X } _ { n }, 2 t ) \text { on } A \text { of radius } 2 t \text { has } $the same homology as $M$ as long as $t \geq \varepsilon(\mathcal{X}_{n})$. As a safety check, we compute the homology of $\operatorname{Cech}\left(\mathcal{X}_{n}, 2 t_{\mathrm{sel}}(\mathcal{X}_{n})\right)$, which is indeed equal to the homology of the circle. 

\begin{figure}
\centering
\includegraphics[width=0.4\textwidth]{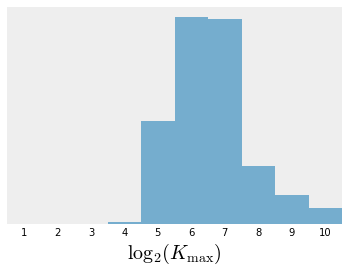}
\hspace{1cm}
\includegraphics[width=0.4\textwidth]{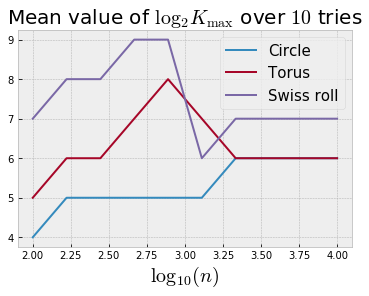}
\caption{Left. Distribution of $\log_{2} K_{\max }$ over the different point clouds (circle, torus and swiss roll of different sizes on 10 tries each). Right. For each class and each number of points $n$, we display the mean value of $\log _{2} K_{\max }$ over the 10 tries: in each class, it stays bounded as $n$ grows. Large values of $K_{\max }$ for the swiss roll dataset correspond to numbers of samples $n$ for which $\varepsilon(\mathcal{X}_{n})$ is too large $(n \leq 1000)$: the subquadratic behavior then does not occur and therefore the whole convexity defect function is computed.}\label{fig9}
\end{figure}

Actually, it is not necessary to compute the whole convexity defect function to compute $\tilde{t}_{\lambda}(\mathcal{X}_{n})$, as one can stop at the first value for which $\tilde{h}(t, \mathcal{X}_{n})<\lambda t$. This can be used to speed up the computation $t_{\text {sel}}(\mathcal{X}_{n})$. Given an integer $K$, we let $\ell_{K}(\mathcal{X}_{n})$ be half the maximum distance between a point of $\mathcal{X}_{n}$ and its $K$ th nearest neighbor in $\mathcal{X}_{n}$. We compute for each point $X_{i}$ in $\mathcal{X}_{n}$ its $K$ nearest neighbors $\mathcal{X}_{K}^{i}$ (using for instance a RP tree \cite{dasgupta2008random}). 
Then, for each point $X_{j}$ in $\mathcal{X}_{K}^{i}$, if $e=\left(X_{i}, X_{j}\right)$, we have $d_{H}(\operatorname{Conv}(e)  |  \mathcal{X}_{n})=d_{H}(\operatorname{Conv}(e)  |  \mathcal{X}_{K}^{i}).$  
The latter distance can be computed in $\mathcal{O}(K(D+\ln K))$ operations. There are at most $n K$ such edges, so that we compute $\tilde{h}(\cdot, \mathcal{X}_{n})$ up to $t=\ell_{K}(\mathcal{X}_{n})$ with $\mathcal{O}(n K^{2}(D+\ln K))$ operations. We then apply the slope selection procedure on the convexity defect function up to $\ell_{K}(\mathcal{X}_{n})$. If we select the maximal value possible, that is if $t_{\mathrm{sel}}(\mathcal{X}_{n})=\ell_{K}(\mathcal{X}_{n})$, then we did not go far enough in the computation of the convexity defect function. In that case, we repeat the procedure with $\bar{K}=2 K$. If $t_{\text {sel}}(\mathcal{X}_{n})<\ell_{K}(\mathcal{X}_{n})$, we stop. In practice, the maximal value $K_{\max }$ of $K$ is much smaller than $n$ and this approach leads to a considerable speed-up.

We test this faster algorithm on three classes of datasets. The first class is made of $n$ points uniformly sampled on a circle that lies on a random plane in $\mathbb{R}^{100}$, that are corrupted with uniform noise (in $\mathbb{R}^{100}$) of size $(\ln n / n)^{2 / d}$. The second class consists of points sampled on the torus of inner radius 1 and outer radius 4. The third class is made of points sampled on the swiss roll dataset from the SciPy Python library \cite{2020SciPy-NMeth}. For each class, we conduct 10 experiments for each value of $n, n$ ranging from $10^{2}$ to $10^{4}$. The value $K_{\max }$ was never larger than $2^{10}=1024$, and did not increase with $n$, see Figure \ref{fig9}. Increasing the ambient dimension in the first class did not significantly increase the computation time. We display in Figure \ref{fig10} the functions $\tilde{h}(\cdot, \mathcal{X}_{n})$ and $g_{\mathcal{X}_{n}}$ for two point clouds from this dataset: we observe once again the "jump" phenomenon occurring.

\section{Discussion and further works}

In this article, we introduced a particularly simple manifold estimator, based on a unique rule: add the convex hull of any subset of the set of observations which is of radius smaller than $t$. After proving that this leads to a minimax estimator for some choice of $t$, we explained how to select the parameter $t$ by computing the convexity defect function of the set of observations. The selection procedure actually allows us to find a parameter $t_{\lambda}(\mathcal{X}_{n})$ such that $\varepsilon(\mathcal{X}_{n}) / t_{\lambda}(\mathcal{X}_{n})$ is arbitrarily close to 1 (by choosing $\lambda$ close enough to 1 ). The selected parameter can therefore be used as a scale parameter in a wide range of procedures in geometric inference. We illustrated this general idea by showing how a data-driven minimax tangent space estimator can be created thanks to $t_{\lambda}(\mathcal{X}_{n})$. The main limitation to our procedure is its non-robustness to outliers. Indeed, even in the presence of one outlier in $\mathcal{X}_{n}$, the loss function $t \mapsto d_{H}(\operatorname{Conv}(t, \mathcal{X}_{n}), M)$ would be constant, equal to the distance between the outlier and the manifold $M$: with respect to the Hausdorff distance, all the estimators $\operatorname{Conv}(t, \mathcal{X}_{n})$ are then equally bad. Of course, even in that case, we would like to assert that some values of $t$ are "better" than others in some sense. A solution to overcome this issue would be to change the loss function, for instance by using Wasserstein distances on judicious probability measures built on the $t$-convex hulls $\operatorname{Conv}(t, \mathcal{X}_{n})$ in place of the Hausdorff distance

\section*{Acknowledgments}
I am grateful to Fréderic Chazal (Inria Saclay) and Pascal Massart (Université Paris-Sud) for thoughtful discussions and valuable comments on both mathematical and computational aspects of this work. I would also like to thank the anonymous reviewers for their helpful suggestions.

\appendix

\section{Proof of Lemma \ref{lem4.12}}\label{appA}
Let $S=\sum_{k=1}^{K} \mathbf{1}\left\{N_{k}=2\right\}$. Let $\tilde{n}$ be the number of points of $\mathcal{X}_{n}$ in $\bigcup_{k} U_{k}$, so that $\tilde{n}$ follows a binomial distribution of parameters $n$ and $K m$. Recall that by construction, $K m \geq c_{0}$ for some constant $c_{0}$ (see Lemma \ref{lem4.9}). Conditionally on $\tilde{n}$, the random variable $S$ can be realized as the number of urns containing exactly two balls, in a model where $\tilde{n}$ balls are thrown uniformly in $K$ urns. Let $p_{i}=\binom{\tilde n}{i}K^{-i}\left(1-K^{-1}\right)^{\tilde{n}-i}$ be the probability that an urn contains exactly $i$ balls. We have $\mathbb{E}[S  |  \tilde{n}]=K p_{2}$, and
\begin{equation}\label{A1}
\begin{aligned}
\mathbb{E}\left[\exp \left(-C_{1} S\right)  |  \tilde{n}\right] & \leq \mathbb{E}\left[\exp \left(-C_{1} K p_{2} / 2\right) \mathbf{1}\left\{S \geq K p_{2} / 2\right\}  |  \tilde{n}\right]+\mathbb{P}\left(S<K p_{2} / 2  |  \tilde{n}\right) \\
& \leq \exp \left(-C_{1} K p_{2} / 2\right)+\mathbb{P}\left(\left|S-K p_{2}\right|>K p_{2} / 2  |  \tilde{n}\right).
\end{aligned}
\end{equation}
Let $v=2 K \max \left(2 p_{2}, 3 p_{3}\right)$. According to \cite[Proposition 3.5]{ben2017concentration}, if for some $s>0$,
\begin{equation}\label{A2}
K p_{2} / 2 \geq \sqrt{4 v s}+2 s / 3,
\end{equation}
then $\mathbb{P}\left(\left|S-K p_{2}\right|>K p_{2} / 2  |  \tilde{n}\right) \leq 4 e^{-s}.$ Recall that $n m^{2} \leq 1$ by assumption, and that $K \geq$ $c_{\mu, \delta} t^{-d} \geq c_{1} / m$. We therefore have $n / K^{2} \leq c_{1}^{-2}$. Assuming that $\tilde{n} \geq 3$ and using the inequality $\ln \left(1-K^{-1}\right) \geq-K^{-1}-K^{-2}$ for $K \geq 2$, we obtain the inequalities
\begin{equation}\label{A3}
p_{2} \geq \frac{(\tilde{n} / K)^{2}}{4 e^{c_{1}^{-2}}} e^{-\tilde{n} / K} \text { and } p_{3} \leq \frac{e^{3}}{6}(\tilde{n} / K)^{3} e^{-\tilde{n} / K} \leq c_{2} p_{2}(n / K)
\end{equation}
for some positive constant $c_{2}$. We consider two different regimes.

\begin{itemize}
  \item Assume first that $n / K \leq 2 /\left(3 c_{2}\right)$. Then $3 p_{3} \leq 2 p_{2}$ and one can check that $s=K p_{2} / 100$ satisfies \eqref{A2}. Inequality \eqref{A1} then yields that \[\mathbb{E}\left[\exp \left(-C_{1} S\right)  |  \tilde{n}\right] \leq 5 \exp \left(-C_{1}^{\prime} K p_{2}\right)\] for $C_{1}^{\prime}=$ $\min \left(C_{1} / 2,1 / 100\right)$. To conclude, we remark that for any $\alpha \in(0,1)$, by the Hoeffding inequality, the event $|\tilde{n}-n K m| \leq n K m \alpha$ holds with probability at least $1-\exp \left(-2 n \alpha^{2}\right).$ Letting $\alpha=1 / 2$, we obtain that, on this event,
\begin{equation*}
\frac{1}{2} n m \leq \frac{\tilde{n}}{K} \leq \frac{3}{2} n m \leq \frac{3}{2} \frac{n}{K} m K \leq \frac{1}{c_{2}}
\end{equation*}
where we used that $m K \leq 1$. Therefore, $p_{2} \geq c_{3}(n m)^{2} \geq c_{4}(n m)^{2} e^{-n m}$ for some constants $c_{3}$ and $c_{4}$. The probability of order $\exp \left(-2 n \alpha^{2}\right)$ being negligible, we obtain a final bound of order $\exp \left(-C_{1}^{\prime} c_{4} K(n m)^{2} e^{-n m}\right) \leq \exp \left(-C_{2} n \phi(n m)\right)$, concluding the proof in the regime $n / K \leq$ $2 /\left(3 c_{2}\right)$.

  \item Otherwise, we have $n / K>2 /\left(3 c_{2}\right)$ and we also assume that $|\tilde{n}-n K m| \leq \alpha n K m$ for some $\alpha \in(0,1)$ to fix (this happens with probability $1-\exp \left(-2 n \alpha^{2}\right)$ by Hoeffding's inequality). One can then check using \eqref{A3} that $s=c_{5} \tilde{n} e^{-\tilde{n} / K}$ satisfies \eqref{A2} if $c_{5}$ is chosen small enough. Furthermore, $s \leq c_{6} K p_{2}$ for some constant $c_{6}$ (using \eqref{A3}). The leading term in \eqref{A1} is therefore of the form $\exp \left(-c_{7} \tilde{n} e^{-n / K}\right)$. Let $\alpha=1 /(\ln n)^{3}$. We have, as $n m \geq c_{0} n / K \geq c_{8}$ and as $n m \leq(\ln n)^{2}$ (by assumption),
\begin{equation*}
c_{9} \leq n m(1-\alpha) \leq \frac{\tilde{n}}{K} \leq n m(1+\alpha) \leq n m+\frac{1}{\ln n}.
\end{equation*}
Therefore, $\tilde{n} e^{-\tilde{n} / K} \geq\left(c_{9} / 2\right) K e^{-n m}.$ The probability of order $\exp (-2 n \alpha^{2})$ is still negligible, and we obtain a final bound on $\mathbb{E}\left[\exp \left(-C_{1} S\right)\right]$ of order $\exp \left(-\left(c_{9} / 2\right) K e^{-n m}\right) \leq \exp \left(-c_{10} n \phi(n m)\right)$.
\end{itemize}

\section{Precise lower bound on the minimax risk}\label{appB}
We adapt the construction made in \cite{stat:kim2015tight} so that the lower bound on the minimax risk holds with an explicit constant. Let $0<d<D$ and $\tau_{\min }, f_{\min }, f_{\max }$ with $\omega_{d} f_{\min } \tau_{\min }^{d}<\kappa$. We let $M(\mu)$ be the underlying manifold of the law $\mu \in \mathcal{Q}_{\tau_{\min }, f_{\min }, f_{\text {max }}}^{d}$ The lowerbound is based on Le Cam's lemma:

\begin{lemma}\label{lemB1}
 Let $\mathcal{P}^{(1)}, \mathcal{P}^{(2)}$ be two subfamilies of $\mathcal{Q}_{\tau_{\min }, f_{\min }, f_{\max }}^{d}$ which are \textbackslash varepsilon-separated, in the sense that $d_{H}(M(\mu^{(1)}), M(\mu^{(2)})) \geq 2 \varepsilon$ for all $\mu^{(1)} \in \mathcal{P}^{(1)}$, $\mu^{(2)} \in \mathcal{P}^{(2)}$. Then
\begin{equation}\label{B1}
\mathcal{R}_{n}(M, \mathcal{Q}_{\tau_{\min }, f_{\min }, f_{\max }}^{d}) \geq \varepsilon\left|\left(\frac{1}{\# \mathcal{P}^{(1)}} \sum_{\mu^{(1)} \in \mathcal{P}^{(1)}} \mu^{(1)}\right) \wedge\left(\frac{1}{\# \mathcal{P}^{(2)}} \sum_{\mu^{(2)} \in \mathcal{P}^{(2)}} \mu^{(2)}\right)\right|,
\end{equation}
where $|\mu \wedge \nu|$ is the testing affinity between two distributions $\mu$ and $\nu$.
\end{lemma}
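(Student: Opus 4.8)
The plan is to run Le Cam's two-point method in its \emph{fuzzy hypotheses} version, the two competing point hypotheses being replaced by the uniform mixtures over the (finite) families $\mathcal{P}^{(1)}$ and $\mathcal{P}^{(2)}$. Write $\bar{P}^{(j)}$ for the uniform mixture over $\mu\in\mathcal{P}^{(j)}$ of the laws of the sample $\mathcal{X}_{n}$ (this is what the notation $\frac{1}{\#\mathcal{P}^{(j)}}\sum_{\mu^{(j)}\in\mathcal{P}^{(j)}}\mu^{(j)}$ stands for), and recall that the testing affinity can be written as $|\bar{P}^{(1)}\wedge\bar{P}^{(2)}| = \inf_{\phi}\max\{\bar{P}^{(1)}(\phi=2),\,\bar{P}^{(2)}(\phi=1)\}$, the infimum ranging over all $\{1,2\}$-valued tests $\phi$ of $\mathcal{X}_{n}$.

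First I would fix an arbitrary estimator $\hat{M}=\hat{M}(\mathcal{X}_{n})$ and distill it into a test. For $j=1,2$ put $\mathcal{M}^{(j)}=\{M(\mu):\mu\in\mathcal{P}^{(j)}\}$ and $d_{H}(\hat{M},\mathcal{M}^{(j)})=\min_{\mu\in\mathcal{P}^{(j)}}d_{H}(\hat{M},M(\mu))$ (the minimum is attained because $\mathcal{P}^{(j)}$ is finite, and $\mathcal{X}_{n}\mapsto d_{H}(\hat{M},\mathcal{M}^{(j)})$ is measurable), and let $\psi(\mathcal{X}_{n})\in\{1,2\}$ be an index minimizing $d_{H}(\hat{M},\mathcal{M}^{(j)})$. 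The crucial — and only slightly delicate — point is the deterministic observation that every error of this test costs at least $\varepsilon$ in Hausdorff loss. Indeed, if $\mu\in\mathcal{P}^{(1)}$ but $\psi(\mathcal{X}_{n})=2$, pick $\nu\in\mathcal{P}^{(2)}$ attaining $d_{H}(\hat{M},\mathcal{M}^{(2)})$; then the $2\varepsilon$-separation and the triangle inequality give
\begin{equation*}
2\varepsilon\le d_{H}(M(\mu),M(\nu))\le d_{H}(\hat{M},M(\mu))+d_{H}(\hat{M},\mathcal{M}^{(2)})\le d_{H}(\hat{M},M(\mu))+d_{H}(\hat{M},\mathcal{M}^{(1)})\le 2\,d_{H}(\hat{M},M(\mu)),
\end{equation*}
where the passage from $d_{H}(\hat{M},\mathcal{M}^{(2)})$ to $d_{H}(\hat{M},\mathcal{M}^{(1)})$ uses $\psi(\mathcal{X}_{n})=2$ and the last step uses $M(\mu)\in\mathcal{M}^{(1)}$; hence $d_{H}(\hat{M},M(\mu))\ge\varepsilon$. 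The symmetric statement holds when $\mu\in\mathcal{P}^{(2)}$, so that $\{\psi\neq j\}\subseteq\{d_{H}(\hat{M},M(\mu))\ge\varepsilon\}$ whenever $\mu\in\mathcal{P}^{(j)}$.

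Then I would average and use Markov's inequality. Bounding $\sup_{\mu}\mathbb{E}_{\mu}[d_{H}(\hat{M},M(\mu))]$ from below by the uniform average over $\mathcal{P}^{(j)}$ for the more favorable $j$, and invoking $\mathbb{E}_{\mu}[d_{H}(\hat{M},M(\mu))]\ge\varepsilon\,\mathbb{P}_{\mu}(d_{H}(\hat{M},M(\mu))\ge\varepsilon)\ge\varepsilon\,\mathbb{P}_{\mu}(\psi\neq j)$ for $\mu\in\mathcal{P}^{(j)}$, one obtains
\begin{equation*}
\sup_{\mu\in\mathcal{Q}^{d}_{\tau_{\min},f_{\min},f_{\max}}}\mathbb{E}_{\mu}[d_{H}(\hat{M},M(\mu))]\ \ge\ \varepsilon\max_{j\in\{1,2\}}\bar{P}^{(j)}(\psi\neq j)\ \ge\ \varepsilon\inf_{\phi}\max_{j\in\{1,2\}}\bar{P}^{(j)}(\phi\neq j)\ =\ \varepsilon\,\bigl|\bar{P}^{(1)}\wedge\bar{P}^{(2)}\bigr|.
\end{equation*}
Taking the infimum over estimators $\hat{M}$ then yields the stated bound.

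I do not anticipate a real obstacle here: this is the standard mixture Le Cam argument, and the single place where something genuinely has to be checked is the deterministic step above — verifying that each misclassification of the ``nearest-family'' test built from $\hat{M}$ entails an $\varepsilon$-error, which is exactly where the $2\varepsilon$-separation hypothesis is consumed. The real effort in Appendix \ref{appB} lies not in this lemma but in the ensuing construction of explicit $2\varepsilon$-separated families $\mathcal{P}^{(1)},\mathcal{P}^{(2)}\subseteq\mathcal{Q}^{d}_{\tau_{\min},f_{\min},f_{\max}}$ with $\varepsilon$ of order $(\ln n/n)^{1/d}/\bigl((\alpha_{d}f_{\min})^{1/d}\tau_{\min}\bigr)$ whose mixtures have testing affinity bounded away from $0$.
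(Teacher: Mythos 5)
The high-level strategy you chose (Le Cam's two-point method with uniform mixtures over the two families) is the right one, and your deterministic observation — that a misclassification by the ``nearest-family'' test $\psi$ forces $d_{H}(\hat{M},M(\mu))\ge\varepsilon$ — is proved correctly via the triangle inequality. The paper itself offers no proof of this lemma (it is quoted as Le Cam's lemma), so the comparison can only be against the standard argument it alludes to.

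There is, however, a genuine gap in your final chain of inequalities: the identity you invoke,
\begin{equation*}
\inf_{\phi}\max\{\bar{P}^{(1)}(\phi=2),\,\bar{P}^{(2)}(\phi=1)\} \;=\; \bigl|\bar{P}^{(1)}\wedge\bar{P}^{(2)}\bigr|,
\end{equation*}
is false. The testing affinity satisfies $|\bar{P}^{(1)}\wedge\bar{P}^{(2)}| = \inf_{\phi}\bigl[\bar{P}^{(1)}(\phi=2)+\bar{P}^{(2)}(\phi=1)\bigr]$, i.e.\ it is the infimum of the \emph{sum} of errors, not of their maximum. Since $\max(a,b)\ge\tfrac{1}{2}(a+b)$, the most one can extract from the maximum is $\inf_{\phi}\max_j\bar{P}^{(j)}(\phi\neq j)\ge\tfrac{1}{2}|\bar{P}^{(1)}\wedge\bar{P}^{(2)}|$, and a Bernoulli example shows the factor $1/2$ can be attained. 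Worse, routing the argument through a test at all is inherently lossy here: the bound $\mathbb{E}_{\mu}[d_{H}]\ge\varepsilon\,\mathbb{P}_{\mu}(\psi\neq j)$ discards all loss on the event where $\psi$ is correct, so even if you switch from the maximum to the sum you only recover $\tfrac{\varepsilon}{2}|\bar{P}^{(1)}\wedge\bar{P}^{(2)}|$, which is a factor $2$ short of the lemma.

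The fix is to forgo the test $\psi$ entirely and run the Le Cam argument directly at the level of the loss. Setting $a(\omega)=\min_{M_1\in\mathcal{M}^{(1)}}d_H(\hat{M}(\omega),M_1)$ and $b(\omega)=\min_{M_2\in\mathcal{M}^{(2)}}d_H(\hat{M}(\omega),M_2)$, the $2\varepsilon$-separation and the triangle inequality give the pointwise bound $a(\omega)+b(\omega)\ge 2\varepsilon$ for every realization $\omega$ of $\mathcal{X}_n$. Then
\begin{equation*}
\sup_{\mu}\mathbb{E}_{\mu}\bigl[d_H(\hat{M},M(\mu))\bigr]
\;\ge\;\tfrac{1}{2}\Bigl[\mathbb{E}_{\bar{P}^{(1)}}[a]+\mathbb{E}_{\bar{P}^{(2)}}[b]\Bigr]
\;\ge\;\tfrac{1}{2}\int\bigl(a+b\bigr)\,\mathrm{d}\bigl(\bar{P}^{(1)}\wedge\bar{P}^{(2)}\bigr)
\;\ge\;\varepsilon\,\bigl|\bar{P}^{(1)}\wedge\bar{P}^{(2)}\bigr|.
\end{equation*}
This is shorter than the detour through $\psi$, exploits the full inequality $a+b\ge2\varepsilon$ rather than only its consequence on a misclassification event, and recovers the sharp constant asserted by the lemma.
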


To obtain a lowerbound on the minimax risk, authors in \cite{stat:kim2015tight} exhibit two families of manifolds which are $\varepsilon$-separated, and consider the uniform distributions on them. Those manifolds are built by considering a base manifold $M_{0}$ which is locally flat, and by adding small bumps on the locally flat part. Such a construction leads to distributions having a density equal roughly to $1 / \operatorname{Vol}\left(M_{0}\right)$, a constant which might be smaller than $f_{\text {min }}$. If this is the case, then the corresponding submodels are not in $\mathcal{Q}_{\tau_{\min }, f_{\min }, f_{\text {max }}}^{d}$ and we cannot apply Le Cam's Lemma. Hence, we consider another base manifold, which is a sphere $M_{0}$ of radius $R$ slightly larger than $\tau_{\min }$, so that its volume is smaller than $1 / f_{\min }$ (this is possible as $\left.f_{\min } \omega_{d} \tau_{\min }^{d} \leq \kappa<1\right)$. The two families are then once again constructed by adding small bumps on $M_{0}$. We now detail this construction. Let $R, \delta>0$ be two parameters to be fixed later. Let $M_{0} \subseteq \mathbb{R}^{d+1} \subseteq \mathbb{R}^{D}$ be the $d$-sphere of radius $R$, and let $A$ be a maximal subset of $M_{0}$ of even size, which is $4 \delta$-separated. Note that, standard packing arguments (and the formula for the volume of a spherical cap) show that if $\delta / R$ is small enough, then the cardinality $2 m$ of $A$ satisfies $2 m \geq\left(\frac{c_{0} R}{\delta}\right)^{d}$ for some absolute constant $c_{0}$.

Let $\phi: \mathbb{R} \rightarrow \mathbb{R}$ be a smooth function such that $0 \leq \phi \leq 1, \phi \equiv 1$ on $[-1,1]$ and $\phi \equiv 0$ on $\mathbb{R} \backslash[-2,2]$. For $s \in\{\pm 1\}^{A}$, we build a diffeomorphism $\Phi_{s}^{\varepsilon}$ by letting for $x \in \mathbb{R}^{D}$
\begin{equation}
\Phi_{s}^{\varepsilon}(x)=x\left(1+\frac{\varepsilon}{R} \sum_{y \in A} s(y) \phi\left(\frac{\|x-y\|}{\delta}\right)\right).
\end{equation}
Recall that $\|N\|_{\text {op }}$ denotes the operator norm of a linear application $N$.

\begin{lemma}\label{lemB2} There exists two absolute constants $c_{0}, c_{1}, c_{2}>0$ such that the following holds. Assume that $\delta \leq R$ and that $c_{0} \varepsilon / \delta<1.$ Then, the function $\Phi_{s}^{\varepsilon}: \mathcal{B}(0,3 R) \rightarrow \mathbb{R}^{d+1}$ is a diffeomorphism on its image, with
\begin{equation}
\sup _{x \in \mathcal{B}(0,3 R)}\left\|\operatorname{Id}-d_{x} \Phi_{s}^{\varepsilon}\right\|_{\mathrm{op}} \leq c_{1} \varepsilon / \delta \text { and } \sup _{x \in \mathcal{B}(0,3 R)}\left\|d_{x}^{2} \Phi_{s}^{\varepsilon}\right\|_{\text {op }} \leq c_{2} \varepsilon / \delta^{2}.
\end{equation}
\end{lemma}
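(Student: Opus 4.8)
The plan is to treat $\Phi_s^\varepsilon$ as a perturbation of the identity and control its first and second derivatives directly from the explicit formula, then invoke a quantitative inverse function argument to get injectivity on $\mathcal{B}(0,3R)$. Write $\psi_s(x) = \sum_{y\in A} s(y)\phi(\|x-y\|/\delta)$, so that $\Phi_s^\varepsilon(x) = x(1 + \tfrac{\varepsilon}{R}\psi_s(x))$. The key structural observation, to be used repeatedly, is that the balls $\mathcal{B}(y,2\delta)$ for $y\in A$ are pairwise disjoint: since $A$ is $4\delta$-separated, at most one term in the sum defining $\psi_s$ is nonzero at any given point $x$, and on that term $|s(y)|=1$. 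Hence $|\psi_s(x)|\le 1$ everywhere, and $\psi_s$ is as regular as $\phi(\|\cdot\|/\delta)$ on each ball, with $\|d_x\psi_s\|_{\mathrm{op}} \le \|\phi'\|_\infty/\delta$ and $\|d_x^2\psi_s\|_{\mathrm{op}} \le C_\phi/\delta^2$, where $C_\phi$ depends only on $\|\phi'\|_\infty$ and $\|\phi''\|_\infty$ — these being absolute constants since $\phi$ is a fixed bump function (one should fix $\phi$ once and for all, which is why the constants $c_1,c_2$ come out absolute). Note we also use $|x-y| < 2\delta$ on the support, together with $y\in M_0$ so $|y|=R$, to bound $|x|\le R+2\delta \le 3R$ on the support of each bump, consistent with the domain $\mathcal{B}(0,3R)$.

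Next I would compute derivatives via the product rule. For the first derivative, $d_x\Phi_s^\varepsilon = (1+\tfrac{\varepsilon}{R}\psi_s(x))\mathrm{Id} + \tfrac{\varepsilon}{R}\, x\otimes d_x\psi_s$, so
\begin{equation*}
\|\mathrm{Id} - d_x\Phi_s^\varepsilon\|_{\mathrm{op}} \le \frac{\varepsilon}{R}|\psi_s(x)| + \frac{\varepsilon}{R}|x|\,\|d_x\psi_s\|_{\mathrm{op}} \le \frac{\varepsilon}{R} + \frac{\varepsilon}{R}\cdot 3R\cdot\frac{\|\phi'\|_\infty}{\delta} \le c_1\frac{\varepsilon}{\delta},
\end{equation*}
using $\delta\le R$ to absorb the first term. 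For the second derivative, differentiating again produces three types of terms: one with $d_x\psi_s$ appearing once (coefficient $\tfrac{\varepsilon}{R}$, bounded by $\tfrac{\varepsilon}{R}\cdot\tfrac{C}{\delta}\le C\tfrac{\varepsilon}{\delta^2}$ after using $\delta\le R$), and one with $x\,d_x^2\psi_s$ (bounded by $\tfrac{\varepsilon}{R}\cdot 3R\cdot\tfrac{C_\phi}{\delta^2} = 3C_\phi\tfrac{\varepsilon}{\delta^2}$). Collecting gives $\|d_x^2\Phi_s^\varepsilon\|_{\mathrm{op}}\le c_2\varepsilon/\delta^2$.

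Finally, for injectivity on $\mathcal{B}(0,3R)$: the hypothesis $c_0\varepsilon/\delta < 1$ should be read as "$c_1\varepsilon/\delta \le 1/2$" after adjusting $c_0$, so that $\|\mathrm{Id}-d_x\Phi_s^\varepsilon\|_{\mathrm{op}}\le 1/2$ uniformly. Then for $x_1,x_2\in\mathcal{B}(0,3R)$, writing $\Phi_s^\varepsilon(x_1)-\Phi_s^\varepsilon(x_2) = (x_1-x_2) + \int_0^1 (d_{x_2+t(x_1-x_2)}\Phi_s^\varepsilon - \mathrm{Id})(x_1-x_2)\,dt$ (the segment stays in the convex set $\mathcal{B}(0,3R)$), the integral has norm at most $\tfrac12|x_1-x_2|$, so $|\Phi_s^\varepsilon(x_1)-\Phi_s^\varepsilon(x_2)|\ge\tfrac12|x_1-x_2|$, giving injectivity; the differential is invertible everywhere since it is within $1/2$ of the identity in operator norm, so $\Phi_s^\varepsilon$ is a diffeomorphism onto its image. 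I don't expect a genuine obstacle here — the one point requiring care is bookkeeping the constants so that $c_1,c_2$ end up depending only on the fixed $\phi$ and not on $R,\delta,\varepsilon$, which is exactly why the disjointness of the bump supports (making the sum locally a single term) is essential rather than incurring a factor of $|A|$.
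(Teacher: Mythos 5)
Your proof is correct, and the bulk of it — exploiting the $4\delta$-separation so at most one bump is active at any point, computing $d_x\Phi_s^\varepsilon$ by the product rule, and bounding the operator norms using $|x|\le 3R$ and $\delta\le R$ — is exactly what the paper does. The one place you diverge is the injectivity step: you run the standard quantitative inverse-function argument, writing $\Phi_s^\varepsilon(x_1)-\Phi_s^\varepsilon(x_2)$ as an integral of the differential along the segment (which stays in the convex ball $\mathcal{B}(0,3R)$) and using $\|\mathrm{Id}-d_x\Phi_s^\varepsilon\|_{\mathrm{op}}\le 1/2$ to get the lower bound $\tfrac12|x_1-x_2|$. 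The paper instead exploits the radial structure $\Phi_s^\varepsilon(x)=g(x)\,x$ with scalar $g$: since the image of $x$ is a positive scalar multiple of $x$, any two preimages of the same point must lie on the same ray, which reduces injectivity to monotonicity of the one-dimensional map $r\mapsto\langle\Phi_s^\varepsilon(ru),u\rangle$ along each unit direction $u$. Both arguments are valid and give the same conclusion; yours is the more generic and robust one (it would work for any small $C^1$ perturbation of the identity on a convex set, with no structural assumptions on $\Phi_s^\varepsilon$), while the paper's is tailored to the particular radial form of the perturbation and slightly leaner computationally. There is no gap in your proposal.
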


\begin{proof}
As $A$ is $4 \delta$-separated, at most one term in the sum in (B.2) is non-zero. A computation gives that the derivative of $\Phi_{B}$ is given by, for $x \in \mathcal{B}(0,3 R)$,

\begin{equation}\label{B4}
\begin{aligned}
&d_{x} \Phi_{s}^{\varepsilon}(h)=\\
& h+h \frac{\varepsilon}{R} \sum_{y \in A} s(y) \phi\left(\frac{|x-y|}{\delta}\right)+x \frac{\varepsilon}{R} \sum_{y \in A} \frac{1}{\delta} s(y) \phi^{\prime}\left(\frac{|x-y|}{\delta}\right) \frac{\langle x-y, h\rangle}{|x-y|}.
\end{aligned}
\end{equation}
Hence,
\begin{equation*}
\left\|\mathrm{Id}-d_{x} \Phi_{s}^{\varepsilon}\right\|_{\mathrm{op}} \leq \frac{\varepsilon}{R}\left(\|\phi\|_{\infty}+|x| \frac{\left\|\phi^{\prime}\right\|_{\infty}}{\delta}\right) \leq \frac{\varepsilon}{R}\left(\|\phi\|_{\infty}+3 R \frac{\left\|\phi^{\prime}\right\|_{\infty}}{\delta}\right) \leq c_{1} \frac{\varepsilon}{\delta}
\end{equation*}
where $c_{1}=c_{0}\|\phi\|_{\infty}+3\left\|\phi^{\prime}\right\|_{\infty}$. A similar computation gives that $\left\|d_{x}^{2} \Phi_{s}^{\varepsilon}\right\|_{\text {op }} \leq c_{2} \varepsilon / \delta^{2}$ for $c_{2}=$ $4\left\|\phi^{\prime}\right\|_{\infty}+3\left\|\phi^{\prime \prime}\right\|_{\infty}$. We eventually show the injectivity: if $\Phi_{s}^{\varepsilon}(x)=\Phi_{s}^{\varepsilon}\left(x^{\prime}\right)$, then $x$ and $x^{\prime}$ are colinear. Also, if $c_{0}=\|\phi\|_{\infty}+3\left\|\phi^{\prime}\right\|_{\infty}$, one can check using \eqref{B4} that the derivative of the function $r \in[0,3 R] \mapsto\left\langle\Phi_{s}^{\varepsilon}(r u), u\right\rangle$ for $u$ an unit vector is increasing, proving the injectivity.
\end{proof}

Therefore, from \cite[Theorem 14.19]{geo:federer1959curvature}, we infer that $M_{s}^{\varepsilon}:=\Phi_{s}^{\varepsilon}(M)$ is a manifold with reach larger than
\begin{equation}\label{B5}
\tau\left(M_{s}^{\varepsilon}\right) \geq R \min \left(1-c_{1} \varepsilon / \delta, \frac{\left(1-c_{1} \varepsilon / \delta\right)^{2}}{1+c_{1} \varepsilon / \delta+R c_{2} \varepsilon / \delta^{2}}\right)
\end{equation}

\begin{figure}
\centering
\includegraphics[width=0.7\textwidth]{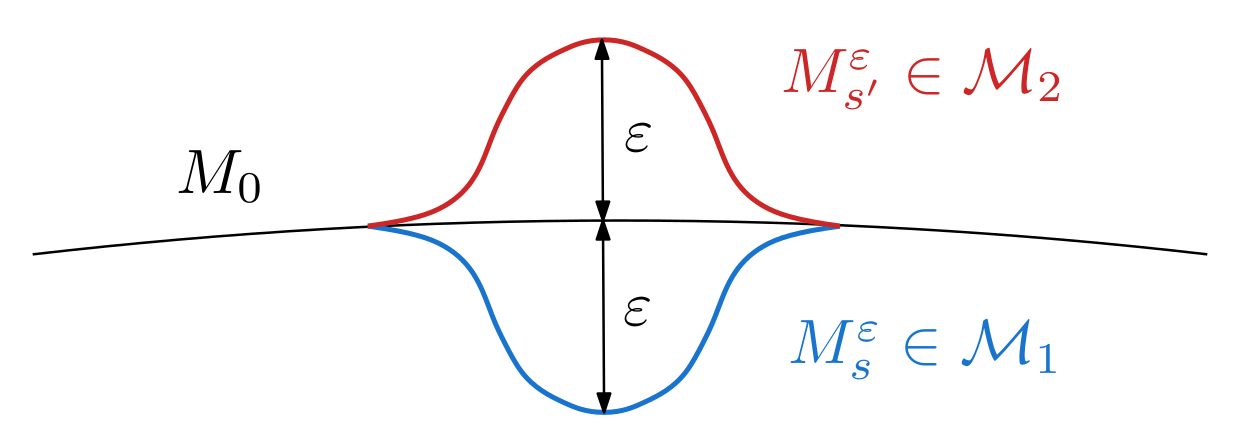}
\caption{An element $\mu^{(1)} \in \mathcal{P}^{(1)}$ has its first marginal supported on the blue manifold $M_{s}^{\varepsilon}$ (lower bump), whereas an element $\mu^{(2)} \in \mathcal{P}^{(2)}$ is supported on the red manifold $M_{s^{\prime}}^{\varepsilon}$ (upper bump).}\label{fig11}
\end{figure}

Denote by $J \Phi_{s}^{\varepsilon}$ the Jacobian of $\Phi_{\varepsilon}^{s}$. Then, the volume of $M_{s}^{\varepsilon}$ is controlled by
\begin{equation}\label{B6}
\begin{aligned}
\omega_{d} R^{d} \leq \operatorname{Vol}\left(M_{s}^{\varepsilon}\right) &=\int_{M_{0}} J \Phi_{s}^{\varepsilon}(x) \mathrm{d} x=\omega_{d} R^{d}+\sum_{y \in A} \int_{\mathcal{B}_{M_{0}}(y, 2 \delta)}\left(J \Phi_{s}^{\varepsilon}(x)-1\right) \mathrm{d} x \\
& \leq \omega_{d} R^{d}+2 m C_{d} c_{1} \frac{\varepsilon}{\delta} \operatorname{Vol}\left(\mathcal{B}_{M_{0}}(y, 2 \delta)\right) \leq \omega_{d} R^{d}\left(1+C_{d} c_{1} \frac{\varepsilon}{\delta}\right)
\end{aligned}
\end{equation}
where we used that $\operatorname{det}(N)-1 \leq C_{d}\|N-\mathrm{Id}\|_{\mathrm{op}}$ for some constant $C_{d}$ if $N$ is a matrix of size $d$ with operator norm smaller than 1, the fact that $2 m \operatorname{Vol}\left(\mathcal{B}_{M_{0}}(y, 2 \delta)\right) \leq \operatorname{Vol}\left(M_{0}\right)$, and Lemma \ref{lemB2}.

Let $R=\tau_{\min }+\frac{1}{2}\left(\frac{1}{\left(\omega_{d} f_{\min }\right)^{1 / d}}-\tau_{\min }\right)$ and $\delta=\sqrt{R \varepsilon} \nu$ where $\nu^{2}=\frac{2 c_{2} \tau_{\min }}{R-\tau_{\min }}.$ With this choice of parameters, one can check that, for $\varepsilon / \delta$ small enough, $\tau\left(M_{s}^{\varepsilon}\right) \geq \tau_{\min }$ (by \eqref{B5}) and $\operatorname{Vol}\left(M_{s}^{\varepsilon}\right) \leq 1 / f_{\min }$ (by \eqref{B6} and using that $\omega_{d} f_{\min } \tau_{\min }^{d} \leq \kappa<1$).

We define the family $\mathcal{M}^{(1)}$ of manifolds $M_{s}^{\varepsilon}$ where $s$ contains exactly $m$ signs $+1$ (and $m$ signs $-1$). The family $\mathcal{M}^{(2)}$ is defined likewise by considering $M_{s}^{\varepsilon}$ where $s$ contains exactly $m+1$ or $m-1$ signs $+1$. We then let $\mathcal{P}^{(1)}$ be the set of distributions $Q_{s}^{\varepsilon}$ where $Q_{s}^{\varepsilon}$ is the uniform distribution on a manifold of $M_{s}^{\varepsilon} \in \mathcal{M}^{(1)}$, so that $\mathcal{P}^{(1)}$ is a subset of $\mathcal{Q}_{\tau_{\min }, f_{\min }}^{d}$ fhe set $\mathcal{P}^{(2)}$ is defined likewise.

By construction, the two families $\mathcal{P}^{(1)}, \mathcal{P}^{(2)}$ are $2 \varepsilon$-separated (see Figure \ref{fig11}). Hence, we can apply Le Cam's lemma. The exact same computations than in \cite[Section 3]{stat:kim2015tight} show that the testing affinity between $\mathcal{P}^{(1)}$ and $\mathcal{P}^{(2)}$ converge to 1 as long as $4 m=n / \ln n$. Thus, Le Cam's Lemma \eqref{B1} yields
\begin{equation}
\liminf _{n} \frac{\mathcal{R}_{n}(M, \mathcal{Q}_{\tau_{\min }, f_{\min }, f_{\max }}^{d})}{(\ln n / n)^{2 / d}} \geq \liminf _{n}(m / 4)^{2 / d} \varepsilon.
\end{equation}
As $2 m \geq\left(c_{0} R / \delta\right)^{d}$, we therefore have
\begin{equation*}
\begin{aligned}
&\lim _{n} \inf \frac{\mathcal{R}_{n}(M, \mathcal{Q}_{\tau_{\min }, f_{\min }, f_{\max }}^{d})}{(\ln n / n)^{2 / d}} \geq \frac{c_{0}^{2}}{8^{2 / d}} \frac{R^{2}}{\delta^{2}} \varepsilon=\frac{c_{0}^{2}}{8^{2 / d}} \frac{R}{\nu^{2}} \\
&=\frac{c_{0}^{2}}{8^{2 / d}} \frac{R\left(R-\tau_{\min }\right)}{2 c_{2} \tau_{\min }} \geq \frac{c_{3}}{\left(\omega_{d} f_{\min }\right)^{1 / d} \tau_{\min }}\left(\frac{1}{\left(\omega_{d} f_{\min }\right)^{1 / d}}-\tau_{\min }\right),
\end{aligned}
\end{equation*}
for some absolute constant $c_{3}$, where we used that by definition, \[R-\tau_{\min }=\frac{1}{2}\left(\frac{1}{\left(\omega_{d} f_{\min }\right)^{1 / d}}-\tau_{\min }\right),\] and that $R \geq \frac{1}{2}\left(\omega_{d} f_{\min }\right)^{-1 / d}$. As $\tau_{\min } \leq \kappa /\left(\omega_{d} f_{\min }\right)^{1 / d}$, and as $\omega_{d}^{1 / d} \leq c \alpha_{d}^{1 / d}$ for some absolute constant $c$, we obtain the conclusion with constant $C=c_{3}(1-\kappa) / c$. Note that the lower bound actually holds on the smaller model $\mathcal{Q}_{\tau_{\min }, f_{\min }, f_{\min }}^{d}$, as we only considered uniform distributions in the proof.

\bibliographystyle{alpha} 
\bibliography{biblio.bib}       


\end{document}